\numberwithin{equation}{section}
\newtheorem{theorem}{Theorem}[section]
\newtheorem{definition}[theorem]{Definition}
\newtheorem{lemma}[theorem]{Lemma}
\newtheorem{prop}[theorem]{Proposition}
\newtheorem{corollary}[theorem]{Corollary}
\newtheorem{remark}[theorem]{Remark}
\newtheorem{example}[theorem]{Example}
\title{The Ross-Darboux-Stieltjes Integral}
\author{David Cruz-Uribe, OFS}
\address{Dept. of Mathematics \\
University of Alabama \\
 Tuscaloosa, AL 35487, USA}
\email{dcruzuribe@ua.edu}
\author{Jacob Glidewell}
\address{Dept. of Mathematics \\
University of Alabama \\
 Tuscaloosa, AL 35487, USA}
\email{jbglidewell@crimson.ua.edu}
\begin{document}

\begin{abstract}
    Motivated by the limitations of the traditional definitions of the Riemann-Stieltjes and  Darboux-Stieltjes integrals, we introduce a generalized Darboux-Stieltjes integral that is equivalent to an earlier generalization by Ross~\cite{Ross}. Our definition builds upon an approach to the Darboux-Stieltjes integral recently introduced by the first author and Convertito~\cite{TSI}.  We show that our definition agrees with all previous definitions, but that the class of integrable functions is much  larger.  We develop all the analogs of the classic results for the Riemann integral,  and rectify the problems inherent in the definition of the  Darboux-Stieltjes integral in~\cite{TSI}. In particular, we show the Bounded Convergence Theorem holds for our definition and that it agrees with the Lebesgue-Stieltjes integral.
\end{abstract}

\date{May 27, 2024}

\keywords{Stieltjes integral, Lebesgue-Stieltjes integral, functions of bounded variation}

\subjclass[2010]{Primary 26A42}

\thanks{This paper is a revised version of the masters thesis written by the second author under the direction of the first.  The first author is partially supported by a Simons Foundation Travel Support for Mathematicians Grant. }
\maketitle
\tableofcontents

\section{Introduction}
\label{section:intro}

The Stieltjes integral plays an important role in analysis and in applications.  However, since its introduction it has been recognized that its original definition had serious drawbacks, and there have been several attempts to improve it, none completely successful.  The goal of this paper is to introduce a new definition which we believe overcomes the drawbacks of previous definitions.  To support this claim and to provide context for our results, we will first review the history of the Stieltjes integal. This review is not intended to be comprehensive, but should suffice to illustrate our point.  Hereafter, $B[a,b]$ will be the set of bounded functions on the interval $[a,b]$, $I[a,b]$, will be increasing functions, and $BV[a,b]$ will be functions of bounded variation.  

Stieltjes \cite{Stieltjes} first introduced the Riemann-Stieltjes integral by generalizing the definition of the Riemann integral.

\begin{definition} \label{defn:rs-integral}
    Given $f\in B[a,b]$, $\alpha\in BV[a,b]$, a partition $\mathcal P = \{x_i\}_{i=0}^n$ of $[a,b]$ and sample points $x_i^*\in [x_{i-1}, x_i]$, we form a Riemann-Stieltjes sum 
    $$\sum_{i=1}^n f(x_i^*)[\alpha(x_i)-\alpha(x_{i-1})].$$ 
    If there exists $A$ such that for every $\epsilon>0$, there exists $\delta>0$ such that for every Riemann-Stieltjes sum with $|\mathcal P|<\delta$, $$\left|\sum_{i=1}^n f(x_i^*)[\alpha(x_i)-\alpha(x_{i-1})] - A\right|<\epsilon,$$ then we define the Riemann-Stieltjes integral of $f$ with respect to $\alpha$ by
    $$(RS)\int_a^b f(x)\, d \alpha = A.$$ 
\end{definition}

Stieltjes introduced this integral, hereafter the RS-integral, to study continued fractions.  Later, M.~ Riesz~\cite{Riesz} showed the importance of this definition by using the RS-integral to characterize the space of linear functionals on $C[a,b]$, the space of continuous functions on $[a,b]$. (See also F.~Riesz and Sz.-Nagy~\cite{MR0071727}.) Both Stieltjes and Riesz were primarily interested in this integral when the integrand $f$ was a continuous function.

Straightforward examples (see~\cite[Example~5.30]{TSI}) show that the RS-integral only exists if $f$ and $\alpha$ have no common points of discontinuity. To overcome this weakness and widen the class of functions for which a Stieltjes-type integral is defined, Pollard~\cite{Pollard} gave two equivalent definitions: the Pollard-Stieltjes integral (PS-integral) and what is now referred to as the Darboux-Stieltjes integral (DS-integral). The PS-integral changes Definition~\ref{defn:rs-integral} by  replacing the  convergence of the Riemann-Stieltjes sums (referred to as "norm convergence" in~\cite{MR1524276}) with a weaker form of convergence (referred to as "$\sigma$-convergence" in~\cite{MR1524276}).  More precisely,  for each $\epsilon>0$, it only considers the Riemann-Stieltjes sums on the refinements of a specific partition that depends on $\epsilon$, $\mathcal{P}_\epsilon$,  instead of all partitions with $|\mathcal P|<\delta$. (See Definition~\ref{def:Pollardint} for more details.) 

The DS-integral generalizes the definition of the classical Darboux integral (often mistakenly referred to as the Riemann integral). Pollard only stated it for $\alpha$ increasing; however, it can be extended to all $\alpha\in BV[a,b]$ using linearity and the Jordan decomposition theorem.

\begin{definition} \label{defn:DS-integral}
    Given $f\in B[a,b]$, $\alpha\in I[a,b]$, and a partition $\mathcal P = \{x_i\}_{i=1}^n$, define 
    $$\overline{M}_i = \sup\{f(x)\; : x\in [x_{i-1}, x_i]\}, 
    \quad 
    \overline{m}_i = \inf\{f(x)\; : x\in [x_{i-1}, x_i]\},$$ 
    and form the upper and lower sums 
    \begin{align*}
        U_\alpha^e (f, \mathcal{P}) 
        &= \sum_{i=1}^n\overline{M}_i[\alpha(x_i)-\alpha(x_{i-1})],\\
        L_\alpha^e (f, \mathcal{P}) 
        &= \sum_{i=1}^n\overline{m}_i[\alpha(x_i)-\alpha(x_{i-1})].
    \end{align*}
    If $\inf U_\alpha^e (f, \mathcal{P})=\sup L_\alpha^e (f, \mathcal{P})=A$, where the infimum and supremum are taken over all partitions of $[a,b]$, then we define the Darboux-Stieltjes integral of $f$ with respect to $\alpha$ by
    $$(DS)\int_a^b f(x)\, d \alpha=A.$$ 
\end{definition}

For $\alpha\in I[a,b]$, Pollard showed the PS-integral and the DS-integral are equivalent and agree with the RS-integral whenever the latter exists.  However, both of these integrals  are  more general than the RS-integral: there exist functions which are DS-integrable  but whose RS-integrals do not exist since $f$ and $\alpha$ have common discontinuities.  The correct necessary condition in this case (see \cite[Theorem~4.13]{Nielson}) is that the DS-integral exists if at each common discontinuity $x\in [a,b]$, at least one of $f$ or $\alpha$ is right continuous and at least one is left continuous at $x$. 

Several other versions of the Stieltjes integral have appeared in the literature. Hildebrandt~\cite{MR1524276} and Dushnik~\cite{MR2936614} studied generalizations of the PS-integral.  Pollard gave an ``interior" version of the DS-integral where $\overline{M}_i$ and $\overline{m}_i$ are replaced with $M_i$ and $m_i$, the supremum and infimum of $f(x)$ when $x\in (x_{i-1}, x_i)$. To distinguish this from Definition~\ref{defn:DS-integral} we will sometimes refer to that one as the "exterior" DS-integral.  (This explains the superscript "$e$" that appears above.)  An equivalent version of the interior DS-integral was independently discovered by Convertito and the first author and developed in~\cite{TSI}.  It is equivalent to the exterior DS-integral wherever the the latter is defined, but is defined on a larger collection of functions:  in~\cite[Theorem~5.19]{TSI} they showed that a necessary condition for the integral to exist is that at  each common discontinuity $x\in [a,b]$, $\alpha$ is right continuous or $f(x+)$ exists, and $\alpha$ is left continuous or $f(x-)$ exists.

The RS, DS, and PS-integrals were the definitions used for the Stieltjes integral in many of the standard analysis books in the second half of the twentieth century.  The RS-integral was used by  Taylor and Mann~\cite{MR0674807}. Apostol~\cite{MR0087718} used the PS-integral and showed the equivalence of the DS-integral as theorem.  He noted in an exercise that the RS-integral is not equivalent.     Bartle~\cite{MR0393369} also used the PS-integral and gave the RS and DS-integrals as exercises.  From a pedagogic perspective, we note that neither author explains the rationale for using $\sigma$-convergence in their definition.  Rudin~\cite{MR0385023} used the DS-integral, as did Burkhill and Burkhill~\cite{MR1962361}, and Krantz~\cite{MR3617044}.  Protter and Morrey~\cite{MR0463372} and Wheeden and Zygmund~\cite{MR3381284} defined both the RS and DS-integrals and highlighted the fact that they are not equivalent.

Ross~\cite{Ross,MR0600928}, motivated by the restrictive necessary conditions for the RS and DS-integrals to exist, proposed a modified version of the DS-integral.

\begin{definition}\label{def: RossIntegral}
     Given $f\in B[a,b]$, $\alpha\in I[a,b]$, and a partition $\mathcal P = \{x_i\}_{i=1}^n$, define 
     $$M_i = \sup\{f(x)\; : x\in (x_{i-1}, x_i)\}, 
     \quad m_i = \inf\{f(x)\; : x\in (x_{i-1}, x_i)\},$$ 
     and 
     $$J_\alpha(f, \mathcal{P})= \sum_{i=0}^n f(x_i)[\alpha(x_i+)- \alpha(x_i-)].$$ 
     Form the upper and lower sums 
     \begin{align*}
        U_\alpha^r (f, \mathcal{P}) 
        &= J_\alpha(f, \mathcal{P})+\sum_{i=1}^n M_i[\alpha(x_i-)-\alpha(x_{i-1}+)],\\
        L_\alpha^r (f, \mathcal{P}) 
        &= J_\alpha(f, \mathcal{P})+\sum_{i=1}^n m_i[\alpha(x_i-)-\alpha(x_{i-1}+)].
    \end{align*}
    If $\inf U_\alpha^r (f, \mathcal{P})=\sup L_\alpha^r (f, \mathcal{P})=A$, where the infimum and supremum are taken over all partitions of $[a,b]$, then we define the Ross-Darboux-Stieljtes (RDS) integral of $f$ with respect to $\alpha$ by
    $$(RDS)\int_a^b f(x)\, d \alpha=A.$$ 
\end{definition}

Ross~\cite{Ross} also gave an equivalent definition in terms of a modified RS-integral.  He loosely motivated the additional term $J_\alpha$ used to define the upper and lower sums by suggesting that it allows individual points to be assigned a positive "weight."  It is clear from other comments in~\cite{MR0600928} that he was also motivated by the Lebesgue-Stieltjes integral.  

\begin{remark}
Another variant of the RS-integral was proposed by Ter Horst~\cite{LebStieltjes} which is a generalization of the definition given by Ross.  His definition is a hybrid between the classical Stieltjes integrals and the more complex definition of the Lebesgue integral.  We will not discuss it in detail in this paper.
\end{remark}

\medskip
As noted above, in~\cite{TSI} the authors developed the interior DS-integral using an equivalent definition in terms of approximation by step functions. (See Definition~\ref{def:DSint} below.)  Like Ross, they wanted to weaken the restrictive necessary conditions associated with the standard definitions.  They were also motivated by pedagogical concerns:  they felt that their definition was intuitive and also created a bridge between classical undergraduate analysis and graduate analysis and measure theory.  However, as they pointed out in their introduction, there are two significant problems with the interior DS-integral. First, the Bounded Convergence Theorem (the analog of the Lebesgue Dominated Convergence Theorem for the DS-integral) does not hold for this definition of the Stieltjes integral unless $\alpha$ is continuous. See~\cite[Example~4.49 and Theorem~4.50]{TSI}.   Second, and perhaps more problematic from an advanced perspective, this definition does not always  agree with the Lebesgue-Stieltjes integral when $f$ and $\alpha$ have a common discontinuity. 

\medskip

The goal of this paper is to address the problems with the interior DS-integral as defined in~\cite{TSI}.  We give  a definition which is equivalent to Definition~\ref{def: RossIntegral} but which adopts the perspective taken in~\cite{TSI}.  In particular, we draw a more explicit connection to the   Lebesgue-Stieltjes integral.  In the definitions of the RS, PS and DS-integrals, as well as in Definition~\ref{def:DSstepint} below, the "$\alpha$-length" of a partition interval $[x_{i-1},x_i]$ is given by $\alpha(x_i)-\alpha(x_{i-1})$.  The problem with this, as Ross correctly noted, is that the Stieltjes integral should give weight to individual points in the partition if they correspond to jump discontinuities of $\alpha$.  To accomplish this, we will replace this definition of $\alpha$-length with the following:  given an open interval $I=(c,d)$, we define $\alpha(I)=\alpha(d-)-\alpha(c+)$, where $x\pm$ denotes the  right and left-sided limits at $x$.  We also define the $\alpha$-length of a single point $x$ by $\alpha(\{x\}) = \alpha(x+)-\alpha(x-)$.  This modest change lets us correctly capture all the behavior of the Stieltjes integral when $f$ and $\alpha$ have common discontinuities.  This notion of $\alpha$-length is implicit in the definition of $J_\alpha$ in Definition~\ref{def: RossIntegral}; it also corresponds to the premeasure used to define the Lebesgue-Stieltjes measure. (See \cite[Section~4.1]{LebesgueStieltjesCarter}.)

Given our definition, we show that all of the standard properties of the Stieltjes integral can be easily derived.  These results can be thought of as extending and complementing the development of the interior DS-integral in~\cite{TSI} and the development of the RDS-integral in~\cite{Ross}.  We then prove some more advanced theorems which show that in some sense the RDS-integral is the right definition to use.  We completely characterize the functions which are RDS-integrable by proving the precise analogue of the Lebesgue criterion for Riemann-Darboux integrability. 

\begin{theorem}[RDS-Lebesgue Criterion]
Given $f\in B[a,b]$ and $\alpha\in BV[a,b]$, $f\in RDS_\alpha[a,b]$ if and only if the subset of $[a,b]$ where $f$ is discontinuous has $\alpha$-measure $0$.
\end{theorem}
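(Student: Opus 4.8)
The plan is to transplant the classical proof of the Riemann--Lebesgue criterion, letting the oscillation of $f$ drive both implications and the $\alpha$-measure play the role of Lebesgue measure. I would begin from the Darboux-type reformulation of integrability established earlier: $f\in RDS_\alpha[a,b]$ if and only if for each $\epsilon>0$ there is a partition $\mathcal P$ with $U_\alpha^r(f,\mathcal P)-L_\alpha^r(f,\mathcal P)<\epsilon$. Writing out this difference, the term $J_\alpha$ cancels and one is left with a sum over the \emph{open} partition intervals,
\[
U_\alpha^r(f,\mathcal P)-L_\alpha^r(f,\mathcal P)=\sum_{i=1}^n (M_i-m_i)\big[\alpha(x_i-)-\alpha(x_{i-1}+)\big]=\sum_{i=1}^n \operatorname{osc}\big(f,(x_{i-1},x_i)\big)\,\alpha\big((x_{i-1},x_i)\big).
\]
Via the Jordan decomposition $\alpha=\alpha_1-\alpha_2$ and the linearity of the integral, it suffices to prove the criterion for increasing $\alpha$, since the discontinuity set of $f$ is $\alpha$-null precisely when it is null for both $\alpha_1$ and $\alpha_2$.

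Next I would introduce the pointwise oscillation $\omega_f(x)$ and the sets $D_\eta=\{x\in[a,b]:\omega_f(x)\ge\eta\}$, recalling the standard facts that each $D_\eta$ is compact, that $f$ is continuous at $x$ exactly when $\omega_f(x)=0$, and hence that the discontinuity set is $D=\bigcup_{n\ge 1}D_{1/n}$. The feature with no counterpart in the Riemann setting, and the step I expect to be the main obstacle, is the interaction between the discontinuities of $f$ and the jumps of $\alpha$. Since the displayed formula charges only the open intervals, making a point a partition point deletes it from the oscillation sum and shifts its $\alpha$-mass into the cancelling term $J_\alpha$; thus any single jump of $\alpha$ can be neutralized by placing a partition point on it. Because $\alpha$ is increasing, its jumps $\{p_k\}$ form a countable set with $\sum_k \alpha(\{p_k\})\le \alpha(b)-\alpha(a)$, so given $\epsilon>0$ I would fix $N$ with $\sum_{k>N}\alpha(\{p_k\})<\epsilon$ and always retain $p_1,\dots,p_N$ among the partition points. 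Correctly separating this atomic bookkeeping from the continuous part of the $\alpha$-measure is the crux of the argument.

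For the sufficiency direction I would fix $\epsilon>0$ and $\eta>0$, and use that $D_\eta\subseteq D$ is $\alpha$-null and compact to cover it by finitely many open intervals of total $\alpha$-length less than $\epsilon$. Taking the endpoints of these intervals together with the large jumps $p_1,\dots,p_N$ as partition points, the complement is compact and every point there has oscillation below $\eta$; a finite-subcover (Lebesgue-number) argument then yields a refinement on whose remaining intervals $\operatorname{osc}(f,\cdot)\le\eta$. Splitting the displayed sum over (i) the intervals covering $D_\eta$, (ii) the intervals carrying the tail jumps $p_k$ with $k>N$, and (iii) the good intervals, the three pieces are controlled by $\operatorname{osc}(f)\cdot\epsilon$, $\operatorname{osc}(f)\cdot\epsilon$, and $\eta\big(\alpha(b)-\alpha(a)\big)$ respectively; taking $\eta$ small then makes $U_\alpha^r-L_\alpha^r$ as small as desired, so $f\in RDS_\alpha[a,b]$.

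For the necessity direction I would argue by contraposition. If $D$ is not $\alpha$-null, then since $D=\bigcup_n D_{1/n}$ some $D_\eta$ has positive $\alpha$-measure. For an arbitrary partition $\mathcal P$, every open interval whose interior meets $D_\eta$ satisfies $M_i-m_i\ge\eta$, while $D_\eta$ differs from the union of these open intervals only by the finitely many partition points. Since those points can absorb only the atomic part of the $\alpha$-measure, a portion of $\alpha(D_\eta)$ bounded away from $0$ must persist on the open intervals meeting $D_\eta$, and the displayed formula gives $U_\alpha^r(f,\mathcal P)-L_\alpha^r(f,\mathcal P)\ge \eta\,c>0$ uniformly in $\mathcal P$ for a fixed constant $c$. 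By the Darboux reformulation $f\notin RDS_\alpha[a,b]$. The one delicate point in both directions --- and the reason the theorem is subtler than its Riemann ancestor --- is precisely this bookkeeping at the jumps of $\alpha$, where the partition points are allowed to carry the atomic $\alpha$-mass while the continuous part governs the oscillation sum.
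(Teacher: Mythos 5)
Your necessity direction contains a genuine gap, and it is located exactly at the point you yourself flagged as delicate. Read with Definition~\ref{def:alphameasure0}, a jump point of $\alpha$ is an atom with positive $\alpha$-measure, so the statement taken literally is false, and no argument can close your contrapositive: take $f=\alpha=H_1$ on $[-1,1]$. The discontinuity set of $f$ is $\{0\}$, and every open interval containing $0$ has $\mu_\alpha$-length $1$, so $\{0\}$ does \emph{not} have $\alpha$-measure zero; yet $f$ is a step function, hence $f\in RDS_\alpha[-1,1]$ by Proposition~\ref{prop:stepagree}, and the partition $\{-1,0,1\}$ already gives $U^r_\alpha-L^r_\alpha=0$. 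Your claim that ``a portion of $\alpha(D_\eta)$ bounded away from $0$ must persist on the open intervals'' silently assumes that the $\alpha$-mass of $D_\eta$ is not purely atomic; the hypothesis ``$D$ is not $\alpha$-null'' gives no such guarantee, and when the mass of $D_\eta$ sits entirely on jumps of $\alpha$, partition points absorb all of it, so no uniform lower bound $\eta c>0$ exists. This is precisely why the paper's precise statement (Theorem~\ref{thm:RDSLebcrit}) measures the discontinuity set with respect to the \emph{continuous} parts $G(P\alpha)$ and $G(N\alpha)$ of the saltus decomposition rather than with respect to $\alpha$ itself; the introduction's wording is an informal gloss of that theorem. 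Note also that your sufficiency direction, while sound as written, then proves less than the theorem: its hypothesis ($\alpha$-null, atoms counted) forbids $f$ from being discontinuous at any jump of $\alpha$, whereas the true criterion permits this.

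For contrast, the paper's proof never touches oscillations: it shows the jump part of $\alpha$ imposes no constraint at all (Theorem~\ref{thm:BddinRDSsaltus}: every bounded function is RDS-integrable against a reduced saltus function), deduces that $f\in RDS_\alpha[a,b]$ if and only if $f\in RDS_{G\alpha}[a,b]$ (Lemma~\ref{lem:RDSGalpha}), passes to the DS-integral for the continuous integrator $G\alpha$ via Theorem~\ref{thm:RDSiffDSintegrator}, and then invokes the known DS-Lebesgue criterion from the earlier work, with Theorem~\ref{thm:pnalpha} handling the reduction from $BV$ to increasing integrators. Your oscillation approach is salvageable and would yield a genuinely self-contained alternative, but only after restating both directions in terms of $\mu_{G\alpha}$: in the necessity direction, finitely many partition points have $G\alpha$-measure zero (continuity of $G\alpha$), so the persistence argument becomes valid using $\mu_\alpha(I_i)\ge\mu_{G\alpha}(I_i)$; in the sufficiency direction, your device of retaining the large jumps of $\alpha$ as partition points is exactly what is needed once the covering intervals are only required to be small in $G\alpha$-length rather than in $\alpha$-length.
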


As an immediate consequence of this theorem we have that if $f$ and $\alpha$ are functions of bounded variation, then the RDS-integral of $f$ with respect to $\alpha$ exists.  
We also prove that the Bounded Convergence theorem holds for the RDS-integral.

\begin{theorem}[RDS Bounded Convergence Theorem]
Given $\alpha \in BV[a,b]$ and a uniformly bounded sequence of  functions $\{f_n\}_{n=1}^\infty$ on $[a,b]$, if 

\begin{itemize}
    \item for all $n\in \mathbb{N}$, $f_n\in RDS_\alpha[a,b]$,
    
    \item $f_n\to f$ pointwise as $n\to \infty$,
    
    \item $f\in RDS_\alpha[a,b]$,
\end{itemize}
then $$\lim_{n\to\infty}\int_a^b f_n(x)\, d \alpha = \int_a^b f(x)\, d \alpha.$$
\end{theorem}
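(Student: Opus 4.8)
The plan is to reduce the assertion to the classical Bounded Convergence Theorem for a finite measure by passing through the Lebesgue-Stieltjes integral, using the RDS-Lebesgue Criterion to control measurability of the integrands. First I would reduce to the case $\alpha \in I[a,b]$. Writing $\alpha = \alpha_1 - \alpha_2$ via the Jordan decomposition with $\alpha_1,\alpha_2 \in I[a,b]$, linearity of the RDS-integral splits each $\int_a^b f_n\, d\alpha$ and $\int_a^b f\, d\alpha$ into the corresponding integrals against $\alpha_1$ and $\alpha_2$. The integrability hypotheses transfer: by the RDS-Lebesgue Criterion, $f_n, f \in RDS_\alpha[a,b]$ means their discontinuity sets have $\alpha$-measure zero, and since a set that is $\alpha$-null is null for each of $\alpha_1$ and $\alpha_2$, we obtain $f_n, f \in RDS_{\alpha_1}[a,b] \cap RDS_{\alpha_2}[a,b]$. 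Thus it suffices to prove the theorem for increasing $\alpha$.

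Next, for $\alpha \in I[a,b]$, let $\mu_\alpha$ denote the associated Lebesgue-Stieltjes measure; it is finite because $\alpha$ is bounded, with total mass comparable to the total variation of $\alpha$ on $[a,b]$. Any $g \in RDS_\alpha[a,b]$ is, by the RDS-Lebesgue Criterion, continuous $\mu_\alpha$-almost everywhere, hence $\mu_\alpha$-measurable, and the agreement of the RDS-integral with the Lebesgue-Stieltjes integral gives
$$\int_a^b g\, d\alpha = \int_{[a,b]} g\, d\mu_\alpha.$$
Applying this to each $f_n$ and to $f$ converts the desired limit into a statement purely about Lebesgue-Stieltjes integrals against the finite measure $\mu_\alpha$.

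Since $\{f_n\}$ is uniformly bounded, say $|f_n| \le M$ for all $n$, the constant function $M$ is $\mu_\alpha$-integrable on the finite measure space $([a,b], \mu_\alpha)$, and $f_n \to f$ pointwise (in particular $\mu_\alpha$-almost everywhere). The classical Dominated (Bounded) Convergence Theorem then yields
$$\int_{[a,b]} f_n\, d\mu_\alpha \longrightarrow \int_{[a,b]} f\, d\mu_\alpha,$$
which, after undoing the identifications above, is exactly the claimed convergence $\int_a^b f_n\, d\alpha \to \int_a^b f\, d\alpha$.

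The main obstacle is the second step: everything hinges on the agreement of the RDS-integral with the Lebesgue-Stieltjes integral and on the $\mu_\alpha$-measurability of RDS-integrable functions, both of which rest on the RDS-Lebesgue Criterion. If that equivalence were not yet available, I would instead argue directly: setting $g_n = f_n - f \to 0$ pointwise with $|g_n| \le 2M$ and each $g_n \in RDS_\alpha[a,b]$, I would apply Egorov's theorem to the total variation measure of $\alpha$ to obtain, for each $\eta > 0$, a set off which $g_n \to 0$ uniformly and whose complement has $\alpha$-measure less than $\eta$, then split the integral into a uniformly small part and a part controlled by $2M$ times that small measure. Finally, I would stress that the hypothesis $f \in RDS_\alpha[a,b]$ cannot be dropped: unlike in the Lebesgue theory, RDS-integrability of the pointwise limit is \emph{not} automatic, since the discontinuity set of $f$ may fail to have $\alpha$-measure zero.
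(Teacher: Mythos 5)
Your proposal is correct in outline, but it takes a genuinely different route from the paper. The paper's proof (Section~\ref{section:BCT}) is deliberately elementary, modeled on Luxemburg's proof for the Riemann integral~\cite{BCT}: after reducing to $\alpha\in I[a,b]$ via Theorem~\ref{thm:pnalpha} and to the case $f_n\ge 0$, $f_n\to 0$, it splits $\alpha$ by the reduced saltus decomposition (Proposition~\ref{prop:ReducedSaltusfunctions}), handles the saltus parts by hand as absolutely convergent sums $\sum_i a_i f_n(x_i)$ (Theorem~\ref{thm:BddinRDSsaltus}), and handles the continuous part $G\alpha$ via Lemma~\ref{lem:LowerIntConvergence}, whose proof approximates from below by continuous functions (Lemma~\ref{lem:ContApprox}) and invokes Dini's theorem together with Theorem~\ref{thm:uniformconv}; no measure theory is used. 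You instead invoke the identification of the RDS-integral with the Lebesgue--Stieltjes integral (Theorem~\ref{thm:RDSLebEquiv}) and quote the classical Dominated Convergence Theorem for the finite measure $\mu^\alpha$. This is legitimate and much shorter, and it is not circular: the paper's proof of Theorem~\ref{thm:RDSLebEquiv} rests on Lemma~\ref{lem:bestfitlimit}, the classical DCT, Theorem~\ref{thm:pnalpha}, and Lemma~\ref{lem:fplus}, never on the Bounded Convergence Theorem. But that equivalence is proved only in the paper's final section, so your argument requires reordering the exposition, and it buys brevity at the cost of importing the Carath\'eodory extension machinery that the paper's elementary development is partly designed to avoid.

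Two steps in your write-up need repair, though neither is fatal. First, the reduction to increasing integrators: for an \emph{arbitrary} Jordan decomposition $\alpha=\alpha_1-\alpha_2$ it is not true that integrability transfers to $\alpha_1$ and $\alpha_2$ (nor is ``$\alpha$-null'' even defined for non-monotone $\alpha$; consider $\alpha_1=\alpha_2$, so $\alpha\equiv 0$). The correct tool is Theorem~\ref{thm:pnalpha}: $f\in RDS_\alpha[a,b]$ if and only if $f\in RDS_{P\alpha}[a,b]$ and $f\in RDS_{N\alpha}[a,b]$ for the canonical positive and negative variations, and that theorem also supplies the identity between the integrals needed to recombine at the end. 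Second, your claim that an RDS-integrable function is continuous $\mu^\alpha$-a.e.\ is false: by Proposition~\ref{prop:BddinRDSH_c}, $H_0\in RDS_{H_1}[-1,1]$, yet its only discontinuity $\{0\}$ carries $\mu^{H_1}$-mass $1$. The RDS--Lebesgue criterion (Theorem~\ref{thm:RDSLebcrit}) gives nullity only with respect to $G(P\alpha)$ and $G(N\alpha)$, the continuous parts of the saltus decompositions; atoms are exempt. Fortunately you do not need this claim: Theorem~\ref{thm:RDSLebEquiv} already asserts that RDS-integrable functions are Lebesgue--Stieltjes integrable (in particular $\mu^\alpha$-measurable) with equal integrals, so measurability of the $f_n$ and of $f$ comes for free and the DCT applies directly. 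Your closing remark that the hypothesis $f\in RDS_\alpha[a,b]$ cannot be dropped is correct and consistent with the paper's formulation.
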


Finally, we prove that the RDS-integral agrees with the Lebesgue-Stieltjes integral whenever the former is defined.

\medskip

The remainder of this paper is organized as follows. In Section~\ref{section:prelim} we give some preliminary definitions and sketch  the approach in \cite{TSI} to  the Darboux-Stieltjes integral.  This will serve as a foundation for our definition of the RDS-integral, which we give in Sections~\ref{section:rds-defn} and~\ref{section:rds-defn-bis}. In Section~\ref{section:rds-defn-bis} we also prove some basic properties of the RDS-integral, including the RDS criterion, an analogue of the Darboux criterion for integrability. Finally, we give sufficient conditions, in terms of continuity, for the RDS and DS-integrals to agree.   In Section~\ref{section:RDS-criterion}, we use the  RDS criterion to prove  linearity, additivity, monotonicity in the integrand and linearity of the integrator. In Section~\ref{section:convergence}, we prove two convergence theorems: uniform convergence in the integrand and BV-norm convergence in the integrator. In Section~\ref{section:integrability} we consider necessary and sufficient conditions for RDS-integrability and prove the RDS-Lebesgue criterion. As an application we prove a generalization of integration by parts for the RDS-integral.  In Section~\ref{section:BCT} we prove the Bounded Convergence Theorem. Finally, in Section~\ref{section:alt-defns}, we compare various definitions of the Stieltjes integral.  We first give a characterization the functions whose RDS and DS-integrals agree.  We then give  three  alternative definitions of a generalized Riemann-Stieltjes integral and discuss their relationship with the RDS-integral.  Finally, we prove that the RDS-integral and the Lebesgue-Stieltjes integral agree.

Throughout, our notation will follow that used in~\cite{TSI}; similarly, in our proofs we will draw upon a number of results proved in this work.  In Sections~\ref{section:rds-defn}--\ref{section:convergence} many of our proofs are very similar to proofs of the corresponding result for the Darboux or DS-integrals, and we have tried to strike a balance between being self-contained and repeating arguments that can be found elsewhere.  The results in Sections~\ref{section:integrability}--\ref{section:alt-defns} are (with one exception) all new, and we give complete proofs.

\section{The  Darboux-Stieltjes Integral}
\label{section:prelim}

In this section we briefly state the definition of the interior Darboux-Stieltjes integral as defined in~\cite[Chapter~4]{TSI}.  This will help to motivate the definition of the RDS-integral that we will give in Section~\ref{section:rds-defn} below.  As part of of this we define functions of bounded variation; additional properties of functions of bounded variation will be called upon throughout the text, and we refer to~\cite[Chapter~3]{TSI} for complete information.

Fundamental to all of our definitions is the notion of a partition of an interval.

\begin{definition}\label{def:part}
Given a closed interval $[a,b]$, a partition of $[a,b]$ is a finite set of points $\mathcal{P}=\{x_i\}_{i=0}^n$ such that $$a=x_0<x_1<x_2<\dots<x_{n-1}<x_n=b.$$ Given $\mathcal{P}$, define the disjoint partition intervals $I_i=(x_{i-1},x_i)$ for each $1\le i\le n$. The mesh size $|\mathcal{P}|$ of a partition is the length of the longest partition interval:  
\[ |\mathcal{P}|= \max_{1\leq i \leq n} \{ x_i-x_{i-1}\}. \]
\end{definition}

\begin{definition}\label{def:refine}
Given partitions $\mathcal{P}=\{x_i\}_{i=0}^n$ and $\mathcal{Q}=\{y_j\}_{j=0}^m$ of the interval $[a,b]$, we say $\mathcal{Q}$ is a refinement of $\mathcal{P}$ if $\mathcal{P}\subset \mathcal{Q}$. We say $\mathcal{R}$ is a common refinement of $\mathcal{P}$ and $\mathcal{Q}$ if $\mathcal{R}$ is a refinement of both $\mathcal{P}$ and $\mathcal{Q}$.
\end{definition}

We will use the above notation throughout. Moreover, given a second partition $\mathcal{Q}=\{y_j\}_{j=0}^m$, we will use $J_j=(y_{j-1}, y_j)$ for each $1\le j\le m$ for the partition intervals of $\mathcal{Q}$.

\medskip

We can now define the bounded variation of a function.

\begin{definition} \label{defn:BV}
Given a function $f\in B[a,b]$ and a partition $\mathcal{P}$ of $[a,b]$, define the variation of $f$ on $\mathcal{P}$ by
\[ V(f,\mathcal{P}) = \sum_{i=0}^n |f(x_i)-f(x_{i-1})|. 
\]
We say that $f$ is of bounded variation, and write $f\in BV[a,b]$, if 
\[ V(f,[a,b]) = \sup_{\mathcal{P}} V(f,\mathcal{P}) < \infty,
\]
where the supremum is taken over all partitions $\mathcal{P}$ of $[a,b]$.
\end{definition}

For the definition of the DS-integral, we recall the Jordan decomposition theorem:  if $\alpha\in BV[a,b]$, then there exist functions $\alpha^+,\,\alpha^- \in I[a,b]$ such that $\alpha=\alpha^+-\alpha^-$.  

\medskip

The definition of the DS-integral occurs in two stages.  First, we define it for step functions.  The integrals of step functions will take the place of the upper and lower Darboux sums in the traditional definition, e.g., Definition~\ref{defn:DS-integral}.

\begin{definition}\label{def:stepf}
A function $f\in B[a,b]$ is a step function if there exists a partition $\mathcal{P}$ of $[a, b]$ and real numbers $\{c_i\}_{i=1}^n$ such that $f(x) = c_i$ if $x \in I_i, 1 \le i \le n$. The function f is said to be a step function with respect to the partition $\mathcal{P}$. 
\end{definition}

\begin{definition}\label{def:wrongalphalength}
    Given $\alpha\in BV[a,b]$ and an open interval $I=(c,d)\subset [a,b]$, we define the (classical) $\alpha$-length of $I$ to be $\alpha(I)=\alpha(d)-\alpha(c)$.
\end{definition}

We refer to this as the classical $\alpha$-length because of the role it played historically in the definition of the Stieltjes integral.  Below in  Definition~\ref{def:aleng} we will redefine it to correspond to the premeasure used to define the Lebesgue-Stieltjes integral.

\begin{definition}\label{def:DSstepint}
    Given $f\in S[a,b]$ defined with respect to a partition $\mathcal{P}$, suppose $f(x)=c_i$ for $x\in I_i$. Define the Stieltjes integral with respect to $\alpha\in BV[a,b]$ by $$(DS)\int_{a}^{b}f(x)\, d \alpha = \sum_{i=1}^nc_i\alpha(I_i).$$
\end{definition}

As the second step, we define the DS-integral of any bounded function.  To do so, we define the collections of step functions that bracket $f$.

\begin{definition}\label{def:bracket}
Given $f\in B[a,b]$, define the sets 
\begin{align*}
    S_U(f, [a,b])&= \{u\in S[a,b]\; :\; f(x)\le u(x), x\in[a,b]\},\\
    S_L(f, [a,b])&= \{v\in S[a,b]\; :\; f(x)\ge v(x), x\in[a,b]\}.
\end{align*}

We say $u,\,v\in S[a,b]$ bracket $f$ if $u\in S_U(f, [a,b])$ and $v\in S_L(f, [a,b]).$
\end{definition}

\begin{definition}\label{def:DSint}
    Given $\alpha\in I[a,b]$ and $f\in B[a,b]$, define
\begin{align*}
    L^i_\alpha (f, [a,b])=\sup\left\{\int_{a}^b v(x)\, d \alpha \; :\; v\in S_L(f, [a,b])\right\},\\
    U^i_\alpha (f, [a,b])=\inf\left\{\int_{a}^b u(x)\, d \alpha \; :\; u\in S_U(f, [a,b])\right\}.
\end{align*}

If $L^i_\alpha(f, [a,b])=U^i_\alpha(f, [a,b])$, then we say $f$ is (interior) Darboux-Stieltjes integrable with respect to $\alpha$ and write $$(DS)\int_a^bf(x)\, d \alpha$$ as their common value. 
Moreover, for $\alpha\in BV[a,b]$, if there exists a decomposition $\alpha=\alpha^+ - \alpha^-$ where $f\in DS_{\alpha^+}[a,b]$ and $f\in DS_{\alpha^-}[a,b]$, then we define $f\in DS_\alpha[a,b]$ and $$(DS)\int_a^bf(x)\, d \alpha=(DS)\int_a^bf(x)\, d \alpha^+-(DS)\int_a^bf(x)\, d \alpha^-.$$
We denote the set of functions which are Darboux-Stieltjes integrable with respect to $\alpha$ as $DS_\alpha[a,b]$.
\end{definition}

The Darboux-Stieltjes integral is  well-defined; in particular, Definition~\ref{def:DSint} applied to a step function agrees with Definition~\ref{def:DSstepint}.   Moreover, this definition is equivalent to the definition of the interior DS-integral described in Section~\ref{section:intro}: details are left to the interested reader.  See the proof of the corresponding result for the RDS-integral, Corollary~\ref{cor:bestfitRDScrit}.

\section{The Ross-Darboux-Stieltjes Integral for Step functions}
\label{section:rds-defn}

We now  turn to the definition of  the RDS-integral.   Following the pattern of the definition of the DS-integral described in Section~\ref{section:prelim}, in this section we give a new definition of $\alpha$-length and define the RDS-integral of step functions.  We will define the integral in general, and prove some of its basic properties, in Section~\ref{section:rds-defn-bis}.

We first give our revised definition of $\alpha$-length, which takes more careful account of the jump discontinuities of the integrator $\alpha$.  

\begin{definition}\label{def:aleng}
Define the $\alpha$-length of an open interval $I=(c,d)$ as $$\mu_{\alpha}(I)=\alpha(d-)-\alpha(c+),$$ of a closed interval $\overline{I}=[c,d]$ as $$\mu_{\alpha}(\;\overline{I}\;)=\alpha(d+)-\alpha(c-),$$ and of a singleton set $\{c\}$ as $$\mu_{\alpha}(\{c\})=\alpha(c+)-\alpha(c-).$$ We also extend this to half-open intervals 
$$\mu_\alpha ((c,d])= \alpha (d+)-\alpha(c+) \quad \text{and} \quad \mu_\alpha([c,d))= \alpha(d-)-\alpha(c-).$$
\end{definition}

Clearly,  $\mu_\alpha$ is finitely additive. Moreover, if $\alpha$ is increasing, $\mu_\alpha$ is nonnegative.

\begin{remark}
    In dealing with a function $\alpha$ defined on a closed interval $[a,b]$, we adopt the following conventions: $\alpha(a-)=\alpha(a)$ and $\alpha(b+)=\alpha(b)$.  This convention will be important in the proving the additivity of the RDS-integral. See the proofs of Theorems~\ref{thm:stepadd} and~\ref{thm:additivity}.
\end{remark}

We now  define the integral of a step function. Notice the definition has two terms: the first is analogous to Ross's jump term $J_\alpha(f, \mathcal{P})$ in Definition \ref{def: RossIntegral}, and the second is the classical step function integral as in Definition \ref{def:DSstepint}.

\begin{definition}\label{def:stepint}
Given $f\in S[a,b]$ defined with respect to a partition $\mathcal{P}$, suppose $f(x)=c_i$ for $x\in I_i$. Define the Ross-Stieltjes integral with respect to $\alpha\in BV[a,b]$ by $$(\mathcal{P})\int_{a}^{b}f(x)\, d \alpha = \sum_{i=0}^n f(x_i)\mu_{\alpha}(\{x_i\})+\sum_{i=1}^nc_i\mu_{\alpha}(I_i).$$
\end{definition}

Clearly, a step function can be defined with respect to multiple paritions:  for instance, a constant function is a step function with respect to any partition.  Therefore, we have to show our definition is well-defined, i.e., that it  is independent of the chosen partition.

\begin{prop}\label{prop:partinvariant}
Given $\alpha \in BV[a,b]$, if $f\in S[a,b]$ is defined with respect to both partitions $\mathcal{P}$ and $\mathcal{Q}$, then $$(\mathcal{P})\int_{a}^{b}f(x)\, d \alpha =(\mathcal{Q})\int_{a}^{b}f(x)\, d \alpha.$$
\end{prop}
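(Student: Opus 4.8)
The plan is to use the standard common-refinement strategy. First I would reduce to the case where one partition refines the other: given two partitions $\mathcal{P}$ and $\mathcal{Q}$ with respect to which $f$ is a step function, let $\mathcal{R}=\mathcal{P}\cup\mathcal{Q}$ be their common refinement. Since every partition interval of $\mathcal{R}$ lies inside a partition interval of each of $\mathcal{P}$ and $\mathcal{Q}$, the function $f$ is constant on each open interval of $\mathcal{R}$ and hence is a step function with respect to $\mathcal{R}$ as well. It therefore suffices to show that the Ross-Stieltjes integral computed with respect to $\mathcal{P}$ equals the one computed with respect to any refinement of $\mathcal{P}$; applying this to both $\mathcal{P}\subset\mathcal{R}$ and $\mathcal{Q}\subset\mathcal{R}$ then yields the claim.

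For the refinement case I would argue by induction on the number of points added, so that the crux is the insertion of a single point. Suppose $\mathcal{Q}=\mathcal{P}\cup\{y\}$ where $x_{k-1}<y<x_k$ for some $k$. Then $y$ is an interior point of $I_k$, so $f(y)=c_k$, and the interval $I_k=(x_{k-1},x_k)$ is split into $(x_{k-1},y)$ and $(y,x_k)$, on both of which $f$ still takes the value $c_k$. All contributions to the two integrals coming from the other partition points $x_i$ and the other open intervals $I_i$ are literally identical, so the two integrals differ only in the terms associated with $I_k$. The comparison therefore reduces to the single identity
\[
c_k\,\mu_\alpha(I_k)
= c_k\,\mu_\alpha\big((x_{k-1},y)\big)+f(y)\,\mu_\alpha(\{y\})+c_k\,\mu_\alpha\big((y,x_k)\big),
\]
which, since $f(y)=c_k$, is exactly the finite additivity of $\mu_\alpha$ on the decomposition of $(x_{k-1},x_k)$ into $(x_{k-1},y)$, $\{y\}$, and $(y,x_k)$ noted after Definition~\ref{def:aleng}. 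Unwinding the definitions, the right-hand side telescopes to $c_k\big(\alpha(x_k-)-\alpha(x_{k-1}+)\big)=c_k\,\mu_\alpha(I_k)$.

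With the single-point case in hand, the induction is routine: writing $\mathcal{Q}\setminus\mathcal{P}=\{y_1,\dots,y_r\}$ and adjoining these points to $\mathcal{P}$ one at a time produces a chain of partitions, each a step-function partition for $f$ and each obtained from the previous by a single insertion, so the integral is unchanged at every stage.

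I do not expect a serious obstacle here; the only point requiring care is the bookkeeping. One must verify that $f(y)=c_k$ for the newly inserted point, so that the jump term carries the correct value, and that the jump-term contributions $f(x_i)\,\mu_\alpha(\{x_i\})$ at the original partition points are untouched by the insertion. Once these are pinned down, the whole comparison collapses to finite additivity of $\mu_\alpha$.
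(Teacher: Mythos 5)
Your proposal is correct and follows essentially the same route as the paper's proof: reduce to a common refinement, induct on single-point insertions, and observe that inserting a point $y$ interior to $I_k$ changes nothing because $f(y)=c_k$ and $\mu_\alpha(I_k)=\mu_\alpha\bigl((x_{k-1},y)\bigr)+\mu_\alpha(\{y\})+\mu_\alpha\bigl((y,x_k)\bigr)$. The paper carries out this last identity as an explicit telescoping computation rather than citing finite additivity of $\mu_\alpha$, but the content is identical.
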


\begin{proof}

First, suppose $\mathcal{Q}=\{y_j\}_{j=0}^m$ is a refinement of $\mathcal{P}=\{x_i\}_{i=0}^n$ with only one additional point. That is, $m=n+1$.
Note there exists indices $1\le \ell \le n$, $1\le k\le n$ such that $$y_{k-1}=x_{\ell-1} < y_k < y_{k+1}=x_{\ell}.$$ Hence, $f(y_k)=c_\ell = d_k=d_{k+1}$. Then, 

\begin{align*}
 &   (\mathcal{Q})\int_{a}^{b}f(x)\, d \alpha \\
 & \qquad =\sum_{j=0}^{n+1} f(y_j)\mu_{\alpha}(\{y_j\})+\sum_{j=1}^{n+1} d_j\mu_{\alpha}(J_j)\\
    & \qquad = f(y_k)\mu_\alpha(\{y_k\}) + \sum_{i=0}^n f(x_i)\mu_{\alpha}(\{x_i\})\\
    & \qquad \quad +d_k\mu_\alpha(J_k)+d_{k+1}\mu_\alpha(J_{k+1})-c_{\ell}\mu_\alpha(I_\ell)+\sum_{i=1}^n c_i\mu_{\alpha}(I_i) \\ 
    & \qquad = f(y_k)\mu_\alpha(\{y_k\})+\sum_{i=0}^n f(x_i)\mu_{\alpha}(\{x_i\})\\
    & \qquad \quad +c_\ell [\alpha(y_k-)-\alpha(y_{k-1}+)+\alpha(y_{k+1}-)-\alpha(y_k+)-\alpha(x_\ell-)+\alpha(x_{\ell-1}+)]\\
    & \qquad \quad +\sum_{i=1}^n c_i\mu_{\alpha}(I_i) \\
    & \qquad =f(y_k)\mu_\alpha(\{y_k\})+c_\ell [\alpha(y_k-)-\alpha(y_k+)]+\sum_{i=0}^n f(x_i)\mu_{\alpha}(\{x_i\})+\sum_{i=1}^n c_i\mu_{\alpha}(I_i)\\
    & \qquad =f(y_k)\mu_\alpha(\{y_k\})-f(y_k)\mu_\alpha(\{y_k\})+\sum_{i=0}^n f(x_i)\mu_{\alpha}(\{x_i\})+\sum_{i=1}^n c_i\mu_{\alpha}(I_i)\\
    & \qquad =\sum_{i=0}^n f(x_i)\mu_{\alpha}(\{x_i\})+\sum_{i=1}^n c_i\mu_{\alpha}(I_i)\\
    & \qquad =(\mathcal{P})\int_{a}^{b}f(x)\, d \alpha.
\end{align*}

Next, assume $\mathcal{Q}$ is a refinement of $\mathcal{P}$. Thus, we can write $\mathcal{Q}=\mathcal{P}\cup\{t_1, \dots, t_r\}$. Define $\mathcal{P}_0=\mathcal{P}$, $\mathcal{P}_1=\mathcal{P}_0\cup\{t_1\}, \mathcal{P}_2=\mathcal{P}_1\cup\{t_2\}, \dots, \mathcal{P}_r=\mathcal{P}_{r-1}\cup\{t_r\}=\mathcal{Q} $. From the previous case, for each $0\le i\le r-1$, $$(\mathcal{P}_i)\int_{a}^{b}f(x)\, d \alpha=(\mathcal{P}_{i+1})\int_{a}^{b}f(x)\, d \alpha.$$ By combining these equalities, we have $$(\mathcal{P})\int_{a}^{b}f(x)\, d \alpha=(\mathcal{P}_{0})\int_{a}^{b}f(x)\, d \alpha=(\mathcal{P}_{r})\int_{a}^{b}f(x)\, d \alpha=(\mathcal{Q})\int_{a}^{b}f(x)\, d \alpha.$$

Finally, we prove the general case. Let $\mathcal{R}$ be a common refinement of $\mathcal{P}$ and $\mathcal{Q}$. Thus, by the previous case, $$(\mathcal{P})\int_a^b f(x)\, d \alpha = (\mathcal{R})\int_a^b f(x)\, d \alpha= (\mathcal{Q})\int_a^b f(x)\, d \alpha. $$
\end{proof}

Given Proposition \ref{prop:partinvariant}, we may drop the indication of the partition and simply write 
\[ (S) \int_a^b f(x)\,d\alpha \]
for the RDS integral of a step function.  In fact, for the rest of this section, for brevity we will drop the "$(S)$" as well.  We will use this notation again in Section~\ref{section:rds-defn-bis}.  

\medskip

The canonical example of a step function is the Heaviside function; we will use this function repeatedly throughout this paper. 

\begin{example}\label{lem:H_c}
Given $c\in \mathbb{R}$, let $$H_c(x)=\begin{cases}0, & x<0, \\ c, & x=0,\\ 1, & x> 0.\end{cases}
$$ Then, for every $a, b\in\mathbb{R}$, 
$$\displaystyle (S) \int_{-1}^1 H_a(x) \, d H_{b} = a.$$ However, 
$$(DS)\displaystyle \int_{-1}^1 H_a(x) \, d H_{b} = 1-b.$$
\end{example}

\begin{remark} 
In~\cite{TSI}, the right continuous function $H_1$ is referred to as "the" Heaviside function, and the left continuous function $H_0$ is referred to as the "Jeaviside" function.
\end{remark}

\begin{proof}
We use the partition $\mathcal{P}=\{-1, 0, 1\}$. Note $H_c$ is a step function with respect to this partition. Now,

\begin{align*}
    \displaystyle \int_{-1}^1 H_a(x) \, d H_b &= H_a(-1)\mu_{H_b} (\{-1\})+H_a(0)\mu_{H_b} (\{0\})+H_a(1)\mu_{H_b} (\{1\})\\&\quad +(0)\mu_{H_b} ((-1,0))+(1)\mu_{H_b} ((0,1))\\ \\ &=(0)(0)+(a)(1)+(1)(0)+(0)(0)+(1)(0) \\ &=a.
\end{align*} However, \begin{align*}
    (DS)\displaystyle \int_{-1}^1 H_a(x) \, d H_b &=(0)H_b ((-1,0))+(1)H_b ((0,1))\\ &=(0)(b-0)+(1)(1-b) \\ &=1-b.
\end{align*}
\end{proof}

In Example~\ref{lem:H_c}, $a$ is the desired value: since  $H_b$ is flat except at zero where it jumps, the integral should only "see" $H_a$ at zero and be equal to its value there.  Moreover, this agrees with the Lebesgue-Stieltjes integral, since this is what is gotten by integrating the function $H_a$ against a point mass at the origin (modeled by integrating with respect to $H_b)$.  The DS-integral, however, only correctly models integration against a point mass  when the integrand is continuous:  see~\cite[Lemma~5.6]{TSI}.  

\medskip

Another application of Definition~\ref{def:stepint} we will continually use is the integral of a constant. 

\begin{lemma}\label{lem:const}
Let $f:[a,b]\to \mathbb{R}$ be the constant function $f(x)=c$ for every $x\in[a,b]$. Then $$\int_a^b f(x)\, d \alpha= c\mu_\alpha([a,b]).$$
\end{lemma}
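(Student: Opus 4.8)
The plan is to apply Definition~\ref{def:stepint} directly to the simplest possible partition of $[a,b]$, and then recognize the resulting sum of $\alpha$-lengths as exactly $\mu_\alpha([a,b])$, either by telescoping or by invoking finite additivity of $\mu_\alpha$.

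First I would observe that the constant function $f(x)=c$ is a step function with respect to the trivial partition $\mathcal{P}=\{a,b\}$, so that $x_0=a$, $x_1=b$, $n=1$, the single partition interval is $I_1=(a,b)$, and the lone coefficient is $c_1=c$. By Proposition~\ref{prop:partinvariant} the value of the integral is independent of the partition used to represent $f$, so it suffices to compute with this $\mathcal{P}$.

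Next I would substitute into Definition~\ref{def:stepint}. Since $f(x_0)=f(a)=c$, $f(x_1)=f(b)=c$, and $c_1=c$, the jump term (whose index runs from $i=0$ to $n$) together with the interval term gives
$$\int_a^b f(x)\,d\alpha = c\,\mu_\alpha(\{a\}) + c\,\mu_\alpha(\{b\}) + c\,\mu_\alpha((a,b)).$$
Factoring out $c$, the claim reduces to the identity $\mu_\alpha(\{a\}) + \mu_\alpha((a,b)) + \mu_\alpha(\{b\}) = \mu_\alpha([a,b])$.

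Finally I would establish this identity, which is where the only substantive (though very mild) point lies. The cleanest route is to note that $[a,b]$ is the disjoint union $\{a\}\cup(a,b)\cup\{b\}$ and invoke the finite additivity of $\mu_\alpha$ remarked upon immediately after Definition~\ref{def:aleng}. Alternatively, one expands the three terms using Definition~\ref{def:aleng} and telescopes,
$$[\alpha(a+)-\alpha(a-)]+[\alpha(b-)-\alpha(a+)]+[\alpha(b+)-\alpha(b-)] = \alpha(b+)-\alpha(a-),$$
which is precisely $\mu_\alpha([a,b])$. There is essentially no obstacle to overcome; the one feature worth flagging is that it is exactly the endpoint masses $\mu_\alpha(\{a\})$ and $\mu_\alpha(\{b\})$ contributed by the jump term of Definition~\ref{def:stepint} that make the telescoping close up to $\mu_\alpha([a,b])$ rather than to the open-interval length $\mu_\alpha((a,b))$. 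This is the key structural difference from the DS-integral, and it is what forces the constant to be integrated against the full closed-interval $\alpha$-length.
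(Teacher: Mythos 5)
Your proof is correct and follows essentially the same route as the paper: both apply Definition~\ref{def:stepint} to the trivial partition $\{a,b\}$ and telescope the three $\alpha$-lengths $\mu_\alpha(\{a\})+\mu_\alpha((a,b))+\mu_\alpha(\{b\})$ down to $\alpha(b+)-\alpha(a-)=\mu_\alpha([a,b])$. Your additional remark about the endpoint masses being what closes the telescope to the full closed-interval length is a nice observation, but the substance of the argument is identical to the paper's.
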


\begin{proof}
Note that $f$ is a step function with respect to the trivial partition $\{a, b\}$. Thus, \begin{align*}
    \int_a^b f(x)\, d \alpha &= f(a)\mu_\alpha(\{a\})+f(b)\mu_\alpha(b)+c\mu_\alpha((a, b))\\
    &=c(\alpha(a+)-\alpha(a-)+\alpha(b+)-\alpha(b-)+\alpha(b-)-\alpha(a+))\\
    &=c(\alpha(b+)-\alpha(a-))\\
    &=c\mu_\alpha([a,b]).
\end{align*}
\end{proof}

We now show that the usual properties of the Stieltjes integral hold: linearity, additivity, monotonicity, and linearity of integrator.  The proofs of these results are very similar to the proofs for the DS-integral, but with some differences.  To illustrate this we give the proofs of the first two results, and refer to~\cite{TSI} for the others.

\begin{theorem}\label{thm:steplin}
Given $f,g\in S[a,b]$, for any $r_1, r_2\in \mathbb{R}$, $$\int_{a}^{b}[r_1f(x)+r_2g(x)]\, d \alpha=r_1\int_{a}^{b}f(x)\, d \alpha+r_2\int_{a}^{b}g(x)\, d \alpha.$$
\end{theorem}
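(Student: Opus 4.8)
The plan is to reduce the linearity statement to a computation on a single common partition, exploiting the partition-invariance just established in Proposition~\ref{prop:partinvariant}. The issue is that $f$ and $g$ are step functions defined with respect to possibly different partitions $\mathcal{P}_f$ and $\mathcal{P}_g$, and the linear combination $r_1 f + r_2 g$ may not be a step function relative to either one. So the first step is to pass to a common refinement $\mathcal{R} = \mathcal{P}_f \cup \mathcal{P}_g$. By Proposition~\ref{prop:partinvariant}, each of the three integrals appearing in the statement is unchanged if we compute it with respect to $\mathcal{R}$; and $f$, $g$, and $r_1 f + r_2 g$ are all step functions with respect to $\mathcal{R}$. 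This reduces the theorem to the case where all three functions are step functions with respect to one fixed partition.

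Once we are on a single partition $\mathcal{R} = \{x_i\}_{i=0}^n$, write $f(x) = a_i$ and $g(x) = b_i$ for $x \in I_i$, so that $(r_1 f + r_2 g)(x) = r_1 a_i + r_2 b_i$ on $I_i$. I would then apply Definition~\ref{def:stepint} to each of the three integrals and simply match terms. The integral of $r_1 f + r_2 g$ splits, by Definition~\ref{def:stepint}, into a jump-term sum $\sum_{i=0}^n (r_1 f + r_2 g)(x_i)\,\mu_\alpha(\{x_i\})$ and an interior sum $\sum_{i=1}^n (r_1 a_i + r_2 b_i)\,\mu_\alpha(I_i)$. Because $(r_1 f + r_2 g)(x_i) = r_1 f(x_i) + r_2 g(x_i)$ pointwise at the partition points as well, both sums distribute over the scalar multiplication and addition, so each sum breaks into the corresponding sums for $f$ and $g$ scaled by $r_1$ and $r_2$. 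Regrouping yields exactly $r_1 \int_a^b f\,d\alpha + r_2 \int_a^b g\,d\alpha$, again by Definition~\ref{def:stepint}.

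The only substantive content is finite additivity of ordinary sums together with the distributive law for real numbers; there is no analytic obstacle here. The one point that merits explicit care is that the jump term in Definition~\ref{def:stepint} depends on the pointwise values $f(x_i)$ at the partition endpoints, not merely on the interior constants $c_i$, so I would be sure to note that evaluating $r_1 f + r_2 g$ at each $x_i$ respects the linear combination and that the weights $\mu_\alpha(\{x_i\})$ and $\mu_\alpha(I_i)$ are the same for all three integrals since they are computed against the same $\alpha$ and the same partition $\mathcal{R}$. I expect this to be the most delicate bookkeeping step, but it is still purely formal. The result then follows by combining the two reindexed sums.
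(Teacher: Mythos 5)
Your proposal is correct and follows essentially the same route as the paper: pass to a common refinement (justified by Proposition~\ref{prop:partinvariant}), note that $f$, $g$, and $r_1f+r_2g$ are all step functions with respect to it, and then expand all three integrals via Definition~\ref{def:stepint}, distributing the scalars through both the jump-term sum and the interior sum. Your explicit remark that the jump term respects the linear combination pointwise at partition points is exactly the bookkeeping the paper's computation relies on.
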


\begin{proof}
We argue as in \cite[Theorem 1.18, Theorem 4.7]{TSI}.
Given step functions $f, g$, by passing to a common refinement, we may assume both are defined with respect to the partition $\mathcal{R}=\{x_i\}_{i=0}^n$ where $f(x)=c_i,\; g(x)=d_i$ for $x\in I_i$. Then, 
\begin{align*}
    \int_{a}^{b}[r_1f(x)+r_2g(x)]\, d \alpha&= \sum_{i=0}^n [r_1f(x_i)+r_2g(x_i)]\mu_{\alpha}(\{x_i\})+\sum_{i=1}^n [r_1c_i+r_2d_i]\mu_{\alpha}(I_i)\\
    &=r_1\sum_{i=0}^n f(x_i)\mu_{\alpha}(\{x_i\})+r_2\sum_{i=0}^n g(x_i)\mu_{\alpha}(\{x_i\})\\&\quad +r_1\sum_{i=1}^n c_i\mu_{\alpha}(I_i)+r_2\sum_{i=1}^n d_i\mu_{\alpha}(I_i)\\
    &= r_1\int_{a}^{b}f(x)\, d \alpha+r_2\int_{a}^{b}g(x)\, d \alpha.
\end{align*}

\end{proof}

\begin{theorem}\label{thm:stepadd}
Given $f\in S[a,b]$, for any $m\in(a,b)$, $$\int_{a}^{m}f(x)\, d \alpha+\int_{m}^{b}f(x)\, d \alpha=\int_{a}^{b}f(x)\, d \alpha.$$
\end{theorem}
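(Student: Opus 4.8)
The plan is to reduce to a single convenient partition by invoking Proposition~\ref{prop:partinvariant}, and then to track the endpoint conventions carefully. Since $f$ is a step function with respect to some partition $\mathcal{P}$ of $[a,b]$, and adjoining a point to $\mathcal{P}$ leaves $f$ a step function with respect to the refinement, I would first add $m$ to $\mathcal{P}$ if it is not already present; by partition invariance this alters none of the three integrals. Thus I may assume $m=x_k$ for some $0<k<n$, so that $\mathcal{P}'=\{x_0,\dots,x_k\}$ is a partition of $[a,m]$ and $\mathcal{P}''=\{x_k,\dots,x_n\}$ is a partition of $[m,b]$, and $f$ is a step function with respect to both, with $f(x)=c_i$ on $I_i=(x_{i-1},x_i)$.

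Next I would write out each of the three integrals using Definition~\ref{def:stepint}, being careful that $\mu_\alpha$ means something slightly different on each subinterval because of the convention $\alpha(a-)=\alpha(a)$, $\alpha(b+)=\alpha(b)$. On $[a,m]$ the point $m$ is the right endpoint, so there $\alpha(m+)=\alpha(m)$ and the singleton weight is $\alpha(m)-\alpha(m-)$; on $[m,b]$ the point $m$ is the left endpoint, so there $\alpha(m-)=\alpha(m)$ and the singleton weight is $\alpha(m+)-\alpha(m)$. By contrast, the open-interval weights $\mu_\alpha(I_k)=\alpha(m-)-\alpha(x_{k-1}+)$ and $\mu_\alpha(I_{k+1})=\alpha(x_{k+1}-)-\alpha(m+)$, the singleton weights at the interior points $x_1,\dots,x_{k-1},x_{k+1},\dots,x_{n-1}$, and the weights at the outer endpoints $a$ and $b$ are all identical whether computed on a subinterval or on $[a,b]$, since the conventions only collapse $\alpha$ at the outermost endpoints. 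Hence the two open-interval sums split exactly as $\sum_{i=1}^n c_i\mu_\alpha(I_i)=\sum_{i=1}^k c_i\mu_\alpha(I_i)+\sum_{i=k+1}^n c_i\mu_\alpha(I_i)$, and every singleton term except the one at $m$ appears in exactly one of the two halves and matches the corresponding term in $\int_a^b f\,d\alpha$.

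The crux is the shared point $m$, which is where the conventions do the work the authors flagged. In $\int_a^m f\,d\alpha$ the point $m$ contributes $f(m)[\alpha(m)-\alpha(m-)]$, while in $\int_m^b f\,d\alpha$ it contributes $f(m)[\alpha(m+)-\alpha(m)]$. Adding these telescopes the intermediate value $\alpha(m)$ away, giving $f(m)[\alpha(m+)-\alpha(m-)]=f(m)\mu_\alpha(\{m\})$, which is exactly the contribution of $m$ to the jump sum in $\int_a^b f\,d\alpha$. Combining this with the exact splitting of the open-interval sums and the one-to-one matching of the remaining singleton terms yields $\int_a^m f\,d\alpha+\int_m^b f\,d\alpha=\sum_{i=0}^n f(x_i)\mu_\alpha(\{x_i\})+\sum_{i=1}^n c_i\mu_\alpha(I_i)=\int_a^b f\,d\alpha$.

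I expect the only real obstacle to be the bookkeeping: keeping straight which one-sided limits are genuine and which are collapsed by the endpoint convention as $m$ switches roles from an interior partition point (where $\mu_\alpha(\{m\})=\alpha(m+)-\alpha(m-)$) to a boundary endpoint of a subinterval. Everything else is a termwise comparison, and because we work only with step functions and a finite partition, no inequalities or limiting arguments are needed.
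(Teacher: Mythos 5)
Your proposal is correct and follows essentially the same route as the paper's proof: both pass to a refinement of $\mathcal{P}$ containing $m$ (justified by Proposition~\ref{prop:partinvariant}), split the jump and open-interval sums at $m$, and use the endpoint conventions $\alpha(m+)=\alpha(m)$ on $[a,m]$ and $\alpha(m-)=\alpha(m)$ on $[m,b]$ so that the two boundary contributions at $m$ telescope to $f(m)\mu_\alpha(\{m\})$. The only difference is direction of reading (you assemble the two subinterval integrals into the whole, the paper decomposes the whole), which is immaterial.
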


The corresponding result for the DS-integral is proved in~\cite[Theorem 1.18, Theorem 4.7]{TSI} but there are some differences in the proof for the RDS-integral.

\begin{proof}
By passing to a refinement, suppose $f$ is a step function on $[a,b]$ with respect to a partition $\mathcal{P}=\{x_i\}_{i=0}^n$ which contains $m$, so there exists $i_0$ where $x_{i_0}=m$. Then, $f$ is a step function on $[a,m]$ with respect to the partition $\{x_i\}_{i=0}^{i_0}$ and on $[m,b]$ with respect to the partition $\{x_i\}_{i=i_0}^{n}$. Then, \begin{align*}
    \int_{a}^b f(x)\, d \alpha &= \sum_{i=0}^n f(x_i)\mu_{\alpha}(\{x_i\})+\sum_{i=1}^nc_i\mu_{\alpha}(I_i) \\
    &= \sum_{i=0}^{i_0-1} f(x_i)\mu_{\alpha}(\{x_i\})+ f(m)[\alpha(m+)-\alpha(m-)]+ \sum_{i=i_0+1}^{n} f(x_i)\mu_{\alpha}(\{x_i\})\\&\quad + \sum_{i=1}^{i_0}c_i\mu_{\alpha}(I_i) +\sum_{i=i_0+1}^{n}c_i\mu_{\alpha}(I_i)\\
    &= \sum_{i=0}^{i_0-1} f(x_i)\mu_{\alpha}(\{x_i\}) + f(m)[\alpha(m)-\alpha(m-)]+ \sum_{i=1}^{i_0}c_i\mu_{\alpha}(I_i) \\&\quad + f(m)[\alpha(m+)-\alpha(m)]+\sum_{i=i_0+1}^{n} f(x_i)\mu_{\alpha}(\{x_i\})+\sum_{i=i_0+1}^{n}c_i\mu_{\alpha}(I_i).
\end{align*}
Note that on $[a,m]$, $\alpha(m+)= \alpha(m)$ and similarly on $[m,b]$, $\alpha(m-)= \alpha(m)$. Hence,
$$\int_{a}^b f(x)\, d \alpha=\int_{a}^m f(x)\, d \alpha+\int_{m}^b f(x)\, d \alpha.$$
\end{proof}

\begin{theorem}\label{thm:stepmon}
Given $\alpha\in I[a,b]$ and $f,g\in S[a,b]$, if for every $x\in[a,b]$, $f(x)\ge g(x)$, then $$\int_{a}^{b}f(x)\, d \alpha\ge \int_{a}^{b}g(x)\, d \alpha.$$
\end{theorem}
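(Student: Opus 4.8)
The plan is to reduce the claimed inequality to a term-by-term comparison of the two step-function integrals, once both $f$ and $g$ are expressed over a single common partition. First I would invoke Proposition~\ref{prop:partinvariant} to pass to a common refinement $\mathcal{R}=\{x_i\}_{i=0}^n$ with respect to which both $f$ and $g$ are step functions, writing $f(x)=c_i$ and $g(x)=d_i$ for $x\in I_i$. This is exactly the normalization used at the start of the proof of Theorem~\ref{thm:steplin}, and it is harmless precisely because the step-function integral is partition-independent.

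Next I would record the two consequences of the hypothesis $f\ge g$ that feed into Definition~\ref{def:stepint}. At each partition node we have $f(x_i)\ge g(x_i)$ directly. On each open interval $I_i$, since $f(x)=c_i$ and $g(x)=d_i$ for all $x\in I_i$ and $f(x)\ge g(x)$ there, we obtain $c_i\ge d_i$. Thus in both sums defining the integral, every $f$-coefficient dominates the corresponding $g$-coefficient.

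The remaining ingredient is the sign of the weights. Since $\alpha\in I[a,b]$ is increasing, the observation following Definition~\ref{def:aleng} gives $\mu_\alpha(\{x_i\})\ge 0$ and $\mu_\alpha(I_i)\ge 0$; one should separately check the endpoint weights $\mu_\alpha(\{a\})=\alpha(a+)-\alpha(a)$ and $\mu_\alpha(\{b\})=\alpha(b)-\alpha(b-)$ under the convention $\alpha(a-)=\alpha(a)$, $\alpha(b+)=\alpha(b)$, but these too are nonnegative by monotonicity. Subtracting the two integrals then yields
$$\int_{a}^{b}f(x)\,d\alpha-\int_{a}^{b}g(x)\,d\alpha=\sum_{i=0}^n[f(x_i)-g(x_i)]\mu_\alpha(\{x_i\})+\sum_{i=1}^n(c_i-d_i)\mu_\alpha(I_i),$$
and each summand is a product of a nonnegative coefficient with a nonnegative weight, so the whole expression is $\ge 0$.

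There is essentially no hard step here: the only point requiring care is that it is precisely the monotonicity of $\alpha$ that makes every weight nonnegative, and that the hypothesis $f\ge g$ must be used both at the nodes and (through $c_i\ge d_i$) on the interiors. As an alternative, one could appeal to linearity (Theorem~\ref{thm:steplin}) to reduce to showing $\int_a^b h\,d\alpha\ge 0$ for the nonnegative step function $h=f-g$, but the direct term-by-term argument is cleaner and avoids invoking a separate result.
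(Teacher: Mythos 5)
Your proof is correct and is essentially the argument the paper intends: the paper's own proof simply defers to the corresponding DS-integral result in \cite[Theorem 1.18, Theorem 4.7]{TSI}, and that argument is exactly your term-by-term comparison over a common refinement, with the extra jump terms $\mu_\alpha(\{x_i\})$ in Definition~\ref{def:stepint} handled by the same observation that monotonicity of $\alpha$ makes every weight nonnegative. Nothing further is needed.
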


\begin{proof}
The proof is the same as for the DS-integral:  see\cite[Theorem 1.18, Theorem 4.7]{TSI}.
% By passing to a common refinement, suppose $f,g$ are defined with respect to $\mathcal{P}=\{x_i\}_{i=0}^n$ where $f(x)=c_i,g(x)=d_i$ for $x\in I_i$. Note $c_i\ge d_i$ for all $1\le i\le n$. Further notice $\mu_\alpha$ is nonnegative. Then, \begin{align*}
%     \int_{a}^{b}f(x)\, d \alpha &= \sum_{i=0}^n f(x_i)\mu_{\alpha}(\{x_i\})+\sum_{i=1}^n c_i\mu_{\alpha}(I_i)\\
%     &\ge \sum_{i=0}^ng(x_i)\mu_{\alpha}(\{x_i\})+\sum_{i=1}^n d_i\mu_{\alpha}(I_i) \\
%     &= \int_{a}^{b}g(x)\, d \alpha .
% \end{align*}
\end{proof}

\begin{theorem}\label{thm:steplinint}
Given $\alpha, \beta\in BV[a,b]$, let $r_1, r_2\in \mathbb{R}$ and define $\gamma = r_1\alpha+r_2\beta\in BV[a,b]$. Then, for every $f\in S[a,b]$, $$\int_{a}^{b}f(x)\, d \gamma=r_1\int_{a}^{b}f(x)\, d \alpha+r_2\int_{a}^{b}f(x)\, d \beta.$$ 
\end{theorem}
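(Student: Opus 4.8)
The plan is to reduce everything to the linearity of the set function $\mu_\alpha$ in the integrator $\alpha$, and then substitute directly into the defining formula for the step-function integral in Definition~\ref{def:stepint}. The central observation is that since $\alpha,\beta\in BV[a,b]$, both have one-sided limits at every point, and the one-sided limits of the linear combination $\gamma=r_1\alpha+r_2\beta$ are the corresponding linear combinations of the one-sided limits. That is, for every $x$ I would first record that $\gamma(x\pm)=\lim_{t\to x\pm}[r_1\alpha(t)+r_2\beta(t)]=r_1\alpha(x\pm)+r_2\beta(x\pm)$, using that the limit of a sum is the sum of the limits whenever both exist.

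From this I would immediately deduce the key identity that $\mu_\gamma$ is linear in the integrator: for a singleton,
\[
\mu_\gamma(\{x\})=\gamma(x+)-\gamma(x-)=r_1[\alpha(x+)-\alpha(x-)]+r_2[\beta(x+)-\beta(x-)]=r_1\mu_\alpha(\{x\})+r_2\mu_\beta(\{x\}),
\]
and for an open partition interval $I=(c,d)$,
\[
\mu_\gamma(I)=\gamma(d-)-\gamma(c+)=r_1\mu_\alpha(I)+r_2\mu_\beta(I).
\]
Here I must be slightly careful with the endpoint conventions $\alpha(a-)=\alpha(a)$ and $\alpha(b+)=\alpha(b)$ from the remark following Definition~\ref{def:aleng}: since $\gamma(a)=r_1\alpha(a)+r_2\beta(a)$, the convention forces $\gamma(a-)=\gamma(a)=r_1\alpha(a-)+r_2\beta(a-)$, and symmetrically at $b$, so the identities above remain valid even at the endpoints of $[a,b]$. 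This is the one place where a reader might worry, so I would note it explicitly.

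With these two identities in hand the proof is a one-line computation. Writing $f$ as a step function with respect to a partition $\mathcal{P}=\{x_i\}_{i=0}^n$ with $f(x)=c_i$ on $I_i$, I would substitute $\mu_\gamma(\{x_i\})$ and $\mu_\gamma(I_i)$ into Definition~\ref{def:stepint} and then distribute and regroup the sums by $r_1$ and $r_2$:
\begin{align*}
\int_a^b f(x)\,d\gamma
&=\sum_{i=0}^n f(x_i)\mu_\gamma(\{x_i\})+\sum_{i=1}^n c_i\mu_\gamma(I_i)\\
&=r_1\Bigl[\sum_{i=0}^n f(x_i)\mu_\alpha(\{x_i\})+\sum_{i=1}^n c_i\mu_\alpha(I_i)\Bigr]
+r_2\Bigl[\sum_{i=0}^n f(x_i)\mu_\beta(\{x_i\})+\sum_{i=1}^n c_i\mu_\beta(I_i)\Bigr]\\
&=r_1\int_a^b f(x)\,d\alpha+r_2\int_a^b f(x)\,d\beta.
\end{align*}

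There is essentially no hard step here: the theorem is a consequence of the linearity of the one-sided limit operators, so the only thing requiring genuine attention is confirming that the endpoint conventions are compatible with the linear combination, which I have isolated above. It is also worth remarking that by Proposition~\ref{prop:partinvariant} the choice of defining partition is irrelevant, so there is no need to pass to a common refinement as in Theorem~\ref{thm:steplin}; any single partition with respect to which $f$ is a step function suffices.
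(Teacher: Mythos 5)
Your proof is correct and is essentially the argument the paper intends: the paper omits the proof, citing the corresponding DS-integral result in~\cite[Theorem 4.10]{TSI}, and your computation is exactly that direct substitution argument adapted to Definition~\ref{def:stepint}, with the only new ingredient being the linearity of one-sided limits (hence of $\mu_\gamma$ on singletons and open intervals), which you verify correctly, including compatibility with the endpoint conventions. Nothing is missing.
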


\begin{proof}
The proof is the same as for the DS-integral:  see\cite[Theorem 4.10]{TSI}.
% Fix $f\in S[a,b]$ with respect to the partition $\mathcal{P}=\{x_i\}_{i=0}^n$ such that $f(x)=d_i$ for $x\in I_i$. Then, \begin{align*}
%     \int_{a}^{b}f(x)\, d \gamma&= \sum_{i=0}^n f(x_i)\mu_{\gamma}(\{x_i\})+\sum_{i=1}^nd_i\mu_{\gamma}(I_i)\\
%     &=\sum_{i=0}^n f(x_i)[\gamma(x_{i}+)-\gamma(x_i -)]+\sum_{i=1}^nd_i[\gamma(x_i -)-\gamma(x_{i-1}+)]\\
%     &=\sum_{i=0}^n f(x_i)[r_1\alpha(x_{i}+)+r_2\beta(x_i +)-r_1\alpha(x_i -)-r_2\beta(x_i -)]\\
%     &\quad +\sum_{i=1}^nd_i[r_1\alpha(x_i -)+r_2\beta(x_i -)-r_1\alpha(x_{i-1}+)-r_2\beta(x_{i-1}+)]\\
%     &=r_1\left[\sum_{i=0}^n f(x_i)[\alpha(x_{i}+)-\alpha(x_i -)]+\sum_{i=1}^nd_i[\alpha(x_i -)-\alpha(x_{i-1}+)]\right]\\
%     &\quad +r_2\left[\sum_{i=0}^n f(x_i)[\beta(x_{i}+)-\beta(x_i -)]+\sum_{i=1}^nd_i[\beta(x_i -)-\beta(x_{i-1}+)]\right]\\
%     &=r_1\int_{a}^{b}f(x)\, d \alpha+r_2\int_{a}^{b}f(x)\, d \beta.
% \end{align*}
\end{proof}

\section{The Ross-Darboux-Stieltjes Integral}
\label{section:rds-defn-bis}

In this section we  define the RDS-integral for an arbitrary bounded function $f$ and prove some basic properties of the integral.  Additional properties, proved using the so-called RDS criterion, Theorem~\ref{thm:RDScrit}, will be given in Section~\ref{section:RDS-criterion}.  We follow the approach in~\cite{TSI} as outlined in Section~\ref{section:prelim} and approximate the integral of $f$ by the integral of step functions that bracket it.  For the next definition recall  $S_U$ and $S_L$, given in Definition~\ref{def:bracket}.

\begin{definition}\label{def:RDSint}
Given $\alpha\in I[a,b]$ and $f\in B[a,b]$, define
\begin{align*}
    L_\alpha (f, [a,b])=\sup\left\{(S)\int_{a}^b v(x)\, d \alpha \; :\; v\in S_L(f, [a,b])\right\},\\
    U_\alpha (f, [a,b])=\inf\left\{(S)\int_{a}^b u(x)\, d \alpha \; :\; u\in S_U(f, [a,b])\right\}.
\end{align*}

If $L_\alpha(f, [a,b])=U_\alpha(f, [a,b])$, then we say $f$ is Ross-Darboux-Stieltjes integrable with respect to $\alpha$ and we denote their common value by
$$(RDS) \int_a^bf(x)\, d\alpha.$$ 
Moreover, for $\alpha\in BV[a,b]$, if there exists a decomposition $\alpha=\alpha^+ - \alpha^-$ with $\alpha^{\pm}\in I[a,b]$ where $f\in RDS_{\alpha^+}[a,b]$ and $f\in RDS_{\alpha^-}[a,b]$, then we define $f\in RDS_\alpha[a,b]$ and 
$$(RDS) \int_a^bf(x)\, d \alpha= (RDS) \int_a^bf(x)\, d \alpha^+ - (RDS) \int_a^bf(x)\, d \alpha^-.$$
We denote the set of functions which are Ross-Darboux-Stieltjes integrable with respect to $\alpha$ as $RDS_\alpha[a,b]$.
\end{definition}

For brevity, unless we need to work with different definitions of the integral, hereafter we will omit the notation "$(RDS)$" and simply write $\int_a^b f\,d\alpha$. 

\medskip

To work with this definition, we need to establish three things.  First, we have to show that it is well-defined in the sense that all of the objects in the definition exist.  

\begin{prop}\label{prop:lowerupperineq} 
Given $\alpha\in I[a,b]$ and $f\in B[a,b]$, $L_\alpha (f, [a,b])$ and $U_\alpha (f, [a,b])$ always exist and $L_\alpha (f, [a,b])\le U_\alpha (f, [a,b])$. 
\end{prop}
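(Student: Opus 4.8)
The plan is to verify the three separate assertions in turn: that $S_L(f,[a,b])$ and $S_U(f,[a,b])$ are nonempty, that the resulting sets of step-function integrals are respectively bounded above and below (so that the supremum and infimum exist as finite real numbers by the completeness of $\mathbb R$), and finally that the supremum lies below the infimum. The single fact doing all the work is monotonicity of the step-function integral, Theorem~\ref{thm:stepmon}, which requires exactly the hypothesis $\alpha\in I[a,b]$ that we are given.

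First I would dispose of nonemptiness. Since $f\in B[a,b]$, there are constants $m,M$ with $m\le f(x)\le M$ for all $x\in[a,b]$. The constant function $M$ is a step function lying above $f$, so $M\in S_U(f,[a,b])$, and likewise the constant $m$ belongs to $S_L(f,[a,b])$; both sets are therefore nonempty. Moreover, by Lemma~\ref{lem:const} the integrals of these constants equal $M\mu_\alpha([a,b])$ and $m\mu_\alpha([a,b])$, which are genuine real numbers, so the collections over which we take the supremum and infimum are nonempty sets of reals.

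Next comes the crucial monotonicity step. Given any $v\in S_L(f,[a,b])$ and any $u\in S_U(f,[a,b])$, the bracketing relation $v(x)\le f(x)\le u(x)$ gives $v\le u$ pointwise, so Theorem~\ref{thm:stepmon} (applicable because $\alpha$ is increasing) yields $\int_a^b v\,d\alpha\le\int_a^b u\,d\alpha$. Fixing the particular upper function $u\equiv M$ shows every lower integral is at most $M\mu_\alpha([a,b])$, so the set $\{\int_a^b v\,d\alpha: v\in S_L(f,[a,b])\}$ is bounded above and $L_\alpha(f,[a,b])$ exists; symmetrically, fixing $v\equiv m$ bounds every upper integral below, so $U_\alpha(f,[a,b])$ exists.

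Finally, for the inequality I would fix an arbitrary $u\in S_U(f,[a,b])$ and use $\int_a^b v\,d\alpha\le\int_a^b u\,d\alpha$ for all $v\in S_L(f,[a,b])$; taking the supremum over $v$ gives $L_\alpha(f,[a,b])\le\int_a^b u\,d\alpha$. Since $u$ was arbitrary, taking the infimum over $u$ delivers $L_\alpha(f,[a,b])\le U_\alpha(f,[a,b])$. I do not anticipate a genuine obstacle here: the only point requiring care is that monotonicity of the step integral is precisely what converts the pointwise bracketing into the numerical comparison, and it is exactly at this point that the hypothesis $\alpha\in I[a,b]$ (rather than merely $\alpha\in BV[a,b]$) is essential.
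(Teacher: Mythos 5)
Your proposal is correct and follows essentially the same route as the paper's proof: nonemptiness via the constant step functions $m$ and $M$, then Theorem~\ref{thm:stepmon} to convert the pointwise bracketing $v\le f\le u$ into the comparison of step integrals, and finally the standard supremum/infimum argument to conclude $L_\alpha(f,[a,b])\le U_\alpha(f,[a,b])$. The only cosmetic difference is the order in which you take the supremum and infimum at the end, which does not change the substance of the argument.
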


The same questions needed to be resolved for the DS-integral as defined above.  This is done in the discussion after~\cite[Definition 4.12]{TSI}.  We include the short proof for completeness.

\begin{proof}
Fix $f\in B[a,b]$. Then, there exists $m, M$ such that $m\le f(x) \le M$ for all $x\in [a,b].$ Hence, $u_M(x) = M $ is in $S_U(f,[a,b])$ and $v_m(x)=m$ is in $S_L(f, [a,b])$, so these sets are non-empty. Moreover, fix $u\in S_U(f,[a,b])$ and $v\in S_L(f,[a,b])$. Since $v(x) \le f(x) \le u(x)$ for all $x\in [a,b]$, we have by Theorem \ref{thm:stepmon}, 
$$(S)\int_a^b v(x)\, d \alpha \le (S)\int_a^b u(x)\, d \alpha.$$ 
Thus, the set of integrals of step functions in $S_U(f, [a,b])$ is bounded below by $(S)\int_a^b v(x)\, d \alpha$ so the infimum $U_\alpha (f, [a,b])$ exists and $$(S)\int_a^b v(x)\, d \alpha \le U_\alpha (f, [a,b]).$$ From here, we see the set of $\alpha$-integrals of step functions in $S_L(f, [a,b])$ is bounded above by $U_\alpha (f, [a,b])$ so the supremum $L_\alpha (f, [a,b])$ exists as well and $$L_\alpha (f, [a,b]) \le U_\alpha (f, [a,b]).$$ 
\end{proof}

Second, we must show that given a step function, the integrals defined by~Definitions~\ref{def:stepint} and~\ref{def:RDSint} agree.  Again a similar question is addressed after~\cite[Definition 4.12]{TSI} for the DS-integral.

\begin{prop}\label{prop:stepagree}
Given $\alpha\in BV[a,b]$ and $f\in S[a,b]$, $f\in RDS_\alpha[a,b]$ and 
$$(S)\int_a^b f(x)\, d \alpha = \int_a^b f(x)\, d \alpha.$$
\end{prop}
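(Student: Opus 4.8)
The plan is to exploit the elementary observation that every step function brackets itself, and then to leverage Proposition~\ref{prop:lowerupperineq} to collapse the upper and lower integrals onto the common value. I would begin with the case $\alpha\in I[a,b]$, where the integral is defined directly via Definition~\ref{def:RDSint}. Since $f\in S[a,b]$ and $f(x)\le f(x)\le f(x)$ for all $x\in[a,b]$, we have $f\in S_U(f,[a,b])$ and $f\in S_L(f,[a,b])$ by Definition~\ref{def:bracket}. Taking $u=f$ in the infimum defining $U_\alpha(f,[a,b])$ and $v=f$ in the supremum defining $L_\alpha(f,[a,b])$ gives immediately
\[
L_\alpha(f,[a,b])\ge (S)\int_a^b f(x)\,d\alpha \ge U_\alpha(f,[a,b]),
\]
where the first inequality holds because $f$ is one of the competitors for the supremum and the second because $f$ is one of the competitors for the infimum. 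Combining this with the inequality $L_\alpha(f,[a,b])\le U_\alpha(f,[a,b])$ from Proposition~\ref{prop:lowerupperineq} forces all three quantities to coincide, so $f\in RDS_\alpha[a,b]$ and its RDS-integral equals $(S)\int_a^b f\,d\alpha$.

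Next I would extend to general $\alpha\in BV[a,b]$ by invoking the Jordan decomposition $\alpha=\alpha^+-\alpha^-$ with $\alpha^\pm\in I[a,b]$. The case just handled shows $f\in RDS_{\alpha^+}[a,b]$ and $f\in RDS_{\alpha^-}[a,b]$ with $(RDS)\int_a^b f\,d\alpha^\pm=(S)\int_a^b f\,d\alpha^\pm$, so by Definition~\ref{def:RDSint} we have $f\in RDS_\alpha[a,b]$ and
\[
(RDS)\int_a^b f(x)\,d\alpha=(S)\int_a^b f(x)\,d\alpha^+-(S)\int_a^b f(x)\,d\alpha^-.
\]
It then remains to identify the right-hand side with $(S)\int_a^b f\,d\alpha$, and this is exactly linearity of the integrator: applying Theorem~\ref{thm:steplinint} with $r_1=1$, $r_2=-1$, $\beta=\alpha^-$ and $\gamma=\alpha^+-\alpha^-=\alpha$ yields the desired equality.

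The only genuine subtlety, and the one point I would be careful to flag, is that for $\alpha\in BV[a,b]$ the two objects in the statement are produced by different routes: the step integral $(S)\int_a^b f\,d\alpha$ is computed directly from Definition~\ref{def:stepint} using the finitely additive $\mu_\alpha$, whereas the RDS-integral $\int_a^b f\,d\alpha$ is defined through a Jordan decomposition in Definition~\ref{def:RDSint}. The reconciliation is supplied entirely by Theorem~\ref{thm:steplinint}, so the argument is not so much an obstacle as a bookkeeping step; I would simply make sure the decomposition used in Definition~\ref{def:RDSint} is taken to be the same $\alpha^+,\alpha^-$ to which Theorem~\ref{thm:steplinint} is applied, so that no consistency question about the choice of decomposition arises. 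Everything else is routine once the self-bracketing observation is in place.
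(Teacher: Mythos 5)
Your proposal is correct and follows essentially the same route as the paper: the self-bracketing observation $f\in S_U(f,[a,b])\cap S_L(f,[a,b])$ combined with Proposition~\ref{prop:lowerupperineq} for increasing $\alpha$, then the Jordan decomposition together with Theorem~\ref{thm:steplinint} (with $r_1=1$, $r_2=-1$) for general $\alpha\in BV[a,b]$. The consistency point you flag about using the same decomposition $\alpha^{\pm}$ in both Definition~\ref{def:RDSint} and Theorem~\ref{thm:steplinint} is implicit in the paper's proof as well, so there is no substantive difference.
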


\begin{proof}
 First, suppose $\alpha \in I[a,b]$. Fix $f\in S[a,b]$. Then we have that  $f\in S_L(f, [a,b])$ and $f\in S_U(f, [a,b])$. Hence, by Proposition~\ref{prop:lowerupperineq},
 \begin{equation*}
    (S)\int_a^b f(x)\, d \alpha \le L_\alpha (f, [a,b])\le U_\alpha (f, [a,b]) \le (S)\int_a^b f(x)\, d \alpha.
\end{equation*} 
Therefore, $$L_\alpha (f, [a,b]) = U_\alpha (f, [a,b])=(S)\int_a^b f(x)\, d \alpha$$ as desired.

Now, suppose $\alpha \in BV[a,b]$. Then there exists a decomposition $\alpha=\alpha^+ - \alpha^-$. By the previous case, $f\in RDS_{\alpha^+}[a,b]$ and $f\in RDS_{\alpha^-}[a,b]$. Hence, $f\in RDS_\alpha[a,b]$ and by Theorem~ \ref{thm:steplinint}, 
\begin{multline*}
    \int_a^b f(x)\, d \alpha
    =\int_a^b f(x)\, d \alpha^+-\int_a^b f(x)\, d \alpha^-\\
    =(S)\int_a^b f(x)\, d \alpha^+-(S)\int_a^b f(x)\, d \alpha^-
    =(S)\int_a^b f(x)\, d \alpha.
\end{multline*}
\end{proof}

\begin{remark}
Given Proposition~\ref{prop:stepagree}, hereafter we will use either definition when computing the integral of a step function.
\end{remark}

Third, we need to show that given $\alpha\in BV[a,b]$ the definition of the RDS-integral is independent of the Jordan decomposition of $\alpha$ used.  This requires more work; the final proof is in Theorem~\ref{prop:decompinv} below.  To prove this result we need some preliminary results.  The first is the following characterization of RDS-integrability.  This corresponds to the Darboux criterion for the Darboux integral and the corresponding result for the DS-integral (see~\cite[Theorems~1.27, 4.17]{TSI}) and we will refer to it as  the Ross-Darboux-Stieltjes criterion or RDS criterion.  The short proof is nearly identical to the proofs of these results; we include the details for completeness. 

\begin{theorem}\label{thm:RDScrit}
Given $\alpha\in I[a,b]$ and $f\in B[a,b]$, $f\in RDS_\alpha[a,b]$ if and only if for every $\epsilon>0$ there exists $u,\,v\in S[a,b]$ which bracket $f$, that is,  $v(x)\le f(x) \le u(x)$ for each $x\in [a,b]$, and $$\int_a^b u(x)-v(x)\, d \alpha < \epsilon.$$
\end{theorem}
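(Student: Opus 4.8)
The plan is to prove both implications by the standard Darboux-type squeeze argument, relying on the existence of $L_\alpha(f,[a,b])$ and $U_\alpha(f,[a,b])$ together with the inequality $L_\alpha(f,[a,b])\le U_\alpha(f,[a,b])$ from Proposition~\ref{prop:lowerupperineq}, and on linearity of the step-function integral, Theorem~\ref{thm:steplin}. Throughout I use that the hypothesis $\alpha\in I[a,b]$ is exactly what makes Proposition~\ref{prop:lowerupperineq} available.

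For the forward direction, assume $f\in RDS_\alpha[a,b]$, so that $L_\alpha(f,[a,b])=U_\alpha(f,[a,b])=A$. Fix $\epsilon>0$. Since $U_\alpha(f,[a,b])$ is an infimum over $S_U(f,[a,b])$, I would choose $u\in S_U(f,[a,b])$ with $(S)\int_a^b u\,d\alpha < A+\epsilon/2$; since $L_\alpha(f,[a,b])$ is a supremum over $S_L(f,[a,b])$, I would choose $v\in S_L(f,[a,b])$ with $(S)\int_a^b v\,d\alpha > A-\epsilon/2$. By construction $v(x)\le f(x)\le u(x)$, so $u$ and $v$ bracket $f$, and applying Theorem~\ref{thm:steplin} with $r_1=1$, $r_2=-1$ to the step function $u-v$ gives $\int_a^b (u-v)\,d\alpha = \int_a^b u\,d\alpha - \int_a^b v\,d\alpha < \epsilon$, as required.

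For the backward direction, assume that for every $\epsilon>0$ there exist bracketing step functions $u,v$ with $\int_a^b(u-v)\,d\alpha<\epsilon$. Since $u\in S_U(f,[a,b])$ and $v\in S_L(f,[a,b])$, the definitions of $U_\alpha$ and $L_\alpha$ give $U_\alpha(f,[a,b])\le \int_a^b u\,d\alpha$ and $L_\alpha(f,[a,b])\ge \int_a^b v\,d\alpha$. Combining these with Proposition~\ref{prop:lowerupperineq} and Theorem~\ref{thm:steplin}, I get $0\le U_\alpha(f,[a,b]) - L_\alpha(f,[a,b]) \le \int_a^b u\,d\alpha - \int_a^b v\,d\alpha = \int_a^b(u-v)\,d\alpha < \epsilon$. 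As $\epsilon>0$ is arbitrary, $U_\alpha(f,[a,b]) = L_\alpha(f,[a,b])$, and hence $f\in RDS_\alpha[a,b]$.

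The argument is essentially routine, and mirrors the proofs of the analogous criteria in~\cite{TSI}; the only points requiring care are confirming that $u-v$ is again a step function so that Theorem~\ref{thm:steplin} legitimately decomposes the integral of the difference, and tracking the $\epsilon/2$ splitting in the forward direction. I expect no genuine obstacle beyond this bookkeeping.
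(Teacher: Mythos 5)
Your proposal is correct and follows essentially the same argument as the paper: both directions use the $\epsilon/2$ approximation of the common value $A$ by the infimum/supremum definitions, Theorem~\ref{thm:steplin} to split the integral of $u-v$, and Proposition~\ref{prop:lowerupperineq} to squeeze $U_\alpha - L_\alpha$ below $\epsilon$ in the converse. No gaps; this matches the paper's proof step for step.
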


\begin{proof}
Suppose $f\in RDS_\alpha [a,b]$. Fix $\epsilon>0$. By Definition \ref{def:RDSint} and the properties of infimum and supremum, there exists $u,\,v\in S[a,b]$ such that $v(x)\le f(x) \le u(x)$ and $$\int_a^b u(x)\, d \alpha -\frac{\epsilon}{2} < \int_a^b f(x)\, d \alpha < \int_a^b v(x)\, d \alpha +\frac{\epsilon}{2}.$$ We rearrange and use Theorem \ref{thm:steplin} to get the desired inequality.

Now for the converse, suppose for each $\epsilon>0$, there exists $u,\,v\in S[a,b]$ such that $v(x)\le f(x) \le u(x)$ for each $x\in [a,b]$ and $$\int_a^b u(x)-v(x)\, d \alpha < \epsilon.$$ Fix such $\epsilon>0$. Then, $$\int_a^b v(x)\, d \alpha \le L_\alpha(f, [a,b]) \le U_\alpha(f, [a,b]) \le \int_a^b u(x)\, d \alpha.$$ Hence, $$U_\alpha(f, [a,b])-L_\alpha(f, [a,b])\le \int_a^b u(x)\, d \alpha -\int_a^b v(x)\, d \alpha < \epsilon.$$ Since $\epsilon$ is arbitrary, this is true for all $\epsilon>0$ and thus, $U_\alpha(f, [a,b])=L_\alpha(f, [a,b])$. By Definition \ref{def:RDSint}, we have the result.
\end{proof}

Before continuing our proof of Theorem~\ref{prop:decompinv}, we pause to give two corollaries of the RDS criterion.  The first will be used below in later sections.  
 The proof is identical to that for the DS-integral (see \cite[Corollaries 1.28, 4.18]{TSI}) and so we omit the details.

\begin{corollary}\label{cor:RDScritbdd}
    Given $\alpha\in I[a,b]$, if $f\in RDS_\alpha [a,b]$ and $|f(x)|\le M$ for every $x\in[a,b]$, then for any $\epsilon>0$, there exists step functions $u,\,v$ such that $$-M\le v(x)\le f(x)\le u(x)\le M$$ and $$\int_a^b u(x)-v(x)\, d \alpha<\epsilon.$$
\end{corollary}

% \begin{proof}
%     {\color{red} Immediate but left as an exercise in book. } Fix $\epsilon>0$. By Theorem \ref{thm:RDScrit}, there exists $u_1, v_1\in S[a,b]$ which bracket $f$ and $$\int_a^b u_1(x)-v_1(x)\, d \alpha<\epsilon.$$ Let $u,\,v\in S[a,b]$ by $u(x)=\min(u_1(x), M)$ and $v(x)=\max(v_1(x), -M).$ Observe for $x\in [a,b]$, $f(x)\le u_1(x)$ and $f(x)\le M$. Hence, for $x\in [a,b]$, $$f(x)\le \min(u_1(x), M)=u(x).$$ We have the analogous inequality with $v(x)$. Finally, by Theorem \ref{thm:stepmon}, $$\int_a^b u(x)-v(x)\, d \alpha\le \int_a^b u_1(x)-v_1(x)\, d \alpha<\epsilon. $$ 
% \end{proof}

As our second corollary we prove that our definition of the RDS-integral in~Definition~\ref{def:RDSint} in terms of step functions is equivalent to the definition by Ross in Definition~\ref{def: RossIntegral} in terms of upper and lower sums.   To do so, we introduce the notion of  best-fit step functions (see~\cite[Definition 1.30]{TSI}).

\begin{definition}\label{def:bestfitstepfunctions}
    Given $f\in B[a,b]$ and a partition $\mathcal{P}$ of $[a,b]$, define the best-fit step functions of $f$ with respect to $\mathcal{P}$, denoted $\hat{u}, \hat{v}\in S[a,b]$, as follows: on each partition interval $I_i$, let $$M_i=\sup\{f(x)\; :\; x\in I_i\},\quad m_i=\inf\{f(x)\; :\; x\in I_i\},$$ and define $\hat{u}(x)=M_i$ and $\hat{v}(x)= m_i$ for $x\in I_i$. At each partition point $x_i\in \mathcal{P}$, let $\hat{u}(x_i)=\hat{v}(x_i)= f(x_i)$.
\end{definition}

Observe that for any $\alpha\in I[a,b]$, 
$$\int_a^b \hat{u}(x)\, d\alpha = U_\alpha^r(f,\mathcal{P}) \:\text{ and } \; \int_a^b \hat{v}(x)\, d\alpha = L_\alpha^r(f,\mathcal{P}).$$ 
Moreover, if $u,\,v$ are step functions with respect to the partition $\mathcal{P}$ that bracket $f$, then we immediately have that for all $x\in [a,b]$, 
\begin{equation} \label{eqn:bestfit-bracket}
v(x) \leq \hat{v}(x) \leq f(x) \leq \hat{u}(x) \leq u(x).
\end{equation}

Using this observation, we can pass between the RDS criterion in Theorem~\ref{thm:RDSLebcrit} and the corresponding result proved by Ross~\cite[Theorem 35.6]{Ross} using his definition.  As an immediate consequence we have that the two definitions are equivalent.  To distinguish them, temporarily denote the set of functions integrable by Definition~\ref{def: RossIntegral} by $RDS^*_\alpha[a,b]$.

\begin{corollary}\label{cor:bestfitRDScrit}
    Given $\alpha\in I[a,b]$, $f\in RDS_\alpha[a,b]$ if and only if for any $\epsilon>0$, there exists a partition $\mathcal{P}$ and best-fit step function $\hat{u},\hat{v}$ of $f$ with respect to $\mathcal{P}$ such that 
    \begin{equation} \label{eqn:bestfitRDScrit1}
U_\alpha^r(f,\mathcal{P}) - L_\alpha^r(f,\mathcal{P}) = \int_a^b \hat{u}(x)-\hat{v}(x)\, d\alpha<\epsilon.
\end{equation}
    As a consequence, $f\in RDS_\alpha[a,b]$ if and only if $f\in RDS_\alpha^*[a,b]$ and the two integrals agree.
\end{corollary}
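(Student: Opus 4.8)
The plan is to reduce both assertions to the RDS criterion, Theorem~\ref{thm:RDScrit}, together with the two facts already recorded after Definition~\ref{def:bestfitstepfunctions}: that $\int_a^b\hat u\,d\alpha = U_\alpha^r(f,\mathcal{P})$ and $\int_a^b\hat v\,d\alpha = L_\alpha^r(f,\mathcal{P})$, and the bracketing inequality~\eqref{eqn:bestfit-bracket}. I would prove the best-fit characterization~\eqref{eqn:bestfitRDScrit1} first and then read off the equivalence with Ross's definition.

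For the best-fit characterization, the converse direction is immediate: the best-fit functions $\hat u,\hat v$ are themselves step functions that bracket $f$, so a partition realizing $\int_a^b(\hat u-\hat v)\,d\alpha<\epsilon$ supplies exactly the bracketing step functions required in the hypothesis of Theorem~\ref{thm:RDScrit}, whence $f\in RDS_\alpha[a,b]$. For the forward direction I would fix $\epsilon>0$ and apply Theorem~\ref{thm:RDScrit} to obtain step functions $u,v$ bracketing $f$ with $\int_a^b(u-v)\,d\alpha<\epsilon$. Passing to a common refinement $\mathcal{P}$ of the partitions defining $u$ and $v$, both become step functions with respect to $\mathcal{P}$, so~\eqref{eqn:bestfit-bracket} gives $v\le\hat v\le f\le\hat u\le u$ for the best-fit functions of $f$ with respect to $\mathcal{P}$. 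Monotonicity (Theorem~\ref{thm:stepmon}, valid since $\alpha\in I[a,b]$) then yields
\[
U_\alpha^r(f,\mathcal{P})-L_\alpha^r(f,\mathcal{P}) = \int_a^b(\hat u-\hat v)\,d\alpha \le \int_a^b(u-v)\,d\alpha<\epsilon,
\]
which is~\eqref{eqn:bestfitRDScrit1}.

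For the equivalence with Ross's integral, I would invoke Ross's Darboux-type criterion, \cite[Theorem 35.6]{Ross}, which states that $f\in RDS_\alpha^*[a,b]$ exactly when for every $\epsilon>0$ there is a partition $\mathcal{P}$ with $U_\alpha^r(f,\mathcal{P})-L_\alpha^r(f,\mathcal{P})<\epsilon$. This is word-for-word the condition in~\eqref{eqn:bestfitRDScrit1}, so the two integrability classes coincide. To see the values agree, note that since $\hat v\in S_L(f,[a,b])$ and $\hat u\in S_U(f,[a,b])$ we have $L_\alpha^r(f,\mathcal{P})\le\int_a^b f\,d\alpha\le U_\alpha^r(f,\mathcal{P})$ for every $\mathcal{P}$, while Ross's value $A$ is squeezed between the same two sums by Definition~\ref{def: RossIntegral}. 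Hence $|A-\int_a^b f\,d\alpha|\le U_\alpha^r(f,\mathcal{P})-L_\alpha^r(f,\mathcal{P})$, which can be made arbitrarily small.

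The computations are all routine once this machinery is in place, so I expect no serious obstacle; the one point requiring care is the passage to a common refinement, since~\eqref{eqn:bestfit-bracket} is only available when $u$ and $v$ are step functions with respect to the same partition used to build $\hat u,\hat v$. A secondary point is that the corollary asserts equality of the two integrals, not merely of the integrability classes, so the squeezing argument in the last paragraph is genuinely needed rather than being a formality.
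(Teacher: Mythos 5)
Your proposal is correct and follows essentially the same route as the paper's proof: both directions reduce to the RDS criterion (Theorem~\ref{thm:RDScrit}) via the bracketing inequality~\eqref{eqn:bestfit-bracket} and monotonicity of the step-function integral, the equivalence of integrability classes comes from Ross's criterion \cite[Theorem 35.6]{Ross}, and the equality of values follows from squeezing both integrals between $L_\alpha^r(f,\mathcal{P})$ and $U_\alpha^r(f,\mathcal{P})$. The one point you flagged as delicate — passing to a common refinement before forming the best-fit functions — is handled in the paper by the same observation, so there is no gap.
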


\begin{proof}
    We argue as for the Darboux integral in \cite[Corollary 1.32]{TSI}. Suppose first that $f\in RDS_\alpha[a,b]$. Fix $\epsilon>0$. By the RDS criterion, Theorem \ref{thm:RDScrit}, there exist $u,\,v\in S[a,b]$ which bracket $f$ such that 
    $$\int_a^b u(x)-v(x)\; d\alpha<\epsilon.$$ 
    We may assume that $u$ and $v$ are step functions with respect to a common partition $\mathcal{P}$. Let $\hat{u}$ and $\hat{v}$ be the best-fit step functions of $f$ with respect to $\mathcal{P}$. Then by \eqref{eqn:bestfit-bracket}, we have that $\hat{u}(x)-\hat{v}(x)\leq u(x)-v(x)$, and so 
     by Theorem \ref{thm:stepmon}, 
     $$ \int_a^b \hat{u}(x)-\hat{v}(x)\, d\alpha \le \int_a^b u(x)-v(x)\, d\alpha<\epsilon.$$
    Therefore, by \cite[Theorem 35.6]{Ross}, $f\in RDS^*_\alpha[a,b]$.

    Conversely, if~\eqref{eqn:bestfitRDScrit1} holds for every $\epsilon>0$, then, since $\hat{u}$ and $\hat{v}$ are step functions that bracket $f$, by Theorem~\ref{thm:RDScrit} we have $f\in RDS_\alpha[a,b]$.
    
    Finally, to see that the integrals agree, note that given $\epsilon>0$ and the best fit step functions such that~\eqref{eqn:bestfitRDScrit1} holds, then by their respective definitions we have that
    \[ \bigg| (RDS) \int_a^b f(x)\, d\alpha - (RDS^*) \int_a^b f(x)\, d\alpha \bigg| 
    \leq \int_a^b \hat{u}(x)-\hat{v}(x)\, d\alpha < \epsilon. \]
    Since $\epsilon>0$ is arbitrary, the two integrals are equal.
\end{proof}

\begin{lemma}\label{lem:constint}
Given $\alpha\in BV[a,b]$ such that $\alpha(x)=c$ is constant. Then, for every $f\in B[a,b]$, $f\in RDS_\alpha[a,b]$ and 
\begin{equation} \label{eqn:zero-int}
\int_{a}^b f(x)\, d \alpha =0.
\end{equation}
\end{lemma}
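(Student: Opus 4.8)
The plan is to reduce everything to the observation that a constant integrator assigns zero $\alpha$-length to every set, so that every step-function integral against $\alpha$ vanishes. First I would note that since $\alpha(x)=c$ for all $x$, every one-sided limit also equals $c$; together with the boundary conventions $\alpha(a-)=\alpha(a)$ and $\alpha(b+)=\alpha(b)$, this gives $\alpha(x+)=\alpha(x-)=c$ for every $x\in[a,b]$. Consulting Definition~\ref{def:aleng}, it follows immediately that $\mu_\alpha(\{x\})=\alpha(x+)-\alpha(x-)=0$ for every singleton and $\mu_\alpha(I)=\alpha(d-)-\alpha(c+)=0$ for every open interval $I=(c,d)$.

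Next I would feed this into Definition~\ref{def:stepint}: for any $g\in S[a,b]$ defined with respect to a partition $\mathcal{P}$, both sums in
$$(S)\int_a^b g(x)\,d\alpha = \sum_{i=0}^n g(x_i)\mu_\alpha(\{x_i\}) + \sum_{i=1}^n c_i\,\mu_\alpha(I_i)$$
are termwise zero, so $(S)\int_a^b g\,d\alpha = 0$ for every step function $g$. I would then invoke Definition~\ref{def:RDSint} directly, using that a constant function lies in $I[a,b]$ since it is (weakly) increasing. By Proposition~\ref{prop:lowerupperineq} the bracketing sets $S_U(f,[a,b])$ and $S_L(f,[a,b])$ are non-empty, and by the previous step every one of their members integrates to $0$ against $\alpha$. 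Hence $L_\alpha(f,[a,b])=\sup\{0\}=0=\inf\{0\}=U_\alpha(f,[a,b])$, which shows $f\in RDS_\alpha[a,b]$ and that the common value is $0$, establishing~\eqref{eqn:zero-int}.

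I do not expect any genuine obstacle here: the result is essentially a bookkeeping consequence of the definition of $\mu_\alpha$. The only points requiring a moment's care are the use of the boundary conventions at $a$ and $b$ to ensure that $\mu_\alpha(\{a\})$ and $\mu_\alpha(\{b\})$ also vanish, and the (trivial) observation that a constant qualifies as increasing, so the first clause of Definition~\ref{def:RDSint} applies without passing to a Jordan decomposition. One could instead obtain integrability from the RDS criterion, Theorem~\ref{thm:RDScrit}, since any bracketing pair $u,v$ satisfies $\int_a^b u(x)-v(x)\,d\alpha = 0<\epsilon$; but the direct computation of $L_\alpha$ and $U_\alpha$ is needed in any case to pin down the value.
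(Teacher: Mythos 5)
Your proof is correct and follows essentially the same route as the paper's: both observe that a constant integrator gives $\mu_\alpha(\{x_i\})=0$ and $\mu_\alpha(I_i)=0$, so every bracketing step function integrates to zero, forcing $L_\alpha(f,[a,b])=U_\alpha(f,[a,b])=0$. Your explicit remarks about the boundary conventions and the fact that a constant lies in $I[a,b]$ are fine touches but do not change the argument.
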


\begin{proof}
We argue as for the DS-integral in~\cite[Theorem 4.20]{TSI}.  Note $\alpha\in I[a,b]$. Fix $f\in B[a,b]$. Fix $u\in S_U(f, [a,b])$ defined with respect to a partition $\mathcal{P}=\{x_i\}_{i=0}^n$ of $[a,b]$ For each $i$, 
\[
\mu_\alpha(\{x_i\})=\alpha(x_i +)-\alpha(x_i -)= c - c=0;
\]
similarly, $\mu_\alpha(I_i)=0$. Hence, for every $u\in S_U(f, [a,b])$, $$\int_a^b u(x)\, d \alpha = 0$$ and so $U_\alpha(f, [a,b])=0$. By the same argument, $L_\alpha(f, [a,b])=0$. Thus, $f\in RDS_\alpha[a,b]$ and~\eqref{eqn:zero-int} holds.
\end{proof}

The proof of the following lemma is identical to the proof for the DS-integral~\cite[Prop 4.21]{TSI}, and we omit the details.  

\begin{lemma}\label{lem:linintinc}
Given $\alpha, \beta\in I[a,b]$ and $r_1, r_2
\ge 0$, define $\gamma = r_1\alpha+r_2\beta\in I[a,b]$. If $f\in RDS_\alpha[a,b]$ and $f\in RDS_\beta[a,b]$, then $f\in RDS_\gamma[a,b]$ and $$\int_{a}^{b}f(x)\, d \gamma=r_1\int_{a}^{b}f(x)\, d \alpha+r_2\int_{a}^{b}f(x)\, d \beta.$$ 
\end{lemma}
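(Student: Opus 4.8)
The plan is to apply the RDS criterion (Theorem~\ref{thm:RDScrit}) with the integrator $\gamma$, which is legitimate since $r_1,r_2\ge 0$ forces $\gamma\in I[a,b]$. The one genuinely new ingredient, compared with the single-integrator results, is that we must produce a \emph{single} pair of bracketing step functions that simultaneously controls the difference with respect to both $\alpha$ and $\beta$; this is where the main (though minor) obstacle lies.

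First I would fix $\epsilon>0$ and invoke the RDS criterion for $f\in RDS_\alpha[a,b]$ and for $f\in RDS_\beta[a,b]$ separately: there exist step functions $u_1,v_1$ bracketing $f$ with $\int_a^b (u_1-v_1)\,d\alpha<\epsilon'$, and step functions $u_2,v_2$ bracketing $f$ with $\int_a^b (u_2-v_2)\,d\beta<\epsilon'$, where I set $\epsilon'=\epsilon/(1+r_1+r_2)$. Next I set $u=\min\{u_1,u_2\}$ and $v=\max\{v_1,v_2\}$. Passing to a common refinement of the partitions for $u_1,u_2,v_1,v_2$ shows $u,v\in S[a,b]$, and clearly $v\le f\le u$, so $u,v$ bracket $f$. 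Since $u-v\le u_1-v_1$ and $u-v\le u_2-v_2$ pointwise, monotonicity for the step integral (Theorem~\ref{thm:stepmon}) gives $\int_a^b(u-v)\,d\alpha\le\int_a^b(u_1-v_1)\,d\alpha<\epsilon'$ and likewise for $\beta$. Applying linearity of the integrator for step functions (Theorem~\ref{thm:steplinint}) to $\gamma=r_1\alpha+r_2\beta$,
\[
\int_a^b (u-v)\,d\gamma = r_1\int_a^b (u-v)\,d\alpha + r_2\int_a^b (u-v)\,d\beta < (r_1+r_2)\epsilon' < \epsilon,
\]
so by the RDS criterion $f\in RDS_\gamma[a,b]$.

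Finally, for the value of the integral I would exploit that both target quantities are trapped in the same small interval. Using Theorem~\ref{thm:steplinint} on the bracketing step functions gives $\int_a^b v\,d\gamma = r_1\int_a^b v\,d\alpha + r_2\int_a^b v\,d\beta$ and similarly for $u$. Since $\int_a^b f\,d\gamma$ lies in $\bigl[\int_a^b v\,d\gamma,\ \int_a^b u\,d\gamma\bigr]$, and since $r_1\int_a^b f\,d\alpha + r_2\int_a^b f\,d\beta$ lies in the same interval (because $\int_a^b f\,d\alpha\in\bigl[\int_a^b v\,d\alpha,\ \int_a^b u\,d\alpha\bigr]$ and analogously for $\beta$, then recombining by linearity), the two differ by at most the interval length $\int_a^b(u-v)\,d\gamma<\epsilon$. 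Letting $\epsilon\to 0$ yields the asserted equality. Everything beyond the simultaneous $\min$/$\max$ bracketing is a direct application of the step-function monotonicity and integrator-linearity already established, so I expect the construction of $u$ and $v$ to be the only step requiring care.
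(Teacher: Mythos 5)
Your proof is correct, and it follows essentially the same route as the paper: the paper omits the argument, citing that it is identical to the DS-integral case in \cite[Prop.~4.21]{TSI}, and that argument is exactly the bracketing scheme you use — combine the two pairs of step functions via $\min$/$\max$ on a common refinement, control $\int_a^b (u-v)\,d\gamma$ through Theorems~\ref{thm:stepmon} and~\ref{thm:steplinint}, and conclude with the RDS criterion (Theorem~\ref{thm:RDScrit}) plus the trapping argument for the value of the integral. No gaps; your normalization $\epsilon'=\epsilon/(1+r_1+r_2)$ correctly handles even the degenerate case $r_1=r_2=0$.
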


\begin{corollary}
    Given $\alpha\in I[a,b]$ and $c\in \mathbb{R}$, if $f\in RDS_{\alpha}[a,b]$, then $f\in RDS_{\alpha+c}[a,b]$ and $$\int_a^b f(x)\, d \alpha = \int_a^b f(x)\, d [\alpha+c].$$
\end{corollary}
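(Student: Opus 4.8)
The plan is to realize $\alpha+c$ as a nonnegative linear combination of two increasing functions—namely $\alpha$ itself and a constant—both of which we already know how to integrate $f$ against, and then invoke the linearity of the integrator.

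First I would let $\beta:[a,b]\to\mathbb{R}$ be the constant function $\beta(x)=c$. Since any constant function is (weakly) increasing, $\beta\in I[a,b]$, and hence $\alpha+c=\alpha+\beta\in I[a,b]$ as well. Next I would note that $f$ is automatically integrable against $\beta$: by Lemma~\ref{lem:constint}, because $\beta$ is constant we have $f\in RDS_\beta[a,b]$ with $\int_a^b f(x)\,d\beta=0$.

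Then, writing $\alpha+c=1\cdot\alpha+1\cdot\beta$ with nonnegative coefficients $r_1=r_2=1$, I would apply Lemma~\ref{lem:linintinc}. Since $f\in RDS_\alpha[a,b]$ by hypothesis and $f\in RDS_\beta[a,b]$ as just shown, the lemma yields $f\in RDS_{\alpha+c}[a,b]$ together with
$$\int_a^b f(x)\,d[\alpha+c]=\int_a^b f(x)\,d\alpha+\int_a^b f(x)\,d\beta=\int_a^b f(x)\,d\alpha,$$
the last equality holding because the $\beta$-integral vanishes.

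There is no serious obstacle here; the only point requiring care is observing that a constant function counts as increasing, so that it lies in $I[a,b]$ and the nonnegativity hypothesis $r_1,r_2\ge 0$ of Lemma~\ref{lem:linintinc} is satisfied. As an alternative I could argue directly from Definition~\ref{def:stepint}: since the constant $c$ cancels in every difference of one-sided limits, $\mu_{\alpha+c}$ agrees with $\mu_\alpha$ on each open interval and each singleton, so the Ross-Stieltjes integral of every step function is unchanged, and therefore the upper and lower sums, and hence the RDS-integrals, coincide. The lemma-based argument is cleaner, and that is what I would present.
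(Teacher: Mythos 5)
Your proof is correct and follows exactly the paper's own route: the paper's proof reads ``This follows immediately from Lemma \ref{lem:constint} and Lemma \ref{lem:linintinc},'' and your argument simply fills in those same two steps (the constant integrator contributes an integral of $0$, and linearity of the integrator with nonnegative coefficients $r_1=r_2=1$ gives integrability against $\alpha+c$ and the stated equality). No gaps; the observation that a constant function lies in $I[a,b]$ is the only detail needed, and you handle it.
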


\begin{proof}
    This follows immediately from Lemma \ref{lem:constint} and Lemma \ref{lem:linintinc}.
\end{proof}

We can now prove the independence of the RDS-integral from the decomposition of the integrator $\alpha$ as the difference of increasing functions.   The proof is essentially the same as for the DS-integral~\cite[Lemma 4.27]{TSI}, but we include the short proof for completeness.

\begin{theorem}\label{prop:decompinv}
For $\alpha\in BV[a,b]$, given two different decompositions $\alpha=\alpha^+-\alpha^-=\beta^+ - \beta^-,$ if $f\in RDS_{\alpha}[a,b]$ with respect to both decompositions, then 
$$\int_a^bf(x)\, d \alpha^+-\int_a^bf(x)\, d \alpha^-=\int_a^bf(x)\, d \beta^+ - \int_a^bf(x)\, d \beta^-.$$
\end{theorem}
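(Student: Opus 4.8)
The plan is to reduce everything to the additivity of the integral in the integrator, Lemma~\ref{lem:linintinc}, by turning the difference of two decompositions into an equality of two \emph{increasing} functions. Starting from the hypothesis $\alpha^+ - \alpha^- = \beta^+ - \beta^-$, I would first rearrange this as the identity
$$
\alpha^+ + \beta^- = \beta^+ + \alpha^-,
$$
where both sides are now sums of increasing functions and hence lie in $I[a,b]$. This is the only algebraic manipulation needed, and it is exactly the device that avoids having to subtract integrators (which Lemma~\ref{lem:linintinc} does not permit, since it requires nonnegative coefficients $r_1,r_2\ge 0$).

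Next I would invoke the hypothesis that $f$ is RDS-integrable with respect to \emph{both} decompositions, which by Definition~\ref{def:RDSint} means precisely that $f\in RDS_{\alpha^+}[a,b]$, $f\in RDS_{\alpha^-}[a,b]$, $f\in RDS_{\beta^+}[a,b]$, and $f\in RDS_{\beta^-}[a,b]$. Applying Lemma~\ref{lem:linintinc} with $r_1=r_2=1$ twice then gives that $f$ is integrable against each side of the identity above, together with
$$
\int_a^b f(x)\, d(\alpha^+ + \beta^-)
= \int_a^b f(x)\, d\alpha^+ + \int_a^b f(x)\, d\beta^-,
\qquad
\int_a^b f(x)\, d(\beta^+ + \alpha^-)
= \int_a^b f(x)\, d\beta^+ + \int_a^b f(x)\, d\alpha^-.
$$
Since $\alpha^+ + \beta^-$ and $\beta^+ + \alpha^-$ are the \emph{same} function, the two left-hand sides are literally the same integral and so are equal; equating the right-hand sides and rearranging yields exactly the claimed equality.

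I do not expect a serious obstacle here: the argument is purely formal once Lemma~\ref{lem:linintinc} is available, and it mirrors the DS-integral proof in~\cite[Lemma~4.27]{TSI}. The only point that requires a moment's care is verifying that $f$ really is integrable with respect to the combined integrators $\alpha^+ + \beta^-$ and $\beta^+ + \alpha^-$ before writing down the additivity identities; this is precisely what the ``then $f\in RDS_\gamma[a,b]$'' conclusion of Lemma~\ref{lem:linintinc} supplies, so no extra work with the upper and lower sums or the RDS criterion is needed. Because the statement concerns only $\alpha\in BV[a,b]$ with increasing summands, there is no sign issue to track, and the whole proof is a few lines.
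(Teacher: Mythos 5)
Your proposal is correct and is essentially identical to the paper's own proof: both form the increasing function $\gamma=\alpha^++\beta^-=\alpha^-+\beta^+$, apply Lemma~\ref{lem:linintinc} to each side, and rearrange. The only difference is expository; you spell out the integrability bookkeeping that the paper leaves implicit.
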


\begin{proof}
Let $\gamma=\alpha^+ + \beta^- = \alpha^- + \beta^+$. Then $\gamma\in I[a,b]$ and by Lemma \ref{lem:linintinc}, 
\[
    \int_a^bf(x)\, d \alpha^+ +\int_a^bf(x)\, d \beta^- =\int_a^bf(x)\, d \gamma =\int_a^bf(x)\, d \alpha^- +\int_a^bf(x)\, d \beta^+.
\]

If we rearrange terms, we get the desired equality.
\end{proof}

\medskip

By Theorem~\ref{prop:decompinv}, when computing the  RDS-integral we can choose any decomposition of  $\alpha$ for which it is defined.  Determining such a decomposition may seem like a problem, but we will show that if a function is RDS-integrable with respect to any decomposition of $\alpha$, then it is always integrable with respect to a canonical decomposition of $\alpha$ in terms of its positive and negative variation.  The proof requires a definition and two lemmas.  For the first definition, recall the definition of the variation $V(f,[a,b])$ given in Definition~\ref{defn:BV}.  

\begin{definition}\label{def:pnalpha}
    Fix $\alpha\in BV[a,b]$, and define the variation function of $\alpha$, $V\alpha$, by 
    \[ V\alpha(x) = V(\alpha, [a,x]). \]
    Further, define the positive and negative variations of $\alpha$ by 
    $$P\alpha=\frac{1}{2}(V\alpha +\alpha-\alpha(a)) \qquad \text{and} \qquad N\alpha=\frac{1}{2}(V\alpha -\alpha+\alpha(a)).$$
\end{definition}

The functions $V\alpha,\,P\alpha,\,N\alpha$ are all increasing and yield a Jordan decomposition of $f$.  For a proof, see~\cite[Section~3.3]{TSI}.

\begin{lemma} \label{lemma:var-functions}
Given $\alpha \in BV[a,b]$, $V\alpha,\,P\alpha,\,N\alpha\in I[a,b]$ and 
$$\alpha(x)=P\alpha(x)-N\alpha(x) +\alpha(a).$$
\end{lemma}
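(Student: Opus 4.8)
The plan is to separate the purely algebraic part of the claim from the part that carries analytic content. The decomposition identity $\alpha(x)=P\alpha(x)-N\alpha(x)+\alpha(a)$ is immediate: substituting the definitions of $P\alpha$ and $N\alpha$ from Definition~\ref{def:pnalpha} and simplifying, the $V\alpha$ and $\alpha(a)$ terms cancel in pairs and one is left with $\alpha(x)$. So the real work is showing that each of $V\alpha$, $P\alpha$, and $N\alpha$ is increasing.

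The one nontrivial tool I would invoke is the additivity of the total variation over adjacent subintervals: for $a\le x\le y\le b$,
$$V(\alpha,[a,y])=V(\alpha,[a,x])+V(\alpha,[x,y]).$$
This is the standard interval-additivity property of variation established in~\cite[Section~3.3]{TSI}, and it is the main obstacle in the sense that it is the only ingredient not reducible to a one-line estimate. Its proof goes by showing the two inequalities separately: any partition of $[a,y]$ can be refined by adjoining the point $x$, which cannot decrease the variation sum and splits it across the two subintervals, giving ``$\le$''; conversely, partitions of $[a,x]$ and of $[x,y]$ concatenate to a partition of $[a,y]$, giving ``$\ge$''.

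Granting interval-additivity, monotonicity of $V\alpha$ is immediate: for $x\le y$ we have $V\alpha(y)-V\alpha(x)=V(\alpha,[x,y])\ge 0$, since every variation sum is a sum of absolute values. For $P\alpha$ and $N\alpha$ I would combine this with the trivial lower bound coming from the two-point partition $\{x,y\}$, namely $V(\alpha,[x,y])\ge |\alpha(y)-\alpha(x)|$. Then for $x\le y$,
$$P\alpha(y)-P\alpha(x)=\tfrac{1}{2}\big[V(\alpha,[x,y])+(\alpha(y)-\alpha(x))\big]\ge \tfrac{1}{2}\big[|\alpha(y)-\alpha(x)|+(\alpha(y)-\alpha(x))\big]\ge 0,$$
and symmetrically
$$N\alpha(y)-N\alpha(x)=\tfrac{1}{2}\big[V(\alpha,[x,y])-(\alpha(y)-\alpha(x))\big]\ge \tfrac{1}{2}\big[|\alpha(y)-\alpha(x)|-(\alpha(y)-\alpha(x))\big]\ge 0.$$
This gives $P\alpha,N\alpha\in I[a,b]$ and, together with the algebraic identity, completes the proof. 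Since the substantive content (interval-additivity) is precisely what~\cite[Section~3.3]{TSI} supplies, in the final write-up I would simply cite that result and present only the short monotonicity estimates and the one-line algebraic cancellation.
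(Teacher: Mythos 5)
Your proposal is correct. The paper in fact gives no proof of this lemma at all—it defers entirely to \cite[Section~3.3]{TSI}—and your argument (the one-line algebraic cancellation, interval-additivity of the variation, and the two-point-partition bound $V(\alpha,[x,y])\ge|\alpha(y)-\alpha(x)|$ to get monotonicity of $P\alpha$ and $N\alpha$) is precisely the standard proof that the cited reference supplies, so your plan to cite it and present only the short estimates matches the paper's treatment exactly.
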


The second lemma is a kind of converse to Lemma~\ref{lem:linintinc}.

\begin{lemma}\label{prop:linintconv}
Given $\alpha, \beta\in I[a,b]$ and $r_1, r_2
> 0$, define $\gamma = r_1\alpha+r_2\beta\in I[a,b]$. If $f\in RDS_\gamma[a,b]$, then $f\in RDS_\alpha[a,b]$ and $f\in RDS_\beta[a,b].$
\end{lemma}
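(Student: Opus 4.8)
The plan is to reduce everything to the RDS criterion (Theorem~\ref{thm:RDScrit}) and to exploit two facts about the integral of step functions: linearity in the integrator (Theorem~\ref{thm:steplinint}) and monotonicity (Theorem~\ref{thm:stepmon}). Since $\gamma\in I[a,b]$ and $f\in RDS_\gamma[a,b]$, the RDS criterion supplies, for any $\epsilon>0$, a single pair of step functions $u,\,v$ that bracket $f$, i.e. $v(x)\le f(x)\le u(x)$ on $[a,b]$, with $\int_a^b u(x)-v(x)\,d\gamma<\epsilon$. The same pair will be used to verify the criterion for both $\alpha$ and $\beta$.

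First I would apply Theorem~\ref{thm:steplinint} to the step function $u-v$ to split the integral:
\[ \int_a^b u(x)-v(x)\,d\gamma = r_1\int_a^b u(x)-v(x)\,d\alpha + r_2\int_a^b u(x)-v(x)\,d\beta. \]
The key observation is that each of the two integrals on the right is nonnegative: since $u-v\ge 0$ and $\alpha,\beta\in I[a,b]$, Theorem~\ref{thm:stepmon} (comparing $u-v$ with the zero function, whose integral vanishes by Lemma~\ref{lem:const}) gives $\int_a^b u(x)-v(x)\,d\alpha\ge 0$ and likewise for $\beta$. Consequently, because $r_1,r_2>0$,
\[ r_1\int_a^b u(x)-v(x)\,d\alpha \le \int_a^b u(x)-v(x)\,d\gamma < \epsilon, \]
and symmetrically for the $\beta$-term.

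To finish, I would rescale $\epsilon$. Given a target $\epsilon'>0$, I apply the previous step with $\epsilon=r_1\epsilon'$ (legitimate since $r_1>0$) to obtain bracketing step functions $u,\,v$ with $\int_a^b u(x)-v(x)\,d\alpha<\epsilon'$; since $\epsilon'$ was arbitrary, the RDS criterion yields $f\in RDS_\alpha[a,b]$. Choosing instead $\epsilon=r_2\epsilon'$ gives $f\in RDS_\beta[a,b]$ by the identical argument.

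I expect no serious obstacle here; this is a short deduction. The only point that must be handled with care is the strict positivity of $r_1$ and $r_2$: nonnegativity of each split integral requires $\alpha,\beta$ increasing, while isolating the $\alpha$- or $\beta$-term and rescaling $\epsilon$ both require $r_i>0$. This is precisely why the hypothesis here is $r_1,r_2>0$, strictly stronger than the $r_1,r_2\ge 0$ appearing in the converse direction, Lemma~\ref{lem:linintinc}.
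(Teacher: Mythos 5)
Your proof is correct and follows essentially the same route as the paper's: both invoke the RDS criterion (Theorem~\ref{thm:RDScrit}) for $\gamma$, use linearity of the integrator for step functions (Theorem~\ref{thm:steplinint}) together with monotonicity (Theorem~\ref{thm:stepmon}) to discard the nonnegative $r_2\beta$-term, and rescale $\epsilon$ by $r_1$ (the paper does the rescaling up front, requesting $\int_a^b u(x)-v(x)\,d\gamma<r_1\epsilon$, rather than at the end, but this is cosmetic). Your closing remark about why strict positivity of $r_1,r_2$ is needed matches the paper's hypotheses exactly.
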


\begin{proof}
It suffices to show $f\in RDS_\alpha[a,b]$ since by a symmetric argument, $f\in RDS_\beta[a,b]$. Fix $\epsilon>0$. By Theorem \ref{thm:RDScrit}, there exist step functions $u,\,v$ which bracket $f$ such that $$\int_a^b u(x)-v(x)\, d \gamma <r_1\epsilon.$$ Then, since $u-v$ is nonnegative and $r_2\beta$ is increasing, by Theorems~\ref{thm:stepmon} and~\ref{thm:steplinint}, 
\begin{align*}
    \int_a^b u(x)-v(x)\, d \alpha &= \frac{1}{r_1}\int_a^b u(x)-v(x)\, d (r_1\alpha)\\
    &\le \frac{1}{r_1}\left(\int_a^b u(x)-v(x)\, d (r_1\alpha)+\int_a^b u(x)-v(x)\, d (r_2\beta)\right)\\
    &=\frac{1}{r_1}\int_a^b u(x)-v(x)\, d \gamma\\
    &<\epsilon.
\end{align*}
Hence, by Theorem \ref{thm:RDScrit}, $f\in RDS_\alpha[a,b].$
\end{proof}

\begin{theorem}\label{thm:pnalpha}
Given $\alpha\in BV[a,b]$, $f\in RDS_\alpha[a,b]$ if and only if $f\in RDS_{P\alpha}[a,b]$, $f\in RDS_{N\alpha}[a,b]$. Moreover, $$\int_a^b f(x)\, d \alpha = \int_a^b f(x)\, d P\alpha-\int_a^b f(x)\, d N\alpha.$$
\end{theorem}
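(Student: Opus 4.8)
The plan is to prove the two implications by passing back and forth between the given decomposition of $\alpha$ and the canonical one supplied by Lemma~\ref{lemma:var-functions}, $\alpha(x)=P\alpha(x)-N\alpha(x)+\alpha(a)$, using Lemma~\ref{lem:linintinc} and its partial converse Lemma~\ref{prop:linintconv} to move integrability between an increasing integrator and its increasing summands. The only ingredient beyond these lemmas is the minimality of the total variation: for any decomposition $\alpha=\sigma-\tau$ into functions $\sigma,\tau\in I[a,b]$, the differences $\sigma-P\alpha$ and $\tau-N\alpha$ are again increasing. This is the standard companion to the Jordan decomposition developed in \cite[Section~3.3]{TSI}; it follows from the estimate $V(\alpha,[x,y])\le(\sigma(y)-\sigma(x))+(\tau(y)-\tau(x))$ for $x<y$ together with $2P\alpha=V\alpha+\alpha-\alpha(a)$.

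For the reverse implication, I would assume $f\in RDS_{P\alpha}[a,b]$ and $f\in RDS_{N\alpha}[a,b]$. By Lemma~\ref{lemma:var-functions} we may write $\alpha=(P\alpha+\alpha(a))-N\alpha$, where $P\alpha+\alpha(a),\,N\alpha\in I[a,b]$. Since shifting an increasing integrator by a constant changes neither integrability nor the value of the integral (which follows at once from Lemma~\ref{lem:constint} and Lemma~\ref{lem:linintinc}), we get $f\in RDS_{P\alpha+\alpha(a)}[a,b]$ with the same integral as against $P\alpha$. Thus $\alpha=(P\alpha+\alpha(a))-N\alpha$ is a Jordan decomposition for which $f$ is integrable against both summands, so Definition~\ref{def:RDSint} gives $f\in RDS_\alpha[a,b]$ and
\[
\int_a^b f(x)\, d\alpha=\int_a^b f(x)\, d(P\alpha+\alpha(a))-\int_a^b f(x)\, dN\alpha=\int_a^b f(x)\, dP\alpha-\int_a^b f(x)\, dN\alpha,
\]
which simultaneously establishes the ``Moreover'' formula.

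For the forward implication, I would start from $f\in RDS_\alpha[a,b]$, which by Definition~\ref{def:RDSint} supplies some decomposition $\alpha=\sigma-\tau$ with $\sigma,\tau\in I[a,b]$ and $f\in RDS_\sigma[a,b]$, $f\in RDS_\tau[a,b]$. Invoking the minimality fact above, $\sigma-P\alpha$ and $\tau-N\alpha$ are increasing, so $\sigma=P\alpha+(\sigma-P\alpha)$ and $\tau=N\alpha+(\tau-N\alpha)$ each exhibit the integrator as a sum of two increasing functions. Applying Lemma~\ref{prop:linintconv} with $r_1=r_2=1$ to $f\in RDS_\sigma[a,b]$ then yields $f\in RDS_{P\alpha}[a,b]$, and applying it to $f\in RDS_\tau[a,b]$ yields $f\in RDS_{N\alpha}[a,b]$, completing the equivalence.

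I expect the main obstacle to be the minimality fact, i.e.\ verifying that $\sigma-P\alpha$ and $\tau-N\alpha$ are increasing, since everything else is a direct application of the lemmas; if this is not already available in the cited form, I would insert the short variation estimate noted above. One must also track the additive constant $\alpha(a)$ carefully throughout, and appeal to Theorem~\ref{prop:decompinv} to guarantee that the value of $\int_a^b f\,d\alpha$ is independent of which admissible decomposition is used to compute it, so that the formula obtained via $P\alpha$ and $N\alpha$ is genuinely the RDS-integral.
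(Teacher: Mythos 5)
Your proposal is correct and takes essentially the same approach as the paper: the reverse implication via the decomposition $\alpha=(P\alpha+\alpha(a))-N\alpha$ together with Lemmas~\ref{lem:constint} and~\ref{lem:linintinc}, and the forward implication by comparing an arbitrary admissible decomposition $\alpha=\sigma-\tau$ with the canonical one and splitting off $P\alpha$ and $N\alpha$ using Lemma~\ref{prop:linintconv}. The only cosmetic difference is that you prove the minimality fact (that $\sigma-P\alpha$ and $\tau-N\alpha$ are increasing) directly from the variation estimate, whereas the paper cites it from \cite[Theorem~3.21]{TSI} in the normalized form $\alpha^+-\alpha^+(a)=P\alpha+\beta$, $\alpha^--\alpha^-(a)=N\alpha+\beta$ for a single increasing $\beta$.
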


\begin{proof}
We argue as in~\cite[Theorem 4.28]{TSI} for the DS-integral.  Suppose first that $f\in RDS_\alpha[a,b]$. By Lemma~\ref{lemma:var-functions}, $P\alpha, N\alpha \in I[a,b]$. Since $f\in RDS_\alpha[a,b]$, there exists a decomposition $\alpha=\alpha^+-\alpha^-$ such that $f\in RDS_{\alpha^+}[a,b]$ and $f\in RDS_{\alpha^-}[a,b]$. Thus, for all $x\in [a,b]$ 
$$\alpha(x)= [\alpha^+(x)-\alpha^+(a)]-[\alpha^-(x) -\alpha^-(a)]+\alpha(a).$$  
By \cite[Theorem 3.21]{TSI}, there exists $\beta\in I[a,b]$ such that $\alpha^+(x) - \alpha^+(a) = P\alpha(x) + \beta(x)$ and $\alpha^-(x) - \alpha^-(a) = N\alpha(x)+\beta(x)$. By Lemma \ref{lem:linintinc}, $f\in RDS_{\alpha^+-\alpha^+(a)}[a,b]= RDS_{P\alpha+\beta}[a,b]$. Thus, by Lemma~\ref{prop:linintconv}, $f\in RDS_{P\alpha}[a,b]$, $f\in RDS_{N\alpha}[a,b]$, and $f\in RDS_\beta[a,b]$. 

Now suppose $f\in RDS_{P\alpha}[a,b]$ and $f\in RDS_{N\alpha}[a,b]$.  Since $P\alpha +\alpha(a)\in I[a,b]$. By Lemmas~\ref{lem:constint} and~\ref{lem:linintinc}, 
$f\in RDS_{P\alpha +\alpha(a)}[a,b]$. Then, by Definition \ref{def:RDSint}, since $\alpha = P\alpha -N\alpha +\alpha(a)$, $f\in RDS_{\alpha}[a,b].$
Moreover,
\begin{align*}
    \int_a^b f(x)\, d \alpha^+ &=\int_a^b f(x)\, d [\alpha^+(a)]+\int_a^b f(x)\, d P\alpha+\int_a^b f(x)\, d \beta\\ &=\int_a^b f(x)\, d P\alpha+\int_a^b f(x)\, d \beta,
\end{align*}
 and \begin{align*}
    \int_a^b f(x)\, d \alpha^- &=\int_a^b f(x)\, d [\alpha^-(a)]+\int_a^b f(x)\, d N\alpha+\int_a^b f(x)\, d \beta\\ &=\int_a^b f(x)\, d N\alpha+\int_a^b f(x)\, d \beta.
\end{align*}
Hence, 
$$\int_a^b f(x)\, d \alpha =\int_a^b f(x)\, d \alpha^+-\int_a^b f(x)\, d \alpha^- = \int_a^b f(x)\, d P\alpha-\int_a^b f(x)\, d N\alpha.$$
\end{proof}

\medskip

We conclude this section by showing that the DS-integral given in~Definition \ref{def:DSint} and the RDS-integral agree if the integrator or integrand are continuous.  We will give a complete characterization of the relationship between these two  definitions of the integral in Section~\ref{section:alt-defns}.  
To state our first result, let $CBV[a,b]$ be the set of functions which are both continuous and of bounded variation on $[a,b]$. 

\begin{theorem}\label{thm:RDSiffDSintegrator}
    Given $\alpha\in CBV[a,b]$, $f\in RDS_{\alpha}[a,b]$ if and only if $f\in DS_\alpha [a,b]$ and in this case, 
    \begin{equation} \label{eqn:rdsiffdsintegrator1}
\int_a^b f(x)\, d \alpha = (DS)\int_a^b f(x)\, d \alpha.
\end{equation}
\end{theorem}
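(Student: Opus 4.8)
The plan is to reduce everything to the single observation that when $\alpha$ is continuous, the revised $\alpha$-length $\mu_\alpha$ collapses to the classical one and every jump term vanishes. Concretely, since $\alpha\in CBV[a,b]$ is continuous, for every $c\in[a,b]$ we have $\alpha(c-)=\alpha(c+)=\alpha(c)$ (using the endpoint conventions $\alpha(a-)=\alpha(a)$ and $\alpha(b+)=\alpha(b)$). Hence $\mu_\alpha(\{c\})=\alpha(c+)-\alpha(c-)=0$ for every point $c$, while for an open interval $I=(c,d)$ we get $\mu_\alpha(I)=\alpha(d-)-\alpha(c+)=\alpha(d)-\alpha(c)=\alpha(I)$, the classical $\alpha$-length of Definition~\ref{def:wrongalphalength}.

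Next I would settle the case where $\alpha$ is continuous and increasing. For such $\alpha$ and any step function $f$ defined with respect to a partition $\mathcal{P}=\{x_i\}_{i=0}^n$ with $f=c_i$ on $I_i$, the observation above shows that in Definition~\ref{def:stepint} the jump sum $\sum_{i=0}^n f(x_i)\mu_\alpha(\{x_i\})$ is zero and each $\mu_\alpha(I_i)=\alpha(x_i)-\alpha(x_{i-1})$, so the RDS step integral equals $\sum_{i=1}^n c_i(\alpha(x_i)-\alpha(x_{i-1}))$, which is exactly the DS step integral of Definition~\ref{def:DSstepint}. Since the RDS-integral (Definition~\ref{def:RDSint}) and the DS-integral (Definition~\ref{def:DSint}) are both defined as the supremum over $v\in S_L(f,[a,b])$ and the infimum over $u\in S_U(f,[a,b])$ of the respective step-function integrals, and since these two collections of bracketing step functions are identical, the coincidence of the step integrals immediately gives $L_\alpha(f,[a,b])=L^i_\alpha(f,[a,b])$ and $U_\alpha(f,[a,b])=U^i_\alpha(f,[a,b])$. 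Therefore $f\in RDS_\alpha[a,b]$ if and only if $f\in DS_\alpha[a,b]$, and the two integrals agree, which proves the theorem in the continuous increasing case.

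Finally I would lift this to all of $CBV[a,b]$ via the Jordan decomposition. Writing $\alpha=P\alpha-N\alpha+\alpha(a)$ as in Lemma~\ref{lemma:var-functions}, the crucial point is that $P\alpha$ and $N\alpha$ are continuous increasing functions: continuity of $\alpha$ forces the variation function $V\alpha$ to be continuous (see \cite[Section~3.3]{TSI}), and $P\alpha,\,N\alpha$ are built from $V\alpha$ and $\alpha$. Then Theorem~\ref{thm:pnalpha} shows that $f\in RDS_\alpha[a,b]$ if and only if $f\in RDS_{P\alpha}[a,b]$ and $f\in RDS_{N\alpha}[a,b]$, while the DS analogue \cite[Theorem~4.28]{TSI} gives the same characterization of $DS_\alpha[a,b]$ in terms of $DS_{P\alpha}[a,b]$ and $DS_{N\alpha}[a,b]$. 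Applying the continuous increasing case to $P\alpha$ and $N\alpha$ identifies these conditions and matches the integral values, and subtracting yields \eqref{eqn:rdsiffdsintegrator1}.

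I expect the only genuine obstacle to be the claim in the last step that continuity of $\alpha$ implies continuity of $V\alpha$ (and hence of $P\alpha$ and $N\alpha$); this is a standard fact about functions of bounded variation but must be invoked with care, since without it the decomposition into continuous increasing pieces — the very thing that legitimizes the reduction to the monotone case — would break down. The rest of the argument is bookkeeping that follows directly from the vanishing of $\mu_\alpha(\{c\})$ and the identity $\mu_\alpha(I)=\alpha(I)$.
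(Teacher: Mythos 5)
Your proposal is correct and follows essentially the same route as the paper's proof: first identify the RDS and DS step-function integrals when $\alpha$ is continuous and increasing (so the common bracketing families force $L_\alpha=L^i_\alpha$ and $U_\alpha=U^i_\alpha$), then lift to $CBV[a,b]$ via the decomposition $\alpha=P\alpha-N\alpha+\alpha(a)$, Theorem~\ref{thm:pnalpha}, and its DS analogue \cite[Theorem~4.28]{TSI}. The only difference is cosmetic: you spell out the vanishing of the jump terms and the continuity of $V\alpha$ (hence of $P\alpha$, $N\alpha$), which the paper asserts more briefly.
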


\begin{proof}
First, consider $\alpha \in I[a,b]$. Suppose $f\in RDS_\alpha [a,b]$. Fix $u\in S[a,b]$. Since $\alpha\in C[a,b]$, $$\int_a^b u(x)\, d \alpha = (DS)\int_a^b u(x)\, d \alpha.$$ By Definition \ref{def:DSint} and Definition \ref{def:RDSint}, we have $$L^i_\alpha(f,[a,b])= L_\alpha(f,[a,b])= U_\alpha(f,[a,b])= U^i_\alpha(f,[a,b]).$$  Thus, $f\in DS[a,b]$. The reverse implication is proved in the same way, and it is immediate that \eqref{eqn:rdsiffdsintegrator1} holds.

Now, consider $\alpha \in BV[a,b]$. Suppose $f\in RDS_\alpha [a,b]$. Then, by Theorem \ref{thm:pnalpha}, $f\in RDS_{P\alpha}[a,b]$ and $f\in RDS_{N\alpha}[a,b]$. By \cite[Definition 3.16]{TSI}, it is easy to see that $P\alpha, N\alpha \in CBV[a,b]$. By the previous case, $f\in DS_{P\alpha}[a,b]$ and $f\in DS_{N\alpha}[a,b]$. By \cite[Theorem 4.28]{TSI}, $f\in DS_\alpha [a,b]$. The converse follows similarly since all these implications are reversible. By the previous case, $$\int_a^b f(x)\, d P\alpha = (DS)\int_a^b f(x)\, d P\alpha,$$ $$\int_a^b f(x)\, d N\alpha = (DS)\int_a^b f(x)\, d N\alpha.$$ Therefore, by Theorem \ref{thm:pnalpha}
 and \cite[Theorem 4.28]{TSI}, \eqref{eqn:rdsiffdsintegrator1} holds.
\end{proof}

\begin{theorem}\label{thm:RDSiffDSintegrand}
    Suppose $f\in C[a,b]$ and $ \alpha\in BV[a,b]$. Then, $f\in RDS_{\alpha}[a,b]$ if and only if $f\in DS_\alpha [a,b]$ and in this case, \eqref{eqn:rdsiffdsintegrator1} holds.
\end{theorem}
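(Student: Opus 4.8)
The plan is to mirror the structure of the proof of Theorem~\ref{thm:RDSiffDSintegrator}: first reduce to increasing integrators, then settle that case by comparing best-fit step functions. The crucial difference is that here the reduction consumes only the continuity of $f$, not of $\alpha$. Using the Jordan decomposition $\alpha = P\alpha - N\alpha + \alpha(a)$ from Lemma~\ref{lemma:var-functions}, Theorem~\ref{thm:pnalpha} reduces RDS-integrability of $f$ against $\alpha$ to RDS-integrability against the increasing functions $P\alpha$ and $N\alpha$, and \cite[Theorem 4.28]{TSI} does the same for the DS-integral. Since $f$ stays continuous against each piece (and $P\alpha,N\alpha$ need only be increasing, not continuous), it suffices to prove the theorem for $\alpha\in I[a,b]$ and then recombine.

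So assume $\alpha\in I[a,b]$ and $f\in C[a,b]$, hence $f$ is uniformly continuous and bounded. Fix a partition $\mathcal{P}=\{x_i\}_{i=0}^n$ and let $\hat u,\hat v$ be the best-fit step functions of Definition~\ref{def:bestfitstepfunctions}, with $M_i=\sup_{I_i}f$ and $m_i=\inf_{I_i}f$. I would first record the elementary DS estimate
\[ (DS)\int_a^b (\hat u - \hat v)\,d\alpha = \sum_{i=1}^n (M_i - m_i)[\alpha(x_i)-\alpha(x_{i-1})] \le [\alpha(b)-\alpha(a)]\,\max_{1\le i\le n}(M_i-m_i), \]
whose right side tends to $0$ as $|\mathcal{P}|\to 0$ by uniform continuity; by the Darboux criterion for the DS-integral (\cite[Theorem 4.17]{TSI}) this already gives $f\in DS_\alpha[a,b]$.

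The heart of the argument is to compare the RDS and DS upper and lower sums on a fixed partition. Expanding $\mu_\alpha(I_i)=\alpha(x_i-)-\alpha(x_{i-1}+)$ and $\mu_\alpha(\{x_i\})=\alpha(x_i+)-\alpha(x_i-)$ in $\int_a^b \hat u\,d\alpha$, subtracting $(DS)\int_a^b \hat u\,d\alpha$, and regrouping every contribution according to the partition point at which it occurs, I would establish the identity
\[ (DS)\int_a^b \hat u\,d\alpha - \int_a^b \hat u\,d\alpha = \sum_{k=0}^n \left[ L(x_k)\,(M_k - f(x_k)) + R(x_k)\,(M_{k+1} - f(x_k)) \right], \]
where $L(x_k)=\alpha(x_k)-\alpha(x_k-)\ge 0$ and $R(x_k)=\alpha(x_k+)-\alpha(x_k)\ge 0$ are the one-sided jumps of $\alpha$; the conventions $\alpha(a-)=\alpha(a)$ and $\alpha(b+)=\alpha(b)$ force $L(x_0)=R(x_n)=0$, so the undefined $M_0,M_{n+1}$ multiply $0$ and never appear. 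Continuity of $f$ at each $x_k$ gives $f(x_k)=\lim_{x\to x_k^-}f=\lim_{x\to x_k^+}f$, so $f(x_k)\le M_k$ and $f(x_k)\le M_{k+1}$; as $\alpha$ is increasing every summand is nonnegative, whence $\int_a^b \hat u\,d\alpha \le (DS)\int_a^b \hat u\,d\alpha$. The identical computation with infima yields $(DS)\int_a^b \hat v\,d\alpha \le \int_a^b \hat v\,d\alpha$.

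Combining these two inequalities with \eqref{eqn:bestfit-bracket}, Theorem~\ref{thm:stepmon}, and Proposition~\ref{prop:lowerupperineq} gives, for every partition $\mathcal{P}$,
\[ (DS)\int_a^b \hat v\,d\alpha \le \int_a^b \hat v\,d\alpha \le L_\alpha(f,[a,b]) \le U_\alpha(f,[a,b]) \le \int_a^b \hat u\,d\alpha \le (DS)\int_a^b \hat u\,d\alpha. \]
Since $f\in DS_\alpha[a,b]$, the two outer quantities can be made arbitrarily close, squeezing $L_\alpha(f,[a,b])=U_\alpha(f,[a,b])$, so $f\in RDS_\alpha[a,b]$; moreover $(DS)\int_a^b f\,d\alpha$ lies in the same shrinking interval, forcing \eqref{eqn:rdsiffdsintegrator1}. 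Because a continuous $f$ is thereby simultaneously DS- and RDS-integrable, the stated equivalence holds in both directions. I expect the main obstacle to be the bookkeeping in the regrouping step: correctly pairing each interval's two boundary jumps with the adjacent point mass and checking that the endpoint conventions eliminate the undefined supremum terms. Once the displayed identity is in hand, the rest is a routine squeeze.
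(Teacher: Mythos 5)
Your proposal is correct and takes essentially the same route as the paper's proof: reduce to $\alpha\in I[a,b]$ via the $P\alpha$/$N\alpha$ decomposition (Theorem~\ref{thm:pnalpha} and \cite[Theorem 4.28]{TSI}), then compare the RDS and DS integrals of the best-fit step functions on a fine partition, using continuity of $f$ at the partition points (so $f(x_k)\le M_k,\,M_{k+1}$, with the endpoint conventions killing the undefined terms) together with monotonicity of $\alpha$, and finish with uniform continuity. The only differences are organizational: the paper runs this comparison as a single inequality chain bounding $\int_a^b u\,d\alpha-(DS)\int_a^b v\,d\alpha$ (and splits off the case $\mu_\alpha([a,b])=0$), whereas you package it as an exact identity with nonnegative terms plus a squeeze, and you derive the two integrability statements in-line rather than citing Theorem~\ref{thm:continuousinRDS} and \cite[Theorem 4.35]{TSI}.
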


\begin{proof}
    By Theorem \ref{thm:continuousinRDS} below and \cite[Theorem 4.35]{TSI}, if $f\in C[a,b]$, then $f\in RDS_\alpha[a,b]$ and $f\in DS_\alpha[a,b]$. It suffices to show the integrals are equal. 

    First, consider $\alpha\in I[a,b]$ . If $\mu_\alpha([a,b])= \alpha(b)-\alpha(a)=0$, then $\alpha$ is constant. Hence, by Lemma \ref{lem:constint} and \cite[Lemma 4.20]{TSI}, we have both integrals are $0$. Otherwise, suppose $\mu_\alpha([a,b])=\alpha(b)-\alpha(a)>0$. Fix $\epsilon>0$. Since $f$ is continuous on $[a,b]$, it is uniformly continuous. Thus, there exists $\delta>0$ such that for every $x,y\in [a,b]$ with $|x-y|<\delta$, $$|f(x)-f(y)|<\frac{\epsilon}{\mu_\alpha([a,b])}.$$ Fix $N\in \mathbb{N}$ such that $N>\frac{b-a}{\delta}.$ Moreover, let $\mathcal{P}_N=\{x_i\}_{i=1}^N=\{a+\frac{b-a}{N}i\}_{i=0}^N$ be a regular partition of $[a,b]$. Hence, $\overline{I_i}=[x_{i-1}, x_i]$ is compact so $f$ attains its extrema on each interval. For each $1\le i\le N$, let 
    $$M_i = \sup\{f(x)\; :\; x\in \overline{I_i}\}=f(x_i^M)$$ 
    and 
    $$m_i = \inf\{f(x)\; :\; x\in \overline{I_i}\}=f(x_i^m).$$ 
    Define step functions $u,\,v$ by 
    \begin{equation} \label{eqn:RDSiffDSintegrand1}
u(x)=\begin{cases}M_i,& x\in I_i,\\ f(x_i), & x= x_i,\end{cases} \quad  \text{and} 
\quad v(x)=\begin{cases}m_i,& x\in I_i,\\ f(x_i), & x= x_i.\end{cases}
\end{equation}
%Notice that $u,\,v$ bracket $f$ on $[a,b]$. By Definition \ref{def:DSint}, $$\int_a^b v(x)\, d \alpha \le \int_a^b f(x)\, d \alpha\le \int_a^b u(x)\, d \alpha, $$ $$(DS)\int_a^b v(x)\, d \alpha \le (DS)\int_a^b f(x)\, d \alpha\le (DS)\int_a^b u(x)\, d \alpha. $$
%
Observe that 
\begin{align*}
      &  \int_a^b u(x)\, d \alpha - (DS)\int_a^b v(x)\, d \alpha \\
      & \qquad \qquad = \sum_{i=0}^N f(x_i)\mu_\alpha(\{x_i\}) + \sum_{i=1}^N M_i\mu_\alpha(I_i) - \sum_{i=1}^Nm_i\alpha(I_i)\\
        & \qquad \qquad= \sum_{i=0}^{N-1} f(x_i)[\alpha(x_i+)-\alpha(x_i)]+\sum_{i=1}^N f(x_i)[\alpha(x_i)-\alpha(x_i-)]\\ & \qquad \qquad\quad + \sum_{i=1}^N M_i\mu_\alpha(I_i)- \sum_{i=1}^Nm_i\alpha(I_i)\\
        & \qquad \qquad\le \sum_{i=0}^{N-1} M_{i+1}[\alpha(x_i+)-\alpha(x_i)]+\sum_{i=1}^N M_{i}[\alpha(x_i)-\alpha(x_i-)]\\ & \qquad \qquad\quad + \sum_{i=1}^N M_i[\alpha(I_i) + \alpha(x_i-)-\alpha(x_i) -\alpha(x_{i-1}+)+\alpha(x_{i-1})]\\ & \qquad \qquad\quad- \sum_{i=1}^Nm_i\alpha(I_i)\\
        & \qquad \qquad=\sum_{i=1}^N M_i\alpha(I_i)-\sum_{i=1}^N m_i\alpha(I_i)\\
        %& \qquad \qquad= \sum_{i=1}^N (M_i-m_i)\alpha(I_i)\\
        & \qquad \qquad< \frac{\epsilon}{\mu_\alpha([a,b])}\sum_{i=1}^N \alpha(I_i)\\
        & \qquad \qquad=\frac{\epsilon}{\mu_\alpha([a,b])}(\alpha(b)-\alpha(a))\\
        & \qquad \qquad=\epsilon.
    \end{align*}

    \noindent A similar argument yields $$(DS)\int_a^b u(x)\, d \alpha - \int_a^b v(x)\, d \alpha<\epsilon.$$ Hence, 
    $$\left|(DS)\int_a^b f(x)\, d \alpha-\int_a^b f(x)\, d \alpha\right|<\epsilon.$$ Since $\epsilon>0$ is arbitrary, \eqref{eqn:rdsiffdsintegrator1} holds.

    Now, we consider the case $\alpha\in BV[a,b]$. By the previous case, $$\int_a^b f(x)\, d P\alpha = (DS)\int_a^b f(x)\, d P\alpha,$$ $$\int_a^b f(x)\, d N\alpha = (DS)\int_a^b f(x)\, d N\alpha.$$ Therefore, by Theorem \ref{thm:pnalpha}
 and \cite[Theorem 4.28]{TSI}, \eqref{eqn:rdsiffdsintegrator1} holds. 
\end{proof}

\section{Consequences of the RDS Criterion}
\label{section:RDS-criterion}

In this section we develop the basic properties of the RDS-integral as a consequence of the RDS Criterion, Theorem~\ref{thm:RDScrit}.  In particular, we show that the RDS-integral is linear, additive, monotonic, and linear in the integrator.  In several cases the proofs are essentially identical to the proofs of the corresponding results for the Darboux integral and DS-integral given in \cite{TSI}, and so we omit the details.  We will instead concentrate on those results where some additional care is needed because of the additional term in the definition of the RDS-integral of step functions.

\begin{theorem}\label{thm:linearity}
(Linearity) Given $\alpha\in BV[a,b]$ and $f,g\in RDS_\alpha[a,b]$, for any $r_1, r_2\in \mathbb{R}$, $r_1f+r_2g\in RDS_\alpha[a,b]$ and $$\int_{a}^{b}[r_1f(x)+r_2g(x)]\, d \alpha=r_1\int_{a}^{b}f(x)\, d \alpha+r_2\int_{a}^{b}g(x)\, d \alpha.$$
\end{theorem}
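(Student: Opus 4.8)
The plan is to reduce the general bounded-variation case to the increasing case, and in the increasing case to use the RDS criterion (Theorem~\ref{thm:RDScrit}) to produce bracketing step functions, exploiting the fact that linearity for step-function integrals is already established in Theorem~\ref{thm:steplin}. First I would handle $\alpha\in I[a,b]$ and split the argument into two pieces: scalar multiplication, $rf\in RDS_\alpha[a,b]$, and the sum, $f+g\in RDS_\alpha[a,b]$, since together these give the full statement. Throughout I can assume $\alpha$ is increasing, because once linearity is proved for increasing integrators, the $BV$ case follows by decomposing $\alpha=P\alpha-N\alpha+\alpha(a)$ via Theorem~\ref{thm:pnalpha} (or any Jordan decomposition together with Theorem~\ref{prop:decompinv}) and applying the increasing case to each piece, then using Definition~\ref{def:RDSint}.

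For the sum, given $\epsilon>0$, I would use Theorem~\ref{thm:RDScrit} to choose step functions $u_f,v_f$ bracketing $f$ and $u_g,v_g$ bracketing $g$ with $\int_a^b (u_f-v_f)\,d\alpha<\epsilon/2$ and $\int_a^b (u_g-v_g)\,d\alpha<\epsilon/2$. Then $u_f+u_g$ and $v_f+v_g$ are step functions (their sum is a step function with respect to a common refinement) that bracket $f+g$, and by Theorem~\ref{thm:steplin} their difference integrates to less than $\epsilon$; Theorem~\ref{thm:RDScrit} then yields $f+g\in RDS_\alpha[a,b]$. For scalar multiplication the only subtlety is the sign of $r$: if $r\ge 0$ then $ru_f,rv_f$ bracket $rf$ in the right order, while if $r<0$ the inequalities reverse, so $rv_f$ becomes the upper bracketing function and $ru_f$ the lower one; in either case the difference of the brackets integrates to $|r|\int_a^b(u_f-v_f)\,d\alpha$, which can be made less than $\epsilon$ by choosing the original brackets within $\epsilon/|r|$ (handling $r=0$ separately via Lemma~\ref{lem:const} or directly). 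This establishes membership $r_1f+r_2g\in RDS_\alpha[a,b]$.

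To identify the value of the integral, I would pass to the limit. Having shown integrability, I can take bracketing step functions $u_f,v_f$ for $f$ and $u_g,v_g$ for $g$ whose integrals are within $\epsilon$ of $\int_a^b f\,d\alpha$ and $\int_a^b g\,d\alpha$ respectively; then $r_1u_f+r_2u_g$ (or the appropriate sign-adjusted brackets) is a step function whose integral, by the already-proved step-function linearity (Theorem~\ref{thm:steplin}), equals $r_1\int u_f+r_2\int u_g$ and simultaneously approximates $\int_a^b(r_1f+r_2g)\,d\alpha$. Comparing the two approximations and letting $\epsilon\to 0$ forces $\int_a^b(r_1f+r_2g)\,d\alpha = r_1\int_a^b f\,d\alpha + r_2\int_a^b g\,d\alpha$.

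I expect the main obstacle to be purely bookkeeping rather than conceptual: the sign of the scalars in the bracketing inequalities must be tracked carefully (negative multipliers swap the upper and lower step functions), and the reduction from $BV$ to increasing must be done so that the \emph{same} decomposition of $\alpha$ works for $f$, $g$, and $r_1f+r_2g$ simultaneously — this is guaranteed by Theorem~\ref{thm:pnalpha}, which canonically sends every RDS-integrable function to integrability against $P\alpha$ and $N\alpha$, so there is no difficulty choosing a common decomposition. Since all the analytic content (existence of brackets, step-function linearity, monotonicity) is already available, the proof is a routine assembly, and indeed the authors note it is essentially identical to the DS-integral case in~\cite{TSI}.
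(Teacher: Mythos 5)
Your proposal is correct and takes essentially the same approach as the paper: the paper likewise splits the theorem into an addition lemma (Lemma~\ref{lem:linaddition}) and a scalar-multiplication lemma (Lemma~\ref{lem:linscalar}), proves both by the RDS criterion with bracketing step functions and step-function linearity (deferring to the identical DS-integral arguments in~\cite{TSI}), and reduces the $BV$ case to the increasing case via the canonical $P\alpha$, $N\alpha$ decomposition of Theorem~\ref{thm:pnalpha}. Your observation that Theorem~\ref{thm:pnalpha} supplies a common decomposition for $f$, $g$, and $r_1f+r_2g$ is exactly the point that makes the reduction work.
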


The proof of Theorem \ref{thm:linearity} is an immediate consequence of Lemmas~\ref{lem:linaddition} and~\ref{lem:linscalar}.  The proof of the first is identical to the proof for the DS-integral in~\cite[Theorem 4.19, Theorem 4.30]{TSI}; the proof of the second is identical to the proofs in~\cite[Theorem 1.39, Theorem 4.28, Theorem 4.30]{TSI}.

 \begin{lemma}\label{lem:linaddition}
 Given $\alpha\in BV[a,b]$ and $f,g\in RDS_\alpha[a,b]$, $f+g\in RDS_\alpha[a,b]$ and $$\int_{a}^{b}f(x)+g(x)\, d \alpha=\int_{a}^{b}f(x)\, d \alpha+\int_{a}^{b}g(x)\, d \alpha.$$
 \end{lemma}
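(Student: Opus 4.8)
The plan is to reduce to the case of an increasing integrator and then combine bracketing step functions using the step-function linearity of Theorem~\ref{thm:steplin} together with the RDS criterion, Theorem~\ref{thm:RDScrit}.

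First I would handle the reduction. Given $\alpha\in BV[a,b]$ and $f,g\in RDS_\alpha[a,b]$, Theorem~\ref{thm:pnalpha} gives $f,g\in RDS_{P\alpha}[a,b]$ and $f,g\in RDS_{N\alpha}[a,b]$, where $P\alpha,N\alpha\in I[a,b]$. If the lemma is established whenever the integrator is increasing, then $f+g\in RDS_{P\alpha}[a,b]$ and $f+g\in RDS_{N\alpha}[a,b]$ with the expected additive formula for each; applying Theorem~\ref{thm:pnalpha} again, now in the reverse direction, yields $f+g\in RDS_\alpha[a,b]$, and subtracting the two formulas gives the claimed identity for $\alpha$. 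Thus it suffices to treat $\alpha\in I[a,b]$.

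Next I would treat the increasing case. Suppose $\alpha\in I[a,b]$ and fix $\epsilon>0$. By the RDS criterion, Theorem~\ref{thm:RDScrit}, there exist step functions $u_1,v_1$ bracketing $f$ and $u_2,v_2$ bracketing $g$ with $\int_a^b (u_1-v_1)\,d\alpha<\epsilon/2$ and $\int_a^b (u_2-v_2)\,d\alpha<\epsilon/2$. The sums $u_1+u_2$ and $v_1+v_2$ are again step functions, say with respect to a common refinement of their partitions, and since $v_1+v_2\le f+g\le u_1+u_2$ pointwise, they bracket $f+g$. By step-function linearity, Theorem~\ref{thm:steplin},
\[ \int_a^b \big[(u_1+u_2)-(v_1+v_2)\big]\,d\alpha = \int_a^b (u_1-v_1)\,d\alpha + \int_a^b (u_2-v_2)\,d\alpha < \epsilon, \]
so the converse direction of Theorem~\ref{thm:RDScrit} gives $f+g\in RDS_\alpha[a,b]$.

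It then remains to identify the value of the integral. Using that any lower (respectively, upper) bracketing step function has integral at most (respectively, at least) the RDS-integral it brackets, which is immediate from Definition~\ref{def:RDSint}, I would observe that both $\int_a^b (f+g)\,d\alpha$ and $\int_a^b f\,d\alpha+\int_a^b g\,d\alpha$ lie in the interval $\big[\int_a^b(v_1+v_2)\,d\alpha,\ \int_a^b(u_1+u_2)\,d\alpha\big]$, whose length is less than $\epsilon$ by the estimate above and Theorem~\ref{thm:steplin}. Hence the two quantities differ by less than $\epsilon$, and letting $\epsilon\to 0$ forces equality. The argument is routine once the integrator is increasing; the only point requiring care is the reduction step, where one must ensure the additive formula survives the passage through $P\alpha$ and $N\alpha$ and the final subtraction, which is precisely what Theorem~\ref{thm:pnalpha} supplies.
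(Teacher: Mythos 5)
Your proposal is correct and takes essentially the same approach as the paper, which omits the details and instead cites the identical DS-integral argument in \cite[Theorems~4.19, 4.30]{TSI}: for increasing $\alpha$, bracket $f$ and $g$ by step functions via the RDS criterion (Theorem~\ref{thm:RDScrit}) and combine them with step-function linearity (Theorem~\ref{thm:steplin}), then reduce the $BV$ case to $P\alpha$ and $N\alpha$ via Theorem~\ref{thm:pnalpha}. Both your integrability argument and your identification of the value of the integral, by trapping the two quantities in a common interval of length less than $\epsilon$, are sound.
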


\begin{lemma}\label{lem:linscalar}
Given $\alpha\in BV[a,b]$ and $f\in RDS_\alpha[a,b]$, for any $c\in\mathbb{R}$, $cf\in RDS_\alpha[a,b]$ and $$\int_{a}^{b}cf(x)\, d \alpha=c\int_{a}^{b}f(x)\, d \alpha.$$
\end{lemma}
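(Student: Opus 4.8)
The plan is to prove $cf \in RDS_\alpha[a,b]$ and the scaling identity by reducing to the case $\alpha \in I[a,b]$ and then invoking the RDS criterion (Theorem~\ref{thm:RDScrit}). First I would handle the reduction over the integrator: since $\alpha \in BV[a,b]$ and $f \in RDS_\alpha[a,b]$, by Theorem~\ref{thm:pnalpha} we have $f \in RDS_{P\alpha}[a,b]$ and $f \in RDS_{N\alpha}[a,b]$ with $P\alpha, N\alpha \in I[a,b]$. If I establish the result for increasing integrators, then applying it to $P\alpha$ and $N\alpha$ separately and subtracting (using Theorem~\ref{thm:pnalpha} again) yields the general $BV$ case. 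So the core work is the case $\alpha \in I[a,b]$.

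For $\alpha \in I[a,b]$, I would split into the cases $c \geq 0$ and $c < 0$. When $c \geq 0$, fix $\epsilon > 0$; by Theorem~\ref{thm:RDScrit} there exist step functions $u,v$ bracketing $f$ with $\int_a^b u(x)-v(x)\,d\alpha < \epsilon/(c+1)$. Then $cu$ and $cv$ are step functions (by Theorem~\ref{thm:steplin}, scalar multiples of step functions are step functions with the same partition) that bracket $cf$, since $v \le f \le u$ and $c \ge 0$ give $cv \le cf \le cu$. Using step-function linearity, $\int_a^b cu(x)-cv(x)\,d\alpha = c\int_a^b u(x)-v(x)\,d\alpha < \epsilon$, so by the converse direction of the RDS criterion $cf \in RDS_\alpha[a,b]$. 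The key point for $c < 0$ is that multiplication by a negative constant \emph{reverses} the bracketing: if $v \le f \le u$ then $cu \le cf \le cv$, so the roles of the upper and lower step functions are swapped, and I would take $cv$ as the new upper function and $cu$ as the new lower function. The difference $cv - cu = |c|(u - v)$ is still controlled, so integrability follows the same way.

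To obtain the identity once integrability is known, I would pass to the limit through bracketing step functions rather than argue $\epsilon$ by $\epsilon$ directly. Specifically, for step functions $u,v$ bracketing $f$ with small integral difference, the corresponding scaled functions bracket $cf$, and Theorem~\ref{thm:steplin} gives $\int_a^b cu\,d\alpha = c\int_a^b u\,d\alpha$ at the step-function level. Taking infima and suprema, in the case $c \ge 0$ one gets $U_\alpha(cf,[a,b]) = cU_\alpha(f,[a,b])$ and $L_\alpha(cf,[a,b]) = cL_\alpha(f,[a,b])$ directly, so the common value scales by $c$; in the case $c<0$ the infimum and supremum interchange, and $\inf\{c\int u\,d\alpha\} = c\sup\{\int u\,d\alpha\}$ still delivers the factor $c$. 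Either way the integral of $cf$ equals $c$ times the integral of $f$.

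I expect the main obstacle to be purely bookkeeping rather than conceptual: correctly tracking the sign reversal in the bracketing when $c < 0$, so that the upper and lower step functions are interchanged and the infimum/supremum swap is handled without sign errors. A clean way to avoid this friction is to first prove the two special cases $c = -1$ (negation, where $-u,-v$ clearly bracket $-f$ with roles swapped) and $c \geq 0$, and then compose them to cover all $c \in \mathbb{R}$; this isolates the sign issue into the single identity $\int_a^b -f(x)\,d\alpha = -\int_a^b f(x)\,d\alpha$. Since the referenced proofs in~\cite{TSI} follow exactly this pattern, the argument should be short and I would likely state it tersely, noting that it is identical to the cited DS-integral results.
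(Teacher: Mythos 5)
Your proposal is correct and follows essentially the same route the paper intends: the paper omits the proof, citing the identical DS-integral arguments in \cite{TSI}, and those arguments are exactly your pattern of reducing to $\alpha\in I[a,b]$ via Theorem~\ref{thm:pnalpha}, applying the RDS criterion (Theorem~\ref{thm:RDScrit}) with step-function linearity (Theorem~\ref{thm:steplin}), and handling $c<0$ by swapping the roles of the bracketing step functions. Your bookkeeping (the $\epsilon/(c+1)$ normalization, the $\inf$/$\sup$ interchange for negative $c$, and the optional factorization through $c=-1$) is sound, so nothing is missing.
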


\begin{theorem}\label{thm:additivity}
(Additivity) Given $\alpha\in BV[a,b]$, for any $m\in(a,b)$, $f\in RDS_\alpha[a,b]$ if and only if $f\in RDS_\alpha[a,m]$ and $f\in RDS_\alpha[m,b]$. Furthermore,  
\begin{equation} \label{eqn:additivity1}
\int_{a}^{m}f(x)\, d \alpha+\int_{m}^{b}f(x)\, d \alpha=\int_{a}^{b}f(x)\, d \alpha.
\end{equation}
\end{theorem}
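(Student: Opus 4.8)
The plan is to prove Theorem~\ref{thm:additivity} using the RDS criterion (Theorem~\ref{thm:RDScrit}) as the main tool, mirroring the strategy used for the corresponding DS-integral result in~\cite{TSI}, but with careful attention to the singleton term at the splitting point $m$. First I would reduce to the case $\alpha\in I[a,b]$: since $\alpha\in BV[a,b]$ admits a Jordan decomposition, and by Theorem~\ref{thm:pnalpha} integrability and the integral formula are preserved under passing to $P\alpha$ and $N\alpha$, it suffices to establish both directions for increasing $\alpha$ and then assemble the general statement by linearity in the integrator.

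For the forward direction, assume $f\in RDS_\alpha[a,b]$ with $\alpha\in I[a,b]$. Fixing $\epsilon>0$, the RDS criterion yields step functions $u,v$ bracketing $f$ on $[a,b]$ with $\int_a^b (u-v)\,d\alpha<\epsilon$. By refining the underlying partition I may assume $m$ is a partition point, so that $u,v$ restrict to step functions on $[a,m]$ and on $[m,b]$ that bracket $f$ there. The key computation is that the Ross-Stieltjes integral splits: by Theorem~\ref{thm:stepadd} applied to the nonnegative step function $u-v$, we have
\begin{equation*}
\int_a^m (u-v)\,d\alpha + \int_m^b (u-v)\,d\alpha = \int_a^b (u-v)\,d\alpha < \epsilon.
\end{equation*}
Since both summands are nonnegative (as $u-v\ge 0$ and $\alpha$ is increasing, using Theorem~\ref{thm:stepmon}), each is individually less than $\epsilon$, and the RDS criterion then gives $f\in RDS_\alpha[a,m]$ and $f\in RDS_\alpha[m,b]$.

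For the converse, assume $f$ is RDS-integrable on both subintervals. Given $\epsilon>0$, I obtain bracketing step functions $u_1,v_1$ on $[a,m]$ and $u_2,v_2$ on $[m,b]$, each with the $(u-v)$-integral bounded by $\epsilon/2$. The subtle point is gluing these into a single step function on $[a,b]$: I must reconcile the two values assigned at $m$, namely $u_1(m),u_2(m)$ versus $v_1(m),v_2(m)$. The cleanest remedy is to redefine the glued functions at the point $m$ to equal $f(m)$, which is legitimate since changing a step function's value at a partition point only alters its integral by a term involving $\mu_\alpha(\{m\})$, and choosing the common value $f(m)$ preserves bracketing there. Once glued, additivity of the step-function integral (Theorem~\ref{thm:stepadd}) combines the two estimates to give $\int_a^b (u-v)\,d\alpha<\epsilon$, whence $f\in RDS_\alpha[a,b]$ by the RDS criterion.

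The main obstacle I anticipate is precisely this gluing at $m$, because the RDS-integral, unlike the DS-integral, carries the singleton term $f(x_i)\mu_\alpha(\{x_i\})$. One must ensure that the value at $m$ assigned to the glued step functions is treated consistently and that the convention $\alpha(m+)=\alpha(m)$ on $[a,m]$ and $\alpha(m-)=\alpha(m)$ on $[m,b]$ (noted in the remark following Definition~\ref{def:aleng} and used in Theorem~\ref{thm:stepadd}) correctly apportions the jump $\mu_\alpha(\{m\})=\alpha(m+)-\alpha(m-)$ between the two subintervals. This is exactly the bookkeeping that makes Theorem~\ref{thm:stepadd} valid, so once I invoke that theorem the jump term is handled automatically; the remaining care is just in verifying that setting the glued functions equal to $f(m)$ at $m$ keeps them bracketing $f$ and does not disturb the $\epsilon$-estimates. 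Finally, to obtain equation~\eqref{eqn:additivity1} itself (as opposed to mere integrability), I would take bracketing step functions with small $(u-v)$-integral and note that the step-function integral satisfies additivity exactly by Theorem~\ref{thm:stepadd}; passing to the supremum/infimum defining the RDS-integral on each piece then forces the integral over $[a,b]$ to equal the sum of the integrals over $[a,m]$ and $[m,b]$, and the general $BV$ case follows by applying this to $P\alpha$ and $N\alpha$ via Theorem~\ref{thm:pnalpha}.
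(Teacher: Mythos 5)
Your proposal is correct and follows essentially the same route as the paper's proof: reduce to $\alpha\in I[a,b]$, use the RDS criterion with restricted step functions (plus monotonicity and Theorem~\ref{thm:stepadd}) for the forward direction, glue bracketing step functions with the value $f(m)$ at the junction point for the converse, obtain \eqref{eqn:additivity1} by the sandwich argument, and handle the general $BV$ case via $P\alpha$, $N\alpha$ and Theorem~\ref{thm:pnalpha}. The bookkeeping you flag at $m$ --- the convention $\alpha(m+)=\alpha(m)$ on $[a,m]$ and $\alpha(m-)=\alpha(m)$ on $[m,b]$, and the fact that redefining the glued functions to equal $f(m)$ there only improves the estimates --- is exactly the care the paper takes.
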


\begin{proof}
The proof of the first part is similar to that for the DS-integral in~\cite[Theorem 1.39, Theorem 4.19, Theorem 4.30]{TSI}, but the second part requires more care. First, consider $\alpha\in I[a,b]$. Fix $m\in (a,b)$ and suppose $f\in RDS_\alpha[a,b]$. Fix $\epsilon>0$. Then by Theorem \ref{thm:RDScrit}, there exists $u,\,v\in S[a,b]$ which bracket $f$ such that $$\int_{a}^b u(x)-v(x)\, d \alpha <\epsilon.$$ Thus, $u|_{[a,m]}, v|_{[a, m]}\in S[a,b]$ and they bracket $f$ on $[a,m]$. Moreover, by Theorem \ref{thm:stepmon}, we have $$\int_{m}^b u(x)-v(x)\, d \alpha \ge 0,$$ and by Theorem \ref{thm:stepadd},
\begin{multline*}
    \int_{a}^m u(x)-v(x)\, d \alpha \\
    \le \int_{a}^m u(x)-v(x)\, d \alpha+\int_{m}^b u(x)-v(x)\, d \alpha
    =\int_{a}^b u(x)-v(x)\, d \alpha
    <\epsilon.
\end{multline*}
Hence, by Theorem \ref{thm:RDScrit}, $f\in RDS_\alpha[a,m]$. By a similar arguement, $f\in RDS_\alpha [m, b]$.

Now, fix $f$ such that $f\in RDS_\alpha[a,m]$ and $f\in RDS_\alpha[m,b]$. Fix $\epsilon>0$. By Theorem~\ref{thm:RDScrit}, there exists $u_1,v_1\in S[a,m]$ and $u_2, v_2\in [m, b]$ such that both pairs bracket $f$ and $$\int_{a}^m u_1(x)-v_1(x)\, d \alpha<\frac{\epsilon}{2},\quad \int_{m}^b u_2(x)-v_2(x)\, d \alpha<\frac{\epsilon}{2} .$$ We combine $u_1$ and $u_2$ as follows: let
$$u(x)=\begin{cases} u_1(x), & x\in[a,m),\\ f(m), & x=m,\\ u_2(x), & x\in (m,b]. \end{cases} $$ Similarly, we combine $v_1$ and $v_2$ as follows: let $$v(x)=\begin{cases} v_1(x), & x\in[a,m),\\ f(m), & x=m,\\ v_2(x), & x\in (m,b]. \end{cases} $$

It is easy to see $v(x) \le f(x) \le u(x)$ for all $x\in [a,b]$. Also, observe that on $[a,m]$, $u(x)-v(x)\le u_1(x)-v_1(x)$ and on $[m,b]$,  $u(x)-v(x)\le u_2(x)-v_2(x).$ Thus, by Theorems~\ref{thm:stepadd} and~\ref{thm:stepmon}, 
\begin{multline*}
    \int_{a}^b u(x)-v(x)\, d \alpha 
     = \int_{a}^m u(x)-v(x)\, d \alpha+\int_{m}^b u(x)-v(x)\, d \alpha\\
    \le \int_{a}^m u_1(x)-v_1(x)\, d \alpha+\int_{m}^b u_2(x)-v_2(x)\, d \alpha
    < \frac{\epsilon}{2}+\frac{\epsilon}{2}
    =\epsilon.
\end{multline*} 
Hence, by Theorem \ref{thm:RDScrit}, $f\in RDS_\alpha[a,b]$.
Further,  by Definition \ref{def:RDSint}, 
$$\int_{a}^b v(x)\, d \alpha\le \int_{a}^b f(x)\, d \alpha \le \int_{a}^b u(x)\, d \alpha$$ and $$\int_{a}^m v(x)\, d \alpha+\int_{m}^b v(x)\, d \alpha\le \int_{a}^m f(x)\, d \alpha+\int_{m}^b f(x)\, d \alpha\le \int_{a}^m u(x)\, d \alpha+\int_{m}^b u(x)\, d \alpha.$$
Therefore,
$$\left|\int_{a}^m f(x)\, d \alpha+\int_{m}^b f(x)\, d \alpha-\int_{a}^b f(x)\, d \alpha\right|\le\epsilon$$ for all $\epsilon>0$, and so \eqref{eqn:additivity1} holds.
% %
% $$\int_{a}^m f(x)\, d \alpha+\int_{m}^b f(x)\, d \alpha=\int_{a}^b f(x)\, d\alpha$$ 
% %
% as desired.

\medskip

Now, consider $\alpha\in BV[a,b]$. Fix $m\in (a,b)$. Suppose $f\in RDS_\alpha[a,b]$. Then by Theorem \ref{thm:pnalpha}, $f\in RDS_{P\alpha}[a,b]$ and $f\in RDS_{N\alpha}[a,b]$. By the previous case, $f\in RDS_{P\alpha}[a,m], f\in RDS_{P\alpha}[m,b], f\in RDS_{N\alpha}[a,m]$, and $f\in RDS_{N\alpha}[m,b].$ Then by Theorem \ref{thm:pnalpha}, $f\in RDS_{\alpha}[a,m]$ and $f\in RDS_{\alpha}[m,b]$. The reverse implications hold as well. Moreover, \begin{align*}
    \int_{a}^m f(x)\, d \alpha+\int_{m}^b f(x)\, d \alpha&=\int_{a}^m f(x)\, d P\alpha +\int_{m}^b f(x)\, d P\alpha\\ &\quad\quad -\int_{a}^m f(x)\, d N\alpha -\int_{m}^b f(x)\, d N\alpha\\
    &=\int_{a}^b f(x)\, d P\alpha -\int_{a}^b f(x)\, d N\alpha\\
    &=\int_{a}^b f(x)\, d \alpha.
\end{align*} 

\end{proof}

The proof of the following result is identical to the proof for the DS-integral in~\cite[Theorem 1.39, Theorem 4.19, Theorem 4.30]{TSI} and so is omitted.

\begin{theorem} \label{thm:monotonicity}
(Monotonicity) Given $\alpha\in I[a,b]$ and $f,g\in RDS_\alpha[a,b]$, if for every $x\in[a,b]$, $f(x)\ge g(x)$, then $$\int_{a}^{b}f(x)\, d \alpha\ge \int_{a}^{b}g(x)\, d \alpha.$$
\end{theorem}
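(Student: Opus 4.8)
The plan is to prove this directly from Definition~\ref{def:RDSint}, by exploiting a set inclusion between the bracketing families of step functions. Since $\alpha\in I[a,b]$, the hypotheses $f,g\in RDS_\alpha[a,b]$ mean that the integrals coincide with the common values $L_\alpha(f,[a,b])=U_\alpha(f,[a,b])$ and $L_\alpha(g,[a,b])=U_\alpha(g,[a,b])$, so it suffices to compare one of these lower (or upper) quantities. The merit of this route is that it uses nothing beyond the definition and the already-established step-function monotonicity embedded in the definitions of $L_\alpha$ and $U_\alpha$.

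First I would observe that the pointwise inequality $f\ge g$ on $[a,b]$ forces an inclusion of the lower bracketing sets from Definition~\ref{def:bracket}. Indeed, if $v\in S_L(g,[a,b])$, then $v(x)\le g(x)\le f(x)$ for every $x\in[a,b]$, so $v\in S_L(f,[a,b])$; hence $S_L(g,[a,b])\subseteq S_L(f,[a,b])$. Dually, one has $S_U(f,[a,b])\subseteq S_U(g,[a,b])$, since any step function dominating $f$ also dominates $g$.

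Next, since the supremum of $(S)\int_a^b v\,d\alpha$ taken over a larger collection is at least the supremum over a subcollection, the inclusion $S_L(g,[a,b])\subseteq S_L(f,[a,b])$ yields $L_\alpha(g,[a,b])\le L_\alpha(f,[a,b])$. Because both $f$ and $g$ are RDS-integrable, these quantities equal the respective integrals, and the desired inequality $\int_a^b g(x)\,d\alpha\le \int_a^b f(x)\,d\alpha$ follows at once. (Working with the upper sums instead gives the same conclusion via $U_\alpha(f,[a,b])\ge U_\alpha(g,[a,b])$.)

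There is no substantive obstacle here; the only points requiring care are to invoke $\alpha\in I[a,b]$ so that Definition~\ref{def:RDSint} applies directly without passing through a Jordan decomposition, and to orient both the set inclusion and the monotonicity of $\sup$ (resp.\ $\inf$) in the correct direction. An equivalent argument, if one prefers to mirror the linearity-based proofs referenced elsewhere, would set $h=f-g\ge 0$, apply Theorem~\ref{thm:linearity} to conclude $h\in RDS_\alpha[a,b]$ with $\int_a^b h\,d\alpha=\int_a^b f\,d\alpha-\int_a^b g\,d\alpha$, and then note that the zero step function lies in $S_L(h,[a,b])$ and has integral $0$, so $L_\alpha(h,[a,b])\ge 0$; this again gives the claim.
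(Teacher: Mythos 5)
Your proof is correct and is essentially the argument the paper relies on: the paper omits the proof, deferring to the identical proof for the DS-integral in \cite{TSI}, which proceeds by exactly this comparison of bracketing step functions under Definition~\ref{def:RDSint}. Your inclusion $S_L(g,[a,b])\subseteq S_L(f,[a,b])$ (equivalently $S_U(f,[a,b])\subseteq S_U(g,[a,b])$), combined with monotonicity of the supremum (resp.\ infimum) and the fact that for $\alpha\in I[a,b]$ the quantities $L_\alpha$ and $U_\alpha$ coincide with the integrals of the integrable functions $f$ and $g$, is a faithful filling-in of that omitted argument.
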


% \begin{proof}
% {\color{red} Left as an exercise in book but immediate } By Theorem \ref{thm:linearity}, $g-f\in RDS_\alpha[a,b]$. Since $f(x) \le g(x)$ for all $x\in[a,b]$, $v(x)=0$ is a step function such that $(g-f)(x)\ge v(x)$. Then, by Definition \ref{def:RDSint}, \begin{align*}
%     \int_{a}^{b}g(x)-f(x) \, d \alpha = L_\alpha(g-f, [a,b])\ge \int_{a}^{b}v(x)\, d \alpha =\int_{a}^{b}0\, d \alpha =0.
% \end{align*} So by Theorem \ref{thm:linearity}, we have the desired result.
% \end{proof}

\begin{theorem}\label{thm:linintegrator}
(Linearity of the Integrator) Given $\alpha, \beta\in BV[a,b]$ and $r_1, r_2\in\mathbb{R}$, define $\gamma = r_1\alpha+r_2\beta\in BV[a,b]$. If $f\in RDS_\alpha[a,b]$ and $f\in RDS_\beta[a,b]$, then $f\in RDS_\gamma[a,b],$ and $$\int_{a}^{b}f(x)\, d \gamma=r_1\int_{a}^{b}f(x)\, d \alpha+r_2\int_{a}^{b}f(x)\, d \beta.$$ 
\end{theorem}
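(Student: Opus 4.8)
The plan is to reduce the general statement, in which $r_1,r_2$ are arbitrary reals and $\alpha,\beta$ are merely of bounded variation, to the case of nonnegative coefficients and increasing integrators, which is exactly Lemma~\ref{lem:linintinc}. The device for this reduction is Theorem~\ref{thm:pnalpha}: since $f\in RDS_\alpha[a,b]$ and $f\in RDS_\beta[a,b]$, we obtain $f\in RDS_{P\alpha}[a,b]$, $f\in RDS_{N\alpha}[a,b]$, $f\in RDS_{P\beta}[a,b]$, and $f\in RDS_{N\beta}[a,b]$, where $P\alpha,N\alpha,P\beta,N\beta\in I[a,b]$ and, by Lemma~\ref{lemma:var-functions}, $\alpha=P\alpha-N\alpha+\alpha(a)$ and $\beta=P\beta-N\beta+\beta(a)$.

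Next I would substitute these decompositions into $\gamma=r_1\alpha+r_2\beta$ and collect terms according to the signs of the coefficients. Writing $r_k=r_k^+-r_k^-$ with $r_k^+=\max(r_k,0)$ and $r_k^-=\max(-r_k,0)$, I would set
$$G^+=r_1^+P\alpha+r_1^-N\alpha+r_2^+P\beta+r_2^-N\beta,\qquad G^-=r_1^-P\alpha+r_1^+N\alpha+r_2^-P\beta+r_2^+N\beta.$$
Each is a nonnegative linear combination of increasing functions, hence increasing, and a direct check of the four coefficients shows $G^+-G^-=r_1(P\alpha-N\alpha)+r_2(P\beta-N\beta)$, so that $\gamma=G^+-G^-+c$ with the constant $c=r_1\alpha(a)+r_2\beta(a)$.

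Now I would invoke Lemma~\ref{lem:linintinc}, applied iteratively to the (at most four) increasing summands, to conclude that $f\in RDS_{G^+}[a,b]$ and $f\in RDS_{G^-}[a,b]$, together with
$$\int_a^b f\,dG^+=r_1^+\int_a^b f\,dP\alpha+r_1^-\int_a^b f\,dN\alpha+r_2^+\int_a^b f\,dP\beta+r_2^-\int_a^b f\,dN\beta$$
and the analogous formula for $G^-$. Since $G^+$ is increasing, so is $G^++c$, and by Lemmas~\ref{lem:constint} and~\ref{lem:linintinc} we have $f\in RDS_{G^++c}[a,b]$ with $\int_a^b f\,d(G^++c)=\int_a^b f\,dG^+$. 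Because $\gamma=(G^++c)-G^-$ exhibits $\gamma$ as a difference of increasing functions with respect to which $f$ is integrable, Definition~\ref{def:RDSint} yields $f\in RDS_\gamma[a,b]$ and $\int_a^b f\,d\gamma=\int_a^b f\,dG^+-\int_a^b f\,dG^-$.

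Finally, subtracting the two integral formulas collapses the $r_k^\pm$ back into $r_k$, giving
$$\int_a^b f\,d\gamma=r_1\Big(\int_a^b f\,dP\alpha-\int_a^b f\,dN\alpha\Big)+r_2\Big(\int_a^b f\,dP\beta-\int_a^b f\,dN\beta\Big),$$
and one more application of Theorem~\ref{thm:pnalpha} identifies the two parenthesized differences with $\int_a^b f\,d\alpha$ and $\int_a^b f\,d\beta$. The only real subtlety, and the step I expect to require the most care, is the sign bookkeeping in forming $G^+$ and $G^-$: one must verify that the coefficients recombine correctly so that both functions are genuinely increasing and their difference is exactly $\gamma$ up to a constant. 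Everything else is a routine appeal to the already-established increasing-integrator lemmas.
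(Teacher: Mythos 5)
Your proof is correct, but it takes a genuinely different route from the paper. The paper proves this theorem by factoring it into two separate lemmas --- additivity of the integrator ($\gamma=\alpha+\beta$, Lemma~\ref{lem:linintaddition}) and scalar homogeneity ($\gamma=c\alpha$, Lemma~\ref{lem:linintscalar}) --- and then composing them; the proofs of those two lemmas are not given at all, but are cited as identical to the DS-integral case in \cite[Theorem 4.30]{TSI}. You instead argue directly: you pass to the positive and negative variations via Theorem~\ref{thm:pnalpha}, split the coefficients as $r_k=r_k^+-r_k^-$, assemble the increasing functions $G^{\pm}$ so that $\gamma=(G^++c)-G^-$, and then invoke only the increasing-integrator machinery already proved in the paper (Lemma~\ref{lem:linintinc}, Lemma~\ref{lem:constint}, and Definition~\ref{def:RDSint} together with the decomposition-independence in Theorem~\ref{prop:decompinv}). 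Your sign bookkeeping checks out: $G^+-G^-=r_1(P\alpha-N\alpha)+r_2(P\beta-N\beta)$, both $G^{\pm}$ are increasing as nonnegative combinations of increasing functions, the constant $c$ is harmlessly absorbed (this is exactly the unnumbered corollary following Lemma~\ref{lem:linintinc}), and the iterated use of Lemma~\ref{lem:linintinc} on pairs of increasing summands is legitimate since every intermediate sum is again increasing. What the paper's route buys is modularity and brevity --- it mirrors the DS-integral development and outsources the work; what your route buys is a self-contained proof within the paper's own results, which is arguably preferable here since the paper's two supporting lemmas are stated without proof.
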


The proof of Theorem \ref{thm:linintegrator} is an immediate consequence of Lemmas~\ref{lem:linintaddition} and~\ref{lem:linintscalar}.  The proofs of both lemmas are the same as that for the DS-integral~\cite[Theorem 4.30]{TSI}, and so omitted.

\begin{lemma}\label{lem:linintaddition}
Given $\alpha, \beta\in BV[a,b]$, define $\gamma = \alpha+\beta\in BV[a,b]$. If $f\in RDS_\alpha[a,b]$ and $f\in RDS_\beta[a,b]$, then $f\in RDS_\gamma[a,b],$ and $$\int_{a}^{b}f(x)\, d \gamma=\int_{a}^{b}f(x)\, d \alpha+\int_{a}^{b}f(x)\, d \beta.$$ 
\end{lemma}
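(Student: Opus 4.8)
The plan is to reduce immediately to the case of an increasing integrator, where Lemma~\ref{lem:linintinc} already supplies additivity, and then reassemble via the definition of the RDS-integral for functions of bounded variation. The point is that all the machinery built so far---the upper and lower sums of Definition~\ref{def:RDSint}, the RDS criterion of Theorem~\ref{thm:RDScrit}, and Lemma~\ref{lem:linintinc}---is stated only for $\alpha\in I[a,b]$. The sole new content of this lemma is therefore the bookkeeping needed to pass from increasing integrators to those of bounded variation.

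First I would invoke the hypotheses together with Definition~\ref{def:RDSint} to fix decompositions $\alpha=\alpha^+-\alpha^-$ and $\beta=\beta^+-\beta^-$ with $\alpha^\pm,\beta^\pm\in I[a,b]$, such that $f$ is RDS-integrable with respect to each of the four increasing integrators. Next I would set $\gamma^+=\alpha^++\beta^+$ and $\gamma^-=\alpha^-+\beta^-$; each is increasing as a sum of increasing functions, and $\gamma^+-\gamma^-=(\alpha^+-\alpha^-)+(\beta^+-\beta^-)=\alpha+\beta=\gamma$, so this is a valid Jordan-type decomposition of $\gamma$.

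The heart of the argument is then a direct application of Lemma~\ref{lem:linintinc} with $r_1=r_2=1$: since $f\in RDS_{\alpha^+}[a,b]$ and $f\in RDS_{\beta^+}[a,b]$, we obtain $f\in RDS_{\gamma^+}[a,b]$ with $\int_a^b f\, d\gamma^+=\int_a^b f\, d\alpha^++\int_a^b f\, d\beta^+$, and symmetrically for $\gamma^-$. By Definition~\ref{def:RDSint} this already yields $f\in RDS_\gamma[a,b]$. Finally I would compute $\int_a^b f\, d\gamma=\int_a^b f\, d\gamma^+-\int_a^b f\, d\gamma^-$, substitute the two expressions just obtained, and regroup the four terms as $\left(\int_a^b f\, d\alpha^+-\int_a^b f\, d\alpha^-\right)+\left(\int_a^b f\, d\beta^+-\int_a^b f\, d\beta^-\right)$, which by Definition~\ref{def:RDSint} equals $\int_a^b f\, d\alpha+\int_a^b f\, d\beta$.

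I expect no serious obstacle here: the only point requiring care is that the value $\int_a^b f\, d\gamma$ must not depend on the particular decomposition $\gamma^+-\gamma^-$ chosen, which is exactly guaranteed by Theorem~\ref{prop:decompinv}. This is also why I can work with the decompositions handed to me by the hypotheses rather than the canonical $P$/$N$ decomposition of Theorem~\ref{thm:pnalpha}; the latter would give an alternative but longer route, requiring the constant-integrator result of Lemma~\ref{lem:constint} to absorb the additive constants $\alpha(a)+\beta(a)$.
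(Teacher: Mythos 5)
Your proof is correct and is essentially the argument the paper has in mind: the paper omits the proof precisely because it is the standard reduction to increasing integrators, i.e., sum the two hypothesized Jordan-type decompositions to get a decomposition of $\gamma$, apply Lemma~\ref{lem:linintinc} with $r_1=r_2=1$, and regroup, with well-definedness guaranteed by Theorem~\ref{prop:decompinv}. Your closing remark correctly identifies why the canonical $P\alpha$/$N\alpha$ route of Theorem~\ref{thm:pnalpha} is an unnecessary detour here.
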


% \begin{proof}
% {\color{red} Same as book } By Theorem \ref{thm:pnalpha}, we have $f\in RDS_{P\alpha}[a,b], f\in RDS_{N\alpha}[a,b], f\in RDS_{P\beta}[a,b],$ and $f\in RDS_{N\beta}[a,b].$ Let $\gamma_+ = P\alpha+P\beta+\gamma(a)$ and $\gamma_-= N\alpha+N\beta$ which are both increasing. Then, by Lemma \ref{lem:linintinc}, $f\in RDS_{\gamma_+}[a,b]$ and $f\in RDS_{\gamma_-}[a,b]$. Hence by Definition \ref{def:RDSint}, $f\in RDS_{\gamma}[a,b]$. Moreover, \begin{align*}
%     \int_{a}^{b}f(x)\, d \gamma&= \int_{a}^{b}f(x)\, d \gamma_+-\int_{a}^{b}f(x)\, d \gamma_-\\
%     &=\int_{a}^{b}f(x)\, d P\alpha-\int_{a}^{b}f(x)\, d N\alpha+\int_{a}^{b}f(x)\, d P\beta-\int_{a}^{b}f(x)\, d N\beta\\
%     &=\int_{a}^{b}f(x)\, d \alpha+\int_{a}^{b}f(x)\, d \beta.
% \end{align*}
% \end{proof}

\begin{lemma}\label{lem:linintscalar}
Given $\alpha\in BV[a,b]$ and $c\in\mathbb{R}$, define $\gamma = c\alpha\in BV[a,b]$. If $f\in RDS_\alpha[a,b]$, then $f\in RDS_\gamma[a,b],$ and $$\int_{a}^{b}f(x)\, d \gamma=c\int_{a}^{b}f(x)\, d \alpha.$$ 
\end{lemma}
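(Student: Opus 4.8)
The plan is to reduce everything to the single analytic case $\alpha\in I[a,b]$ with $c\ge 0$, and then recover the general statement ($\alpha\in BV[a,b]$, $c\in\mathbb{R}$) by feeding the appropriate increasing pieces into the Jordan-decomposition clause of Definition~\ref{def:RDSint}. The two tools driving the core case are the step-function linearity of the integrator (Theorem~\ref{thm:steplinint}), which gives $(S)\int_a^b w\,d(c\alpha)=c\,(S)\int_a^b w\,d\alpha$ for every step function $w$ (take $r_1=c$, $r_2=0$), and the RDS criterion (Theorem~\ref{thm:RDScrit}).

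For the core case I would first dispose of $c=0$: then $\gamma=c\alpha\equiv 0$ is constant, so Lemma~\ref{lem:constint} gives $f\in RDS_\gamma[a,b]$ with integral $0=0\cdot\int_a^b f\,d\alpha$. For $c>0$ we have $c\alpha\in I[a,b]$. Fix $\epsilon>0$; since $f\in RDS_\alpha[a,b]$, Theorem~\ref{thm:RDScrit} yields bracketing step functions $u,v$ with $\int_a^b u(x)-v(x)\,d\alpha<\epsilon/c$. Scaling by the step-function identity above gives $\int_a^b u(x)-v(x)\,d(c\alpha)=c\int_a^b u(x)-v(x)\,d\alpha<\epsilon$, so $f\in RDS_{c\alpha}[a,b]$, again by Theorem~\ref{thm:RDScrit}. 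For the value, I would observe that both $\int_a^b f\,d(c\alpha)$ and $c\int_a^b f\,d\alpha$ are trapped between $c\int_a^b v\,d\alpha$ and $c\int_a^b u\,d\alpha$ (the former because $v,u$ bracket $f$ and integration against $c\alpha$ is $c$ times integration against $\alpha$ on step functions; the latter from the bracketing inequalities for the $\alpha$-integral, multiplied by $c\ge 0$). Since this interval has length $c\int_a^b u(x)-v(x)\,d\alpha<\epsilon$, the two quantities differ by less than $\epsilon$; letting $\epsilon\to 0$ gives equality.

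To pass to arbitrary $\alpha\in BV[a,b]$ and $c\in\mathbb{R}$, fix a decomposition $\alpha=\alpha^+-\alpha^-$ with $\alpha^{\pm}\in I[a,b]$ and $f\in RDS_{\alpha^{\pm}}[a,b]$, which exists since $f\in RDS_\alpha[a,b]$. If $c\ge 0$, then $c\alpha=(c\alpha^+)-(c\alpha^-)$ is a Jordan decomposition into increasing functions, and the core case applied to each piece shows $f\in RDS_{c\alpha^{\pm}}[a,b]$ with integral $c\int_a^b f\,d\alpha^{\pm}$; Definition~\ref{def:RDSint} then yields $f\in RDS_{c\alpha}[a,b]$ and $\int_a^b f\,d(c\alpha)=c\int_a^b f\,d\alpha^+-c\int_a^b f\,d\alpha^-=c\int_a^b f\,d\alpha$. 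The one genuine subtlety, and the main obstacle, is the case $c<0$: the sup/inf definition underlying the RDS-integral is only available for \emph{increasing} integrators, and $c\alpha^{\pm}$ are now decreasing. The fix is to keep the integrators increasing by swapping the roles of the two pieces, writing $c\alpha=|c|\alpha^- - |c|\alpha^+$, a valid Jordan decomposition since $|c|\alpha^{\mp}\in I[a,b]$. Applying the core case (with scalar $|c|>0$) to $|c|\alpha^-$ and $|c|\alpha^+$ and then invoking Definition~\ref{def:RDSint} gives $f\in RDS_{c\alpha}[a,b]$ and $\int_a^b f\,d(c\alpha)=|c|\int_a^b f\,d\alpha^- - |c|\int_a^b f\,d\alpha^+ = c\int_a^b f\,d\alpha$. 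Independence of these values from the chosen decomposition is guaranteed by Theorem~\ref{prop:decompinv}.
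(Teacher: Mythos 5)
Your proof is correct. Since the paper omits this proof entirely (deferring to the DS-integral analogue in \cite[Theorem 4.30]{TSI}), your argument supplies exactly the details the paper intends: the increasing case via the RDS criterion (Theorem~\ref{thm:RDScrit}) combined with step-function scaling from Theorem~\ref{thm:steplinint}, then the reduction of the general case through the Jordan-decomposition clause of Definition~\ref{def:RDSint}, with Theorem~\ref{prop:decompinv} ensuring well-definedness; in particular, you correctly identify and resolve the one genuine subtlety, namely that for $c<0$ the functions $c\alpha^{\pm}$ are decreasing, so the two pieces must be swapped by writing $c\alpha=|c|\alpha^{-}-|c|\alpha^{+}$.
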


% \begin{proof}
% {\color{red} Same as book \cite[Theorem 4.30]{TSI}} Fix $c\in\mathbb{R}$. If $c=0$, $\gamma=0$ so by Lemma \ref{lem:constint}, we are done. Suppose $c\neq 0$. By Theorem \ref{thm:pnalpha}, we have $f\in RDS_{P\alpha}[a,b]$ and $ f\in RDS_{N\alpha}[a,b].$ If $c>0$, let $\gamma_+ = cP\alpha+\gamma(a)$ and $\gamma_-= cN\alpha$. Then, by Lemma \ref{lem:linintinc}, $f\in RDS_{\gamma_+}[a,b]$ and $f\in RDS_{\gamma_-}[a,b]$. Hence, by Definition \ref{def:RDSint}, $f\in RDS_{\gamma}[a,b]$. Moreover, \begin{align*}
%     \int_{a}^{b}f(x)\, d \gamma&= \int_{a}^{b}f(x)\, d \gamma_+-\int_{a}^{b}f(x)\, d \gamma_-\\
%     &=c\int_{a}^{b}f(x)\, d P\alpha-c\int_{a}^{b}f(x)\, d N\alpha\\
%     &=c\int_{a}^{b}f(x)\, d \alpha.
% \end{align*}
% If $c<0$, let $\gamma_+= -cN\alpha$ and $\gamma_-=-cP\alpha - \gamma(a)$. Applying the same argument, we reach the desired result. 
% \end{proof}

\bigskip

We conclude this section with two lemmas, the first of which uses the RDS Criterion in its proof, and which we will use below.

\begin{lemma}\label{lem:fplus}
Given $\alpha \in BV[a,b]$ and $f\in RDS_\alpha[a,b]$, if we define $f^+(x)=\max(f(x), 0)$, then $f^+\in RDS_\alpha[a,b].$ Similarly, if we set $f^-(x) = \max(-f(x),0)$, then $f^-\in RDS_\alpha[a,b]$.
\end{lemma}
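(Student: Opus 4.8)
The plan is to apply the RDS criterion (Theorem~\ref{thm:RDScrit}) together with the observation that the truncation $t \mapsto \max(t,0)$ is both order-preserving and nonexpansive. I will first reduce to the case $\alpha \in I[a,b]$, then prove the statement for $f^+$, and finally obtain the statement for $f^-$ by applying the result for $f^+$ to $-f$.

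\emph{Reduction to increasing integrators.} Suppose $\alpha \in BV[a,b]$ and $f \in RDS_\alpha[a,b]$. By Theorem~\ref{thm:pnalpha}, $f \in RDS_{P\alpha}[a,b]$ and $f \in RDS_{N\alpha}[a,b]$, where $P\alpha, N\alpha \in I[a,b]$. If I can show that $f^+ \in RDS_\beta[a,b]$ whenever $\beta \in I[a,b]$ and $f \in RDS_\beta[a,b]$, then applying this to $\beta = P\alpha$ and $\beta = N\alpha$ gives $f^+ \in RDS_{P\alpha}[a,b]$ and $f^+ \in RDS_{N\alpha}[a,b]$, and a second application of Theorem~\ref{thm:pnalpha} yields $f^+ \in RDS_\alpha[a,b]$. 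Thus it suffices to treat $\alpha \in I[a,b]$.

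\emph{The increasing case.} Fix $\alpha \in I[a,b]$ and $\epsilon > 0$. By the RDS criterion, choose step functions $u,\,v$ that bracket $f$ with $\int_a^b (u-v)\, d\alpha < \epsilon$. Define $u^+ = \max(u,0)$ and $v^+ = \max(v,0)$; since $u$ and $v$ are step functions with respect to some common partition, so are $u^+$ and $v^+$. Because $t \mapsto \max(t,0)$ is nondecreasing, the inequalities $v \le f \le u$ give $v^+ \le f^+ \le u^+$ pointwise on $[a,b]$, so $u^+$ and $v^+$ bracket $f^+$. The key elementary estimate is that $0 \le u^+(x) - v^+(x) \le u(x) - v(x)$ for every $x$, which follows by checking the three cases according to the signs of $u(x)$ and $v(x)$, using $v \le u$. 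Consequently $(u-v) - (u^+ - v^+) \ge 0$, and monotonicity of the step integral for increasing $\alpha$ (Theorem~\ref{thm:stepmon}) yields $\int_a^b (u^+ - v^+)\, d\alpha \le \int_a^b (u-v)\, d\alpha < \epsilon$. Since $\epsilon$ was arbitrary, the RDS criterion gives $f^+ \in RDS_\alpha[a,b]$.

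\emph{The statement for $f^-$ and the main obstacle.} Observe that $f^- = \max(-f,0) = (-f)^+$. By Lemma~\ref{lem:linscalar}, $-f \in RDS_\alpha[a,b]$, and applying the result just proved to $-f$ gives $(-f)^+ = f^- \in RDS_\alpha[a,b]$. The only nonroutine step is the pointwise inequality $0 \le u^+ - v^+ \le u - v$; everything else is a direct invocation of the RDS criterion, monotonicity, and the reduction machinery already established. This inequality is precisely the statement that truncation at $0$ is order-preserving and $1$-Lipschitz, which is exactly what guarantees that truncating the bracketing step functions does not enlarge the gap measured by the step integral.
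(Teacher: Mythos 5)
Your proof is correct and follows essentially the same route as the paper: apply the RDS criterion to truncations of the bracketing step functions, conclude via monotonicity of the step integral, and reduce the $BV$ case to increasing integrators with Theorem~\ref{thm:pnalpha}. The only cosmetic differences are that the paper redefines the truncated step functions at the partition points to equal $f^+(x_i)$ (so the point terms vanish, followed by a term-by-term sum estimate), whereas you keep $\max(u,0)$ and $\max(v,0)$ everywhere and let the pointwise bound $0 \le u^+ - v^+ \le u - v$ together with Theorem~\ref{thm:stepmon} do all the work, and that you obtain the $f^-$ statement from $f^- = (-f)^+$ and Lemma~\ref{lem:linscalar} rather than repeating the argument.
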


\begin{proof}
We will prove this for $f^+$; the proof for $f^-$ is essentially the same. First, consider $\alpha\in I[a,b]$. Fix $\epsilon>0$. By Theorem \ref{thm:RDScrit}, there exists step functions $u_f, v_f$ on some common partition  $\mathcal{P}=\{x_i\}_{i=0}^n$ of $[a,b]$ which bracket $f$ such that $$\int_a^b u_f(x)-v_f(x)\, d \alpha <\epsilon.$$ Let $u_f(x)=c_i$ and $v_f(x)=d_i$ for $x\in I_i$. Define $u,\,v$ by $$u(x)=\begin{cases}\max(c_i, 0), & x\in I_i,\\ f^+(x), & x= x_i,\end{cases}$$ and $$v(x)=\begin{cases}\max(d_i, 0), & x\in I_i,\\ f^+(x), & x= x_i.\end{cases}$$ Then, for $x\in I_i$, $$v(x)=\max(d_i, 0) \le \max(f(x), 0) \le \max(c_i, 0)= u(x)$$ and for $x=x_i$, $v(x)=f^+(x)=u(x)$. Moreover, if $\max(c_i, 0)=c_i$, then $$\max(c_i, 0)-\max(d_i, 0)=c_i-\max(d_i, 0)\le c_i-d_i$$ and if $\max(c_i, 0)=0$, then $$0\le \max(c_i, 0)-\max(d_i, 0)=-\max(d_i, 0)\le 0.$$ Hence, since $c_i-d_i\ge 0$, we have $$\max(c_i, 0)-\max(d_i, 0)\le \max(c_i-d_i, 0) = c_i-d_i.$$ Now by Definition \ref{def:stepint}, \begin{align*}
    \int_a^b u(x)-v(x)\, d \alpha &= \sum_{i=0}^n [u(x_i)-v(x_i)] \mu_{\alpha}(\{x_i\})+\sum_{i=1}^n(\max(c_i, 0)-\max(d_i, 0)) \mu_{\alpha}(I_i)\\
    &=\sum_{i=1}^n(\max(c_i, 0)-\max(d_i, 0)) \mu_{\alpha}(I_i)\\
    &\le \sum_{i=1}^n(c_i-d_i) \mu_{\alpha}(I_i)\\
    &\le \sum_{i=0}^n [u_f(x_i)-v_f(x_i)] \mu_{\alpha}(\{x_i\})+\sum_{i=1}^n(c_i-d_i) \mu_{\alpha}(I_i)\\
    &=\int_a^b u_f(x)-v_f(x)\, d \alpha\\
    &<\epsilon.
\end{align*}
So by Theorem \ref{thm:RDScrit}, $g\in RDS_\alpha[a,b]$.

\smallskip

Now, consider $\alpha\in BV[a,b]$. Then by Theorem \ref{thm:pnalpha}, $f\in RDS_{P\alpha}[a,b]$ and $f\in RDS_{N\alpha}[a,b]$. By the previous case, $f^+\in RDS_{P\alpha}[a,b]$ and $f^+\in RDS_{N\alpha}[a,b]$. Hence, by Theorem \ref{thm:pnalpha}, $f^+\in RDS_{\alpha}[a,b]$ as desired.
\end{proof}

\begin{remark}
Given $f\in B[a,b]$,  $f(x)= f^+(x)-f^-(x)$ and $|f(x)|=f^+(x)+f^-(x)$.
\end{remark}

The proof of the second lemma, which relies on Lemma~\ref{lem:fplus}, is identical to that for the DS-integral in~\cite[Theorem 4.34]{TSI}, and so is omitted.

\begin{lemma}\label{lem:triangleineq}
Given $\alpha \in BV[a,b]$ and $f\in RDS_\alpha[a,b]$, $|f|\in RDS_\alpha[a,b]$ and $f\in RDS_{V\alpha}[a,b]$, and $$\left|\int_a^bf(x)\, d \alpha\right|\le \int_a^b|f(x)|\, d V\alpha.$$
\end{lemma}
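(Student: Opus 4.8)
The plan is to establish the two integrability assertions first and then deduce the inequality. For $|f|\in RDS_\alpha[a,b]$, I would invoke the Remark following Lemma~\ref{lem:fplus}, which records that $|f|=f^++f^-$. Since $f\in RDS_\alpha[a,b]$, Lemma~\ref{lem:fplus} gives $f^+,f^-\in RDS_\alpha[a,b]$, and linearity (Theorem~\ref{thm:linearity}) then yields $|f|\in RDS_\alpha[a,b]$. For $f\in RDS_{V\alpha}[a,b]$, I would use that $V\alpha=P\alpha+N\alpha$, which follows directly from Definition~\ref{def:pnalpha}. By Theorem~\ref{thm:pnalpha}, $f\in RDS_\alpha[a,b]$ implies $f\in RDS_{P\alpha}[a,b]$ and $f\in RDS_{N\alpha}[a,b]$; since $P\alpha,N\alpha\in I[a,b]$ by Lemma~\ref{lemma:var-functions}, Lemma~\ref{lem:linintinc} (with $r_1=r_2=1$) gives $f\in RDS_{P\alpha+N\alpha}[a,b]=RDS_{V\alpha}[a,b]$.

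For the inequality, the key point is that monotonicity (Theorem~\ref{thm:monotonicity}) is only available for increasing integrators, so I would not work with $\alpha$ directly but instead pass to the decomposition $\alpha=P\alpha-N\alpha+\alpha(a)$. Using Theorem~\ref{thm:pnalpha} to write $\int_a^b f\,d\alpha=\int_a^b f\,dP\alpha-\int_a^b f\,dN\alpha$ and applying the triangle inequality for real numbers, it suffices to bound each term. Since $-|f|\le f\le |f|$ pointwise and $P\alpha\in I[a,b]$, Theorem~\ref{thm:monotonicity} gives
\[
-\int_a^b |f|\,dP\alpha \le \int_a^b f\,dP\alpha \le \int_a^b |f|\,dP\alpha,
\]
so that $\bigl|\int_a^b f\,dP\alpha\bigr|\le \int_a^b |f|\,dP\alpha$, and likewise for $N\alpha$. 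Here I use that $|f|\in RDS_{P\alpha}[a,b]$ and $|f|\in RDS_{N\alpha}[a,b]$, which follows from the first paragraph together with Theorem~\ref{thm:pnalpha}. Adding the two bounds and recombining the integrators with Lemma~\ref{lem:linintinc} yields
\[
\left|\int_a^b f\,d\alpha\right| \le \int_a^b |f|\,dP\alpha + \int_a^b |f|\,dN\alpha = \int_a^b |f|\,d(P\alpha+N\alpha) = \int_a^b |f|\,dV\alpha.
\]

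I do not expect any serious obstacle, since every ingredient is already in place; the only point requiring care is the one just highlighted, namely that monotonicity must be applied to the increasing pieces $P\alpha$ and $N\alpha$ rather than to $\alpha$, and that one must verify $|f|$ is integrable against both of them before recombining into $V\alpha$.
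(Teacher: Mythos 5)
Your proposal is correct, and it is essentially the argument the paper has in mind: the paper omits the proof, deferring to \cite[Theorem 4.34]{TSI} and noting only that it relies on Lemma~\ref{lem:fplus}, and your route---writing $|f|=f^++f^-$ and using Lemma~\ref{lem:fplus} with linearity, then decomposing via Theorem~\ref{thm:pnalpha}, applying monotonicity only to the increasing integrators $P\alpha$ and $N\alpha$, and recombining through Lemma~\ref{lem:linintinc} using $V\alpha=P\alpha+N\alpha$---is precisely that standard argument. Every integrability hypothesis needed for the applications of Theorem~\ref{thm:monotonicity} and for the final recombination is verified before use, so there are no gaps.
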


% \begin{proof}
% {\color{red} Same as book } By Lemma \ref{lem:fplus}, $f^+, f^-\in RDS_\alpha[a,b]$ and so by Theorem \ref{thm:linearity}, $|f|=f^+ +f^-\in RDS_\alpha[a,b]$. To prove the inequality, first assume $\alpha\in I[a,b]$. Note in this case, by \cite[Proposition 3.8]{TSI}, $V\alpha(x)= \alpha(x)-\alpha(a)$. By Theorem \ref{thm:monotonicity}, $$\int_a^bf(x)\, d V\alpha=\int_a^bf(x)\, d \alpha \le \int_a^b|f(x)|\, d \alpha=\int_a^b|f(x)|\, d V\alpha$$ and $$-\int_a^bf(x)\, d V\alpha=-\int_a^bf(x)\, d \alpha=\int_a^b-f(x)\, d \alpha\le \int_a^b|f(x)|\, d \alpha=\int_a^b|f(x)|\, d V\alpha.$$ We combine these to give the desired inequality.

% Now suppose $\alpha\in BV[a,b]$. Recall by \ref{def:pnalpha}, $V\alpha=P\alpha+N\alpha$. Then, by Theorem \ref{thm:pnalpha}, the previous case, and Lemma \ref{lem:linintinc}, \begin{align*}
%     \left|\int_a^b f(x)\, d \alpha\right| &= \left|\int_a^b f(x)\, d P\alpha-\int_a^b f(x)\, d N\alpha\right|\\
%     &\le \left|\int_a^b f(x)\, d P\alpha\right|+\left|\int_a^b f(x)\, d N\alpha\right|\\
%     &\le \int_a^b |f(x)|\, d P\alpha+ \int_a^b |f(x)|\, d N\alpha\\
%     &=\int_a^b |f(x)|\, d V\alpha.
% \end{align*}
% \end{proof}

\section{Convergence Theorems}
\label{section:convergence}

In this section, we  begin to consider limits and the RDS-integral.  Our ultimate goal is to prove the bounded convergence theorem, but we defer that proof to Section~\ref{section:BCT} as we will need results from Sections~\ref{section:convergence} and~\ref{section:integrability} to prove it.  Here we prove that the limit and the integral can be exchanged in two cases:  first when the integrands converge uniformly, and  second when the integrators converge in bounded variation norm. 

\begin{theorem}\label{thm:uniformconv}
Given $\alpha\in BV[a,b]$ and a sequence $\{f_k\}_{k=1}^\infty$ of bounded functions on $[a,b]$, suppose $f_k\in RDS_\alpha[a,b]$ for each $k\in \mathbb{N}$ and there exists a function $f$ on $[a,b]$ such that $f_k\to f$ uniformly. Then, $f\in RDS_\alpha [a,b]$ and 
\begin{equation} \label{eqn:uniformconv1}
\lim_{k\to \infty} \int_a^b f_k(x)\, d \alpha = \int_a^b f(x)\, d \alpha.
\end{equation}
\end{theorem}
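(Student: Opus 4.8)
The plan is to prove this in two stages, following the standard template for uniform convergence of integrals. First I would handle the case $\alpha \in I[a,b]$, and then extend to general $\alpha \in BV[a,b]$ by passing to the positive and negative variations via Theorem~\ref{thm:pnalpha}. For the increasing case, the two things to establish are (i) that the limit function $f$ is itself RDS-integrable, and (ii) that the integrals converge. Both will follow from the RDS criterion, Theorem~\ref{thm:RDScrit}, combined with the integral of a constant, Lemma~\ref{lem:const}.

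For integrability of $f$, fix $\epsilon > 0$. By uniform convergence choose $k$ large enough that $\|f_k - f\|_\infty < \epsilon$ on $[a,b]$; then since $f_k \in RDS_\alpha[a,b]$, the RDS criterion gives step functions $u, v$ bracketing $f_k$ with $\int_a^b (u-v)\, d\alpha$ small. The functions $u + \epsilon$ and $v - \epsilon$ (using the inclusion of constants as step functions and Theorem~\ref{thm:steplin}) then bracket $f$, and by Lemma~\ref{lem:const} the extra terms contribute only $2\epsilon\,\mu_\alpha([a,b])$ to the difference. Since $\mu_\alpha([a,b]) = \alpha(b+) - \alpha(a-)$ is a fixed finite constant, the total difference $\int_a^b (u+\epsilon) - (v-\epsilon)\, d\alpha$ can be made smaller than any prescribed tolerance by choosing $k$ large and $u,v$ close; applying Theorem~\ref{thm:RDScrit} in the reverse direction yields $f \in RDS_\alpha[a,b]$.

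To prove \eqref{eqn:uniformconv1}, I would estimate $\left|\int_a^b f_k\, d\alpha - \int_a^b f\, d\alpha\right|$ directly. By linearity (Theorem~\ref{thm:linearity}) this equals $\left|\int_a^b (f_k - f)\, d\alpha\right|$, and since $-\|f_k - f\|_\infty \le f_k(x) - f(x) \le \|f_k - f\|_\infty$, monotonicity (Theorem~\ref{thm:monotonicity}) together with Lemma~\ref{lem:const} bounds this by $\|f_k - f\|_\infty \cdot \mu_\alpha([a,b])$. As $k \to \infty$ this tends to $0$, giving the result. For the passage to $\alpha \in BV[a,b]$, I would apply the increasing case separately to $P\alpha$ and $N\alpha$; Theorem~\ref{thm:pnalpha} then gives $f \in RDS_\alpha[a,b]$ and $\int_a^b f\, d\alpha = \int_a^b f\, dP\alpha - \int_a^b f\, dN\alpha$, and taking limits in each piece yields \eqref{eqn:uniformconv1}.

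I do not expect any serious obstacle here, since uniform convergence is strong enough to make all the estimates routine; the only point requiring a little care is ensuring that the constant $\mu_\alpha([a,b])$ is genuinely finite and independent of $k$, which is immediate from $\alpha \in I[a,b]$ (or $\alpha \in BV[a,b]$ after decomposition). The main conceptual content is simply recognizing that uniform convergence lets one bound the integral of $f_k - f$ by a constant multiple of the sup-norm, and that the additive constant used to bracket $f$ contributes a controllably small amount because $\mu_\alpha([a,b])$ is fixed. This is precisely the RDS analogue of the classical argument for the Riemann and DS-integrals, and the structure transfers without modification apart from using $\mu_\alpha$ in place of the classical $\alpha$-length.
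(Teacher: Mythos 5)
Your proposal is correct and follows essentially the same route as the paper's own proof: treat $\alpha\in I[a,b]$ first, bracket $f$ by shifting step functions that bracket a far-out $f_k$ by a small constant (controlled via Lemma~\ref{lem:const} and the RDS criterion, Theorem~\ref{thm:RDScrit}), bound $\left|\int_a^b (f_k-f)\,d\alpha\right|$ by $\|f_k-f\|_\infty\,\mu_\alpha([a,b])$, and then pass to general $\alpha\in BV[a,b]$ through $P\alpha$ and $N\alpha$ via Theorem~\ref{thm:pnalpha}. The only cosmetic difference is that by adding the constant $\epsilon$ rather than $\epsilon/(3(\alpha(b)-\alpha(a)))$ you avoid the paper's separate treatment of the degenerate case $\mu_\alpha([a,b])=0$, which is a minor simplification rather than a different argument.
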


\begin{proof}
First, we consider the case when $\alpha\in I[a,b]$. Suppose $\mu_\alpha([a,b])=\alpha(b)-\alpha(a)=0$. Then, $\alpha$ is constant. By Lemma \ref{lem:constint}, $f\in RDS_\alpha[a,b]$ and for each $k\in\mathbb{N},$ $$\int_a^bf_k(x)\, d \alpha = 0 = \int_a^bf(x)\, d \alpha.$$ Hence, \eqref{eqn:uniformconv1} holds.

Now suppose $\mu_\alpha([a,b])=\alpha(b)-\alpha(a)>0.$ Fix $\epsilon>0$. By uniform convergence, there exists $K\in \mathbb{N}$ such that for all $x\in [a,b]$, $$|f_K(x)-f(x)|<\frac{\epsilon}{3(\alpha(b)-\alpha(a))}.$$ Since $f_K \in RDS_\alpha[a,b]$, by Theorem \ref{thm:RDScrit}, there exists $u_K, v_K$ step functions with respect to a common partition $\{x_i\}_{i=0}^n$ which bracket $f_K$ such that $$\int_a^b u_K(x)-v_K(x) \, d \alpha < \frac{\epsilon}{3}.$$ Let $u_K(x)=c_i$ and $v_K(x)=d_i$ for $x\in I_i$.

Define step functions $u,\,v$ as $$u(x) = u_K(x)+\frac{\epsilon}{3(\alpha(b)-\alpha(a))}$$ and $$v(x) = v_K(x)-\frac{\epsilon}{3(\alpha(b)-\alpha(a))}$$ Hence, for $x\in [a,b]$, $$f(x) < f_K(x) + \frac{\epsilon}{3(\alpha(b)-\alpha(a))} \le u_K(x)+\frac{\epsilon}{3(\alpha(b)-\alpha(a))}= u(x)$$ and $$f(x) > f_K(x) - \frac{\epsilon}{3(\alpha(b)-\alpha(a))} \ge v_K(x)-\frac{\epsilon}{3(\alpha(b)-\alpha(a))}= v(x).$$ Moreover, \begin{align*}
    \int_a^b u(x)-v(x)\, d \alpha &= \sum_{i=0}^n [u(x_i)-v(x_i)] \mu_{\alpha}(\{x_i\})\\
    &\quad \quad+\sum_{i=1}^n(c_i+\frac{\epsilon}{3(\alpha(b)-\alpha(a))}-d_i+\frac{\epsilon}{3(\alpha(b)-\alpha(a))}) \mu_{\alpha}(I_i)\\
    &=\sum_{i=0}^n [u_K(x_i)-v_K(x_i)] \mu_{\alpha}(\{x_i\})+\sum_{i=1}^n(c_i-d_i)\mu_\alpha(I_i)\\ &\quad \quad +\frac{2\epsilon}{3(\alpha(b)-\alpha(a))}\left[\sum_{i=0}^n \mu_{\alpha}(\{x_i\})+\sum_{i=1}^n\mu_{\alpha}(I_i)\right]\\
    &=\int_a^b u_K(x)-v_K(x)\, d \alpha + \frac{2\epsilon}{3(\alpha(b)-\alpha(a))}\int_a^b 1\, d \alpha\\
    &<\frac{\epsilon}{3}+\frac{2\epsilon}{3(\alpha(b)-\alpha(a))}(\alpha(b)-\alpha(a))\\
    &=\epsilon.
\end{align*}

\noindent So by Theorem \ref{thm:RDScrit}, $f\in RDS_\alpha [a,b].$
\newline

To show that \eqref{eqn:uniformconv1} holds, fix $\epsilon>0$. Then, there exists $K$ such that for all $k\ge K^\prime$, 
$$|f_k(x)-f(x)|<\frac{\epsilon}{2(\alpha(b)-\alpha(a))}.$$ 
Hence, for $k\ge K^\prime$, 
\begin{multline*}
    \left|\int_a^b f_k(x)\, d \alpha-\int_a^b f(x)\, d \alpha\right| 
    = \left|\int_a^b f_k(x)-f(x)\, d \alpha\right|
    \le \int_a^b |f_k(x)-f(x)|\, d \alpha \\
    \le \int_a^b \frac{\epsilon}{2(\alpha(b)-\alpha(a))}\, d \alpha
    =\frac{\epsilon}{2(\alpha(b)-\alpha(a))}(\alpha(b)-\alpha(a))
    =\frac{\epsilon}{2}<\epsilon.
\end{multline*}

Now consider $\alpha\in BV[a,b]$. Then by Theorem \ref{thm:pnalpha}, for each $k\in \mathbb{N}$, $f_k\in RDS_{P\alpha}[a,b]$ and $f_k\in RDS_{N\alpha}[a,b]$. By the previous case, $f\in RDS_{P\alpha}[a,b]$ and $f\in RDS_{N\alpha}[a,b]$. Thus, $f\in RDS_\alpha[a,b]$. Moreover, 
\begin{multline*}
    \lim_{k\to \infty} \int_a^b f_k(x)\, d \alpha
    =\lim_{k\to \infty} \left[\int_a^b f_k(x)\, d P\alpha-\int_a^b f_k(x)\, d N\alpha\right]\\
    =\int_a^b f(x)\, d P\alpha-\int_a^b f(x)\, d N\alpha
    =\int_a^b f(x)\, d \alpha.
\end{multline*} 
\end{proof}

\medskip

To state our result on convergence of the integrators, we first give the definition of the bounded variation norm, following~\cite[Proposition 3.43]{TSI}; recall the definition of the variation of a BV function in Definition~\ref{defn:BV}.

\begin{definition}
    The function $\|\cdot\|_{BV}: BV[a,b]\to [0, \infty)$ defined for $\alpha\in BV[a,b]$ by $$\|\alpha\|_{BV}= \sup_{x\in [a,b]}|\alpha(x)|+ V\alpha(b).$$ 
\end{definition}

\begin{remark}
For a proof that $BV[a,b]$ is a normed space with respect to this norm, and for properties of this space, see~\cite[Section~3.6]{TSI}.
\end{remark}

\begin{theorem}\label{thm:convintegrator}
Given a sequence of functions $\{\alpha_k\}_{k=1}^\infty$, suppose for each $k\in \mathbb{N}$, $\alpha_k\in BV[a,b]$ and $\alpha_k\to \alpha$ in BV norm. Moreover, suppose  $f\in RDS_{\alpha_k}[a,b]$ for all $k\in \mathbb{N}$. Then, $f\in RDS_\alpha[a,b]$ and 
\begin{equation} \label{eqn:convintegrator1}
\lim_{k\to\infty}\int_a^bf(x)\, d \alpha_k=\int_a^bf(x)\, d \alpha.
\end{equation}
\end{theorem}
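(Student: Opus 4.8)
The plan is to reduce everything to the case of an increasing integrator through the canonical decomposition $\alpha = P\alpha - N\alpha + \alpha(a)$, verify integrability with the RDS criterion, and then obtain the limit \eqref{eqn:convintegrator1} from a single a priori bound. Throughout fix $M$ with $|f(x)| \le M$ on $[a,b]$. The bound I would record first is: for any $\gamma \in BV[a,b]$ and any $g \in RDS_\gamma[a,b]$ with $|g| \le M$, $\left|\int_a^b g\,d\gamma\right| \le M\,V\gamma(b) \le M\|\gamma\|_{BV}$. This follows by combining Lemma~\ref{lem:triangleineq}, which gives $\left|\int_a^b g\,d\gamma\right| \le \int_a^b |g|\,dV\gamma$, with monotonicity (Theorem~\ref{thm:monotonicity}) and the constant computation $\int_a^b M\,dV\gamma = M\mu_{V\gamma}([a,b]) = M\,V\gamma(b)$ from Lemma~\ref{lem:const} (recall $V\gamma(a)=0$). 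The same bound applies to step functions, which are RDS-integrable with respect to every BV integrator.

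The heart of the argument is showing $f \in RDS_\alpha[a,b]$; once this is known, \eqref{eqn:convintegrator1} is immediate. Indeed, since $f \in RDS_\alpha$ and $f \in RDS_{\alpha_k}$, linearity of the integrator (Theorem~\ref{thm:linintegrator}) gives $f \in RDS_{\alpha - \alpha_k}$ and $\int_a^b f\,d\alpha - \int_a^b f\,d\alpha_k = \int_a^b f\,d(\alpha - \alpha_k)$, which by the bound above is at most $M\,V(\alpha - \alpha_k)(b) \le M\|\alpha - \alpha_k\|_{BV} \to 0$. To prove $f \in RDS_\alpha$, by Theorem~\ref{thm:pnalpha} it suffices to show $f \in RDS_{P\alpha}[a,b]$ and $f \in RDS_{N\alpha}[a,b]$; I will do the first via the RDS criterion (Theorem~\ref{thm:RDScrit}), the second being identical. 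Since $f \in RDS_{\alpha_k}$, Theorem~\ref{thm:pnalpha} also gives $f \in RDS_{P\alpha_k}$ for every $k$. Given $\epsilon > 0$, I would first choose $k$ so large that $2M\,V(P\alpha - P\alpha_k)(b) < \epsilon/2$ (this is the delicate point, discussed below), and then, since $P\alpha_k \in I[a,b]$, invoke Corollary~\ref{cor:RDScritbdd} to produce step functions $u,v$ bracketing $f$ with $|u|,|v| \le M$ and $\int_a^b (u - v)\,dP\alpha_k < \epsilon/2$. Writing $P\alpha = P\alpha_k + (P\alpha - P\alpha_k)$ and using linearity of the integrator for step functions (Theorem~\ref{thm:steplinint}) together with the a priori bound applied to the step function $u - v$ (which is nonnegative and bounded by $2M$), I obtain $\int_a^b (u-v)\,dP\alpha \le \int_a^b (u-v)\,dP\alpha_k + 2M\,V(P\alpha - P\alpha_k)(b) < \epsilon$, so the RDS criterion yields $f \in RDS_{P\alpha}$.

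The main obstacle is justifying the choice of $k$, i.e.\ showing that $P\alpha_k \to P\alpha$ and $N\alpha_k \to N\alpha$ in the total variation seminorm. This rests on the non-expansiveness of the variation map: for $\phi, \psi \in BV[a,b]$, $V(V\phi - V\psi,[a,b]) \le V(\phi - \psi,[a,b])$, which one proves by applying the reverse triangle inequality $\left|V(\phi,[t_{j-1},t_j]) - V(\psi,[t_{j-1},t_j])\right| \le V(\phi - \psi,[t_{j-1},t_j])$ on each subinterval of an arbitrary partition and summing, using additivity of the variation. Since $P\alpha - P\alpha_k = \tfrac12\big[(V\alpha - V\alpha_k) + (\alpha - \alpha_k)\big]$ up to an additive constant (which does not affect the variation), this gives $V(P\alpha - P\alpha_k)(b) \le V(\alpha - \alpha_k)(b) \le \|\alpha - \alpha_k\|_{BV} \to 0$, and likewise for $N\alpha$. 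This continuity of the maps $V, P, N$ is part of the development of $BV[a,b]$ as a normed space in \cite[Section~3.6]{TSI}, which I would cite rather than reprove. With this in hand, Theorem~\ref{thm:pnalpha} gives $f \in RDS_\alpha$ and the argument of the second paragraph delivers \eqref{eqn:convintegrator1}.
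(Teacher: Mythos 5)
Your proposal is correct and follows essentially the same route as the paper's proof: reduction to increasing integrators via Theorem~\ref{thm:pnalpha}, verification of the RDS criterion using the bounded bracketing step functions of Corollary~\ref{cor:RDScritbdd} together with the estimate of Lemma~\ref{lem:triangleineq} (giving $\bigl|\int_a^b g\,d\gamma\bigr|\le M\,V\gamma(b)\le M\|\gamma\|_{BV}$ for $|g|\le M$), and the BV-norm convergence of $P\alpha_k$ and $N\alpha_k$, which the paper cites from \cite[Proposition~3.47]{TSI} and you reprove via non-expansiveness of the variation map. The differences are purely organizational: you work directly with $P\alpha_k$ rather than first treating a general increasing sequence and then specializing, and you omit the paper's inessential modification of the step functions at the partition points.
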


\begin{proof}
 First, we consider the case where all $\alpha_k\in I[a,b]$. Fix $\epsilon>0$. Note $f\in B[a,b]$ so there exists $M>0$ such that $|f(x)|\le M$ for all $x\in [a,b]$. Further, there exists $K\in\mathbb{N}$ such that $$||\alpha-\alpha_k||_{BV}<\frac{\epsilon}{3M}.$$ Since $f\in RDS_{\alpha_k}[a,b]$, by Corollary \ref{cor:RDScritbdd}, there exist step functions $u_K, v_K$ such that $$-M\le v_K(x)\le f(x)\le u_K(x)\le M$$ and $$\int_a^b u_K(x)-v_K(x)\, d \alpha_k < \frac{\epsilon}{3}.$$ Suppose $u_K, v_K$ are defined with respect to the common partition $\{x_i\}_{i=0}^n$ where $u_K(x)=c_i, v_K(x)=d_i$ for $x\in I_i$. Define step functions $u,\,v$ by $$u(x)=\begin{cases}u_K(x), & x\in I_i,\\f(x_i), & x=x_i,\\\end{cases} \quad \text{ and }\quad v(x)=\begin{cases}v_K(x), & x\in I_i,\\f(x_i), & x=x_i.\\\end{cases}$$ So, \begin{align*}
     \int_a^b u(x)-v(x)\, d \alpha_k &= \sum_{i=0}^n(u(x_i)-v(x_i))\mu_{\alpha_k}(\{x_i\})+\sum_{i=1}^n(c_i-d_i)\mu_{\alpha_k}(I_i)\\
     &=\sum_{i=1}^n(c_i-d_i)\mu_{\alpha_k}(I_i)\\
     &\le \sum_{i=0}^n(u_K(x_i)-v_K(x_i))\mu_{\alpha_k}(\{x_i\})+\sum_{i=1}^n(c_i-d_i)\mu_{\alpha_k}(I_i)\\
     &=\int_a^b u_K(x)-v_K(x)\, d \alpha_k\\
     &<\frac{\epsilon}{3}.
 \end{align*} 
 
 Then by Lemma \ref{lem:triangleineq}, \begin{align*}
     \int_a^b u(x)-v(x)\, d \alpha &= \int_a^b u(x)-v(x)\, d \alpha_k+\int_a^b u(x)-v(x)\, d (\alpha-\alpha_k)\\
     &\le \int_a^b u(x)-v(x)\, d \alpha_k+\left|\int_a^b u(x)-v(x)\, d (\alpha-\alpha_k)\right|\\
     &<\frac{\epsilon}{3}+\int_a^b|u(x)-v(x)|\, d V(\alpha-\alpha_k)\\
     &\le \frac{\epsilon}{3}+2M\int_a^b\, d V(\alpha-\alpha_k)\\
     &= \frac{\epsilon}{3}+2MV(\alpha-\alpha_k)(b)\\
     &\le \frac{\epsilon}{3}+2M\|\alpha-\alpha_k\|_{BV}\\
     &<\frac{\epsilon}{3}+2M\frac{\epsilon}{3M}\\
     &=\epsilon.
 \end{align*}
 Therefore, by Theorem \ref{thm:RDScrit}, $f\in RDS_\alpha[a,b]$.

To show that \eqref{eqn:convintegrator1} holds, fix $\epsilon>0$. Since $\alpha_k \to \alpha$ in BV norm, there exists $K^\prime$ such that for all $k\ge K^\prime$, 
$$\|\alpha_k-\alpha\|_{BV}<\frac{\epsilon}{M}.$$ 
Then, for all $k\ge K^\prime$, 
\begin{multline*}
    \left|\int_a^b f(x)\, d \alpha_k-\int_a^b f(x)\, d \alpha\right| 
    = \left|\int_a^b f(x)\, d (\alpha_k-\alpha)\right|
    \le \int_a^b |f(x)|\, d V(\alpha_k-\alpha) \\
    \le M\int_a^b dV(\alpha_k-\alpha)
    =MV(\alpha_k-\alpha)(b)
    \le M\|\alpha_k-\alpha\|_{BV}
    <\epsilon.
\end{multline*}
 
 Now, consider $\alpha_k\in BV[a,b]$. Then by Theorem \ref{thm:pnalpha}, $f\in RDS_{P\alpha_k}[a,b]$ and $f\in RDS_{N\alpha_k}[a,b]$ for every $k\in\mathbb{N}$. By \cite[Proposition 3.47]{TSI}, $P\alpha_k \to P\alpha, N\alpha_k\to N\alpha$ in $BV$ norm. Thus, $f\in RDS_{P\alpha}[a,b]$ and $f\in RDS_{N\alpha}[a,b]$ by the previous case. Therefore, $f\in RDS_\alpha[a,b]$. Moreover, 
 \begin{multline*}
     \lim_{k\to \infty} \int_a^b f(x) \, d \alpha_k 
     = \lim_{k\to \infty} \left[\int_a^b f(x) \, d P\alpha_k -\int_a^b f(x) \, d N\alpha_k\right] \\
     = \int_a^b f(x) \, d P\alpha -\int_a^b f(x) \, d N\alpha
     =\int_a^b f(x) \, d \alpha.
 \end{multline*}
\end{proof}

\medskip

Finally, we note that in Theorem~\ref{thm:convintegrator}, convergence in BV norm is necessary, in the sense that it cannot be replaced with the weaker condition of uniform convergence (which is implied by convergence in norm).  This was shown for the interior DS-integral in~\cite[Example~4.52]{TSI}.  In this example all the integrators are continuous, so by Theorem~\ref{thm:RDSiffDSintegrator} we conclude that the same example holds for the RDS-integral as well.

\section{RDS Integrability Theorems}
\label{section:integrability}

In this section we turn to the following problem:  given $\alpha \in BV[a,b]$, characterize the functions $f\in B[a,b]$ such that $f\in RDS_\alpha[a,b]$.  Our goal is to prove a version of the Lebesgue criterion for the RDS-integral, Theorem~\ref{thm:RDSLebcrit}; this will let us completely characterize the $f\in RDS_\alpha[a,b]$ in terms of the discontinuities of $f$ and $\alpha$.   To accomplish this, we need to work with the so-called saltus decomposition of functions in $BV[a,b]$: this will allow us to study the behavior of the RDS-integral at a discontinuity of $\alpha$ by considering a Heaviside function that corresponds to this discontinuity.  

Finally, as an application of the results in this section, we prove a generalized version of integration by parts for the RDS-integral.   For the classical definitions of the RS or DS-integrals, integration by parts can be stated as an integrability theorem:  see, for example,~\cite[Theorem~29.7]{MR0393369}. While we do not prove such a version here, it still seemed natural to include integration by parts in this section. 

\medskip

We begin with a classical result that also holds for all  definitions of the Stieltjes integral.

\begin{theorem}\label{thm:continuousinRDS}
Suppose $\alpha\in BV[a,b]$ and $f\in C[a,b]$. Then, $f\in RDS_\alpha[a,b]$.
\end{theorem}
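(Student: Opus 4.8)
The statement is that any continuous $f$ is RDS-integrable against any $\alpha \in BV[a,b]$. My plan is to first reduce to the case $\alpha \in I[a,b]$, then verify the RDS criterion (Theorem~\ref{thm:RDScrit}) directly using uniform continuity. The reduction to increasing $\alpha$ is standard in this paper: by Lemma~\ref{lemma:var-functions} and Theorem~\ref{thm:pnalpha}, it suffices to show $f \in RDS_{P\alpha}[a,b]$ and $f \in RDS_{N\alpha}[a,b]$ where $P\alpha, N\alpha \in I[a,b]$. So I would open by saying: ``By Theorem~\ref{thm:pnalpha} it suffices to treat the case $\alpha \in I[a,b]$.''

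\textbf{Main argument.} Assuming $\alpha \in I[a,b]$, I would handle the degenerate case $\mu_\alpha([a,b]) = \alpha(b) - \alpha(a) = 0$ first: then $\alpha$ is constant and Lemma~\ref{lem:constint} gives $f \in RDS_\alpha[a,b]$ immediately. Otherwise $\mu_\alpha([a,b]) > 0$. Fix $\epsilon > 0$. Since $[a,b]$ is compact, $f$ is uniformly continuous, so there is $\delta > 0$ such that $|x - y| < \delta$ implies $|f(x) - f(y)| < \epsilon / \mu_\alpha([a,b])$. Choose a partition $\mathcal{P} = \{x_i\}_{i=0}^n$ with mesh $|\mathcal{P}| < \delta$. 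On each closed interval $\overline{I_i} = [x_{i-1}, x_i]$, continuity on a compact set lets $f$ attain its sup $M_i$ and inf $m_i$, and uniform continuity forces $M_i - m_i \le \epsilon / \mu_\alpha([a,b])$. I then form the best-fit step functions $\hat u, \hat v$ of $f$ with respect to $\mathcal{P}$ (Definition~\ref{def:bestfitstepfunctions}), which bracket $f$ and agree with $f$ at each partition point. The key point is that the jump term cancels: since $\hat u(x_i) - \hat v(x_i) = f(x_i) - f(x_i) = 0$ at every partition point, by Definition~\ref{def:stepint} we get
\begin{equation*}
\int_a^b \hat u(x) - \hat v(x)\, d\alpha = \sum_{i=1}^n (M_i - m_i)\, \mu_\alpha(I_i) \le \frac{\epsilon}{\mu_\alpha([a,b])} \sum_{i=1}^n \mu_\alpha(I_i).
\end{equation*}
Now I would note that the best-fit sup/inf over the open interval $I_i$ is bounded by those over $\overline{I_i}$, so the bound $M_i - m_i \le \epsilon/\mu_\alpha([a,b])$ still controls the open-interval oscillation.

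\textbf{Finishing and the obstacle.} To close the estimate I need $\sum_{i=1}^n \mu_\alpha(I_i) \le \mu_\alpha([a,b])$. Since $\mu_\alpha(I_i) = \alpha(x_i-) - \alpha(x_{i-1}+)$ and $\alpha$ is increasing, the sum $\sum_i \mu_\alpha(I_i)$ telescopes to something at most $\alpha(b) - \alpha(a) = \mu_\alpha([a,b])$ (the partition-point jumps $\alpha(x_i+) - \alpha(x_i-) \ge 0$ are dropped), which gives $\int_a^b \hat u - \hat v\, d\alpha \le \epsilon$ — in fact strictly less, or I can simply run with $\epsilon$ arbitrary. By the RDS criterion, Theorem~\ref{thm:RDScrit}, this yields $f \in RDS_\alpha[a,b]$. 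The main subtlety to watch is precisely the bookkeeping of the jump and open-interval $\alpha$-lengths: one must confirm both that the point-mass term genuinely vanishes because $\hat u$ and $\hat v$ coincide with $f$ at partition points, and that the finite additivity/nonnegativity of $\mu_\alpha$ (noted right after Definition~\ref{def:aleng}) makes the telescoped open-interval sum no larger than $\mu_\alpha([a,b])$. This is essentially the only place where the new definition of $\alpha$-length behaves differently from the classical one, so it is where I would be most careful; everything else is a routine adaptation of the Darboux-integral argument.
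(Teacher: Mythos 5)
Your proposal is correct and follows essentially the same route as the paper's proof: reduce to $\alpha\in I[a,b]$ via Theorem~\ref{thm:pnalpha}, dispose of the constant case with Lemma~\ref{lem:constint}, then use uniform continuity together with step functions that equal $f$ at the partition points (so the point-mass term vanishes) and the nonnegativity/finite additivity of $\mu_\alpha$ to bound $\sum_i\mu_\alpha(I_i)$ by $\mu_\alpha([a,b])$, closing with the RDS criterion, Theorem~\ref{thm:RDScrit}. The only cosmetic differences are that the paper takes suprema/infima over the closed intervals $\overline{I_i}$ (rather than your best-fit functions over the open $I_i$) and phrases the final bound as $\sum_i\mu_\alpha(I_i)\le\int_a^b 1\,d\alpha$ instead of your telescoping estimate; both are immaterial.
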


\begin{proof}
We will first consider the case when $\alpha\in I[a,b]$. The proof is very similar to the proof of Theorem~\ref{thm:RDSiffDSintegrand}.  Suppose $\mu_\alpha([a,b])= \alpha(b)-\alpha(a) = 0$. Then, $\alpha$ is constant, and so by Lemma \ref{lem:constint}, $f\in RDS_\alpha[a,b]$.

Now suppose $\mu_\alpha([a,b])= \alpha(b)-\alpha(a) >0$. Fix $\epsilon>0$. Since $f$ is continuous on the closed interval $[a,b]$, it must also be uniformly continuous on $[a,b]$. Fix $\delta$, $N$ and the regular partition $\mathcal{P}_N=\{x_i\}_{i=1}^N=\{a+\frac{b-a}{N}i\}_{i=0}^N$.  Define step functions $u,\,v$ as in~\eqref{eqn:RDSiffDSintegrand1}.  Then $u,\,v$ bracket $f$ on $[a,b]$, and, since the partition intervals have lengths less than $\delta$, if $x\in I_i$, then 
$$u(x)-v(x) =M_i- m_i = f(x_i^M)-f(x_i^m)= |f(x_i^M)-f(x_i^m)| < \frac{\epsilon}{\alpha(b)-\alpha(a)}, $$ 
and if $x=x_i$, $u(x)-v(x)=0$. Hence, by Definition \ref{def:stepint}, 
\begin{align*}
    \int_a^b u(x)-v(x)\, d \alpha &= \sum_{i=0}^N [u(x_i)-v(x_i)] \mu_{\alpha}(\{x_i\})+\sum_{i=1}^N[M_i-m_i] \mu_{\alpha}(I_i)\\
    &<\sum_{i=1}^N\frac{\epsilon}{\alpha(b)-\alpha(a)}\mu_{\alpha}(I_i)\\
    &=\frac{\epsilon}{\alpha(b)-\alpha(a)}\sum_{i=1}^N\mu_{\alpha}(I_i)\\
    &\le \frac{\epsilon}{\alpha(b)-\alpha(a)}\left[\sum_{i=0}^N\mu_\alpha(\{x_i\})+\sum_{i=1}^N\mu_{\alpha}(I_i)\right]\\
    &=\frac{\epsilon}{\alpha(b)-\alpha(a)}\int_a^b 1\, d \alpha\\
    &=\epsilon.
\end{align*}
Therefore, by Theorem \ref{thm:RDScrit}, $f\in RDS_\alpha[a,b]$.
\newline

Finally, if  $\alpha\in BV[a,b]$, then by the previous case, $f\in RDS_{P\alpha}[a,b]$ and $f\in RDS_{N\alpha}[a,b]$. Thus, by Theorem \ref{thm:pnalpha}, $f\in RDS_\alpha[a,b]$.

\end{proof}

\medskip

In our next result we show that if the integrand is a function of bounded variation, then it is integrable with respect to any integrator in $BV[a,b]$.  Already with this result, we see that the RDS-integral is much more general than the classical definitions of the Stieltjes integral:  the RS-integral does not exist if $f$ and $\alpha$ have a common discontinuity (see \cite[Theorem~5.31]{TSI}), and the classical exterior DS-integral only exists in certain cases (see~\cite[Theorem~4.13]{Nielson}).  This result also holds for the interior DS-integral (see~\cite[Theorem~4.38]{TSI}) and  was one of the motivations for using that definition.

To prove this result  we introduce the saltus decomposition which decomposes a function of bounded variation  into a continuous piece and an infinite sum of Heaviside functions.  This decomposition is developed in detail in~\cite[Section~3.5]{TSI}; here we refine that definition slightly to meet our needs.

\begin{definition}\label{def:saltusset}
    A saltus set in $[a,b]$ is a sequence of ordered pairs $\{(x_i, a_i)\}$ such that the points $x_i$ are a collection of disjoint points in $[a,b]$, and the $a_i\in \mathbb{R}$, $a_i\neq 0$, are such that 
    $$\sum_{i=1}^\infty |a_i|<\infty.$$ 
    Associated with any saltus set is a saltus function,
    $$f(x) = \sum_{i=1}^\infty a_i H_c(x-x_i),$$
    where  $c\in \mathbb{R}$. If $c=0$,  $f$ is a left continuous saltus function, and if $c=1$, it is a right continuous saltus function.  A saltus function is reduced if when $c=1$, $x_i\neq a$ for all $i\in \mathbb{N}$, or if when $c=0$, $x_i\neq b$ for all $i\in \mathbb{N}$.
\end{definition}

\begin{prop}\label{prop:ReducedSaltusfunctions}
    Given $f\in BV[a,b]$, there exists a saltus decomposition into functions of bounded variation
    \begin{equation} \label{eqn:reducedSaltusfunctions1}
f= Gf+S_Lf+S_Rf,
\end{equation}
    where $Gf$ is continuous,  $S_Lf$ is a reduced, left continuous saltus function,  and $S_Rf$ is a reduced, right continuous saltus function.
\end{prop}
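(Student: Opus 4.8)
The plan is to isolate the jump behavior of $f$ at each of its discontinuities and repackage it into two saltus functions -- one carrying the left-hand portion of each jump and one the right-hand portion -- leaving a continuous remainder. First I would invoke the structural facts about functions of bounded variation established in \cite[Section~3.5]{TSI}: the set of discontinuities of $f$ is at most countable, the one-sided limits $f(x-)$ and $f(x+)$ exist at every point, and the total jump is controlled by the total variation. Concretely, enumerating the discontinuities as $\{x_i\}$ and writing the left and right jumps as $j_i^- = f(x_i)-f(x_i-)$ and $j_i^+ = f(x_i+)-f(x_i)$, one has $\sum_i \big(|j_i^-|+|j_i^+|\big) \le V(f,[a,b]) < \infty$, so (retaining only the indices with a nonzero coefficient, as required by Definition~\ref{def:saltusset}) both families define saltus sets.

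With the coefficients in hand I would set
$$S_Rf(x) = \sum_i j_i^-\, H_1(x-x_i), \qquad S_Lf(x) = \sum_i j_i^+\, H_0(x-x_i),$$
so that $S_Rf$ is right continuous and $S_Lf$ is left continuous. The assignment is forced by the requirement that the remainder be continuous: at each $x_i$ the function $H_1(\cdot-x_i)$ contributes a left jump of $1$ and no right jump, while $H_0(\cdot-x_i)$ contributes a right jump of $1$ and no left jump. Summability of the coefficients guarantees, exactly as in \cite[Section~3.5]{TSI}, that each series converges to a function of bounded variation. I would then define $Gf = f - S_Lf - S_Rf$ and verify continuity by checking the one-sided jumps cancel: since $S_Rf$ contributes a left jump of precisely $j_i^-$ (and no right jump) and $S_Lf$ a right jump of precisely $j_i^+$ (and no left jump), we get $Gf(x_i)-Gf(x_i-)=0$ and $Gf(x_i+)-Gf(x_i)=0$. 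Combined with continuity at the non-jump points, this establishes \eqref{eqn:reducedSaltusfunctions1}.

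Finally I would address the reducedness condition using the conventions $f(a-)=f(a)$ and $f(b+)=f(b)$. Under these conventions the left jump at $a$ and the right jump at $b$ both vanish, so $a$ never appears as a node of $S_Rf$ and $b$ never appears as a node of $S_Lf$; this is precisely reducedness. Equivalently, $H_1(\cdot-a)$ and $H_0(\cdot-b)$ are constant on $[a,b]$, so such terms would contribute nothing and can be discarded.

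The main obstacle I anticipate is the continuity of $Gf$ at points that are accumulation points of the discontinuity set: there continuity cannot be checked jump-by-jump, but it follows from the uniform smallness of the tails $\sum_{i \ge N}\big(|j_i^-|+|j_i^+|\big)$ together with the continuity of each finite partial sum away from its finitely many nodes. I would record this via the standard uniform-convergence argument already used for saltus functions in \cite[Section~3.5]{TSI}; relative to that cited result, the genuinely new content here is merely the bookkeeping that splits the jumps between the left- and right-continuous pieces and the endpoint argument ensuring each piece is reduced.
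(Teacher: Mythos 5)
Your proposal is correct and follows essentially the same route as the paper: the paper simply cites \cite[Theorem 3.38]{TSI} for the decomposition with exactly your formulas $S_Lf=\sum_i\big(f(x_i+)-f(x_i)\big)H_0(x-x_i)$ and $S_Rf=\sum_i\big(f(x_i)-f(x_i-)\big)H_1(x-x_i)$, whereas you re-derive that cited result from the summability of the jumps and the uniform-convergence argument of \cite[Section~3.5]{TSI}. Your endpoint argument for reducedness (the conventions $f(a-)=f(a)$, $f(b+)=f(b)$ force the coefficient of $a$ in $S_Rf$ and of $b$ in $S_Lf$ to vanish) is precisely the paper's argument.
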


\begin{proof}
    Let $\{x_i\}_{i=1}^\infty$ be the set of discontinuities of $f$. By \cite[Theorem 3.38]{TSI}, we have a saltus decomposition 
    \eqref{eqn:reducedSaltusfunctions1},
    where $Gf$ is a continuous function of bounded variation, and 
    \[ S_Lf = \sum_{i=1}^\infty \big(f(x_i+)-f(x_i)\big) H_0(x-x_i) \;
    \text{and} 
    \; S_Rf = \sum_{i=1}^\infty \big(f(x_i)-f(x_i-)\big) H_1(x-x_i)\]
    are left and right continuous saltus functions (which are functions of bounded variation:  see \cite[Proposition~3.37]{TSI}).  
    It follows from this definition (also see \cite[Remark~3.42]{TSI}) that we may assume that $b$ is not in the saltus set of $S_Lf$ since $f(b+)-f(b)=0$ so even if $f$ is discontinuous at $b$, the corresponding term in the definition of $S_Lf$ is $0$.  Similarly, we may assume that $a$ is not in the saltus set of $S_Rf$.   Thus, both $S_Lf$ and $S_Rf$ are reduced saltus functions.
\end{proof}

\begin{remark}
The reduced saltus functions $S_Lf$ and $S_Rf$ satisfy $S_Lf(a)=S_Rf(a)=0$.
\end{remark}

\begin{theorem}\label{thm:BVinRDS}
Suppose $\alpha\in BV[a,b]$ and $f\in BV[a,b]$. Then, $f\in RDS_\alpha[a,b]$.
\end{theorem}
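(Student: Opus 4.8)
The plan is to reduce the general statement to the case where the integrator $\alpha$ is increasing, and then use the saltus decomposition of the integrand $f$ to split it into a continuous piece and two saltus pieces. First I would invoke Theorem~\ref{thm:pnalpha}: since $P\alpha, N\alpha \in I[a,b]$, it suffices to show that $f \in RDS_{P\alpha}[a,b]$ and $f \in RDS_{N\alpha}[a,b]$, because these then combine to give $f \in RDS_\alpha[a,b]$. So I may assume without loss of generality that $\alpha \in I[a,b]$. Next, by Proposition~\ref{prop:ReducedSaltusfunctions}, write $f = Gf + S_Lf + S_Rf$, where $Gf \in CBV[a,b]$ and $S_Lf, S_Rf$ are reduced saltus functions. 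Since the RDS-integral is linear in the integrand (Theorem~\ref{thm:linearity}), it is enough to show that each of these three pieces is in $RDS_\alpha[a,b]$.

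The continuous piece is immediate: since $Gf \in C[a,b]$, Theorem~\ref{thm:continuousinRDS} gives $Gf \in RDS_\alpha[a,b]$. The real work is showing that a reduced saltus function is RDS-integrable with respect to $\alpha \in I[a,b]$. I would handle $S_Lf$ and $S_Rf$ by essentially symmetric arguments, so focus on one, say $S_Rf = \sum_{i=1}^\infty a_i H_1(x - x_i)$ with $\sum |a_i| < \infty$. The natural strategy is to approximate $S_Rf$ uniformly by its partial sums $S_N(x) = \sum_{i=1}^N a_i H_1(x - x_i)$, since the tail $\sum_{i>N} |a_i| \to 0$ controls $\|S_Rf - S_N\|_\infty$. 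Each partial sum $S_N$ is a finite linear combination of shifted Heaviside functions; each such Heaviside function is a step function, hence by Proposition~\ref{prop:stepagree} lies in $RDS_\alpha[a,b]$, and by linearity so does $S_N$. Then Theorem~\ref{thm:uniformconv} (uniform convergence of integrands) yields $S_Rf \in RDS_\alpha[a,b]$, since $S_N \to S_Rf$ uniformly.

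The main obstacle I anticipate is the uniform convergence claim: I need that the partial sums of a saltus function converge uniformly, not merely pointwise. This should follow from the Weierstrass $M$-test, since $|a_i H_1(x-x_i)| \le |a_i|$ and $\sum |a_i| < \infty$; the summability condition in Definition~\ref{def:saltusset} is exactly what guarantees this, and uniform convergence of such saltus series is presumably already established in the development of~\cite[Section~3.5]{TSI}. A secondary subtlety is bookkeeping at the endpoints: because $S_Rf$ is reduced (so no jump point equals $a$, and $S_Rf(a) = 0$), the boundary conventions $\alpha(a-) = \alpha(a)$ and $\alpha(b+) = \alpha(b)$ do not cause the approximating step functions to misrepresent the integrand near $a$ or $b$. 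Once uniform convergence is in hand, Theorem~\ref{thm:uniformconv} does all the remaining work with no direct estimate on the $\alpha$-lengths required.

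Combining the three pieces by linearity gives $f \in RDS_\alpha[a,b]$ for increasing $\alpha$, and then the reduction via Theorem~\ref{thm:pnalpha} completes the proof for general $\alpha \in BV[a,b]$.
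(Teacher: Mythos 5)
Your proposal is correct and follows essentially the same route as the paper's proof: saltus-decompose $f$ via Proposition~\ref{prop:ReducedSaltusfunctions}, handle $Gf$ by Theorem~\ref{thm:continuousinRDS}, treat each saltus part by uniform approximation with step functions (which lie in $RDS_\alpha[a,b]$ by Proposition~\ref{prop:stepagree}) followed by Theorem~\ref{thm:uniformconv}, and combine the pieces by Theorem~\ref{thm:linearity}. The only difference is your preliminary reduction to $\alpha\in I[a,b]$ via Theorem~\ref{thm:pnalpha}, which is valid but unnecessary, since every result you invoke is already stated for $\alpha\in BV[a,b]$.
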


\begin{proof}
By Proposition~\ref{prop:ReducedSaltusfunctions}, form the saltus decomposition of $f$.  By Theorem \ref{thm:continuousinRDS}, $Gf\in RDS_{\alpha}[a,b]$. By \cite[Lemma 3.36]{TSI}, there exists  a sequence of step functions $\{S_L^nf\}_{n=1}^\infty$ such that $S_L^nf \to S_L f$ uniformly. Since these are step functions, $S_L^n f\in RDS_\alpha[a,b]$ by Proposition~\ref{prop:stepagree}. Thus, by Theorem~\ref{thm:uniformconv}, $S_Lf\in RDS_\alpha[a,b]$. Similarly, $S_Rf\in RDS_\alpha[a,b]$. Therefore, by Theorem~\ref{thm:linearity},  $f\in RDS_\alpha[a,b].$ 
\end{proof}

\medskip

We now consider the RDS-integral when the integrand $\alpha$ is a saltus function.  We will show that every bounded function is RDS-integrable when the integrator is a saltus function that arises in the saltus decomposition of a function of bounded variation.  As a first step we give two key results for integrators that are Heaviside functions.  These results capture the fact that such integrators act like Lebesgue-Stieltjes measures that are point masses.  This behavior highlights where our definition improves on the historic ones.  (Compare this result to the behavior of the interior DS-integral in~\cite[Lemma~5.6]{TSI}.)

\begin{prop}\label{prop:BddinRDSH_c}
Suppose $\alpha\in BV[a,b]$ is the Heaviside function $\alpha(x)=H_c(x-x_0)$ for some $x_0\in (a,b), c\in \mathbb{R}$  and $f\in B[a,b]$. Then, $f\in RDS_\alpha[a,b]$ and 
\begin{equation} \label{eqn:BddinRDSH_c}
\int_a^b f(x)\, d \alpha = f(x_0).
\end{equation}
\end{prop}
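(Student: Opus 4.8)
The plan is to exploit the fact that for this integrator the $\alpha$-length $\mu_\alpha$ is a single point mass of weight $1$ sitting at $x_0$. Indeed, $\mu_\alpha(\{x_0\})=\alpha(x_0+)-\alpha(x_0-)=1-0=1$, whereas $\mu_\alpha(\{x\})=0$ for every $x\neq x_0$ and, for the nodes $a,b$ and the open intervals of the partition $\{a,x_0,b\}$, one checks $\mu_\alpha(\{a\})=\mu_\alpha(\{b\})=0$ and $\mu_\alpha\big((a,x_0)\big)=\mu_\alpha\big((x_0,b)\big)=0$ (using the conventions $\alpha(a-)=\alpha(a)$, $\alpha(b+)=\alpha(b)$). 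Consequently, whenever a step function $g$ is taken with respect to a partition having $x_0$ as a node, every term in Definition~\ref{def:stepint} vanishes except the one at $x_0$, so that $(S)\int_a^b g\,d\alpha=g(x_0)\,\mu_\alpha(\{x_0\})=g(x_0)$.

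First I would dispose of the increasing case $c\in\{0,1\}$, where $\alpha=H_c(\cdot-x_0)\in I[a,b]$, and establish integrability directly from the RDS criterion, Theorem~\ref{thm:RDScrit}. Writing $M=\sup f$ and $m=\inf f$, I would define step functions relative to $\{a,x_0,b\}$ by setting $u\equiv M$ and $v\equiv m$ off $x_0$ with $u(x_0)=v(x_0)=f(x_0)$; these bracket $f$. By the observation above, $\int_a^b (u-v)\,d\alpha=(u-v)(x_0)\,\mu_\alpha(\{x_0\})=0<\epsilon$, so $f\in RDS_\alpha[a,b]$; and since $\int_a^b u\,d\alpha=\int_a^b v\,d\alpha=f(x_0)$, squeezing through $\int_a^b v\,d\alpha\le L_\alpha\le U_\alpha\le\int_a^b u\,d\alpha$ yields $\int_a^b f\,d\alpha=f(x_0)$.

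To reach arbitrary $c\in\mathbb{R}$, where $\alpha$ need no longer be monotone, I would avoid computing a Jordan decomposition by hand and instead use the algebraic identity
$$H_c(x-x_0)=(1-c)\,H_0(x-x_0)+c\,H_1(x-x_0),$$
valid because both sides equal $0$, $c$, $1$ on $\{x<x_0\}$, $\{x=x_0\}$, $\{x>x_0\}$ respectively. Since $f$ is RDS-integrable against each of the increasing integrators $H_0(\cdot-x_0)$ and $H_1(\cdot-x_0)$ by the previous step, linearity of the integrator (Theorem~\ref{thm:linintegrator}) gives $f\in RDS_\alpha[a,b]$ together with
$$\int_a^b f\,d\alpha=(1-c)f(x_0)+c\,f(x_0)=f(x_0),$$
which is exactly \eqref{eqn:BddinRDSH_c}.

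The only real subtlety, and the step I would be most careful with, is the bookkeeping of the $\alpha$-lengths at and around $x_0$: verifying that the two open intervals $(a,x_0)$, $(x_0,b)$ and the endpoints $a,b$ all contribute zero, so that the entire integral of any admissible step function collapses onto the single point $x_0$. Everything else is a routine application of the RDS criterion and of linearity in the integrator. I would also note that the value $c=\alpha(x_0)$ never enters the final answer, reflecting the fact that the RDS-integral against a point mass sees only the value $f(x_0)$, precisely as the Lebesgue--Stieltjes integral does.
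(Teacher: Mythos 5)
Your proof is correct, and its core is the same as the paper's: bracket $f$ by step functions on the partition $\{a,x_0,b\}$ that are pinned to $f(x_0)$ at $x_0$ and constant elsewhere, observe that every $\alpha$-length except $\mu_\alpha(\{x_0\})=1$ vanishes, and squeeze. The difference lies in how the two arguments treat general $c$. The paper runs the computation for arbitrary $c\in\mathbb{R}$ in a single pass, concluding from Definition~\ref{def:RDSint} and Proposition~\ref{prop:lowerupperineq} that $f(x_0)\le L_\alpha(f,[a,b])\le U_\alpha(f,[a,b])\le f(x_0)$; but those statements are formulated only for $\alpha\in I[a,b]$, i.e.\ for $c\in[0,1]$, and when $c\notin[0,1]$ the function $H_c(\cdot-x_0)$ is not monotone, so membership in $RDS_\alpha[a,b]$ literally requires exhibiting a decomposition $\alpha=\alpha^+-\alpha^-$ with $\alpha^\pm\in I[a,b]$ and $f\in RDS_{\alpha^\pm}[a,b]$. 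Your two-step route --- settle the increasing cases $c\in\{0,1\}$ by the RDS criterion (Theorem~\ref{thm:RDScrit}), then write $H_c(\cdot-x_0)=(1-c)H_0(\cdot-x_0)+c\,H_1(\cdot-x_0)$ and invoke linearity in the integrator (Theorem~\ref{thm:linintegrator}) --- supplies exactly this missing ingredient, so it is in fact tighter than the paper's argument on the non-monotone case. What the paper's version buys is brevity and self-containedness (one direct computation, no appeal to Theorem~\ref{thm:linintegrator}); what yours buys is that every step is licensed by a statement whose hypotheses are actually satisfied, at the modest cost of the extra decomposition identity.
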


\begin{proof}
Since $f\in B[a,b]$, there exists $M>0$ such that $|f(x)|\le M$ for every $x\in [a,b]$. Define step functions $u,\,v$ by 
$$u(x) = \begin{cases}M, & x\in [a, b]\backslash \{x_0\},\\ f(x_0), & x=x_0, \end{cases} \quad \text{and} \quad 
v(x) = \begin{cases}-M, & x\in [a, b]\backslash \{x_0\},\\ f(x_0), & x=x_0. \end{cases}$$ 
Then $v(x)\le f(x)\le u(x)$ and, by Definition \ref{def:stepint},
\begin{align*}
    \int_{a}^b u(x)\, d \alpha &= u(a)\mu_\alpha(\{a\})+u(x_0)\mu_\alpha(\{x_0\})+u(b)\mu_\alpha(\{b\})\\&\quad \quad+M\cdot\mu_\alpha((a, x_0))+M\cdot\mu_\alpha((x_0, b))\\
    &= M\cdot 0 + f(x_0)\cdot 1+M\cdot 0 + M\cdot 0+M\cdot 0\\
    &=f(x_0).
\end{align*}
Similarly,
\begin{align*}
    \int_{a}^b v(x)\, d \alpha &= v(a)\mu_\alpha(\{a\})+v(x_0)\mu_\alpha(\{x_0\})+v(b)\mu_\alpha(\{b\})\\&\quad \quad-M\cdot\mu_\alpha((a, x_0))-M\cdot\mu_\alpha((x_0, 1))\\
    &= -M\cdot 0 + f(x_0)\cdot 1-M\cdot 0 -M\cdot 0-M\cdot 0\\
    &=f(x_0).
\end{align*}
Thus by Definition \ref{def:RDSint} and Proposition \ref{prop:lowerupperineq}, $$f(x_0)\le L_\alpha(f, [a,b]) \le U_\alpha(f, [a, b])\le f(x_0),$$ which proves that $f\in RDS_\alpha[a, b]$ and \eqref{eqn:BddinRDSH_c} holds.
\end{proof}

In Proposition~\ref{prop:BddinRDSH_c} we assumed that the discontinuity of the Heaviside function was in the interior of the interval $(a,b)$. We now consider when the discontinuity lies at $x_0=a$ or $x_0=b$. If $c=1$ and $x_0=a$ then $H_c(x-x_0)=1$, a constant function on $[a,b]$ and so, the result would not be true by Lemma \ref{lem:constint}. A similar situation happens if $c=0$ and $x_0=b$. Thus, we consider only the combinations $c=0, x_0=a$ or $c=1, x_0=b$.

\begin{prop}\label{prop:BddinRDSH_cExtraCases}
Suppose $\alpha\in BV[a,b]$ is the Heaviside function $\alpha(x)=H_c(x-x_0)$ with either $x_0=a, c=0$ or $x_0=b, c=1$. If $f\in B[a,b]$, then $f\in RDS_\alpha[a,b]$ and $$\int_a^b f(x)\, d \alpha = f(x_0).$$
\end{prop}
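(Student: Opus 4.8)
The plan is to follow the proof of Proposition~\ref{prop:BddinRDSH_c} almost verbatim, the one difference being that the jump of $\alpha$ now sits at an endpoint rather than in the interior. Consequently the bracketing step functions will be taken with respect to the trivial partition $\{a,b\}$, so that there is a single open partition interval $(a,b)$ to account for instead of the two intervals $(a,x_0)$ and $(x_0,b)$ used before.

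Concretely, since $f\in B[a,b]$ I would fix $M>0$ with $|f(x)|\le M$ on $[a,b]$ and define step functions $u,\,v$ by $u(x)=M$, $v(x)=-M$ for $x\in[a,b]\setminus\{x_0\}$, and $u(x_0)=v(x_0)=f(x_0)$. These bracket $f$, and each is a step function with respect to $\{a,b\}$, so by Definition~\ref{def:stepint},
$$\int_a^b u(x)\, d\alpha = u(a)\mu_\alpha(\{a\}) + u(b)\mu_\alpha(\{b\}) + M\,\mu_\alpha((a,b)),$$
and similarly for $v$ with $-M$ replacing $M$ in the last term.

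The heart of the matter is evaluating these $\alpha$-lengths, and this is where the endpoint conventions $\alpha(a-)=\alpha(a)$ and $\alpha(b+)=\alpha(b)$ from the remark after Definition~\ref{def:aleng} do the work. In each case I would verify that $\mu_\alpha(\{x_0\})=\alpha(x_0+)-\alpha(x_0-)=1$, while $\mu_\alpha$ of the other endpoint singleton and of $(a,b)$ both vanish because $\alpha$ is constant off $\{x_0\}$. For instance, when $x_0=a$ and $c=0$ one has $\alpha\equiv 1$ on $(a,b]$ with $\alpha(a)=0$, so $\mu_\alpha(\{a\})=1-0=1$ and $\mu_\alpha((a,b))=\alpha(b-)-\alpha(a+)=1-1=0$; the case $x_0=b$, $c=1$ is symmetric, with the jump read off at $b$ via $\alpha(b+)=\alpha(b)=1$. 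Substituting these values gives $\int_a^b u\, d\alpha = f(x_0) = \int_a^b v\, d\alpha$.

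Finally, with both bracketing integrals equal to $f(x_0)$, Definition~\ref{def:RDSint} and Proposition~\ref{prop:lowerupperineq} yield
$$f(x_0)\le L_\alpha(f,[a,b])\le U_\alpha(f,[a,b])\le f(x_0),$$
so $f\in RDS_\alpha[a,b]$ with common value $f(x_0)$. I do not expect a genuine obstacle here; the only step demanding care is the bookkeeping of the endpoint conventions when computing $\mu_\alpha(\{x_0\})$, which is precisely the subtlety that forced us to exclude the degenerate pairings $(x_0,c)=(a,1)$ and $(b,0)$ in the discussion preceding the statement.
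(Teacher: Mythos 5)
Your proposal is correct and follows essentially the same route as the paper's own proof: the paper likewise reuses the bracketing step functions $u,v$ from Proposition~\ref{prop:BddinRDSH_c} on the trivial partition $\{a,b\}$, computes $\mu_\alpha(\{x_0\})=1$ and the vanishing of the remaining $\alpha$-lengths via the endpoint conventions $\alpha(a-)=\alpha(a)$, $\alpha(b+)=\alpha(b)$, and concludes by squeezing $L_\alpha$ and $U_\alpha$ between the two step-function integrals. Your explicit verification of the $\alpha$-length bookkeeping in each case is, if anything, slightly more detailed than the paper's.
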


\begin{proof}
Suppose $x_0=a$ and $c=0$.  Define $u,\,v$  as in the proof of Proposition \ref{prop:BddinRDSH_c}. Then, \begin{align*}
    \int_{a}^b u(x)\, d \alpha &= u(a)\mu_\alpha(\{a\})+u(b)\mu_\alpha(\{b\})+M\cdot\mu_\alpha((a, b))\\
    &= f(a)\cdot (1-c) + M\cdot 0+ M\cdot 0\\
    &= f(a),
\end{align*}

and \begin{align*}
    \int_{a}^b v(x)\, d \alpha &= v(a)\mu_\alpha(\{a\})+v(b)\mu_\alpha(\{b\})-M\cdot\mu_\alpha((a, b))\\
    &= f(a)\cdot (1-c) - M\cdot 0- M\cdot 0\\
    &= f(a).
\end{align*}

Thus, as in the proof of Proposition \ref{prop:BddinRDSH_c}, we are done. The other case $x_0=b, c=1$, follows similarly.
\end{proof}

\begin{theorem}\label{thm:BddinRDSsaltus}
Suppose $\alpha\in BV[a,b]$ is a reduced saltus function of the form 
$$\alpha(x)=\sum_{i=1}^\infty a_iH_c(x-x_i).$$  If $f\in B[a,b]$, then $f\in RDS_\alpha[a,b]$ and 
$$\int_a^b f(x)\, d \alpha = \sum_{i=1}^\infty a_if(x_i).$$
\end{theorem}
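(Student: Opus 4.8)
The plan is to reduce the general statement to the results already established for individual Heaviside functions, namely Propositions~\ref{prop:BddinRDSH_c} and~\ref{prop:BddinRDSH_cExtraCases}, and then pass to the infinite sum using the uniform convergence theorem for the RDS-integral, Theorem~\ref{thm:uniformconv}. The key observation is that since $\{(x_i,a_i)\}$ is a saltus set, we have $\sum_{i=1}^\infty |a_i| < \infty$, so the partial sums of $\alpha$ converge. First I would define the partial-sum integrators
\[ \alpha_n(x) = \sum_{i=1}^n a_i H_c(x-x_i), \]
each of which is a finite linear combination of Heaviside functions and hence lies in $BV[a,b]$.

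Next I would handle each finite integrator $\alpha_n$. By linearity of the integrator (Theorem~\ref{thm:linintegrator}) together with the single-Heaviside results, every bounded $f$ is RDS-integrable with respect to $\alpha_n$, and
\[ \int_a^b f(x)\, d\alpha_n = \sum_{i=1}^n a_i f(x_i). \]
Here I must be slightly careful: the reduced-saltus hypothesis guarantees that when $c=1$ the points $x_i\neq a$ and when $c=0$ the points $x_i\neq b$, so each $H_c(x-x_i)$ falls under either Proposition~\ref{prop:BddinRDSH_c} (when $x_i$ is interior) or Proposition~\ref{prop:BddinRDSH_cExtraCases} (when $x_i$ is an allowed endpoint), and in every case the single integral contributes exactly $a_i f(x_i)$.

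The main work is then passing to the limit. Rather than converge the integrands, here the integrators $\alpha_n$ converge to $\alpha$, so I would prefer to invoke Theorem~\ref{thm:convintegrator} (convergence of integrators in BV norm). To do so I must verify that $\alpha_n \to \alpha$ in the BV norm, which is where the summability $\sum |a_i|<\infty$ is essential: since
\[ \alpha - \alpha_n = \sum_{i=n+1}^\infty a_i H_c(x-x_i), \]
both the supremum term and the total-variation term of $\|\alpha-\alpha_n\|_{BV}$ are controlled by $\sum_{i=n+1}^\infty |a_i|$ (each Heaviside function contributes variation $1$ and supremum $1$), which tends to $0$ as $n\to\infty$. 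This is exactly the content of~\cite[Proposition~3.37]{TSI} and its surrounding estimates, so I would cite that the tail sum gives $\|\alpha-\alpha_n\|_{BV}\to 0$. Having established BV-norm convergence and RDS-integrability of $f$ with respect to each $\alpha_n$, Theorem~\ref{thm:convintegrator} immediately yields $f\in RDS_\alpha[a,b]$ together with
\[ \int_a^b f(x)\, d\alpha = \lim_{n\to\infty}\int_a^b f(x)\, d\alpha_n = \lim_{n\to\infty}\sum_{i=1}^n a_i f(x_i) = \sum_{i=1}^\infty a_i f(x_i), \]
where the last series converges absolutely because $f$ is bounded and $\sum|a_i|<\infty$.

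The step I expect to be the main obstacle is the BV-norm estimate $\|\alpha-\alpha_n\|_{BV}\to 0$; one must be sure that the tail of an infinite sum of shifted Heaviside functions really has small variation, not merely small supremum, so that Theorem~\ref{thm:convintegrator} (which genuinely needs norm convergence and not just uniform convergence) applies. Everything else is bookkeeping: the finite case is routine linearity, and the endpoint subtleties are precisely the reason the hypothesis requires a \emph{reduced} saltus function. An alternative route, avoiding the BV-norm convergence theorem, would be to estimate directly that the best-fit step functions bracketing $f$ differ in integral by a tail sum $2M\sum_{i=n+1}^\infty |a_i|$ and invoke the RDS criterion, Theorem~\ref{thm:RDScrit}; but the integrator-convergence approach is cleaner and reuses machinery already in place.
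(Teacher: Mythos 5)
Your proposal is correct and follows essentially the same route as the paper's proof: partial sums $\alpha_n$, the single-Heaviside results (Propositions~\ref{prop:BddinRDSH_c} and~\ref{prop:BddinRDSH_cExtraCases}) combined with linearity in the integrator, BV-norm convergence of $\alpha_n$ to $\alpha$ controlled by the tail $\sum_{i>n}|a_i|$, and then Theorem~\ref{thm:convintegrator} to pass to the limit. The only difference is cosmetic: the paper cites a ready-made result for the BV-norm convergence of saltus partial sums rather than estimating the tail by hand as you do, and your citation of Theorem~\ref{thm:linintegrator} for the finite case is in fact the appropriate one.
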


\begin{proof}
Let $\alpha_n$ denote the partial sums of $\alpha$: that is, 
$$\alpha_n(x)=\sum_{i=1}^n a_iH_c(x-x_i).$$ 
By \cite[Proposition 3.48]{TSI}, $\alpha_n\to \alpha$ in BV norm. Moreover, for every $n\in \mathbb{N}$,  $\alpha_n$ is a linear combination of Heaviside functions. Therefore, by Propositions~\ref{prop:BddinRDSH_c} and~\ref{prop:BddinRDSH_cExtraCases}, and Theorem \ref{thm:linearity}, $f\in RDS_{\alpha_n}[a,b]$ for every $n\in\mathbb{N}$. Hence, by Theorem \ref{thm:convintegrator}, $f\in RDS_\alpha[a,b]$ and 
\begin{multline*}
    \int_a^b f(x)\, d \alpha 
    = \lim_{n\to \infty} \int_a^b f(x)\, d \alpha_n\\
    = \lim_{n\to \infty} \sum_{i=1}^n a_i\int_a^b f(x)\, d H_c(x-x_i)
    = \lim_{n\to \infty}\sum_{i=1}^n a_if(x_i)
    = \sum_{i=1}^\infty a_i f(x_i).
\end{multline*}

\end{proof}

We have two immediate corollaries of Theorem~\ref{thm:BddinRDSsaltus}. First, we have that the Dirichlet function,
$$D(x)=\begin{cases}
        1,& x\in \mathbb{Q},\\
        0, & x\not \in \mathbb{Q},
    \end{cases}$$
is always integrable against a saltus function.  This stands in contrast to both the interior and exterior DS-integrals. 

\begin{corollary}\label{cor:Dirichlet}
    Let $\alpha\in BV[a,b]$ be a reduced saltus function.  Then $D\in RDS_{\alpha}[a,b]$.
\end{corollary}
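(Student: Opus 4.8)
The plan is to observe that this is a direct application of Theorem~\ref{thm:BddinRDSsaltus}, so there is almost nothing to do. The only hypothesis that theorem places on the integrand is that it be bounded, and the Dirichlet function $D$ takes only the values $0$ and $1$, so $|D(x)|\le 1$ for all $x\in[a,b]$ and hence $D\in B[a,b]$. Since $\alpha$ is a reduced saltus function, it has the form $\alpha(x)=\sum_{i=1}^\infty a_i H_c(x-x_i)$ required by the theorem, and I would simply apply Theorem~\ref{thm:BddinRDSsaltus} with $f=D$ to conclude at once that $D\in RDS_\alpha[a,b]$. As a bonus, the same theorem even gives the value of the integral, namely $\int_a^b D(x)\,d\alpha=\sum_{i=1}^\infty a_i D(x_i)$, so that the integral reduces to summing $a_i$ over those jump points $x_i$ that happen to be rational.

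There is essentially no obstacle to surmount: the theorem does all the work, and the single verification needed is the (trivial) boundedness of $D$. The interest of the statement lies instead in the contrast with the classical theory. The Dirichlet function is discontinuous at every point of $[a,b]$, so it is not Riemann--Darboux integrable, and it likewise fails to be integrable for the interior or exterior DS-integrals against typical integrators. For the RDS-integral against a saltus integrator, however, the integral detects the integrand only at the countably many jump points $x_i$, at each of which $D$ has a perfectly well-defined value; the wildly discontinuous behavior of $D$ on the rest of the interval is simply invisible to $\mu_\alpha$. Thus boundedness is genuinely the only thing that must be checked, and the conclusion follows immediately.
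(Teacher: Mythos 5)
Your proof is correct and matches the paper exactly: the paper presents this as an immediate consequence of Theorem~\ref{thm:BddinRDSsaltus}, which is precisely your argument, with boundedness of $D$ the only hypothesis to check. Your observation that the integral equals $\sum_{i=1}^\infty a_i D(x_i)$ is also a valid bonus that the paper's statement omits.
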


% \begin{proof}
%     Let $\alpha= S\alpha +s\alpha$ where $S\alpha$ is a reduced saltus function and $s\alpha$ is a step function containing the endpoint cases. By Theorem \ref{thm:BddinRDSsaltus} and Proposition \ref{prop:BddinRDSH_cExtraCases}, $D\in RDS_{S\alpha}[a,b]$ and $D\in RDS_{s\alpha}[a,b]$. Hence, by Theorem \ref{thm:linintegrator}, $D\in RDS_{\alpha}[a,b]$.
% \end{proof}

Second, we can show that the values of $\alpha$ at its discontinuities in the interior of $[a,b]$ do not affect the integral, so we can normalize the integrand by assuming it is left or right continuous.

\begin{corollary}\label{cor:WLOGleftcontinuity}
    Given $\alpha\in BV[a,b]$, let $\{x_i\}_{i=1}^\infty$ be its set of discontinuities and suppose $\alpha$ is not discontinuous at $b$.  Suppose $\beta\in BV[a,b]$ is such that $\alpha(x)=\beta(x)$ for all $x\neq x_i$ and $\beta(x_i)=\beta(x_i-)$ for every $i$. (That is, $\beta$ is left continuous.)  Then, $f\in RDS_\alpha[a,b]$ if and only if $f\in RDS_\beta [a,b]$ and in this case, 
    \begin{equation} \label{eqn:wlog1}
    \int_a^b f(x)\, d\alpha = \int_a^b f(x)\, d\beta.
    \end{equation}
    The same is true if we assume $\alpha$ is not discontinuous at $a$ and we take $\beta$ to be right continuous.
\end{corollary}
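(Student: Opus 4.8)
The plan is to reduce everything to a single computation with the difference $\gamma := \alpha - \beta \in BV[a,b]$, by showing that \emph{every} bounded function is RDS-integrable against $\gamma$ with integral zero. Granting this claim, the corollary follows at once from linearity of the integrator (Theorem~\ref{thm:linintegrator}): if $f\in RDS_\beta[a,b]$, then since also $f\in RDS_\gamma[a,b]$ and $\alpha=\beta+\gamma$, we obtain $f\in RDS_\alpha[a,b]$ with $\int_a^b f\,d\alpha=\int_a^b f\,d\beta+\int_a^b f\,d\gamma=\int_a^b f\,d\beta$; and conversely, writing $\beta=\alpha-\gamma$ yields the reverse implication together with \eqref{eqn:wlog1}. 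Thus the entire content lies in the claim about $\gamma$. The conceptual reason it should be true is that $\alpha$ and $\beta$ have the same $\alpha$-lengths $\mu$ on every singleton and open interval; the $\gamma$-framing is simply the way to make this rigorous for a general (non-increasing) integrator.

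First I would pin down the structure of $\gamma$. Since $\alpha$ and $\beta$ agree off the countable set $\{x_i\}$, they have identical one-sided limits at every point: a one-sided limit is computed by approaching through points outside $\{x_i\}$, where the two functions coincide. Hence $\gamma(x+)=\gamma(x-)=0$ for every $x$, so $\gamma$ is a ``pure spike'' function, vanishing except at the $x_i$, where $\gamma(x_i)=\alpha(x_i)-\alpha(x_i-)=:j_i$. Using the convention $\alpha(a-)=\alpha(a)$ together with the left-continuous normalization $\beta(a)=\alpha(a)$, the spike at $a$ vanishes since $j_a=\alpha(a)-\alpha(a-)=0$; and the hypothesis that $\alpha$ is not discontinuous at $b$ forces $b\notin\{x_i\}$. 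Consequently all nonzero spikes lie in the open interval $(a,b)$, and $\sum|j_i|\le V(\alpha,[a,b])<\infty$. The main care in the proof is exactly this endpoint bookkeeping: a nonzero spike at $a$ or $b$ would contribute a term $\pm f(a)$ or $\pm f(b)$ (via Proposition~\ref{prop:BddinRDSH_cExtraCases} and Lemma~\ref{lem:constint}) and break the cancellation below, so the hypotheses must be used precisely here.

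Finally I would realize $\gamma$ as a difference of saltus functions with the \emph{same} coefficients. Because $H_1(x-x_i)-H_0(x-x_i)$ is the indicator of $\{x_i\}$, setting
\[
\sigma=\sum_{x_i\in(a,b)} j_i\,H_1(\cdot-x_i),\qquad \tau=\sum_{x_i\in(a,b)} j_i\,H_0(\cdot-x_i),
\]
we have $\gamma=\sigma-\tau$, and $\sigma,\tau$ are genuine reduced saltus functions in $BV[a,b]$ (right- and left-continuous respectively, with all spikes interior). Applying Theorem~\ref{thm:BddinRDSsaltus} to each, for every $f\in B[a,b]$ we get $f\in RDS_\sigma[a,b]$ and $f\in RDS_\tau[a,b]$ with
\[
\int_a^b f\,d\sigma=\sum_{x_i\in(a,b)} j_i f(x_i)=\int_a^b f\,d\tau.
\]
Then Theorem~\ref{thm:linintegrator}, applied to $\gamma=\sigma-\tau$, gives $f\in RDS_\gamma[a,b]$ with $\int_a^b f\,d\gamma=\int_a^b f\,d\sigma-\int_a^b f\,d\tau=0$, which is exactly the claim. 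The right-continuous case is entirely symmetric: one replaces the left-continuous normalization by the right-continuous one, interchanges the roles of $H_0$ and $H_1$ as well as the endpoints $a$ and $b$, and the same saltus argument applies.
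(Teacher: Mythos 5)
Your proposal is correct and follows essentially the same route as the paper's proof: both identify $\alpha-\beta$ as a pure spike function with jumps $c_i=\alpha(x_i)-\alpha(x_i-)$ (vanishing at the endpoints by the convention at $a$ and the hypothesis at $b$), write it as $\sum_i c_iH_1(x-x_i)-\sum_i c_iH_0(x-x_i)$, a difference of reduced right- and left-continuous saltus functions, and then apply Theorem~\ref{thm:BddinRDSsaltus} together with Theorem~\ref{thm:linintegrator} to conclude that every bounded $f$ is RDS-integrable against the difference with integral zero. Your explicit framing of the key claim (``every bounded function integrates to zero against $\gamma$'') and the bound $\sum_i|c_i|\le V(\alpha,[a,b])$ are only cosmetic variations on the paper's argument.
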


\begin{proof}
We will prove this for $\beta$ left continuous; the proof if it is right continuous is essentially the same.
    Let $c_i = \alpha(x_i)-\alpha(x_i-)$.  Then by Proposition~\ref{prop:ReducedSaltusfunctions}, we have that $\{(x_i, c_i) : c_i \neq 0\}$ forms the saltus set for $S_R\alpha$.  Since $S_R\alpha$ is reduced, we have that $a\not\in \{x_i\}_{i=1}^\infty$. by assumption, $b\not\in \{x_i\}_{i=1}^\infty$.  
    
    Since $\alpha(x_i-)=\beta(x_i-)=\beta(x_i)$,
    $$\alpha(x)-\beta(x) = \begin{cases}
        0, & x\neq x_i,\\
        c_i, & x=x_i.
    \end{cases}$$ 
    By  Definition \ref{def:saltusset}, 
    $$\sum_{i=1}^\infty |c_i|<\infty,$$
    so we can rewrite this as
    $$\alpha(x)-\beta(x) = \sum_{i=1}^\infty c_iH_1(x-x_i)-\sum_{i=1}^\infty c_iH_0(x-x_i).$$ 
    The righthand side  is the difference of two reduced saltus functions. Therefore, by Theorems~\ref{thm:BddinRDSsaltus} and~\ref{thm:linintegrator}, for any $f\in B[a,b]$, we have $f\in RDS_{\alpha-\beta}[a,b]$. 

    To complete the proof, first suppose $f\in RDS_\alpha[a,b]$. Then, by Theorem \ref{thm:linintegrator}, since $\beta = -(\alpha-\beta)+ \alpha$, we have $f\in RDS_\beta[a,b]$. Alternatively, suppose $f\in RDS_\beta[a,b]$. Again, by Theorem \ref{thm:linintegrator}, we have $f\in RDS_\alpha[a,b]$.  To see that \eqref{eqn:wlog1} holds, note that  by  Theorems~\ref{thm:BddinRDSsaltus} and~\ref{thm:linintegrator}, 
    $$\int_a^b f(x)\, d\alpha - \int_a^b f(x)\, d\beta
    =\int_a^b f(x)\, d(\alpha-\beta)= \sum_{i=1}^\infty c_if(x_i)- \sum_{i=1}^\infty c_if(x_i)
    =0.$$ 
\end{proof}
\medskip

We now give a complete characterization of those functions $f$ which are integrable with respect to a given function of bounded variation  $\alpha$.  To put our result into context, recall that the Lebesgue criterion (see \cite[Theorem~2.2]{TSI}) states that a bounded function $f$ is Darboux integrable if and only if  the set of discontinuities of $f$ has Lebesgue measure $0$.  Lebesgue measure corresponds to the Lebesgue-Stieltjes measure induced by the continuous integrator $\alpha(x)=x$.  Our result is the exact analogue of this, and does not require the additional hypotheses needed for either the interior or exterior DS-integral.  To state it we need a definition.

\begin{definition}\label{def:alphameasure0}
Given $\alpha\in I[a,b]$, a set $E\subset [a,b]$ has $\alpha$-measure $0$ if for every $\epsilon>0$, there exists a collection $\{I_i\}_{i=1}^\infty$ of open intervals such that 
$$E\subset \bigcup_{i=1}^\infty I_i \quad \text{ and } \quad 
\sum_{i=1}^\infty \mu_{\alpha}(I_i)<\epsilon.$$
\end{definition}

\begin{remark} \label{remark:two-defns}
When $\alpha$ is continuous, this definition of $\alpha$-measure $0$ agrees with \cite[Definition 5.17]{TSI}.  If $\alpha$ has a non-trivial saltus component, Definition~\ref{def:alphameasure0} detects point masses, while the definition in \cite{TSI} does not.
\end{remark}

\begin{theorem}\label{thm:RDSLebcrit}
(RDS-Lebesgue Criterion) Given $f\in B[a,b]$ and $\alpha\in BV[a,b]$, $f\in RDS_\alpha[a,b]$ if and only if the subset of $[a,b]$ where $f$ is discontinuous has $G(P\alpha)$-measure $0$ and $G(N\alpha)$-measure $0$ where $G(P\alpha), G(N\alpha)$ are the continuous pieces of the saltus decomposition of $P\alpha, N\alpha$ respectively.
\end{theorem}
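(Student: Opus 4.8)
The plan is to reduce the statement to the case of a continuous increasing integrator, where it becomes the exact analogue of the classical Lebesgue criterion, and then to argue via the oscillation of $f$. The first reduction is to increasing integrators: by Theorem~\ref{thm:pnalpha}, $f\in RDS_\alpha[a,b]$ if and only if $f\in RDS_{P\alpha}[a,b]$ and $f\in RDS_{N\alpha}[a,b]$, with $P\alpha,N\alpha\in I[a,b]$. It therefore suffices to prove, for an arbitrary $\gamma\in I[a,b]$, that $f\in RDS_\gamma[a,b]$ if and only if the discontinuity set of $f$ has $G\gamma$-measure $0$; applying this to $\gamma=P\alpha$ and $\gamma=N\alpha$ and conjoining the two conditions yields the theorem.

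The second reduction strips off the saltus part. Using the saltus decomposition $\gamma=G\gamma+S_L\gamma+S_R\gamma$ from Proposition~\ref{prop:ReducedSaltusfunctions}, all three summands lie in $I[a,b]$ (the jumps of an increasing function are positive, so the saltus pieces are themselves increasing). By Theorem~\ref{thm:BddinRDSsaltus} every bounded function is RDS-integrable against a reduced saltus function, hence $f\in RDS_{S_L\gamma+S_R\gamma}[a,b]$ automatically by Theorem~\ref{thm:linintegrator}. Writing $\gamma=G\gamma+(S_L\gamma+S_R\gamma)$ and applying Lemma~\ref{prop:linintconv} in one direction and Theorem~\ref{thm:linintegrator} in the other, I obtain that $f\in RDS_\gamma[a,b]$ if and only if $f\in RDS_{G\gamma}[a,b]$. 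This is precisely where the criterion's dependence on only the continuous piece originates: the saltus part imposes no constraint on integrability.

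It then remains to prove the core assertion for the continuous increasing integrator $\beta=G\gamma$, namely that $f\in RDS_\beta[a,b]$ if and only if the discontinuity set $D$ of $f$ has $\beta$-measure $0$. Here I would run the standard oscillation argument, exploiting that continuity of $\beta$ forces $\mu_\beta(\{x\})=0$ and $\mu_\beta((c,d))=\beta(d)-\beta(c)$, so that the best-fit step functions of Definition~\ref{def:bestfitstepfunctions} carry no jump term and the RDS criterion (Theorem~\ref{thm:RDScrit}) reduces to controlling $\sum_i (M_i-m_i)[\beta(x_i)-\beta(x_{i-1})]$. Writing $\omega_f$ for the oscillation of $f$ and $D_\eta=\{x:\omega_f(x)\ge\eta\}$, each $D_\eta$ is compact and $D=\bigcup_n D_{1/n}$. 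For the ``if'' direction, given $\epsilon>0$ I would cover $D_\eta$ by finitely many open intervals of total $\mu_\beta$ less than $\delta$ (by compactness together with $\beta$-measure $0$), partition the compact complement into pieces on which $f$ oscillates by less than $\eta$, and combine these into bracketing step functions satisfying $\int_a^b (u-v)\,d\beta \le 2M\delta+\eta(\beta(b)-\beta(a))<\epsilon$. For the ``only if'' direction, if $D$ failed to have $\beta$-measure $0$ then some $D_\eta$ would have positive outer $\beta$-measure; on any partition the intervals whose interiors meet $D_\eta$ carry best-fit oscillation at least $\eta$, and since the finitely many offending partition points have zero $\beta$-measure, the total $\mu_\beta$ of those intervals stays bounded below, forcing $\int_a^b (\hat u-\hat v)\,d\beta$ away from $0$ and contradicting Theorem~\ref{thm:RDScrit}.

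The main obstacle is this final continuous case, specifically the bookkeeping in the ``only if'' direction, where one must convert positive outer $\beta$-measure of $D_\eta$ into a uniform lower bound on $\int_a^b (\hat u-\hat v)\,d\beta$ across all partitions while absorbing the finitely many partition points that land in $D_\eta$ (harmless precisely because $\beta$ has no point mass). As an alternative to the direct argument, I note that since $\beta$ is continuous, Theorem~\ref{thm:RDSiffDSintegrator} identifies $RDS_\beta[a,b]$ with $DS_\beta[a,b]$ and Remark~\ref{remark:two-defns} identifies the present notion of $\beta$-measure $0$ with the one in~\cite{TSI}, so this core step could instead be deduced from the corresponding Lebesgue criterion for the continuous DS-integral.
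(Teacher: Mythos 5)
Your proposal is correct, and its skeleton is identical to the paper's: the paper likewise first reduces to increasing integrators via Theorem~\ref{thm:pnalpha}, and then strips the saltus part (its Lemma~\ref{lem:RDSGalpha}, proved from Theorem~\ref{thm:BddinRDSsaltus} and Theorem~\ref{thm:linintegrator}, essentially as you do; your extra appeal to Lemma~\ref{prop:linintconv} is valid but unnecessary, since linearity of the integrator applied to $G\alpha=\alpha-S_L\alpha-S_R\alpha$ already gives both directions). Where you diverge is the core case of a continuous increasing integrator: the paper does precisely what you offer as your fallback, namely invoke Theorem~\ref{thm:RDSiffDSintegrator} to identify $RDS_{G\alpha}$ with $DS_{G\alpha}$, use Remark~\ref{remark:two-defns} to see that the two notions of measure zero coincide for continuous integrators, and then cite the Lebesgue criterion for the interior DS-integral from \cite{TSI}. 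Your primary route, the direct oscillation argument with $D_\eta=\{x:\omega_f(x)\ge\eta\}$, is a genuinely self-contained alternative, and the sketch is sound: for continuous $\beta$ the jump terms in the step-function integrals vanish and single points have $\mu_\beta$-measure zero, so the covering estimate $2M\delta+\eta(\beta(b)-\beta(a))$ in the ``if'' direction, and the uniform lower bound $\eta\,\mu_\beta^*(D_\eta)$ on $\int_a^b(\hat u-\hat v)\,d\beta$ over all partitions in the ``only if'' direction (after discarding the finitely many partition points), both go through using Theorem~\ref{thm:RDScrit} and Corollary~\ref{cor:bestfitRDScrit}, together with countable subadditivity to pass from $D$ to some $D_\eta$ of positive outer measure. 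What the paper's route buys is brevity, by reusing the DS-theory already developed in \cite{TSI}; what yours buys is independence from that external result, at the cost of redoing the classical compactness and oscillation bookkeeping.
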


\begin{remark}
    An equivalent version of Theorem~\ref{thm:RDSLebcrit}, with a very different proof couched in terms of his definition of a generalized RS-integral, was given by Ter Horst~\cite[Theorem~D]{LebStieltjes}.
\end{remark}

The following lemma let us  takes advantage of the saltus decomposition to  reduce to only considering the $G\alpha$-measure of the discontinuities of $f$.

\begin{lemma}\label{lem:RDSGalpha}
Given $\alpha\in BV[a,b]$ and $f\in B[a,b]$, $f\in RDS_\alpha[a,b]$ if and only if $f\in RDS_{G\alpha}[a,b]$.
\end{lemma}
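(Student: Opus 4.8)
The plan is to reduce everything to the saltus decomposition of $\alpha$ together with two facts already established: that a reduced saltus function integrates \emph{every} bounded function (Theorem~\ref{thm:BddinRDSsaltus}), and that the RDS-integral is linear in the integrator (Theorem~\ref{thm:linintegrator}). By Proposition~\ref{prop:ReducedSaltusfunctions}, write
\[
\alpha = G\alpha + S_L\alpha + S_R\alpha,
\]
where $G\alpha \in CBV[a,b]$, $S_L\alpha$ is a reduced left continuous saltus function, and $S_R\alpha$ is a reduced right continuous saltus function, and all three lie in $BV[a,b]$. The crucial observation is that since $S_L\alpha$ and $S_R\alpha$ are reduced saltus functions of the form required by Theorem~\ref{thm:BddinRDSsaltus} (with $c=0$ and $c=1$ respectively), we automatically have $f \in RDS_{S_L\alpha}[a,b]$ and $f \in RDS_{S_R\alpha}[a,b]$ for \emph{any} $f \in B[a,b]$, with no hypothesis on $f$ beyond boundedness. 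This is what makes the equivalence hold: the possibly-bad behavior of $f$ at the discontinuities of $\alpha$ is harmless against the saltus part.

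For the reverse direction, suppose $f \in RDS_{G\alpha}[a,b]$. Then $f$ is RDS-integrable with respect to each of the three summands $G\alpha$, $S_L\alpha$, $S_R\alpha$, so applying Theorem~\ref{thm:linintegrator} twice (first to form $G\alpha + S_L\alpha$, then to add $S_R\alpha$) gives $f \in RDS_\alpha[a,b]$.

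For the forward direction, suppose $f \in RDS_\alpha[a,b]$. Since $G\alpha = \alpha - S_L\alpha - S_R\alpha$ and $f$ is integrable with respect to $\alpha$, $S_L\alpha$, and $S_R\alpha$, two more applications of Theorem~\ref{thm:linintegrator} (with coefficients $r_1 = 1$, $r_2 = -1$ at each step) yield $f \in RDS_{\alpha - S_L\alpha}[a,b]$ and then $f \in RDS_{(\alpha - S_L\alpha) - S_R\alpha}[a,b] = RDS_{G\alpha}[a,b]$.

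There is no serious obstacle here; the real content was shifted into Theorem~\ref{thm:BddinRDSsaltus}. The only points requiring care are purely bookkeeping: one must confirm that all three decomposition pieces are genuinely in $BV[a,b]$ so that Theorem~\ref{thm:linintegrator} applies, and that $S_L\alpha$ and $S_R\alpha$ match the precise form $\sum_i a_i H_c(x - x_i)$ demanded by Theorem~\ref{thm:BddinRDSsaltus}, both of which are guaranteed by Proposition~\ref{prop:ReducedSaltusfunctions}. One should also note that this statement does not track the value of the integral, only integrability, so no identity between $\int f\,d\alpha$ and $\int f\,dG\alpha$ is asserted (indeed they differ by the saltus contributions $\sum_i a_i f(x_i)$).
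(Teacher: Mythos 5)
Your proof is correct and follows essentially the same route as the paper's: both rest on Theorem~\ref{thm:BddinRDSsaltus} (every bounded function is RDS-integrable against the reduced saltus pieces $S_L\alpha$ and $S_R\alpha$) combined with linearity of the integrator, Theorem~\ref{thm:linintegrator}, applied to the decomposition $\alpha = G\alpha + S_L\alpha + S_R\alpha$ and its rearrangement $G\alpha = \alpha - S_L\alpha - S_R\alpha$. Your added bookkeeping remarks (that the pieces lie in $BV[a,b]$, that the saltus parts have the form required by Theorem~\ref{thm:BddinRDSsaltus}, and that the lemma asserts only integrability, not equality of the integrals) are accurate and consistent with the paper.
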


\begin{proof}
    Suppose first that $f\in RDS_\alpha[a,b]$. By Theorem \ref{thm:BddinRDSsaltus}, $f\in RDS_{S_L\alpha}[a,b]$ and $f\in RDS_{S_R\alpha}[a,b]$. Since $G\alpha =\alpha - S_L\alpha - S_R\alpha$, by Theorem \ref{thm:linintegrator}, $f\in RDS_{G\alpha}[a,b]$. Conversely, suppose $f\in RDS_{G\alpha}[a,b]$. Since  $f\in RDS_{S_L\alpha}[a,b]$ and $f\in RDS_{S_R\alpha}[a,b]$, again by Theorem \ref{thm:linintegrator} we have that $f\in RDS_{\alpha}[a,b]$.
\end{proof}

\smallskip

\begin{proof}[Proof of Theorem \ref{thm:RDSLebcrit}]
We first suppose that $\alpha\in I[a,b]$.
 Let $D\subset [a,b]$ be the of discontinuities of $f$.  We will show $f\in RDS_\alpha [a,b]$ if and only if $D$ has $G\alpha$-measure $0$.  If $f\in RDS_{\alpha}[a,b]$, then by Lemma \ref{lem:RDSGalpha}, $f\in RDS_{G\alpha}[a,b]$.
 Since $G\alpha$ is continuous, by Theorem \ref{thm:RDSiffDSintegrand}, $f\in DS_{G\alpha}[a,b]$.  We can now use the characterization of the DS-integral in~\cite[Proposition 5.25]{TSI}, which gives that this is the case if and only if $D$ has $G\alpha$-measure $0$.  (Note by Remark~\ref{remark:two-defns}, the definitions of $G\alpha$ agree.)  
 
Conversely, suppose $D$ has $G\alpha$-measure $0$. By \cite[Proposition 5.25]{TSI}, $f\in RDS_{G\alpha}[a,b]$. By Theorem \ref{thm:BddinRDSsaltus}, $f\in RDS_{S_L\alpha}[a,b]$ and $f\in RDS_{S_R\alpha}[a,b]$. Hence, by Theorem \ref{thm:linintegrator}, $f\in RDS_\alpha[a,b]$.

Now suppose that  $\alpha\in BV[a,b]$. If $f\in RDS_\alpha[a,b]$, then by Theorem \ref{thm:pnalpha}, $f\in RDS_{P\alpha}[a,b]$ and $f\in RDS_{N\alpha}[a,b]$. From the previous case, $D$ has $G(P\alpha)$-measure $0$ and $G(N\alpha)$-measure $0$. The converse follows since this argument is reversible. 
\end{proof}

\medskip

We conclude this section by proving a generalization of integration by parts.   As with the DS-integral (see~\cite[Theorem 5.3]{TSI}), the classical formula for integration by parts need not hold.  Fix $c,\,d\in \mathbb{R}$ and $t\in (a,b)$; then we have that (see Lemma~\ref{lem:H_c})
\[ \int_a^b H_c(x-t)\,dH_d(x-t) + \int_a^b H_d(x-t) \, dH_c(x-t) = c + d;
\]
On the other hand, $H_d(b)H_c(b) - H_d(a)H_c(a)=1$.  Consequently, we need to introduce a term, $A(t)$, into the formula for integration by parts  that compensates for a common discontinuity at a point $t$.  This term is the same as the the error term introduced in~\cite[Theorem 6.2.2]{LebesgueStieltjesCarter} for the Lebesgue-Stieltjes integral. 

\begin{theorem}\label{thm:IntByParts}
    Fix $\alpha,\beta\in BV[a,b]$. If  $S$ is the set of common discontinuities of $\alpha$ and $\beta$, then 
    $$\int_a^b \alpha(x)\, d \beta + \int_a^b \beta(x)\, d \alpha = \alpha(b)\beta(b)-\alpha(a)\beta(a) + \sum_{t\in S} A_{\alpha,\beta}(t),$$ 
    where 
    $$A_{\alpha,\beta}(t)= [\alpha(t)-\tfrac{1}{2}(\alpha(t+)+\alpha(t-))]\mu_\beta(\{t\})+[\beta(t)-\tfrac{1}{2}(\beta(t+)+\beta(t-))]\mu_\alpha(\{t\}).$$
\end{theorem}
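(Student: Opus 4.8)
The plan is to prove the formula by recognizing that both sides are \emph{bilinear} functions of the pair $(\alpha,\beta)$ and then checking the identity on the building blocks provided by the saltus decomposition. Both integrals exist by Theorem~\ref{thm:BVinRDS}. Set
$B(\alpha,\beta) = \int_a^b \alpha\,d\beta + \int_a^b \beta\,d\alpha - \big[\alpha(b)\beta(b)-\alpha(a)\beta(a)\big]$;
using linearity in the integrand (Theorem~\ref{thm:linearity}), linearity in the integrator (Theorem~\ref{thm:linintegrator}), and the obvious bilinearity of the boundary product, $B$ is bilinear. For the right-hand side I would first extend $A_{\alpha,\beta}(t)$ to every $t\in[a,b]$ by the same formula and observe that if $t$ is not a common discontinuity---say $\beta$ is continuous at $t$---then $\mu_\beta(\{t\})=0$ and $\beta(t)-\tfrac12(\beta(t+)+\beta(t-))=0$, so $A_{\alpha,\beta}(t)=0$. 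Hence $\sum_{t\in S}A_{\alpha,\beta}(t)=\sum_{t\in[a,b]}A_{\alpha,\beta}(t)=:T(\alpha,\beta)$, and since $\beta\mapsto\mu_\beta(\{t\})$ and $\beta\mapsto \beta(t)-\tfrac12(\beta(t+)+\beta(t-))$ are linear, each summand, hence $T$, is bilinear; absolute convergence of the sum follows from summability of the jumps of a $BV$ function.

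It then suffices to prove $B=T$ on the saltus components $\alpha=G\alpha+S_L\alpha+S_R\alpha$ (Proposition~\ref{prop:ReducedSaltusfunctions}) and likewise for $\beta$. Whenever one argument is continuous the corresponding $T$-contribution vanishes, since $\mu_{G\alpha}(\{t\})=0$ and $G\alpha(t)-\tfrac12(G\alpha(t+)+G\alpha(t-))=0$. This reduces the identity to three families: (i) $\alpha$ and $\beta$ both continuous, where I must show $B=0$, i.e.\ the classical formula $\int\alpha\,d\beta+\int\beta\,d\alpha=\alpha(b)\beta(b)-\alpha(a)\beta(a)$; (ii) one argument continuous and the other a saltus function, again with $B=0$; and (iii) both arguments saltus functions, where $B=T$ and both sides may be nonzero.

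For the building blocks I would reduce each saltus function to a $BV$-norm limit of finite linear combinations of Heaviside functions $H_c(x-x_i)$, exactly as in the proof of Theorem~\ref{thm:BddinRDSsaltus} (via \cite[Proposition~3.48]{TSI}), and verify the identity for single Heaviside integrands/integrators using Propositions~\ref{prop:BddinRDSH_c} and~\ref{prop:BddinRDSH_cExtraCases}. A direct computation there yields, for distinct jump points, a cancellation reproducing the boundary product with no error term, and for a common jump at $t$ a left-hand value $c+d$ that equals the boundary product plus $A_{\alpha,\beta}(t)$; this is the calculation foreshadowed in the discussion preceding the theorem (cf.\ Lemma~\ref{lem:H_c}). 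Passing from finite combinations to the full saltus functions is a double limit in which I use Theorem~\ref{thm:uniformconv} whenever the varying function plays the integrand role (recall $BV$-norm convergence forces uniform convergence, since $\|\cdot\|_{BV}$ dominates the sup norm) and Theorem~\ref{thm:convintegrator} whenever it plays the integrator role; convergence of the $T$-side under these limits again follows from summability of the jumps controlled by the $BV$ norm.

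The main obstacle is the continuous-continuous case (i): the RDS-integral is built from Darboux-style step-function approximation, whereas classical integration by parts is most naturally obtained from a Riemann--Stieltjes summation-by-parts (Abel) argument. I would bridge this with Theorems~\ref{thm:RDSiffDSintegrator} and~\ref{thm:RDSiffDSintegrand}, which identify the RDS- and DS-integrals for continuous data, and then either cite the continuous-case DS integration-by-parts formula or prove it directly: for continuous $\alpha,\beta\in BV$ the sums $\sum_i\alpha(x_{i-1})[\beta(x_i)-\beta(x_{i-1})]$ and $\sum_i\beta(x_i)[\alpha(x_i)-\alpha(x_{i-1})]$ telescope to $\alpha(b)\beta(b)-\alpha(a)\beta(a)$, and each converges to the corresponding integral as the mesh tends to $0$ by continuity of the integrand. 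A secondary difficulty is the bookkeeping of the endpoint conventions $\alpha(a-)=\alpha(a)$, $\alpha(b+)=\alpha(b)$ when a common discontinuity occurs at $a$ or $b$; this is handled by the reduced-saltus normalization together with Proposition~\ref{prop:BddinRDSH_cExtraCases}.
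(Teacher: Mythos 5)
Your proposal is correct and follows essentially the same route as the paper's proof: reduce via the reduced saltus decomposition and (bi)linearity to the three cases continuous/continuous (handled, as you suggest, by the RDS--DS equivalence theorems together with the known DS integration-by-parts formula), continuous/saltus, and saltus/saltus (handled by Heaviside computations combined with Theorems~\ref{thm:uniformconv} and~\ref{thm:convintegrator}). The only differences are organizational: you package the correction term as a globally defined bilinear functional $T$ where the paper instead proves the splitting $A_{\alpha,\beta}(t)=A_{S_L\alpha,S_L\beta}(t)+A_{S_R\alpha,S_R\beta}(t)$ as Lemma~\ref{lem:saltuscorrectionterm}, and you verify the saltus cases Heaviside-by-Heaviside before passing to partial-sum limits, where the paper manipulates the full saltus series directly in Lemmas~\ref{lem:IBPcross} and~\ref{lem:IBPsaltus}.
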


\begin{remark}
Arguing as in~\cite[Lemma~5.2]{TSI}, we see that $A(t)=0$ if the discontinuities  of $\alpha$ and $\beta$ at $t$ are balanced; that is, 
$$\alpha(t)=\frac{\alpha(t-)+\alpha(t+)}{2}, \qquad \beta(t) = \frac{\beta(t-)+\beta(t+)}{2}.$$ 
\end{remark}

\medskip

The proof of Theorem~\ref{thm:IntByParts} requires three lemmas whose proofs we defer until after the main proof.

\begin{lemma}\label{lem:saltuscorrectionterm}
    Given $\alpha,\beta\in BV[a,b]$, with reduced saltus decompositions
    $$\alpha = G\alpha + S_L \alpha + S_R \alpha,\quad \beta = G\beta + S_L\beta +S_R\beta,$$ 
    we have that
    $$A_{\alpha, \beta}(t) = A_{S_L\alpha, S_L\beta}(t)+A_{S_R\alpha, S_R\beta}(t).$$
\end{lemma}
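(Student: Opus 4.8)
The plan is to exploit the bilinear structure of $A_{\alpha,\beta}(t)$ together with the one-sided continuity of the saltus pieces. For a fixed $t$, I would introduce two auxiliary quantities: the jump $J\gamma = \mu_\gamma(\{t\}) = \gamma(t+)-\gamma(t-)$ and the centered deviation $D\gamma = \gamma(t) - \tfrac12(\gamma(t+)+\gamma(t-))$. Both $J$ and $D$ are linear in $\gamma$, since pointwise evaluation and one-sided limits are linear operations. In this notation $A_{\alpha,\beta}(t) = D\alpha\, J\beta + D\beta\, J\alpha$, which is visibly bilinear in the pair $(\alpha,\beta)$.

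First I would substitute the reduced saltus decompositions and expand by bilinearity into the nine terms $A_{P\alpha,\, Q\beta}(t)$ with $P,Q\in\{G,S_L,S_R\}$. Since $G\alpha$ and $G\beta$ are continuous at $t$, we have $G\alpha(t+)=G\alpha(t-)=G\alpha(t)$, so both $D(G\alpha)=0$ and $J(G\alpha)=0$, and likewise for $G\beta$. Hence every term involving a continuous piece vanishes, leaving only the four terms built from $S_L$ and $S_R$.

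Next I would record the key relations coming from one-sided continuity. Because $S_L\gamma$ is left continuous at every $t$, we have $S_L\gamma(t)=S_L\gamma(t-)$, whence $D(S_L\gamma)=-\tfrac12 J(S_L\gamma)$; because $S_R\gamma$ is right continuous, $S_R\gamma(t)=S_R\gamma(t+)$, whence $D(S_R\gamma)=+\tfrac12 J(S_R\gamma)$. Substituting these into the two mixed cross terms gives
\[
A_{S_L\alpha,\, S_R\beta}(t) = -\tfrac12 J(S_L\alpha)\, J(S_R\beta) + \tfrac12 J(S_R\beta)\, J(S_L\alpha) = 0,
\]
and symmetrically $A_{S_R\alpha,\, S_L\beta}(t)=0$. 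The two surviving terms are exactly $A_{S_L\alpha,\, S_L\beta}(t)$ and $A_{S_R\alpha,\, S_R\beta}(t)$, which is the claimed identity.

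There is no genuine analytic obstacle here; the content is entirely the bookkeeping above. The one point requiring care is justifying the deviation–jump relations, i.e.\ confirming that the reduced saltus functions $S_L\gamma$ and $S_R\gamma$ really are left- and right-continuous at every $t$ (including at their own discontinuities), which follows from their construction in Proposition~\ref{prop:ReducedSaltusfunctions} as convergent sums of shifted copies of $H_0$ and $H_1$. Once that is in hand, the crux is simply the vanishing of the mixed $L$--$R$ cross terms, which is immediate from bilinearity and the opposite signs in the two deviation relations.
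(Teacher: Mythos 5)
Your proof is correct and is essentially the paper's argument: the paper likewise reduces everything to the two facts you call the $J$/$D$ relations, namely $\mu_\alpha(\{t\})=\mu_{S_L\alpha}(\{t\})+\mu_{S_R\alpha}(\{t\})$ and $\alpha(t)-\tfrac{1}{2}(\alpha(t+)+\alpha(t-))=\tfrac{1}{2}\bigl[\mu_{S_R\alpha}(\{t\})-\mu_{S_L\alpha}(\{t\})\bigr]$, both consequences of the continuity of $G\alpha$ and the one-sided continuity of the saltus parts, and then multiplies out. Your nine-term bilinear expansion with explicit cancellation of the mixed $L$--$R$ cross terms is just a more systematic bookkeeping of the same computation, which in the paper happens implicitly in the final algebra.
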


\begin{lemma}\label{lem:IBPcross}
    Given $\alpha,\beta\in BV[a,b]$, if $\alpha$ is continuous and $\beta$ is a reduced, left or right continuous saltus function, then $$\int_a^b \alpha(x)\, d \beta + \int_a^b \beta(x)\, d \alpha = \alpha(b)\beta(b)-\alpha(a)\beta(a).$$
\end{lemma}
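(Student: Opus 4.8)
The plan is to exploit the key structural fact that since $\alpha$ is continuous, $\alpha$ and $\beta$ have no common discontinuities, so the correction term appearing in the general formula (Theorem~\ref{thm:IntByParts}) ought to vanish. Concretely, continuity of $\alpha$ forces $\mu_\alpha(\{x\})=\alpha(x+)-\alpha(x-)=0$ for every $x$, and $\mu_\alpha((c,d))=\alpha(d)-\alpha(c)$; these identities will annihilate all the point-mass contributions. I would build the result up from a single Heaviside term and then pass to the limit, rather than attacking the infinite sum directly.

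\textbf{Step 1 (single jump).} First I would verify the identity when $\beta(x)=H_c(x-x_0)$ is a single reduced Heaviside term, so that $x_0\in(a,b)$, or $x_0=a$ with $c=0$, or $x_0=b$ with $c=1$. For $\int_a^b\alpha\,d\beta$, Propositions~\ref{prop:BddinRDSH_c} and~\ref{prop:BddinRDSH_cExtraCases} give $\alpha\in RDS_\beta[a,b]$ with $\int_a^b\alpha\,d\beta=\alpha(x_0)$; the reducedness hypothesis is exactly what guarantees we avoid the degenerate constant cases ($c=1,x_0=a$ and $c=0,x_0=b$) excluded from those propositions. For $\int_a^b\beta\,d\alpha$, I note $\beta$ is a step function and compute directly from Definition~\ref{def:stepint}; since every singleton term carries a factor $\mu_\alpha(\{x\})=0$, they all drop out, and a short computation yields $\int_a^b\beta\,d\alpha=\alpha(b)-\alpha(x_0)$ uniformly in the interior and endpoint cases. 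Adding the two gives $\alpha(b)$, and since $\beta(b)=1$ and $\beta(a)=0$ in each admissible case, this equals $\alpha(b)\beta(b)-\alpha(a)\beta(a)$.

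\textbf{Step 2 (finite sums) and Step 3 (limit).} Next I observe that the integration-by-parts expression is additive in $\beta$: if the identity holds for $\beta_1$ and $\beta_2$ paired with the same continuous $\alpha$, then linearity of the integral in the integrand (Theorem~\ref{thm:linearity}) and in the integrator (Theorem~\ref{thm:linintegrator}) yields it for any linear combination, with the boundary term transforming correctly. Applying this to the partial sums $\beta_n(x)=\sum_{i=1}^n a_iH_c(x-x_i)$ gives
$$\int_a^b\alpha\,d\beta_n+\int_a^b\beta_n\,d\alpha=\alpha(b)\beta_n(b)-\alpha(a)\beta_n(a).$$
Since $\sum|a_i|<\infty$, we have $\beta_n\to\beta$ uniformly, and in BV norm by~\cite[Proposition~3.48]{TSI}. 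Then $\int_a^b\beta_n\,d\alpha\to\int_a^b\beta\,d\alpha$ by Theorem~\ref{thm:uniformconv}, while $\int_a^b\alpha\,d\beta_n\to\int_a^b\alpha\,d\beta$ either by Theorem~\ref{thm:convintegrator} or directly, since $\int_a^b\alpha\,d\beta_n=\sum_{i=1}^n a_i\alpha(x_i)\to\sum_{i=1}^\infty a_i\alpha(x_i)=\int_a^b\alpha\,d\beta$ by Theorem~\ref{thm:BddinRDSsaltus}. The boundary terms converge by pointwise convergence of $\beta_n$ at $a$ and $b$. Taking $n\to\infty$ completes the proof; integrability of $\alpha$ against $\beta$ and of $\beta$ against $\alpha$ is guaranteed by Theorems~\ref{thm:BddinRDSsaltus} and~\ref{thm:BVinRDS} respectively.

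\textbf{Main obstacle.} I expect the only genuine care is in Step~1: handling the endpoint cases $x_0=a$ and $x_0=b$, where one must invoke Proposition~\ref{prop:BddinRDSH_cExtraCases} and correctly read off $\beta(a)$ and $\beta(b)$, and verifying that the singleton contributions to $\int_a^b\beta\,d\alpha$ really vanish by continuity of $\alpha$. The remaining steps are routine given the linearity and convergence theorems already in hand.
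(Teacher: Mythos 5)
Your proposal is correct and follows essentially the same route as the paper: both decompose $\beta$ into partial sums of Heaviside functions, use continuity of $\alpha$ to kill the singleton ($\mu_\alpha(\{x\})=0$) contributions, evaluate $\int_a^b \alpha\, d\beta$ via Theorem~\ref{thm:BddinRDSsaltus}, and handle $\int_a^b \beta\, d\alpha$ by passing to the limit of the partial sums with Theorem~\ref{thm:uniformconv}. The only difference is bookkeeping: you package the computation as a single-jump integration-by-parts identity extended by linearity and then take limits of the combined identity, while the paper evaluates the two integrals separately and adds them at the end, using $\beta(a)=0$ from reducedness rather than carrying the boundary terms through.
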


\begin{lemma}\label{lem:IBPsaltus}
    Given $\alpha,\beta\in BV[a,b]$, suppose they are both reduced,  left or right continuous saltus functions.  If $S$ is the set of common discontinuities of $\alpha$ and $\beta$, then 
    $$\int_a^b \alpha(x)\, d \beta + \int_a^b \beta(x)\, d \alpha 
    = \alpha(b)\beta(b)-\alpha(a)\beta(a) + \sum_{t\in S} A_{\alpha,\beta}(t).$$
\end{lemma}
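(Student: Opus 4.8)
The plan is to reduce the identity to an explicit statement about the jumps of $\alpha$ and $\beta$ and then verify that statement by telescoping. Write $\alpha(x)=\sum_i a_i H_c(x-x_i)$ and $\beta(x)=\sum_j b_j H_{c'}(x-y_j)$, so that $a_i=\mu_\alpha(\{x_i\})$ and $b_j=\mu_\beta(\{y_j\})$ are the jumps. By Theorem~\ref{thm:BddinRDSsaltus}, applied once with integrand $\alpha$ and once with integrand $\beta$,
$$\int_a^b \alpha(x)\, d\beta = \sum_j b_j\,\alpha(y_j), \qquad \int_a^b \beta(x)\, d\alpha = \sum_i a_i\,\beta(x_i).$$
Let $T$ denote the combined (countable) set of discontinuities of $\alpha$ and $\beta$, and write $p(t)=\mu_\alpha(\{t\})$, $q(t)=\mu_\beta(\{t\})$, each vanishing off the corresponding jump set. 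Since $\sum_{t}\bigl(|p(t)|+|q(t)|\bigr)<\infty$ by Definition~\ref{def:saltusset} and $\alpha,\beta$ are bounded, all sums below converge absolutely, and the left-hand side of the lemma equals $\sum_{t\in T}\bigl[p(t)\beta(t)+q(t)\alpha(t)\bigr]$.

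The key algebraic observation is the pointwise product identity
$$\alpha(t+)\beta(t+)-\alpha(t-)\beta(t-)=\tfrac12\bigl[\alpha(t+)+\alpha(t-)\bigr]q(t)+\tfrac12\bigl[\beta(t+)+\beta(t-)\bigr]p(t),$$
valid at every $t$ (expand both sides). Subtracting this from $p(t)\beta(t)+q(t)\alpha(t)$ yields exactly the correction term $A_{\alpha,\beta}(t)$. Moreover, if $t$ is a discontinuity of only one of the two functions — say $p(t)\neq 0$ but $q(t)=0$ — then $\beta$ is continuous at $t$, so $\beta(t)=\tfrac12(\beta(t+)+\beta(t-))$ and $q(t)=0$, whence $A_{\alpha,\beta}(t)=0$. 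Thus $A_{\alpha,\beta}$ is supported on the common discontinuity set $S$, and the lemma will follow once we establish the telescoping identity
$$\alpha(b)\beta(b)-\alpha(a)\beta(a)=\sum_{t\in T}\bigl[\alpha(t+)\beta(t+)-\alpha(t-)\beta(t-)\bigr],$$
using throughout the endpoint conventions $\alpha(a-)=\alpha(a)$, $\alpha(b+)=\alpha(b)$ and the analogues for $\beta$.

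To prove this telescoping identity I would first verify it for the partial sums $\alpha_n,\beta_n$, which are finite linear combinations of Heaviside functions. Then $\alpha_n\beta_n$ is piecewise constant with finitely many jumps; ordering the jump points $s_1<\dots<s_L$ and using that $\alpha_n\beta_n$ is constant on each gap $(s_\ell,s_{\ell+1})$ makes the right-hand sum telescope to $\alpha_n(b)\beta_n(b)-\alpha_n(a)\beta_n(a)$, with the endpoint conventions absorbing any jump located at $a$ or $b$. It remains to let $n\to\infty$. Since $\sum_i|a_i|<\infty$, the partial sums converge uniformly, so $\alpha_n\to\alpha$, $\beta_n\to\beta$ uniformly; hence $\alpha_n(b)\beta_n(b)\to\alpha(b)\beta(b)$, $\alpha_n(a)\beta_n(a)\to\alpha(a)\beta(a)$, and for each fixed $t$ the one-sided limits satisfy $\alpha_n(t\pm)\to\alpha(t\pm)$, $\beta_n(t\pm)\to\beta(t\pm)$. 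To pass the limit through the infinite sum I would use dominated convergence for series: each summand is bounded, uniformly in $n$, by $M\bigl(|p(t)|+|q(t)|\bigr)$, where $M$ bounds all the (uniformly convergent) $\alpha_n,\beta_n$, and this bound is summable. This gives the telescoping identity, and combining it with the two displays above completes the proof.

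The main obstacle is precisely this passage to the limit: the finite case is pure bookkeeping, but in the infinite case the product $\alpha\beta$ need not be a saltus function in any sense already available to us, so one cannot telescope directly on $\alpha\beta$. The clean route is the dominated-convergence-for-series argument, whose only nonroutine ingredient is the uniform-in-$n$ summable majorant $M\bigl(|p(t)|+|q(t)|\bigr)$ for the increments of $\alpha_n\beta_n$.
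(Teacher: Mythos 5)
Your proof is correct, but it takes a genuinely different route from the paper's. Both arguments begin identically, using Theorem~\ref{thm:BddinRDSsaltus} to write the left-hand side as the absolutely convergent sum $\sum_{t}\left[\mu_\alpha(\{t\})\beta(t)+\mu_\beta(\{t\})\alpha(t)\right]$. From there the paper splits into three cases according to the continuity types ($c=d=0$, $c=d=1$, and the mixed case) and, in each case, exploits the explicit cumulative formulas that one-sided continuity provides (e.g.\ $\alpha(t_j)=\sum_{i\,:\,x_i<t_j}a_i$ for left-continuous $\alpha$) to interchange the resulting double series; the correction term $\sum_{t\in S}A_{\alpha,\beta}(t)$ emerges as the diagonal terms $-a_ib_{j_0}$ that must be excised from the product $\alpha(b)\beta(b)$. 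You avoid the case analysis entirely: your product-jump identity
$$\alpha(t+)\beta(t+)-\alpha(t-)\beta(t-)=\tfrac12\left[\alpha(t+)+\alpha(t-)\right]\mu_\beta(\{t\})+\tfrac12\left[\beta(t+)+\beta(t-)\right]\mu_\alpha(\{t\})$$
exhibits $A_{\alpha,\beta}(t)$ as exactly the defect between the summand $\mu_\alpha(\{t\})\beta(t)+\mu_\beta(\{t\})\alpha(t)$ and the jump of the product $\alpha\beta$ at $t$, so the lemma reduces to the single telescoping identity $\sum_{t}\left[\alpha(t+)\beta(t+)-\alpha(t-)\beta(t-)\right]=\alpha(b)\beta(b)-\alpha(a)\beta(a)$, which you prove for finite partial sums by bookkeeping and then in general by dominated convergence for series; your majorant $M\left(|\mu_\alpha(\{t\})|+|\mu_\beta(\{t\})|\right)$ is legitimate because the jumps of the partial sums $\alpha_n,\beta_n$ are dominated pointwise by those of $\alpha,\beta$. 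What your approach buys is a case-free, structurally transparent argument that in effect proves the product of two saltus functions is again a pure jump function; what the paper's buys is that it never needs a limit interchange beyond rearrangement of absolutely convergent double series. One point you should make explicit: the identification $a_i=\mu_\alpha(\{x_i\})$, $b_j=\mu_\beta(\{y_j\})$ of formal coefficients with actual jumps uses the hypothesis that the saltus functions are \emph{reduced} (a non-reduced coefficient at an endpoint is invisible under the conventions $\alpha(a-)=\alpha(a)$, $\alpha(b+)=\alpha(b)$), and reducedness is also what licenses the application of Theorem~\ref{thm:BddinRDSsaltus}; with that noted, your argument is complete.
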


\begin{proof}[Proof of Theorem \ref{thm:IntByParts}]
We first consider three special cases: (i) both $\alpha, \beta$ are continuous; (ii) $\alpha$ is continuous, $\beta$ is a reduced, left or right continuous saltus function;  (iii) both $\alpha,\beta$ are reduced, left or right continuous saltus functions. In Case (i), since both $\alpha$ and $\beta$ are continuous, by Theorems~\ref{thm:RDSiffDSintegrator} and~\ref{thm:RDSiffDSintegrand} the RDS and DS-integrals agree, and  the desired identity follows from~\cite[Proposition 5.5]{TSI}.  Case (ii) follows from~Lemma~\ref{lem:IBPcross}. Case (iii) follows from Lemma~\ref{lem:IBPsaltus}.

We now argue in general. Fix $\alpha,\,\beta \in BV[a,b]$.
    By  Proposition \ref{prop:ReducedSaltusfunctions}, we have the saltus decompositions 
    $$\alpha = G\alpha + S_L \alpha + S_R \alpha,\quad \beta = G\beta + S_L\beta +S_R\beta,$$ 
    where the saltus functions are reduced. Then by Theorems~\ref{thm:linearity} and~\ref{thm:linintegrator}, and by Lemma~\ref{lem:saltuscorrectionterm} we can decompose the integrals and argue as follows:
  \begin{align*}
        \int_a^b \alpha(x)\, d \beta +\int_a^b \beta(x)\, d \alpha &= \int_a^b G\alpha(x)+S_L\alpha(x)+S_R\alpha(x)\, d [G\beta+S_L\beta+S_R\beta]\\
        &\quad +\int_a^b G\beta(x)+S_L\beta(x)+S_R\beta(x)\, d [G\alpha+S_L\alpha+S_R\alpha]\\
        &= \alpha(b)\beta(b)-\alpha(a)\beta(a)+ \sum_{t\in S}A_{S_L\alpha, S_L\beta}(t)+\sum_{t\in S}A_{S_R\alpha, S_R\beta}(t)\\
        &=\alpha(b)\beta(b)-\alpha(a)\beta(a)+ \sum_{t\in S}A_{\alpha, \beta}(t),
    \end{align*}
where to get the term $\alpha(b)\beta(b)-\alpha(a)\beta(a)$ we have used the saltus decomposition to recombine terms gotten from Cases (i) and (ii).
\end{proof}

\begin{proof}[Proof of Lemma \ref{lem:saltuscorrectionterm}]
    Observe that 
    \begin{align*}
        \mu_{\alpha}(\{t\})= \alpha(t+)-\alpha(t-)&= G\alpha(t)+S_L\alpha(t+)+S_R\alpha(t)\\
        &\quad -G\alpha(t)-S_L\alpha(t)-S_R\alpha(t-)\\ &= \mu_{S_L\alpha}(\{t\})+\mu_{S_R\alpha}(\{t\}).
    \end{align*}
    Hence, \begin{align*}
        A_{\alpha,\beta}(t)&= [\alpha(t)-\tfrac{1}{2}(\alpha(t+)+\alpha(t-))]\mu_\beta(\{t\})+[\beta(t)-\tfrac{1}{2}(\beta(t+)+\beta(t-))]\mu_\alpha(\{t\})\\
        &= \tfrac{1}{2}[S_L\alpha(t)+S_R\alpha(t)-S_L\alpha(t+)-S_R\alpha(t-)][\mu_{S_L\beta}(\{t\})+\mu_{S_R\beta}(\{t\})]\\
        &\quad + \tfrac{1}{2}[S_L\beta(t)+S_R\beta(t)-S_L\beta(t+)-S_R\beta(t-)][\mu_{S_L\alpha}(\{t\})+\mu_{S_R\alpha}(\{t\})]\\
        &= \tfrac{1}{2}[\mu_{S_R\alpha}(\{t\})-\mu_{S_L\alpha}(\{t\})][\mu_{S_L\beta}(\{t\})+\mu_{S_R\beta}(\{t\})]\\
        &\quad + \tfrac{1}{2}[\mu_{S_R\beta}(\{t\})-\mu_{S_L\beta}(\{t\})][\mu_{S_L\alpha}(\{t\})+\mu_{S_R\alpha}(\{t\})]\\
        &= \mu_{S_R\alpha}(\{t\})\mu_{S_R\beta}(\{t\})- \mu_{S_L\alpha}(\{t\})\mu_{S_L\beta}(\{t\})\\
        &= A_{S_R\alpha,S_R\beta}(t)+A_{S_L\alpha,S_L\beta}(t).
    \end{align*}
\end{proof}

\begin{proof}[Proof of Lemma \ref{lem:IBPcross}]
    Let $\{(x_i, b_i)\}_{i=1}^\infty$ be the saltus set associated to $\beta$;
    then
    $$\beta(x)= \sum_{i=1}^\infty b_i H_c(x-x_i),$$ 
    where $c\in\{0, 1\}$. By Theorem \ref{thm:BddinRDSsaltus}, $$\int_a^b \alpha(x)\, d \beta = \sum_{i=1}^\infty b_i \alpha(x_i).$$

    Let $\beta_n(x) = \sum_{i=1}^n b_iH_c(x-x_i)$. By \cite[Lemma 3.36]{TSI}, $\beta_n\to \beta$ uniformly. By Theorem~\ref{thm:uniformconv}, 
    \begin{multline*}
        \int_a^b \beta(x)\, d \alpha
        = \lim_{n\to \infty}\int_a^b \beta_n(x)\, d \alpha
        = \lim_{n\to \infty}\sum_{i=1}^nb_i\int_a^b H_c(x-x_i) \, d \alpha\\
        =\lim_{n\to \infty}\sum_{i=1}^nb_i(\alpha(b)-\alpha(x_i))
        = \alpha(b)\beta(b)- \sum_{i=1}^\infty b_i\alpha(x_i).
    \end{multline*}

    By Proposition \ref{prop:ReducedSaltusfunctions}, $\beta(a)=0$. Hence, $$\int_a^b \alpha(x)\, d \beta + \int_a^b \beta(x)\, d \alpha = \alpha(b)\beta(b)= \alpha(b)\beta(b)-\alpha(a)\beta(a).$$
\end{proof}

\begin{proof}[Proof of Lemma \ref{lem:IBPsaltus}]
    Let $$\alpha(x) = \sum_{i=1}^\infty a_i H_{c}(x-x_i), \quad \beta(x) = \sum_{j=1}^\infty b_j H_{d}(x-t_j).$$ 
    First, suppose $c=d=0$. Then by left continuity, 
    $$A(t)= -(\alpha(t+)-\alpha(t-))(\beta(t+)-\beta(t-)).$$ 
    By Theorem \ref{thm:BddinRDSsaltus}, we have 
    \begin{align*}
        \int_a^b \alpha(x)\, d \beta + \int_a^b \beta(x)\, d \alpha &= \sum_{j=1}^\infty b_j\alpha(t_j)+\sum_{i=1}^\infty a_i\beta(x_i)\\
        &= \sum_{j=1}^\infty b_j\sum_{i\; :\; x_i<t_j} a_i+\sum_{i=1}^\infty a_i\sum_{j\; :\; t_j<x_i } b_j\\
        &= \sum_{i=1}^\infty \sum_{j\; :\; t_j>x_i}a_ib_j+\sum_{i=1}^\infty \sum_{j\; :\; t_j<x_i } a_ib_j\\
        &= \sum_{i=1}^\infty \sum_{j\; :\; t_j\neq x_i} a_ib_j.
    \end{align*}

    Now fix $i\in \mathbb{N}$. If $x_i\not\in \{t_j\}_{j=1}^\infty$, then 
    \begin{align*}
        \sum_{j\; :\; t_j\neq x_i} a_ib_j &= a_i\sum_{j=1}^\infty b_j= a_i\beta(b).
    \end{align*}
    Next, suppose $x_i \in \{t_j\}_{j=1}^\infty$. That is, there exists $j_0$ such that $x_i=t_{j_0}$. Then, \begin{align*}
        \sum_{j\; :\; t_j\neq x_i} a_ib_j &= a_i\sum_{j=1}^\infty b_j - a_ib_{j_0}= a_i\beta(b)-a_ib_{j_0}.
    \end{align*}
    Hence, \begin{align*}
        \int_a^b \alpha(x)\, d \beta + \int_a^b \beta(x)\, d \alpha &= \alpha(b)\beta(b)-\sum_{t\in S} (\alpha(t+)-\alpha(t-))(\beta(t+)-\beta(t-))\\
        &= \alpha(b)\beta(b)- \alpha(a)\beta(a)+\sum_{t\in S} A_{\alpha,\beta}(t)\\
    \end{align*}

    The case $c=d=1$ follows similarly as the previous case. 
    
    Finally, suppose $c=0, d=1$. Observe that $A_{\alpha,\beta}(t)=0$. Then, by Theorem \ref{thm:BddinRDSsaltus}, we have \begin{align*}
        \int_a^b \alpha(x)\, d \beta + \int_a^b \beta(x)\, d \alpha &= \sum_{j=1}^\infty b_j\alpha(t_j)+\sum_{i=1}^\infty a_i\beta(x_i)\\
        &= \sum_{j=1}^\infty b_j\sum_{i\; :\; x_i<t_j} a_i+\sum_{i=1}^\infty a_i\sum_{j\; :\; t_j\le x_i } b_j\\
        &= \sum_{i=1}^\infty \sum_{j\; :\; t_j>x_i}a_ib_j+\sum_{i=1}^\infty \sum_{j\; :\; t_j\le x_i } a_ib_j\\
        &= \sum_{i=1}^\infty \sum_{j=1}^\infty a_ib_j\\
        &= \alpha(b)\beta(b)-\alpha(a)\beta(b).
    \end{align*}
\end{proof}

\section{The Bounded Convergence Theorem}
\label{section:BCT}

In this section, we prove the Bounded Convergence Theorem for the RDS integral. As we noted before, this theorem is not true for the interior DS-integral defined in~\cite{TSI} unless $\alpha$ is continuous; our new definition therefore overcomes this technical problem.  For the convenience of the reader we restate it from Section~\ref{section:intro}.

\begin{theorem}\label{thm:BddConvergence}
Given $\alpha \in BV[a,b]$ and a uniformly bounded sequence of  functions $\{f_n\}_{n=1}^\infty$ on $[a,b]$, if 

\begin{itemize}
    \item for all $n\in \mathbb{N}$, $f_n\in RDS_\alpha[a,b]$,
    
    \item $f_n\to f$ pointwise as $n\to \infty$,
    
    \item $f\in RDS_\alpha[a,b]$,
\end{itemize}

then
\begin{equation} \label{eqn:BddConvergence1}
\lim_{n\to\infty}\int_a^b f_n(x)\, d \alpha = \int_a^b f(x)\, d \alpha.
\end{equation}
\end{theorem}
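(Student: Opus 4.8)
The plan is to peel the integrator apart into pieces already understood and to treat each piece separately, so that the genuinely delicate analytic work is confined to a single, previously established case. First I would reduce to $\alpha \in I[a,b]$ using Theorem~\ref{thm:pnalpha}: since $f_n, f \in RDS_\alpha[a,b]$ if and only if they lie in $RDS_{P\alpha}[a,b]$ and $RDS_{N\alpha}[a,b]$, and since $\int_a^b g\,d\alpha = \int_a^b g\,dP\alpha - \int_a^b g\,dN\alpha$ for such $g$, it suffices to prove \eqref{eqn:BddConvergence1} with $\alpha$ replaced by the two increasing functions $P\alpha$ and $N\alpha$. Hence I assume $\alpha \in I[a,b]$.

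Next I would apply the saltus decomposition, Proposition~\ref{prop:ReducedSaltusfunctions}, to write $\alpha = G\alpha + S_L\alpha + S_R\alpha$, where $G\alpha$ is continuous and $S_L\alpha, S_R\alpha$ are reduced saltus functions; for $\alpha$ increasing, all three summands are increasing. By linearity of the integrator (Theorem~\ref{thm:linintegrator}), it is enough to verify the limit for each of these three integrators. The required integrability is automatic: $f_n, f \in RDS_{S_L\alpha}[a,b]\cap RDS_{S_R\alpha}[a,b]$ by Theorem~\ref{thm:BddinRDSsaltus}, since every bounded function is integrable against a reduced saltus function, and $f_n, f \in RDS_{G\alpha}[a,b]$ by Lemma~\ref{lem:RDSGalpha}. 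For the continuous piece $G\alpha \in CBV[a,b]$, Theorem~\ref{thm:RDSiffDSintegrator} shows the RDS- and DS-integrals agree, so the convergence $\int_a^b f_n\,dG\alpha \to \int_a^b f\,dG\alpha$ is exactly the Bounded Convergence Theorem for the DS-integral with a continuous integrator, which is established in~\cite[Theorem~4.50]{TSI}.

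For the two saltus pieces, I would compute the integrals explicitly as series and apply a discrete dominated-convergence argument. Writing $S_L\alpha(x) = \sum_{i=1}^\infty a_i H_0(x - x_i)$ with $\sum_i |a_i| < \infty$, Theorem~\ref{thm:BddinRDSsaltus} gives $\int_a^b f_n\,dS_L\alpha = \sum_{i=1}^\infty a_i f_n(x_i)$ and $\int_a^b f\,dS_L\alpha = \sum_{i=1}^\infty a_i f(x_i)$. Let $M$ be the uniform bound, so that $|f_n - f| \le 2M$. Given $\epsilon > 0$, I would choose $N$ with $\sum_{i>N}|a_i| < \epsilon/(4M)$, so the tail of $\sum_i a_i\bigl(f_n(x_i) - f(x_i)\bigr)$ is at most $\epsilon/2$ uniformly in $n$, while the finite head $\sum_{i\le N} a_i\bigl(f_n(x_i) - f(x_i)\bigr)$ tends to $0$ by the pointwise convergence $f_n(x_i)\to f(x_i)$. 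This yields $\int_a^b f_n\,dS_L\alpha \to \int_a^b f\,dS_L\alpha$, and the identical argument handles $S_R\alpha$. Recombining the three pieces proves the theorem.

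The main obstacle is the continuous case, which requires a true Arzelà-type argument; but that is precisely where we may lean on the DS-integral result from~\cite{TSI}, so it costs us nothing new. The genuinely new content — and the reason the RDS-integral succeeds where the interior DS-integral of~\cite{TSI} fails — is the effortless treatment of the jump (saltus) part: the point masses are correctly weighted, turning the problem into a convergent series to which pointwise convergence and the uniform bound apply directly. I would emphasize this contrast in the exposition.
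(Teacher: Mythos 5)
Your proposal is correct, and its skeleton matches the paper's: reduction to $\alpha\in I[a,b]$ via Theorem~\ref{thm:pnalpha}, the reduced saltus decomposition of Proposition~\ref{prop:ReducedSaltusfunctions} combined with Theorem~\ref{thm:linintegrator}, integrability of each piece from Theorem~\ref{thm:BddinRDSsaltus} and Lemma~\ref{lem:RDSGalpha}, and the head-plus-tail series argument for the saltus pieces (your $2M\sum_{i>N}|a_i|<\epsilon/2$ estimate is exactly the paper's). The genuine divergence is at the crux, the continuous piece $G\alpha$. You dispatch it by transferring to the DS-integral via Theorem~\ref{thm:RDSiffDSintegrator} (legitimate, since $G\alpha\in CBV[a,b]$ and $f_n,f\in RDS_{G\alpha}[a,b]$) and citing the Bounded Convergence Theorem for the interior DS-integral with continuous integrator, \cite[Theorem~4.50]{TSI}; that citation is sound, as the paper itself points to that result in its introduction. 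The paper instead reproves this case from scratch: it first reduces to nonnegative $g_n=|f_n-f|\to 0$ (using Lemma~\ref{lem:fplus} and Lemma~\ref{lem:triangleineq} for integrability of $g_n$), then passes to the monotone envelopes $p_n=\sup_{k\ge n}f_k$ --- which need not be RDS-integrable, which is precisely why Lemma~\ref{lem:LowerIntConvergence} is stated for lower integrals $L_\alpha$ rather than integrals --- and runs Luxemburg's argument: approximation from below by continuous functions (Lemma~\ref{lem:ContApprox}), Dini's theorem, and Theorem~\ref{thm:uniformconv}. Your route buys brevity and sidesteps the entire technical apparatus of Lemmas~\ref{lem:ContApprox} and~\ref{lem:LowerIntConvergence}, at the cost of making the proof of the paper's headline theorem rest on the very result from \cite{TSI} whose limitations the paper is trying to transcend; the paper's route is longer but self-contained, and its lower-integral lemma is reusable machinery. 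A minor structural difference worth noting: by handling $f_n\to f$ directly in each piece (pointwise convergence suffices for both the cited DS result and the series argument), you avoid the paper's preliminary reduction to the nonnegative, converging-to-zero case entirely.
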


Our proof of Theorem~\ref{thm:BddConvergence} is modeled on the proof  of the bounded convergence theorem for the Riemann integral due to Luxemburg~\cite{BCT}. We have organized our proof as follows.  In the main part of the proof, we show that  we can reduce to the case when our sequence of functions $\{f_n\}_{n=1}^\infty$ is nonnegative and decreasing, and the integrator $\alpha$ is continuous; this special case is given in Lemma~\ref{lem:LowerIntConvergence}.  To prove this lemma we need to approximate this decreasing sequence by a sequence of continuous functions; this is done in Lemma~\ref{lem:ContApprox}.  Given this lemma, to prove  Lemma~\ref{lem:LowerIntConvergence} we  apply Dini's Theorem to get uniform convergence, and finally use Theorem~\ref{thm:uniformconv}.

\begin{lemma}\label{lem:LowerIntConvergence}
Given $\alpha \in I[a,b]$ continuous and a sequence of functions $\{f_n\}_{n=1}^\infty$ on $[a,b]$, if 

\begin{itemize}
    \item $\{f_n\}_{n=1}^\infty$ is pointwise decreasing,
    
    \item $f_n(x)\ge 0$ for every $x\in [a,b]$ and $n\in \mathbb{N}$,
    
    \item $f_n\to 0$ pointwise as $n\to \infty$,

\end{itemize}

then $$\lim_{n\to\infty}L_\alpha(f_n, [a,b]) = 0.$$
\end{lemma}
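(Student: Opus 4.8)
The plan is to follow exactly the reduction outlined in the surrounding text: approximate the decreasing sequence $\{f_n\}$ from below by a \emph{decreasing} sequence of continuous functions, use Dini's theorem to promote pointwise convergence to uniform convergence, and then apply Theorem~\ref{thm:uniformconv}. First I would record two elementary observations. Since each $f_n\ge 0$ is bounded, the zero step function lies in $S_L(f_n,[a,b])$, so $L_\alpha(f_n,[a,b])\ge 0$; and since $f_{n+1}\le f_n$, monotonicity of the lower integral (which holds because $S_L(f_{n+1},[a,b])\subseteq S_L(f_n,[a,b])$) makes $\{L_\alpha(f_n,[a,b])\}$ a decreasing sequence bounded below, hence convergent to some $\ell\ge 0$. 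It then suffices to prove $\ell\le\epsilon$ for every $\epsilon>0$.

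Fix $\epsilon>0$. Using the continuous-approximation lemma (Lemma~\ref{lem:ContApprox}), which is exactly where continuity of $\alpha$ is needed, for each $n$ I would choose a continuous $g_n$ with $0\le g_n\le f_n$ and $\int_a^b g_n\,d\alpha > L_\alpha(f_n,[a,b])-\epsilon 2^{-n}$. Set $h_n=\min(g_1,\dots,g_n)$. Each $h_n$ is continuous, satisfies $0\le h_n\le g_n\le f_n$, and $h_n=\min(h_{n-1},g_n)\le h_{n-1}$, so $\{h_n\}$ is decreasing; since $0\le h_n\le f_n\to 0$ pointwise, also $h_n\to 0$ pointwise.

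The crux is the estimate $L_\alpha(f_n,[a,b])-\int_a^b h_n\,d\alpha<\epsilon$ for every $n$, which I would prove by induction using the identity $\min(\phi,\psi)+\max(\phi,\psi)=\phi+\psi$ together with linearity (Theorem~\ref{thm:linearity}). Writing $\int_a^b h_n\,d\alpha = \int_a^b h_{n-1}\,d\alpha + \int_a^b g_n\,d\alpha - \int_a^b \max(h_{n-1},g_n)\,d\alpha$, the decisive point is that $\max(h_{n-1},g_n)\le f_{n-1}$, since $h_{n-1}\le f_{n-1}$ and $g_n\le f_n\le f_{n-1}$; this is where the monotonicity of $\{f_n\}$ is used. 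As $\max(h_{n-1},g_n)$ is continuous and bounded above by $f_{n-1}$, monotonicity of the lower integral gives $\int_a^b \max(h_{n-1},g_n)\,d\alpha\le L_\alpha(f_{n-1},[a,b])$, and telescoping the induction hypothesis yields $\int_a^b h_n\,d\alpha > L_\alpha(f_n,[a,b])-\epsilon\sum_{k=1}^n 2^{-k}>L_\alpha(f_n,[a,b])-\epsilon$. This telescoping is the main obstacle: the naive pointwise bound $g_n-h_n\le\sum_{k<n}(f_k-g_k)$ is useless because the $f_k$ need not be integrable and are only controlled through their lower integrals, and the $\min/\max$ identity is precisely what circumvents this.

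To conclude, $\{h_n\}$ is a decreasing sequence of continuous functions on the compact interval $[a,b]$ converging pointwise to the continuous limit $0$, so Dini's theorem gives $h_n\to 0$ uniformly. Since each $h_n$ is continuous it lies in $RDS_\alpha[a,b]$, so Theorem~\ref{thm:uniformconv} yields $\int_a^b h_n\,d\alpha\to 0$. Combining this with the gap estimate gives $0\le L_\alpha(f_n,[a,b])<\int_a^b h_n\,d\alpha+\epsilon$, whence $\ell=\lim_n L_\alpha(f_n,[a,b])\le\epsilon$. As $\epsilon>0$ was arbitrary, $\ell=0$, which is the desired conclusion.
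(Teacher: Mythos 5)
Your proposal is correct and follows essentially the same route as the paper's proof: approximate each $f_n$ from below by a continuous $g_n$ via Lemma~\ref{lem:ContApprox}, form $h_n=\min\{g_1,\dots,g_n\}$, use Dini's theorem plus Theorem~\ref{thm:uniformconv} to get $\int_a^b h_n(x)\,d\alpha\to 0$, and close with the gap estimate $L_\alpha(f_n,[a,b])<\int_a^b h_n(x)\,d\alpha+\epsilon$. The only difference is bookkeeping in that gap estimate: you run an induction on $n$ using the exact identity $\min(\phi,\psi)+\max(\phi,\psi)=\phi+\psi$ and the single bound $\int_a^b\max(h_{n-1},g_n)\,d\alpha\le L_\alpha(f_{n-1},[a,b])$, whereas the paper sums the pointwise inequality $h_n+\sum_{i=1}^n\bigl(\max\{g_i,\dots,g_n\}-g_i\bigr)\ge g_n$ and bounds each term $\int_a^b\bigl(\max\{g_i(x),\dots,g_n(x)\}-g_i(x)\bigr)\,d\alpha<\epsilon/2^i$ --- both hinging on the same two facts, namely that maxima of the $g$'s with indices at least $i$ lie below $f_i$ (by the monotonicity of $\{f_n\}$) and the monotonicity of the lower integral.
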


We defer the proof of Lemma \ref{lem:LowerIntConvergence} to below.

\begin{proof}[Proof of Theorem \ref{thm:BddConvergence}]
Suppose first that $\alpha\in I[a,b]$. We will first prove the theorem for the special case when $f_n(x)\ge 0$ for all $x\in[a,b],$ $n\in \mathbb{N}$, and $f_n\to 0$ pointwise. By Proposition~\ref{prop:ReducedSaltusfunctions}, we can decompose 
$$\alpha = G\alpha + S_L\alpha+S_R\alpha $$ 
where $G\alpha$ is continuous and $S_L\alpha$ and $S_R\alpha$ are reduced saltus functions.
By Theorem \ref{thm:BddinRDSsaltus} and Lemma \ref{lem:RDSGalpha}, we have $f_n\in RDS_{G\alpha}[a,b]$, $f_n\in RDS_{S_L\alpha}[a,b]$ and $f_n\in RDS_{S_R\alpha}[a,b]$ for every $n\in \mathbb{N}$.  By Theorem~\ref{thm:linintegrator},
\begin{align*}
    \lim_{n\to \infty} \int_a^b f_n(x)\, d \alpha&= \lim_{n\to \infty}\left[\int_a^b f_n(x)\, d G\alpha+\int_a^b f_n(x)\, d S_L\alpha+\int_a^b f_n(x)\, d S_R\alpha\right],
\end{align*}
and so to prove \eqref{eqn:BddConvergence1} it will suffice to show that each of the integrals on the righthand side converges to $0$ and $n \rightarrow \infty$.

We first consider $G\alpha$.  For each $n\in \mathbb{N}$, define $p_n= \sup_{k\ge n}f_k.$ Then, $0\le f_n(x)\le p_n(x)$ for $x\in [a,b]$ and $\{p_n\}_{n=1}^\infty$ decreases pointwise. In particular, for $x\in [a,b]$, $$\lim_{n\to \infty}p_n(x) = \limsup_{n\to \infty} f_n(x) = \lim_{n\to \infty}f_n(x)=0.$$ 
Hence, by Lemma \ref{lem:LowerIntConvergence},
\begin{multline*}
    0
    \le\liminf_{n\to \infty}\int_a^b f_n(x)\, d G\alpha 
    \le \limsup_{n\to \infty}\int_a^b f_n(x)\, d G\alpha \\
    = \limsup_{n\to\infty}L_{G\alpha}(f_n, [a,b])
   \le \limsup_{n\to\infty}L_{G\alpha}(p_n, [a,b])
    =\lim_{n\to\infty}L_{G\alpha}(p_n, [a,b])
    =0.
\end{multline*}
Therefore,
$$\lim_{n\to \infty} \int_a^bf_n(x)\, d G\alpha = 0.$$

We next consider $S_L\alpha$.  Let $\{(x_i, a_i)\}$ be the saltus set for $S_L\alpha$. Then we can write
$S_L\alpha = \sum_{i=1}^\infty a_iH_c(x-x_i)$ where $c=0$.
By Lemma \ref{thm:BddinRDSsaltus}, for each $n\in \mathbb{N}$, $$\int_a^b f_n(x)\, d S_L\alpha = \sum_{i=1}^\infty a_if_n(x_i).$$
Let $M$ be the uniform bound for $\{f_n\}_{n=1}^\infty$.  By Definition \ref{def:saltusset}, 
$$\sum_{i=1}^\infty |a_i||f_n(x_i)|\le \sum_{i=1}^\infty M|a_i|<\infty.$$ 
Thus,  for $n\in \mathbb{N}$, 
$$\sum_{i=1}^\infty a_if_n(x_i)$$ converges absolutely.

Fix $\epsilon>0$. Let $K\in\mathbb{N}$ be such that $$\sum_{i=K+1}^\infty M|a_i|<\frac{\epsilon}{2}.$$ For each $1\le i\le K$, let $N_i\in \mathbb{N}$ be such that for $n\ge N_i$, $f_{n}(x_i)<\frac{\epsilon}{2|a_i|K}.$  If we set $N=\max_{1\le i\le K}\{N_i\}$, then  for $n\ge N$,
\begin{multline*}
    \left|\sum_{i=1}^\infty a_if_n(x_i)\right| 
    \le  \left|\sum_{i=1}^K a_if_n(x_i)\right|+\left|\sum_{i=K+1}^\infty a_if_n(x_i)\right|\\
    \le \sum_{i=1}^{K}|a_i|\frac{\epsilon}{2|a_i|K}+\sum_{i=K+1}^\infty M|a_i|
    < \frac{\epsilon}{2}+\frac{\epsilon}{2}
    =\epsilon.
\end{multline*}
Hence, $$\lim_{n\to \infty}\int_a^bf_n(x)\, d S_L\alpha =0.$$ 

Finally, for $S_R\alpha$, we can apply the same argument with $c=1$ to get 
$$\lim_{n\to \infty}\int_a^bf_n(x)\, d S_R\alpha =0.$$
This completes the proof of this special case.

\medskip

Now we will prove the theorem for the general case of $\{f_n\}_{n=1}^\infty$. Define for each $n\in\mathbb{N},$ $$g_n(x)= |f_n(x)-f(x)|.$$ Note that $g_n \in RDS_{\alpha}[a,b]$ and $\{g_n\}_{n=1}^\infty$ is a uniformly bounded sequence of functions. By the previous case, 
$$\lim_{n\to \infty}\int_a^b |f_n(x)-f(x)|\, d \alpha = \lim_{n\to \infty}\int_a^b g_n(x)\, d \alpha  = 0.$$ 
By Lemma \ref{lem:triangleineq}, 
$$0\le \left|\int_a^b f_n(x)-f(x)\, d \alpha\right|\le \int_a^b |f_n(x)-f(x)|\, d \alpha,$$
and so,  by the squeeze theorem, 
$$\lim_{n\to\infty}\int_a^b f_n(x)- f(x)\, d \alpha=0,$$ 
which yields \eqref{eqn:BddConvergence1}.

\medskip

Finally, we prove the theorem for  $\alpha \in BV[a,b]$. By Theorem \ref{thm:pnalpha}, $f_n\in RDS_{P\alpha}[a,b]$ for every $n\in \mathbb{N}$ and $f\in RDS_{P\alpha}[a,b]$. Therefore, by the previous case applied to $P\alpha$ and $N\alpha$,
\begin{multline*}
    \lim_{n\to \infty}\int_a^b f_n(x)\, d \alpha 
    = \lim_{n\to \infty}\bigg(\int_a^b f_n(x)\, d P\alpha-\int_a^b f_n(x)\, d N\alpha\bigg)\\
    = \int_a^b f(x)\, d P\alpha-\int_a^b f(x)\, d N\alpha
    =\int_a^b f(x)\, d \alpha.
\end{multline*}
\end{proof}

To prove Lemma \ref{lem:LowerIntConvergence} we need another lemma, whose proof we give below.

\begin{lemma}\label{lem:ContApprox}
Given $\alpha \in I[a,b]$ continuous and $f\in B[a,b]$ nonnegative, for each $\epsilon>0$, there exists $g\in C[a,b]$ such that $0\le g(x)\le f(x)$ for every $x\in [a,b]$ and $$L_\alpha(f, [a,b]) < \int_a^b g(x)\, d \alpha +\epsilon.$$ 
\end{lemma}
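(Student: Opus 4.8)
The plan is to approximate the lower integral from below by a step function, and then to smooth that step function into a continuous function while losing only a negligible amount of $\alpha$-mass, concentrated in small neighborhoods of the partition points. The continuity of $\alpha$ is precisely what makes this loss controllable.

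First I would dispose of the trivial case: if $M := \sup_{[a,b]} f = 0$, then $f\equiv 0$, so $L_\alpha(f,[a,b])=0$ and $g\equiv 0$ works. So assume $M>0$. By the definition of $L_\alpha(f,[a,b])$ as a supremum, choose a step function $v_0$ with $v_0\le f$ and $\int_a^b v_0\,d\alpha > L_\alpha(f,[a,b])-\tfrac{\epsilon}{2}$. Replacing $v_0$ by $\max(v_0,0)$ — which is a step function with respect to the same partition, still satisfies $\max(v_0,0)\le f$ since $f\ge 0$, and has integral no smaller by Theorem~\ref{thm:stepmon} — I may assume $0\le v\le f$, where $v=c_i\ge 0$ on $I_i=(x_{i-1},x_i)$ for a partition $\{x_i\}_{i=0}^n$.

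Next I would construct $g$ by a trapezoidal modification of $v$. Fix $\delta>0$ with $2\delta<\min_i(x_i-x_{i-1})$; set $g=c_i$ on the ``core'' $[x_{i-1}+\delta,\,x_i-\delta]$ of each $I_i$, and interpolate $g$ linearly down to the value $0$ at every partition point (including $a$ and $b$). Then $g\in C[a,b]$ with $g(x_j)=0$ at each partition point. On each open interval $I_i$ we have $0\le g\le c_i=v$, and at each partition point $g=0\le v$; hence $0\le g\le v\le f$ everywhere, which gives the two pointwise inequalities demanded by the statement. The key feature is that $v-g$ is nonnegative, vanishes off the set $D$ consisting of the $\delta$-neighborhoods of the partition points, namely $[a,a+\delta]\cup\bigcup_{j=1}^{n-1}[x_j-\delta,x_j+\delta]\cup[b-\delta,b]$, and satisfies $0\le v-g\le M\,\mathbf{1}_D$.

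Finally I would estimate the loss. By Theorems~\ref{thm:continuousinRDS} and~\ref{thm:linearity}, $v-g\in RDS_\alpha[a,b]$, so by monotonicity (Theorem~\ref{thm:monotonicity}) we get $\int_a^b (v-g)\,d\alpha\le M\mu_\alpha(D)$. Since $\alpha$ is continuous, $\mu_\alpha$ of each neighborhood equals, e.g., $\alpha(x_j+\delta)-\alpha(x_j-\delta)$ for interior points (and $\alpha(a+\delta)-\alpha(a)$, $\alpha(b)-\alpha(b-\delta)$ at the endpoints), each of which tends to $0$ as $\delta\to 0$; as there are only finitely many, $\mu_\alpha(D)\to 0$. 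Choosing $\delta$ so that $M\mu_\alpha(D)<\tfrac{\epsilon}{2}$ yields $\int_a^b g\,d\alpha=\int_a^b v\,d\alpha-\int_a^b(v-g)\,d\alpha>L_\alpha(f,[a,b])-\epsilon$, which is exactly the claim. The only genuinely nontrivial step is this last continuity estimate — verifying that continuity of $\alpha$ forces the $\alpha$-mass of shrinking neighborhoods of the finitely many partition points to vanish — which is where the hypothesis $\alpha\in C[a,b]$ is essential; everything else is routine smoothing and bookkeeping.
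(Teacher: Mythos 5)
Your proposal is correct and follows essentially the same route as the paper: choose a nonnegative step function $v\le f$ whose integral nearly attains $L_\alpha(f,[a,b])$, replace it by a piecewise-linear (trapezoidal) continuous minorant $g$ that agrees with $v$ outside $\delta$-neighborhoods of the finitely many partition points, and use continuity of $\alpha$ to make the lost $\alpha$-mass of those neighborhoods less than $\epsilon/2$. The differences are only cosmetic: you interpolate $g$ down to $0$ at the partition points and control the loss by letting $\delta\to 0$ (pointwise continuity at finitely many points), whereas the paper keeps $g(x_i)=v(x_i)$ or $\min\{c_i,c_{i+1}\}$ there and invokes uniform continuity of $\alpha$ with the explicit modulus $\epsilon/(4MN)$.
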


\begin{proof}[Proof of Lemma \ref{lem:LowerIntConvergence}]
Fix $\epsilon>0$. By Lemma \ref{lem:ContApprox}, for each $n\in \mathbb{N}$, there exists a continuous function $g_n$ such that $0\le g_n(x)\le f_n(x)$ for $x\in [a,b]$ and  
\begin{equation} \label{eqn:lemma83bound}
L_\alpha(f_n, [a,b]) < \int_a^b g_n(x)\, d \alpha +\frac{\epsilon}{2^n}.
\end{equation}
For each $n\in \mathbb{N}$, define $h_n=\min\{g_1, \dots, g_n\}$. Then, $0\le h_n\le g_n\le f_n$ and $h_n\in C[a,b]$. Thus, $\{h_n\}_{n=1}^{\infty}$ decreases to $0$ pointwise. Therefore, by Dini's Theorem \cite[Exercise~24.H]{MR0393369}, the sequence $\{h_n\}_{n=1}^\infty$ converges uniformly to $0$ on $[a,b]$. Hence, by Theorem \ref{thm:uniformconv}, $$\lim_{n\to \infty}\int_a^b h_n(x)\, d \alpha=0.$$

Fix $n\in \mathbb{N}$ and $x\in [a,b]$. Then, $h_n(x)=g_j(x)$ for some $1\le j\le n$. Thus, 
\begin{align}
    & h_n(x) + \sum_{i=1}^n\big(\max\{g_i(x), \dots, g_n(x)\}-g_i(x)\big) \label{eqn1} \\
    & \qquad \qquad = g_j(x) + \sum_{i=1}^n(\max\{g_i(x), \dots, g_n(x)\}-g_i(x)) \notag\\
    &\qquad \qquad \ge  g_j(x)+ \max\{g_j(x), \dots, g_n(x)\} -g_j(x) \notag\\\
    & \qquad \qquad = \max\{g_j(x), \dots, g_n(x)\}\notag\\\
    &\qquad \qquad \ge g_n(x).\notag\
\end{align}
Since $x$ is arbitrary, this is true for all $x\in [a,b]$.

For $1\le i\le n$, $\max\{g_i, \dots, g_n\}$ is continuous, so by Theorem \ref{thm:continuousinRDS}, 
$\max\{g_i, \dots, g_n\}\in RDS_{\alpha}[a,b]$. Moreover, since $\max\{g_i, \dots, g_n\}\le \max\{f_i, \dots, f_n\}=f_i$ as $\{f_i\}_{n=1}^\infty$ is pointwise decreasing, for $1\le i\le n$ we have that
\begin{align*}
    L_\alpha(f_i, [a,b]) &\ge L_\alpha(\max\{g_i, \dots, g_n\}, [a,b]) \\
    &= \int_a^b \max\{g_i(x), \dots, g_n(x)\}\, d \alpha\\
    &= \int_a^b \max\{g_i(x), \dots, g_n(x)\}-g_i(x)\, d \alpha+\int_a^b g_i(x)\, d \alpha.
\end{align*}
If we combine this estimate with~\eqref{eqn:lemma83bound}, we get
\begin{equation} \label{eqn2}
    \int_a^b \max\{g_i(x), \dots, g_n(x)\}-g_i(x)\, d \alpha 
    \le L_\alpha(f_i, [a,b]) - \int_a^b g_i(x)\, d \alpha <\frac{\epsilon}{2^i}.
\end{equation}

Therefore, by Theorem \ref{thm:monotonicity} and by \eqref{eqn1} and \eqref{eqn2}, \begin{align*}
    0\le L_\alpha(f_n, [a,b]) &< \frac{\epsilon}{2^n}+ \int_a^b g_n(x)\, d \alpha\\
    &\le \frac{\epsilon}{2^n}+ \int_a^b h_n(x)\, d \alpha+ \sum_{i=1}^n \int_a^b \max\{g_i(x), \dots, g_n(x)\}-g_i(x)\, d \alpha\\
    &\le \frac{\epsilon}{2^n}+ \int_a^b h_n(x)\, d \alpha+ \sum_{i=1}^n \frac{\epsilon}{2^i}\\
    &= \epsilon+\int_a^b h_n(x)\, d \alpha.
\end{align*}

Thus, for every $\epsilon>0$, $$0\le\liminf_{n\to\infty}L_\alpha(f_n, [a,b])\le  \limsup_{n\to\infty}L_\alpha(f_n, [a,b])\le \epsilon+ \lim_{n\to\infty}\int_a^b h_n(x)\, d \alpha = \epsilon, $$ which gives $$\limsup_{n\to\infty}L_\alpha(f_n, [a,b])=\limsup_{n\to\infty}L_\alpha(f_n, [a,b])=0.$$ Hence, $$\lim_{n\to \infty} L_\alpha(f_n, [a,b]) = 0.$$
\end{proof}

\begin{proof}[Proof of Lemma \ref{lem:ContApprox}]
Fix $\epsilon>0$. By Definition \ref{def:RDSint}, there exists a step function $v(x)\le f(x)$ defined with respect to the partition $\{x_i\}_{i=1}^N$ such that $v(x)=c_i$ for $x\in I_i$ and 
$$L_\alpha(f, [a,b]) \le \int_a^b v(x)\, d \alpha +\frac{\epsilon}{2}.$$ 
Without loss of generality, by Corollary \ref{cor:RDScritbdd} we may assume that  $v(x)\ge 0$ for $x\in [a,b]$. Let $M=\max_{1\le i\le N}\{c_i\}+1$. Since $\alpha$ is continuous on $[a,b]$, it is uniformly continuous on $[a,b]$. Thus, there exists a $\delta_1>0$ such that for every $x,y\in [a,b]$ with $|x-y|<\delta_1$, $$|\alpha(x)-\alpha(y)|<\frac{\epsilon}{4MN}.$$ Let $\delta = \min(\frac{1}{2}\delta_1, \min_{1\le i\le N}\{\frac{x_i-x_{i-1}}{4}\}).$ 

We now define the function $g$ on $[a,b]$ by redefining $f$ in each neighborhood $(x_i-\delta,x_i+\delta)$, $1\leq i \leq n-1$, if $x_i$ is a discontinuity of $f$.  (If $x_0$ or $x_n$ is a discontinuity, this argument immediately adapts to the intervals $[x_0,x_0+\delta)$, $(x_n-\delta,x_n]$.)   For each $i$, if $v(x_i)\le \min\{c_i, c_{i+1}\}$, let $g$ be the linear function such that $g(x_i-\delta)=c_i, g(x_i)=v(x_i), g(x_i+\delta)=c_{i+1}$.  On the other hand,  if $v(x_i)> \min\{c_i, c_{i+1}\}$, let $g$ be the piecewise linear function with $g(x_i-\delta)=c_i, g(x_i)=\min\{c_i, c_{i+1}\}, g(x_i+\delta)=c_{i+1}$. Since the pieces of $g$ are continuous and agree at the endpoints, $g$ is continuous. Moreover, 
$$0\le g(x)\le v(x)\le f(x).$$ 

We now estimate as follows: since $\alpha$ is continuous and increasing, by the definition of the RDS-integral of a step function,

\begin{align*}
    \int_a^b g(x)\, d \alpha 
    & = \sum_{i=1}^N \int_{x_{i-1}}^{x_i} g(x)\, d \alpha \\
    &\ge  \sum_{i=1}^N \int_{x_{i-1}+\delta}^{x_i-\delta} g(x)\, d \alpha\\
    &= \sum_{i=1}^N c_i\mu_{\alpha}((x_{i-1}+\delta, x_i-\delta))\\
    &= \sum_{i=1}^N c_i\mu_{\alpha}((x_{i-1}, x_i)) - \sum_{i=1}^N c_i[\mu_{\alpha}((x_{i-1}, x_{i-1}+\delta])+\mu_\alpha([x_i-\delta, x_i))]\\
    &=\int_a^b v(x)\, d \alpha - \sum_{i=1}^N c_i[|\alpha(x_{i-1}+\delta)-\alpha(x_{i-1})|+|\alpha(x_i)-\alpha(x_i-\delta)|]\\
    &> \int_a^b v(x)\, d \alpha - \sum_{i=1}^N c_i\frac{\epsilon}{2NM}\\
    &\ge \int_a^b v(x)\, d \alpha - \sum_{i=1}^N M\frac{\epsilon}{2NM}\\
    &= \int_a^b v(x)\, d \alpha - \frac{\epsilon}{2}.\\
\end{align*}
Therefore, we have 
$$L_\alpha(f, [a,b])\le \int_a^b v(x)\, d \alpha +\frac{\epsilon}{2}< \int_a^b g(x)\, d \alpha +\epsilon.$$
\end{proof}

\section{RDS Integral and Related Integrals}
\label{section:alt-defns}

In this section we continue our discussion of alternative  definitions of  the Stieltjes integral, begun in Section~\ref{section:intro}, and compare them to the  RDS-integral. As we showed above in Corollary~\ref{cor:bestfitRDScrit}, our definition of the RDS-integral is equivalent to the original definition by Ross, Definition~\ref{def: RossIntegral}.  Further, as shown in~\cite{TSI}, the interior DS-integral is more general than the Riemann-Stieltjes and exterior Darboux-Stieltjes integrals, and agrees with them when they exist.  Therefore, our first step is to characterize when the RDS-integral and the interior DS-integral agree.  We began this in Theorems~\ref{thm:RDSiffDSintegrator} and~\ref{thm:RDSiffDSintegrand}, but here we will use Theorem~\ref{thm:RDSLebcrit} to give a complete characterization.  Then we will  define Riemann and Pollard-type integrals corresponding to the RDS-integral. The Pollard-type integral  is equivalent to the RDS-integral while the Riemann-type integral is still weaker. We will also briefly consider a variant of the RS-integral introduced by Ross that is equivalent to the RDS-integral.   Finally, we show the RDS-integral agrees with the Lebesgue-Stieltjes integral whenever the former is defined.

\medskip

To compare the RDS-integral and the interior DS-integral, we will use the following characterization of interior DS-integrability proved in \cite[Theorem~5.19]{TSI}.

\begin{theorem} \label{thm:ds-lebesgue-crit}
Given $\alpha \in I[a,b]$ and $f\in B[a,b]$, $f \in DS_\alpha[a,b]$ if and only if:
\begin{enumerate}
    \item the subset of $[a,b]$ where $f$ is discontinuous has $G\alpha$-measure $0$, where $G\alpha$ is the continous part of the saltus decomposition of $\alpha$;

    \item If $\{x_i\}_{i=1}^\infty$ is the set of discontinuities of $\alpha$, then for each $i$, 
    \begin{enumerate}
        \item $\alpha$ is right continuous at $x_i$ or $f(x_i+)$ exists, and

        \item $\alpha$ is left continuous at $x_i$ or $f(x_i-)$ exists.
    \end{enumerate}
\end{enumerate}
\end{theorem}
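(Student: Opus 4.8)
The plan is to reduce the statement to a single oscillation sum and then analyze the continuous and jump parts of $\alpha$ separately. By the Darboux criterion for the DS-integral \cite[Theorem~4.17]{TSI} and the best-fit step functions of Definition~\ref{def:bestfitstepfunctions}, $f\in DS_\alpha[a,b]$ if and only if
$$\inf_{\mathcal P}\sum_{i=1}^n \omega_i(f)\,[\alpha(x_i)-\alpha(x_{i-1})]=0,$$
where $\omega_i(f)=M_i-m_i$ is the oscillation of $f$ on the open interval $I_i=(x_{i-1},x_i)$ and the infimum runs over all partitions $\mathcal P$; the best-fit functions agree with $f$ at the partition points, so those points contribute nothing to the DS step integral. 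Writing the saltus decomposition $\alpha=G\alpha+S_L\alpha+S_R\alpha$ of Proposition~\ref{prop:ReducedSaltusfunctions}, I would split each increment $\alpha(x_i)-\alpha(x_{i-1})$ into its continuous part $G\alpha(x_i)-G\alpha(x_{i-1})$ and its jump part, recording the key bookkeeping fact that the coefficient $\alpha(p)-\alpha(p-)$ (a jump of $S_R\alpha$) is attributed to the partition interval having $p$ as its \emph{right} endpoint, while $\alpha(p+)-\alpha(p)$ (a jump of $S_L\alpha$) is attributed to the interval with $p$ as its \emph{left} endpoint. This is what pairs the left jump with $f(p-)$ and the right jump with $f(p+)$.

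For necessity I would treat the two conditions independently. If (1) fails, then $D_\eta=\{x:\operatorname{osc}_f(x)\ge\eta\}$ has positive $G\alpha$-measure for some $\eta>0$; since $\alpha(x_i)-\alpha(x_{i-1})\ge G\alpha(x_i)-G\alpha(x_{i-1})$, the classical Lebesgue-criterion argument (as in \cite[Theorem~2.2, Proposition~5.25]{TSI}) bounds every partition sum below by a fixed positive multiple of the $G\alpha$-outer measure of $D_\eta$, so the infimum is positive. If (2) fails, say $\alpha$ is not left continuous at a discontinuity $p$ and $f(p-)$ does not exist, then $\eta:=\limsup_{x\to p^-}f(x)-\liminf_{x\to p^-}f(x)>0$, and for \emph{any} partition the interval whose closure meets a left neighborhood of $p$ has oscillation at least $\eta$ and increment at least $\alpha(p)-\alpha(p-)>0$; hence every partition sum is at least $\eta[\alpha(p)-\alpha(p-)]$, again forcing a positive infimum. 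The right-continuity/$f(p+)$ case is symmetric.

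For sufficiency, assume (1) and (2), fix $\epsilon>0$, and set $M=\|f\|_\infty$. Since $\alpha\in I[a,b]$ has summable jumps, choose finitely many jump points $p_1,\dots,p_K$ carrying all but $\epsilon/(8M)$ of the total jump mass. I would then build one partition $\mathcal P$ from three ingredients: (a) each $p_k$ as a partition point, with its neighbors chosen---using the one-sided limit of $f$ guaranteed by (2) on whichever side $\alpha$ actually jumps---so close that the oscillation on the adjacent interval carrying that jump is below $\epsilon$; (b) a finite cover of $D_\eta$ (for small $\eta$) by intervals of total $G\alpha$-length below $\epsilon$, refined so that these become partition intervals, which controls the continuous part exactly as in the classical Lebesgue criterion; (c) enough extra points that every remaining interval has oscillation below $\eta$. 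Bounding the oscillation by $2M$ on the finitely many exceptional and jump-adjacent intervals and by $\eta$ elsewhere, and bounding the residual small jumps by $2M\cdot\epsilon/(8M)$, makes the total sum a fixed multiple of $\epsilon$, so the Darboux criterion gives $f\in DS_\alpha[a,b]$.

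The main obstacle is precisely this sufficiency bookkeeping: the covering intervals from (1) and the jump-isolating points from (2) must be amalgamated into one common partition so that refining for one estimate does not spoil the other, and the infinitely many small jumps of $\alpha$ must be truncated and absorbed uniformly. Keeping each jump attached to the correct adjacent interval---so that the one-sided limit invoked is the matching one---is the delicate point, and it is exactly this asymmetry that produces the two separate side conditions in (2).
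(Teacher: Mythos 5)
First, a point of comparison: the paper contains no proof of this statement. Theorem~\ref{thm:ds-lebesgue-crit} is quoted from \cite[Theorem~5.19]{TSI} and is used in Section~\ref{section:alt-defns} purely as an external input for comparing the RDS and DS-integrals, so your argument is not paralleling a proof in this paper but supplying one that the paper outsources to the reference.

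Judged on its own terms, your outline is essentially correct and follows the natural route. The reduction to oscillation sums is legitimate, because the DS step integral of Definition~\ref{def:DSstepint} ignores the values of a step function at the partition points, so bracketing pairs may be replaced by the best-fit pair of Definition~\ref{def:bestfitstepfunctions}. Both necessity arguments are sound: for (1) you additionally need the continuity of $G\alpha$ to discard the finitely many partition points from the cover of $D_\eta$, and for (2) the key inequality is that the term for the interval having $p$ as right endpoint (or containing $p$) is at least $\eta[\alpha(p)-\alpha(p-)]$, which holds by monotonicity in either configuration. In the sufficiency direction, two things should be made explicit. First, $D_\eta$ is closed, hence compact, so the $G\alpha$-measure-zero cover can be taken finite before being refined into a single partition; since refinement increases neither the total $G\alpha$-length of the cover pieces nor the oscillation on the good intervals, the amalgamation you worry about does go through. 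Second, at a retained jump point $p_k$ where $\alpha$ jumps on one side only, condition (2) says nothing about $f$ on the other side, and the interval adjacent to $p_k$ there may have oscillation as large as $2M$; this is rescued by observing that in that case $p_k\in D_\eta$, so that interval sits inside a covering interval and its $\alpha$-increment is a small $G\alpha$-part plus residual small jumps, both of which are already budgeted. With those two points spelled out, your sketch is a complete proof of the cited result.
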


As an immediate consequence of Theorems~\ref{thm:RDSLebcrit} and~\ref{thm:ds-lebesgue-crit}, we have the following generalization of Theorems~\ref{thm:RDSiffDSintegrator} and~\ref{thm:RDSiffDSintegrand}.  Note that the assumption that $f$ and $\alpha$ have no common discontinuities is necessary for the RS-integral to exist and be equal to the DS-integral, and so this is a natural sufficient condition to consider.

\begin{theorem}\label{thm:RDSiffDSsuffcond}
    Given $\alpha\in BV[a,b]$ and $f\in B[a,b]$, suppose $f$ and $\alpha$ have no common discontinuities. Then $f\in RDS_\alpha[a,b]$ if and only if $f\in DS_\alpha[a,b]$. Moreover,
    $$\int_a^b f(x)\, d\alpha = (DS)\int_a^b f(x)\, d\alpha.$$
\end{theorem}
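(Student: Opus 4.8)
The plan is to read off the integrability equivalence directly from the two Lebesgue-type criteria, and then to obtain the equality of values by splitting $\alpha$ through its positive/negative variation and saltus decompositions. The one input that makes everything collapse is the observation that the no-common-discontinuity hypothesis kills condition~(2) of Theorem~\ref{thm:ds-lebesgue-crit}. First I would note that the discontinuity sets of $P\alpha$ and $N\alpha$ are contained in that of $\alpha$: since $V\alpha$ is continuous exactly where $\alpha$ is, and $P\alpha,N\alpha$ are half-sums of $V\alpha$ with $\pm\alpha$, any jump of $P\alpha$ or $N\alpha$ is a jump of $\alpha$. Hence $f$ is continuous at every discontinuity $x_i$ of $P\alpha$ and of $N\alpha$, so $f(x_i+)$ and $f(x_i-)$ exist (both equal $f(x_i)$) and conditions (2a),(2b) hold automatically, whatever the one-sided continuity of the integrator.

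With this in hand the integrability statement is immediate. Applying the DS criterion through the DS analogue of the positive/negative variation theorem (\cite[Theorem 4.28]{TSI}), $f\in DS_\alpha[a,b]$ iff $f\in DS_{P\alpha}[a,b]$ and $f\in DS_{N\alpha}[a,b]$, which by Theorem~\ref{thm:ds-lebesgue-crit} (with condition~(2) now free) holds iff the discontinuity set of $f$ has both $G(P\alpha)$-measure $0$ and $G(N\alpha)$-measure $0$. By Theorem~\ref{thm:RDSLebcrit} this is precisely the condition for $f\in RDS_\alpha[a,b]$. This settles the equivalence.

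For the equality of the integrals I would reduce to $\alpha\in I[a,b]$, since by Theorem~\ref{thm:pnalpha} and \cite[Theorem 4.28]{TSI} both integrals decompose as $\int f\,dP\alpha-\int f\,dN\alpha$, and $f$ inherits the no-common-discontinuity property with $P\alpha$ and $N\alpha$. For increasing $\alpha$ I would then invoke the saltus decomposition $\alpha=G\alpha+S_L\alpha+S_R\alpha$ (Proposition~\ref{prop:ReducedSaltusfunctions}) and use linearity of the integrator for each integral. On the continuous part $G\alpha$ the two integrals agree by Theorem~\ref{thm:RDSiffDSintegrator}. On each saltus part the RDS-integral equals $\sum_i a_i f(x_i)$ by Theorem~\ref{thm:BddinRDSsaltus}, so it remains to show the interior DS-integral produces the same sum.

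The main obstacle is exactly this last point: reconciling the RDS jump term with the interior DS-integral's treatment of a jump of $\alpha$. The interior DS-integral of $f$ against a single Heaviside $H_c(\cdot-x_i)$ depends a priori on the value $\alpha(x_i)$ at the jump (as Example~\ref{lem:H_c} makes clear), and only washes out to $f(x_i)$ when $f$ is continuous at $x_i$ — this is where the hypothesis does its real work for the value, not merely for integrability. Concretely I would verify, citing the interior-Heaviside computation \cite[Lemma 5.6]{TSI}, that continuity of $f$ at $x_i$ forces $(DS)\int_a^b f\,dH_c(\cdot-x_i)=f(x_i)$, and then pass from the partial sums to the full saltus function via the DS-integral convergence theorem for integrators converging in $BV$ norm from \cite{TSI}, obtaining $(DS)\int_a^b f\,dS_L\alpha=\sum_i a_i f(x_i)$ and likewise for $S_R\alpha$. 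Recombining the three pieces gives $\int_a^b f\,d\alpha=(DS)\int_a^b f\,d\alpha$, and undoing the $P\alpha,N\alpha$ reduction finishes the proof. The delicate bookkeeping is checking that $f$ is DS-integrable against each individual piece $G\alpha,S_L\alpha,S_R\alpha$ so that the DS linearity-of-integrator and convergence results legitimately apply.
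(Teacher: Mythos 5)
Your proposal is correct, and its skeleton matches the paper's: the integrability equivalence is read off from the two Lebesgue-type criteria, with the no-common-discontinuity hypothesis discharging condition (2) of Theorem~\ref{thm:ds-lebesgue-crit}, and the equality of values comes from the reduced saltus decomposition, linearity of the integrator, Theorem~\ref{thm:RDSiffDSintegrator} on $G\alpha$, and Theorem~\ref{thm:BddinRDSsaltus} on the saltus parts. The differences are in execution, and each is legitimate. You reduce the value computation to increasing integrators through $P\alpha$, $N\alpha$ (correctly observing that jumps of $P\alpha$ and $N\alpha$ are jumps of $\alpha$, so the hypothesis is inherited); the paper skips this reduction and works with the saltus decomposition of $\alpha\in BV[a,b]$ directly, which is permissible since Theorems~\ref{thm:linintegrator}, \ref{thm:RDSiffDSintegrator} and~\ref{thm:BddinRDSsaltus} are all stated at the BV level, so your reduction is harmless but unnecessary. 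More substantively, where the paper evaluates $(DS)\int_a^b f(x)\,dS_L\alpha$ and $(DS)\int_a^b f(x)\,dS_R\alpha$ by citing the ready-made result \cite[Proposition 5.9]{TSI} (which gives $\sum_i a_if(x_i+)$, resp.\ $\sum_i b_if(y_i-)$, after which continuity of $f$ at the jumps converts one-sided limits into values), you rebuild that result from the single-Heaviside computation \cite[Lemma 5.6]{TSI} together with the DS analogue in \cite{TSI} of the BV-norm convergence theorem (Theorem~\ref{thm:convintegrator}). This route is sound: DS-integrability of $f$ against each Heaviside and each partial sum follows from Theorem~\ref{thm:ds-lebesgue-crit} and DS linearity of the integrator, and the partial sums converge in BV norm by \cite[Proposition 3.48]{TSI}. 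What the paper's citation buys is exactly the bookkeeping you flag at the end; what your route buys is independence from \cite[Proposition 5.9]{TSI} and a more transparent view of where continuity of $f$ at the jumps enters the value (not merely the integrability). Your treatment of the integrability step is in fact slightly more careful than the paper's, which is terse about how the increasing-integrator criterion of Theorem~\ref{thm:ds-lebesgue-crit} gets applied to a BV integrator via $P\alpha$ and $N\alpha$.
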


\begin{proof}
Since $f$ and $\alpha$ have no common discontinuities, $f(x\pm)=f(x)$ at each discontinuity $x$ of $\alpha$, so the second condition in Theorem~\ref{thm:ds-lebesgue-crit} is automatically fulfilled.  By Remark~\ref{remark:two-defns}, since $G\alpha$ is continuous, the definition of $G\alpha$-measure $0$ used in Theorem \ref{thm:RDSLebcrit} agrees with that used in Theorem~\ref{thm:RDSLebcrit}.  Hence, $f\in DS_\alpha[a,b]$ if and only if $f\in RDS_\alpha[a,b]$. 
    
  We now prove that the two integrals agree.   By Proposition~\ref{prop:ReducedSaltusfunctions} we have the reduced saltus decomposition
  $\alpha=G\alpha+S_L\alpha +S_R\alpha$.   Let $\{(x_i, a_i)\}$ be the reduced saltus set for $S_L\alpha$ and $\{(y_i, b_i)\}$ be the reduced saltus set for $S_R\alpha$. By Theorems~\ref{thm:linintegrator}, \ref{thm:RDSiffDSintegrator}, and~\ref{thm:BddinRDSsaltus}  we have
  \begin{align*}
        \int_a^b f(x)\, d\alpha &= \int_a^b f(x)\, dG\alpha+\int_a^b f(x)\, dS_R\alpha+\int_a^b f(x)\, dS_L\alpha\\
        &= (DS)\int_a^b f(x)\, dG\alpha + \sum_{i=1}^\infty a_if(x_i) + \sum_{i=1}^\infty b_if(y_i)\\
        &= (DS)\int_a^b f(x)\, dG\alpha + \sum_{i=1}^\infty a_if(x_i+) + \sum_{i=1}^\infty b_if(y_i-);\\
        \intertext{by the corresponding results for the DS-integral, \cite[Proposition 5.9]{TSI} and \cite[Theorem 4.30c]{TSI},}
        &= (DS)\int_a^b f(x)\, dG\alpha+(DS)\int_a^b f(x)\, dS_R\alpha+(DS)\int_a^b f(x)\, dS_L\alpha\\
        &= (DS)\int_a^b f(x)\, d\alpha.
    \end{align*}
\end{proof} 

\medskip

Arguing as we did in the proof of Theorem~\ref{thm:RDSiffDSsuffcond}, it is clear from Theorems~\ref{thm:RDSLebcrit} and~\ref{thm:ds-lebesgue-crit} that if a function $f$ is interior DS-integrable, it is RDS-integrable.  However, as we showed in Lemma~\ref{lem:H_c}, the two integrals may not agree if they share a common discontinuity.  By carefully considering the additional continuity conditions in the second part of Theorem~\ref{thm:ds-lebesgue-crit}, we can give a complete characterization of when the two integrals agree in terms of a condition reminiscent of the error term introduced for integration by parts, Theorem~\ref{thm:IntByParts}.

\begin{theorem}\label{thm: RDSiffDScharacterization}
    Given $\alpha\in I[a,b]$, let $G\alpha+S_L\alpha+S_R\alpha$ be its reduced saltus decomposition, with $\{(x_i, a_i)\}$ and $\{(y_i, b_i)\}$ the reduced saltus sets for $S_L\alpha$ and $S_R\alpha$.
    Then for any $f\in B[a,b]$,  $f\in DS_\alpha[a,b]$ and 
    \begin{equation} \label{eqn:RDSiffDS-1}
    \int_a^b f(x)\, d\alpha = (DS)\int_a^b f(x)\, d\alpha
    \end{equation}
    if and only if $f\in RDS_\alpha[a,b]$, $f(x_i+)$ and $f(y_i-)$ exist for all $i$ and  
    \begin{equation} \label{eqn:RDSiffDS-2}
\sum_{i=1}^\infty a_i[f(x_i)-f(x_i+)]+\sum_{i=1}^\infty b_i[f(y_i)-f(y_i-)]=0.
\end{equation}
\end{theorem}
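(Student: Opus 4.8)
The plan is to specialize the two Lebesgue-type criteria, Theorems~\ref{thm:RDSLebcrit} and~\ref{thm:ds-lebesgue-crit}, to the increasing integrator $\alpha$, and then to compute both integrals explicitly through the saltus decomposition. First I would observe that, since $\alpha\in I[a,b]$, we have $N\alpha\equiv 0$, so the $G(N\alpha)$-measure-zero clause in Theorem~\ref{thm:RDSLebcrit} is vacuous and $f\in RDS_\alpha[a,b]$ holds if and only if the discontinuity set of $f$ has $G\alpha$-measure zero. To read off Theorem~\ref{thm:ds-lebesgue-crit} I would use that $\alpha$ fails to be right continuous at a point $z$ exactly when $z=x_i$ for some $i$ (that is, $z$ lies in the saltus set of $S_L\alpha$), and fails to be left continuous exactly when $z=y_i$ for some $i$. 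Hence the continuity clauses in part (2) of Theorem~\ref{thm:ds-lebesgue-crit} reduce precisely to the requirement that $f(x_i+)$ and $f(y_i-)$ exist for all $i$, and the criterion becomes: $f\in DS_\alpha[a,b]$ if and only if $f\in RDS_\alpha[a,b]$ and all of the one-sided limits $f(x_i+),\,f(y_i-)$ exist.

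Next I would compute both integrals. Using the reduced saltus decomposition $\alpha=G\alpha+S_L\alpha+S_R\alpha$ from Proposition~\ref{prop:ReducedSaltusfunctions} together with linearity of the integrator (Theorem~\ref{thm:linintegrator}) and Theorem~\ref{thm:BddinRDSsaltus}, the RDS-integral splits as
\[ \int_a^b f(x)\, d\alpha = \int_a^b f(x)\, dG\alpha + \sum_{i=1}^\infty a_i f(x_i) + \sum_{i=1}^\infty b_i f(y_i), \]
where absolute convergence of the series is guaranteed by $\sum|a_i|,\sum|b_i|<\infty$ and the boundedness of $f$. Assuming the limits $f(x_i+),f(y_i-)$ exist, the corresponding results for the DS-integral, \cite[Proposition 5.9]{TSI} and \cite[Theorem 4.30c]{TSI}, give in the same way
\[ (DS)\int_a^b f(x)\, d\alpha = (DS)\int_a^b f(x)\, dG\alpha + \sum_{i=1}^\infty a_i f(x_i+) + \sum_{i=1}^\infty b_i f(y_i-). \]
Since $G\alpha$ is continuous, Theorem~\ref{thm:RDSiffDSintegrator} identifies the two $G\alpha$-integrals, so subtracting yields
\[ (DS)\int_a^b f(x)\, d\alpha - \int_a^b f(x)\, d\alpha = -\sum_{i=1}^\infty a_i[f(x_i)-f(x_i+)] - \sum_{i=1}^\infty b_i[f(y_i)-f(y_i-)]. \]

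With these two pieces in hand, the theorem follows by a direct logical assembly. For the forward direction, if $f\in DS_\alpha[a,b]$ and \eqref{eqn:RDSiffDS-1} holds, then the reduced criterion gives $f\in RDS_\alpha[a,b]$ and the existence of $f(x_i+),f(y_i-)$, while the displayed difference being zero is exactly \eqref{eqn:RDSiffDS-2}. Conversely, if $f\in RDS_\alpha[a,b]$ with $f(x_i+),f(y_i-)$ existing and \eqref{eqn:RDSiffDS-2} holding, then the reduced criterion yields $f\in DS_\alpha[a,b]$, and the difference formula combined with \eqref{eqn:RDSiffDS-2} gives \eqref{eqn:RDSiffDS-1}. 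The main obstacle I anticipate is bookkeeping rather than depth: one must track that the existence of the one-sided limits is simultaneously what makes the DS-integral formula meaningful and what Theorem~\ref{thm:ds-lebesgue-crit} demands for DS-integrability, and that a point lying in both saltus sets is handled correctly by treating the $S_L\alpha$ and $S_R\alpha$ contributions separately rather than as a single jump.
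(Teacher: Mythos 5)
Your proposal is correct and follows essentially the same route as the paper's proof: both directions rest on comparing the two Lebesgue-type criteria (Theorems~\ref{thm:RDSLebcrit} and~\ref{thm:ds-lebesgue-crit}), decomposing $\alpha$ via Proposition~\ref{prop:ReducedSaltusfunctions}, computing the saltus contributions with Theorem~\ref{thm:BddinRDSsaltus} and \cite[Proposition 5.9, Theorem 4.30c]{TSI}, and identifying the $G\alpha$-integrals by Theorem~\ref{thm:RDSiffDSintegrator}. The only difference is organizational — you extract the integrability equivalence (your ``reduced criterion'') up front and then use a single difference-of-integrals formula, whereas the paper interleaves these steps and invokes \cite[Proposition 4.22]{TSI} in the forward direction — but the underlying argument is the same.
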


\begin{proof}
  First suppose $f\in DS_\alpha[a,b]$ (and so $f\in RDS_\alpha[a,b]$) and 
  $$\int_a^b f(x)\, d\alpha = (DS)\int_a^b f(x)\, d\alpha.$$ 
  We will prove that \eqref{eqn:RDSiffDS-2} holds.  By \cite[Proposition 4.22]{TSI}, $f\in DS_{S_L\alpha} [a,b]$ and $f\in DS_{S_R\alpha} [a,b]$. Then, by Theorem~\ref{thm:ds-lebesgue-crit}, $S_L\alpha$ is not right continuous for any $x_i$ and so $f(x_i+)$ exists. Similarly, $S_R\alpha$ is not left continuous for any $y_i$ and so $f(y_i-)$ exists. Further, if we apply  Theorem \ref{thm:linintegrator} and \cite[Theorem 4.30c]{TSI} to each side of \eqref{eqn:RDSiffDS-1}, we get that 
  \begin{multline*}
       \int_a^b f(x)\, dG\alpha+\int_a^b f(x)\, dS_L\alpha+\int_a^b f(x)\, dS_R\alpha\\ =(DS)\int_a^b f(x)\, dG\alpha+(DS)\int_a^b f(x)\, dS_L\alpha+(DS)\int_a^b f(x)\, dS_R\alpha.
   \end{multline*} 
   Moreover, by Theorem \ref{thm:RDSiffDSintegrator}, we have 
   $$\int_a^b f(x)\, dG\alpha=(DS)\int_a^b f(x)\, dG\alpha.$$ 
   Therefore, by Theorem \ref{thm:BddinRDSsaltus} and \cite[Proposition 5.9]{TSI}, 
   \begin{align*}
       \sum_{i=1}^\infty a_if(x_i)+\sum_{i=1}^\infty b_if(y_i) &= \int_a^b f(x)\, dS_L\alpha+\int_a^b f(x)\, dS_R\alpha\\
       &= (DS)\int_a^b f(x)\, dS_L\alpha+(DS)\int_a^b f(x)\, dS_R\alpha\\
       &= \sum_{i=1}^\infty a_if(x_i+)+\sum_{i=1}^\infty b_if(y_i-).
   \end{align*}
   If we rearrange terms, we get~\eqref{eqn:RDSiffDS-2}.

\medskip

   Now suppose that $f\in RDS_\alpha[a,b]$, $f(x_i+)$ and $f(y_i-)$ exist, and~\eqref{eqn:RDSiffDS-2} holds.     Then, by Theorem~\ref{thm:ds-lebesgue-crit}, $f\in DS_{S_L\alpha}[a,b]$ and $f\in DS_{S_R\alpha}[a,b]$. Moreover, since $f\in RDS_\alpha[a,b]$, by Lemma \ref{lem:RDSGalpha}, $f\in RDS_{G\alpha}[a,b]$ and so by Theorem \ref{thm:RDSiffDSintegrator}, $f\in DS_{G\alpha}[a,b]$. Therefore, by \cite[Theorem 4.30c]{TSI}, $f\in DS_\alpha[a,b]$. Finally, if we argue exactly as we did at the end of the proof of Theorem~\ref{thm:RDSiffDSsuffcond}, we have that~\eqref{eqn:RDSiffDS-1} holds.
\end{proof}

The hypothesis~\eqref{eqn:RDSiffDS-2} in Theorem \ref{thm: RDSiffDScharacterization} is often not easy to check.  However, as a corollary  we give a practical  sufficient condition that is weaker than that in Theorem~\ref{thm:RDSiffDSsuffcond}.  In particular, if $\alpha$ is not discontinuous at the endpoints of $[a,b]$, then by Corollary~\ref{cor:WLOGleftcontinuity} we may assume that $\alpha$ is either left or right continuous, depending on $f$.

\begin{corollary}
    Given $\alpha \in I[a,b]$ and $f\in B[a,b]$, suppose for every common point of discontinuity, either:
    \begin{enumerate}
        \item $\alpha$ is right continuous and $f$ is left continuous, or

        \item $\alpha$ is left continuous and $f$ is right continuous.
    \end{enumerate} 
    Then $f\in RDS_\alpha[a,b]$ if and only if $f\in DS_\alpha[a,b]$.  Moreover, $$\int_a^b f(x)\, d\alpha = (DS)\int_a^b f(x)\, d\alpha.$$
\end{corollary}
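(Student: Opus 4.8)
The plan is to deduce the corollary from the characterization already obtained in Theorem \ref{thm: RDSiffDScharacterization}, combined with the two Lebesgue-type criteria, Theorems \ref{thm:RDSLebcrit} and \ref{thm:ds-lebesgue-crit}. Throughout I would fix the reduced saltus decomposition $\alpha = G\alpha + S_L\alpha + S_R\alpha$ with saltus sets $\{(x_i,a_i)\}$ for $S_L\alpha$ and $\{(y_i,b_i)\}$ for $S_R\alpha$, and recall that the $x_i$ are precisely the points where $\alpha$ fails to be right continuous (so $a_i=\alpha(x_i+)-\alpha(x_i)\neq 0$) while the $y_i$ are precisely the points where $\alpha$ fails to be left continuous.

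First I would establish the key pointwise observation: under the stated hypotheses every summand in \eqref{eqn:RDSiffDS-2} vanishes and the relevant one-sided limits of $f$ exist. Fix a point $x_i$. If $f$ is continuous there, then $f(x_i+)=f(x_i)$ and the term $a_i[f(x_i)-f(x_i+)]$ is zero. If $f$ is discontinuous at $x_i$, then $x_i$ is a common discontinuity; since $\alpha$ is not right continuous at $x_i$, hypothesis (1) cannot apply, so hypothesis (2) must hold, forcing $f$ to be right continuous at $x_i$, i.e.\ $f(x_i+)=f(x_i)$, and again the term is zero. The symmetric argument at each $y_i$, where now it is hypothesis (2) that is excluded because $\alpha$ is not left continuous, leaving hypothesis (1) and hence $f$ left continuous, shows $f(y_i-)=f(y_i)$ and that the corresponding term vanishes. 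Thus all one-sided limits appearing in Theorem \ref{thm: RDSiffDScharacterization} exist and \eqref{eqn:RDSiffDS-2} holds automatically.

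Next I would settle the equivalence of integrability by reducing both notions to a single measure condition. Under the hypotheses, condition (2) of Theorem \ref{thm:ds-lebesgue-crit} holds at every discontinuity of $\alpha$: at a non-common discontinuity $f$ is continuous, so both one-sided limits exist; at a common discontinuity the excluded-case argument above supplies exactly the missing one-sided continuity of $\alpha$ or one-sided limit of $f$. Hence $f\in DS_\alpha[a,b]$ is equivalent to the discontinuity set of $f$ having $G\alpha$-measure $0$. On the other hand, since $\alpha\in I[a,b]$ we have $P\alpha=\alpha-\alpha(a)$ and $N\alpha$ constant, so $G(P\alpha)=G\alpha$ and every set has $G(N\alpha)$-measure $0$; thus Theorem \ref{thm:RDSLebcrit} shows $f\in RDS_\alpha[a,b]$ is \emph{also} equivalent to the discontinuity set having $G\alpha$-measure $0$. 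Consequently $f\in RDS_\alpha[a,b]$ if and only if $f\in DS_\alpha[a,b]$.

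Finally, whenever either membership holds, the fact that $f\in RDS_\alpha[a,b]$ together with the first step (existence of the one-sided limits and \eqref{eqn:RDSiffDS-2}) lets me invoke Theorem \ref{thm: RDSiffDScharacterization} to conclude $f\in DS_\alpha[a,b]$ and that the two integrals coincide, which is \eqref{eqn:RDSiffDS-1}. I expect the main obstacle to be the bookkeeping in the first step: one must match each saltus set to the correct one-sided failure of continuity of $\alpha$ ($S_L\alpha$ with non-right-continuity, $S_R\alpha$ with non-left-continuity) and verify that, at each type of point, the hypothesis that is excluded is precisely the one that would otherwise obstruct the vanishing of that summand. A secondary point requiring care is the identification of $P\alpha$ and $N\alpha$ for increasing $\alpha$, since that is what collapses the RDS criterion to the single $G\alpha$-measure-$0$ condition used above.
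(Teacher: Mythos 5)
Your proposal is correct and follows essentially the same route as the paper's own proof: the integrability equivalence via Theorems~\ref{thm:RDSLebcrit} and~\ref{thm:ds-lebesgue-crit} (reducing both to the $G\alpha$-measure-zero condition), and equality of the integrals by showing each term of~\eqref{eqn:RDSiffDS-2} vanishes through the same excluded-case analysis at the saltus points of $S_L\alpha$ and $S_R\alpha$, then invoking Theorem~\ref{thm: RDSiffDScharacterization}. Your explicit identification $P\alpha=\alpha-\alpha(a)$, $N\alpha$ constant is a detail the paper leaves implicit in its appeal to the argument of Theorem~\ref{thm:RDSiffDSsuffcond}, but it is the same argument.
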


\begin{proof}
First given our hypothesis on $f$ and $\alpha$, by Theorems~\ref{thm:RDSLebcrit} and~\ref{thm:ds-lebesgue-crit}, and arguing as we did in the proof of Theorem~\ref{thm:RDSiffDSsuffcond} we have that $f\in DS_\alpha[a,b]$ if and only if $f\in RDS_\alpha[a,b]$.

It remains to show that integrals are equal.  
    Let $\{(x_i, a_i)\}$ be the reduced saltus set for $S_L\alpha$ and $\{(y_i, b_i)\}$ be the reduced saltus set for $S_R\alpha$. We will show that~\eqref{eqn:RDSiffDS-2} holds by showing $f(x_i+)=f(x_i)$ and $f(y_i-)=f(y_i)$ for all $i$. If $x_i$ is a common point of discontinuity between $f$ and $\alpha$, then $\alpha$ is not right continuous at $x_i$ as it is in the saltus set for $S_L\alpha$. Thus, by assumption, $\alpha$ is left continuous and $f$ is right continuous; that is, $f(x_i+)=f(x_i)$. Otherwise, if $x_i$ is not a common point of discontinuity, then $f$ must be continuous at $x_i$ and again $f(x_i+)=f(x_i)$. A similar argument shows that $f(y_i-)=f(y_i)$ for all $i$.
\end{proof}

\bigskip

We now consider Riemann-type integral corresponding to the RDS-integral.  In our definition we will follow the approach used in~\cite[Sections~2.2 and~5.3]{TSI}.  We will reinterpret the Riemann sums used to define the RS-integral (see Definition~\ref{defn:rs-integral}) as the integral of a step function from a restricted class we call Riemann step functions  However, for our definition we will use the RDS-integral of the step function, resulting in a more generalized Riemann sum.

\begin{definition}\label{def:taggedstep}
    A tagged partition $\mathcal{P}^*$ of $[a,b]$ is a partition $\mathcal{P}=\{x_i\}_{i=0}^n$ of $[a,b]$ together with a collection of sample points $\{x_i^*\}_{i=0}^n$ where $x_i^* \in \overline{I}_i.$ Given $f\in B[a,b]$ and a tagged partition $\mathcal{P}^*$ of $[a,b]$, we say $r\in S[a,b]$ is a Riemann step function of $f$ with respect to $\mathcal{P}^*$ if $r(x)=f(x_i^*)$ for $x\in I_i$ and $f(x_i)=f(x_i)$ for each $1\le i\le n$. The collection of all Riemann step functions of $f$ defined with respect to all tagged partitions $\mathcal{P}^*$ is denoted by $R^*(f,\mathcal{P})$.
\end{definition}

\begin{definition}\label{def:RRSint}
    Given $\alpha\in BV[a,b]$ and $f\in B[a,b]$, we say $f$ is modified Riemann-Stieltjes integrable on $[a,b]$ with respect to $\alpha$ if there exists $A\in \mathbb{R}$ such that for every $\epsilon>0$ there exists $\delta>0$ so that for any partition $\mathcal P$ with $|\mathcal{P}|<\delta$ and any $r\in R^*(f, \mathcal P)$, 
    $$\left|\int_a^b r(x)\, d \alpha - A\right|<\epsilon.$$ 
    In this case we define the value of the modified Riemann-Stieltjes integral of $f$ with respect to $\alpha$ by 
    $$(mRS)\int_a^b f(x)\, d \alpha = A$$ 
    and the collection of all such $f$ by $mRS_\alpha[a,b]$.
\end{definition}

\begin{remark}
    It might seem more natural to refer to this integral as the Ross-Riemann-Stieltjes integral or the RRS-integral.   However, we have reserved that notation for Riemann-type integral introduced by Ross:  see Definition~\ref{def:RRSintNewMesh} below.
\end{remark}

However, as is the case with the RS and DS-integrals, this definition does not agree with the definition of the RDS-integral, as the set of integrable functions is smaller.  In fact, we can use the same example as for those integrals:  see~\cite[Example~5.30]{TSI}.

\begin{example}\label{example:RRSisnotRDS}
    Given $\alpha=H_1\in BV[-1,1]$, there exists a function $f$, namely $H_1$, where $f\in RDS_\alpha[-1,1]$ while $f\not\in mRS_\alpha[-1,1]$.
\end{example}
\begin{proof}
    Since $H_1\in S[-1,1]$, we have by Proposition \ref{prop:stepagree} that $H_1\in RDS_\alpha[-1,1].$  To show that $H_1$ is not mRS-integrable, fix $A\in \mathbb{R}$ and let $\epsilon=\frac{1}{2}$. Fix $\delta>0$ and let $\mathcal{P}=\{x_i\}_{i=0}^n$ be a partition such that $|\mathcal{P}|<\delta$ and $0$ is not a partition point. Then, there exists $1\le i_0\le n$ where $0\in I_{i_0}$. Thus, $\mu_\alpha(I_{i_0})=1$ while $\mu_\alpha(I_i)=0$ for all $i\neq i_0$. Let $r,s\in R^*(f, \mathcal{P})$ such that $x_{i_0}^*$ lies in $(0, x_{i_0})$ for $r$ and $x_{i_0}^*$ lies in $(x_{i_0-1}, 0)$ for $s$. Hence, $r(x)=1$ and $s(x)=0$ for $x\in I_{i_0}.$ Therefore, 
    $$\int_{-1}^1 r(x)\, d \alpha = \sum_{i=0}^n r(x_i)\mu_\alpha(\{x_i\})+\sum_{i=1}^n r(x_i^*)\mu_\alpha(I_i) = r(x_{i_0}^*)=1,$$
   and the same argument shows that
    $$\int_{-1}^1 s(x)\, d \alpha = 0.$$ 
    Thus, $$1= \left|\int_{-1}^1 r(x)\, d \alpha-\int_{-1}^1 s(x)\, d \alpha\right|
    \le\left|\int_{-1}^1 r(x)\, d \alpha-A \right|+\left|\int_{-1}^1 s(x)\, d \alpha-A \right|.$$ 
    Hence, one of the terms on the right is at least $\epsilon=\frac{1}{2}$ and so we have the desired step function in $R^*(f, \mathcal{P}).$ Therefore, $H_1\not\in RS_\alpha[-1,1]$.
\end{proof}

\begin{remark}
    We leave it as an open problem to determine if the mRS-integral and RDS-integrals agree whenever the former is defined.  Moreover, there is also a modified definition for which the same questions can be asked.  Since in Definition~\ref{def:taggedstep} the sample point can be taken as one of the endpoints of the partition interval, we might think of this as an "exterior" modified RS-integral.  It is possible to define an "interior" modified RS-integral, by restricting the sample points to be in the interior of the partition interval.   We leave it as on open problem to define and characterize the interior modified RS-integral and determine its relationship to the exterior version we define and to the RDS-integral.  Similar questions exist for the RS and DS integrals:  see \cite[Theorem~5.31, Exercises~5.20,~5.29]{TSI}.
\end{remark}

\medskip

As Example~\ref{example:RRSisnotRDS} makes clear, the problem arises because a common point of discontinuity can be an interior point of a partition interval, no matter how small the mesh size.  As we noted in the introduction, to avoid this problem for the RS-integral, Pollard introduced the notion of "$\sigma$-convergence" which leads to a modified definition of the limit in the RS-integral.  Here we show that the same technique can be used to define a Ross-Pollard-Stieltjes integral that is equivalent to the RDS-integral.  

\begin{definition}\label{def:Pollardint}
    Given $\alpha\in BV[a,b]$ and $f\in B[a,b]$, we say $f$ is Ross-Pollard-Stieltjes integrable on $[a,b]$ with respect to $\alpha$ if there exists $A\in \mathbb{R}$ such that for every $\epsilon>0$, there exists a partition $\mathcal{P}_\epsilon$ so that for any refinement $\mathcal Q$ and any $r\in R^*(f, \mathcal Q)$, $$\left|\int_a^b r(x)\, d \alpha - A\right|<\epsilon.$$ In this case we define the value of the Ross-Pollard-Stieltjes integral of $f$ with respect to $\alpha$ by $$(RPS)\int_a^b f(x)\, d \alpha = A$$ and the collection of all such $f$ by $RPS_\alpha[a,b]$.
\end{definition}

For brevity, we will only prove that the RPS-integral and the RDS-integral are equivalent for  $\alpha\in I[a,b]$. To prove the general case when  $\alpha\in BV[a,b]$ we would need to prove that the RPS-integral satisfied a canonical decomposition of the integrator analogous to Theorem~\ref{thm:pnalpha} for the RDS integral.  Such a result is true for the RS-integral:  see~\cite[Lemma~5.38]{TSI}.

\begin{theorem}\label{thm:RPSisRDS}
    Given $\alpha\in I[a,b]$ and $f\in B[a,b]$, $f\in RDS_\alpha[a,b]$ if and only if $f\in RPS_\alpha[a,b]$. In this case, 
    \begin{equation} \label{eqn:RPSisRDS-1}
    \int_a^b f(x)\, d \alpha = (RPS)\int_a^b f(x)\, d \alpha.
    \end{equation}
\end{theorem}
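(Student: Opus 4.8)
The plan is to prove the two inclusions separately, tracking the value of the integral along the way. I expect the implication $f\in RPS_\alpha[a,b]\Rightarrow f\in RDS_\alpha[a,b]$ to be the routine one, so I would dispatch it first. Assume $f\in RPS_\alpha[a,b]$ with value $A$, fix $\epsilon>0$, and let $\mathcal{P}_\epsilon$ be the partition from Definition~\ref{def:Pollardint}. Take $\mathcal{Q}=\mathcal{P}_\epsilon$ (a refinement of itself) and, given $\eta>0$, choose \emph{interior} sample points $x_i^*\in I_i$ with $f(x_i^*)>M_i-\eta$; the resulting $r_U\in R^*(f,\mathcal{Q})$ satisfies $\int_a^b r_U\,d\alpha> U_\alpha^r(f,\mathcal{Q})-\eta(\alpha(b)-\alpha(a))$, and a symmetric lower choice gives $\int_a^b r_L\,d\alpha< L_\alpha^r(f,\mathcal{Q})+\eta(\alpha(b)-\alpha(a))$. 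Since both integrals lie within $\epsilon$ of $A$, letting $\eta\to 0$ forces $U_\alpha^r(f,\mathcal{Q})-L_\alpha^r(f,\mathcal{Q})\le 2\epsilon$; by Corollary~\ref{cor:bestfitRDScrit} this yields $f\in RDS_\alpha[a,b]$, and the same sandwich identifies the value as $A$. Note that the point-mass term cancels in the gap $U_\alpha^r-L_\alpha^r$, so it causes no trouble here.

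For the converse, $f\in RDS_\alpha[a,b]\Rightarrow f\in RPS_\alpha[a,b]$, I would first record that $\sigma$-convergence is additive in the integrator: if $f\in RPS_{\alpha_1}$ and $f\in RPS_{\alpha_2}$ then $f\in RPS_{\alpha_1+\alpha_2}$ with values adding, since a common refinement of the two Pollard partitions serves the sum (using linearity of the step-function integral, Theorem~\ref{thm:steplinint}). This reduces matters, via the reduced saltus decomposition $\alpha=G\alpha+S_L\alpha+S_R\alpha$ of Proposition~\ref{prop:ReducedSaltusfunctions}, to treating each piece. For the saltus pieces, \emph{every} bounded $f$ is RPS-integrable: given $\epsilon$, insert into $\mathcal{P}_\epsilon$ finitely many jump points carrying all but $\delta$ of the total jump mass; then for any refinement $\mathcal{Q}$ and any $r\in R^*(f,\mathcal{Q})$, the value $r(x_i)=f(x_i)$ at each retained jump point is correctly weighted by $\mu_{S_L\alpha}(\{x_i\})=a_i$, while the discarded jumps sit inside open intervals and contribute at most $\|f\|_\infty\,\delta$; matching against Theorem~\ref{thm:BddinRDSsaltus} gives $f\in RPS_{S_L\alpha}$ (and $RPS_{S_R\alpha}$) with value $\sum_i a_i f(x_i)$. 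For the continuous piece, $f\in RDS_\alpha$ implies $f\in RDS_{G\alpha}$ by Lemma~\ref{lem:RDSGalpha}; since $\mu_{G\alpha}$ has no point masses, $\int_a^b r\,dG\alpha$ collapses to a classical Riemann--Stieltjes sum, so $RPS_{G\alpha}$ coincides with the ordinary Pollard--Stieltjes integral, and the equivalence follows from Theorem~\ref{thm:RDSiffDSintegrator} together with the classical equality of the Pollard--Stieltjes and Darboux--Stieltjes integrals (as developed in~\cite{TSI}). Adding the three pieces and invoking Theorem~\ref{thm:linintegrator} and Theorem~\ref{thm:BddinRDSsaltus} gives $f\in RPS_\alpha$ with value $\int_a^b f\,d\alpha$, which is~\eqref{eqn:RPSisRDS-1}.

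The hard part, and the reason $\sigma$-convergence rather than mesh convergence is needed, is the \emph{endpoint overshoot}: in a Riemann step function the sample point may be an endpoint of $\overline{I}_i$, so on $I_i=(x_{i-1},x_i)$ the value of $r$ may be $f(x_i)$, which can exceed $M_i=\sup_{I_i}f$ when $f$ is discontinuous at $x_i$. This is exactly the pathology behind the failure of the mesh-convergent mRS-integral in Example~\ref{example:RRSisnotRDS}. The mechanism that defeats it is that the offending points can be forced into $\mathcal{P}_\epsilon$, hence into every refinement $\mathcal{Q}$: a jump of $\alpha$ located at a partition point is \emph{excluded} from the $\mu_\alpha$-length of the two adjacent open intervals, which annihilates the spurious contribution. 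In the continuous piece the same overshoot persists but is weighted only by $\mu_{G\alpha}$; controlling it is the genuine analytic content and amounts to the standard Lebesgue-type covering argument, namely covering the compact oscillation sets $\{x:\operatorname{osc}_f(x)\ge\sigma\}$ by finitely many open intervals of small $G\alpha$-measure (available because $\operatorname{disc}(f)$ has $G\alpha$-measure $0$ by Theorem~\ref{thm:RDSLebcrit}) and adjoining their endpoints to $\mathcal{P}_\epsilon$, so that every refinement interval abutting a discontinuity of $f$ is trapped inside the cover. If one wished to avoid citing the continuous-integrator equivalence, this covering argument would be carried out directly, and it is precisely there that the main difficulty lies.
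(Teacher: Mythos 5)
Your first direction (RPS $\Rightarrow$ RDS) is, up to cosmetic differences, the paper's own argument: the paper likewise takes $\mathcal{Q}=\mathcal{P}_\epsilon$, compares near-extremal Riemann step functions against upper/lower step functions built from suprema and infima over the partition intervals, and invokes the RDS criterion; your use of Ross's sums $U^r_\alpha,L^r_\alpha$ together with Corollary~\ref{cor:bestfitRDScrit} instead of Theorem~\ref{thm:RDScrit} changes nothing essential.

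The second direction is where you genuinely diverge, and your extra machinery is not wasted. The paper's proof of RDS $\Rightarrow$ RPS is short: it takes bracketing step functions $u,v$ with integrals within $\epsilon$ of $A$, sets $\mathcal{P}_\epsilon$ equal to their common partition, and asserts that every $r\in R^*(f,\mathcal{Q})$ satisfies $v\le r\le u$, so that $\int_a^b r\,d\alpha$ is trapped between $\int_a^b v\,d\alpha$ and $\int_a^b u\,d\alpha$. Under the paper's own Definition~\ref{def:taggedstep}, which permits sample points at the endpoints of $\overline{I}_i$, that assertion is false: it is exactly the ``endpoint overshoot'' you identify. Concretely, take $\alpha(x)=x$ and $f=H_1$ on $[-1,1]$, so $u=v=f$ bracket $f$ perfectly and the paper's recipe gives $\mathcal{P}_\epsilon=\{-1,0,1\}$ and $A=1$; the Riemann step function with sample point $x_1^*=0$ for $I_1=(-1,0)$ equals $1$ on $(-1,0)$, violates $r\le u$ there, and has $\int_{-1}^1 r\,d\alpha = 2$, so the RPS condition fails for this $\mathcal{P}_\epsilon$. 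The theorem survives (one must instead choose $\mathcal{P}_\epsilon$ whose points crowd the discontinuities of $f$ and the jumps of $\alpha$, e.g.\ $\{-1,-\delta,0,\delta,1\}$ here), but the paper's choice of $\mathcal{P}_\epsilon$ does not, so the printed proof of this implication has a real gap. Your route --- additivity of the RPS property in the integrator via common refinements, the reduced saltus decomposition of Proposition~\ref{prop:ReducedSaltusfunctions}, insertion of the heavy jump points into $\mathcal{P}_\epsilon$ so that adjacent refinement intervals carry only the residual jump mass, and an oscillation-set covering argument (or the classical Pollard equivalence) for $G\alpha$ --- is precisely the kind of argument needed to close it. Two small caveats on your side: the equality of the Pollard--Stieltjes and Darboux--Stieltjes integrals is Pollard's classical theorem and is not something I would attribute to~\cite{TSI}, and for continuous integrators it still must be combined with the interior/exterior comparison before Theorem~\ref{thm:RDSiffDSintegrator} applies; since you sketch the direct covering argument as a substitute, this is a citation issue, not a mathematical one, but that covering argument (including the bookkeeping for samples landing on endpoints adjoined from the cover) is the part you would actually have to write out in full.
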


\begin{proof} 
    If $\mu_\alpha([a,b])=0$, then $\alpha$ is constant and so $f$ is integrable in both definitions with common value is $0$. Indeed, by Lemma \ref{lem:constint}, we have this for the RDS-integral. To see this for the RPS-integral, fix $\epsilon>0$ and let $\mathcal{P}_\epsilon$ be the trivial partition. Then, for any refinement $\mathcal{Q}$ and $r\in R^*(f, \mathcal{Q})$, $$\left|\int_a^b r(x)\, d \alpha\right|=0<\epsilon.$$  Hence, without loss of generality, $\mu_\alpha([a,b])>0$. 
    
    Suppose first that $f\in RDS_\alpha[a,b]$. Let 
    $$A=\int_a^b f(x)\, d \alpha.$$ 
    Fix $\epsilon>0$. By Definition \ref{def:RDSint}, there exists $u,\,v\in S[a,b]$ which bracket $f$ and $$\int_a^b u(x)\, d \alpha <A+\epsilon, \quad \int_a^b v(x)\, d \alpha >A-\epsilon.$$ Without loss of generality, suppose $u,\,v$ are step functions with respect to a common partition $\mathcal{P}$. Let $\mathcal{P}_\epsilon = \mathcal{P}$. Let $\mathcal{Q}$ be any refinement of $\mathcal{P}$ and fix any $r\in R^*(f, \mathcal{Q})$. Since $u,\,v$ are also step functions with respect to $\mathcal{Q}$ and bracket $f$ we have that $v(x)\le r(x)\le u(x)$ for all $x\in [a,b]$. Thus, by Theorem \ref{thm:stepmon}, 
    $$A-\epsilon<\int_a^bv(x)\, d \alpha\le \int_a^b r(x)\, d \alpha \le \int_a^bu(x)\, d \alpha < A+\epsilon.$$ 
    Since $\epsilon>0$ is arbitrary, by Definition \ref{def:Pollardint}, $f\in RPS_\alpha[a,b]$ and \eqref{eqn:RPSisRDS-1} holds.

\medskip

    Now suppose $f\in RPS_\alpha[a,b]$ and let 
    $$A=(RPS)\int_a^b f(x)\, d \alpha.$$ %
    Fix $\epsilon>0$. By Definition \ref{def:Pollardint}, there exists a partition $\mathcal{P}_\epsilon=\{x_i\}_{i=0}^n$ such that for any refinement $\mathcal{Q}$ of $\mathcal{P}_\epsilon$ and $r\in R^*(f, \mathcal{Q})$, 
    $$\left|\int_a^b r(x)\, d \alpha -A\right|<\frac{\epsilon}{4}.$$ 
    It will actually be enough to consider $\mathcal{Q}=\mathcal{P}_\epsilon$. Denote the partition intervals of $\mathcal{P}_\epsilon$ by $I_i$. Let $c_i= \sup_{x\in \overline{I_i}} f(x)$ and $d_i=\inf_{x\in \overline{I_i}}f(x)$. Since $f$ is bounded, these are well-defined. Note that there exist $y_i,z_i\in \overline{I_i}$ such that %
    $$f(y_i)+\frac{\epsilon}{4\cdot \mu_{\alpha}([a,b])2^i}>c_i,\quad f(z_i)-\frac{\epsilon}{4\cdot \mu_\alpha([a,b])2^i}<d_i.$$ 
    Define
    $$u(x)=\begin{cases}
        c_i, & x\in I_i,\\ f(x),& x=x_i,
    \end{cases} \quad v(x)=\begin{cases}
        d_i, & x\in I_i,\\ f(x), & x=x_i.
    \end{cases}$$ 
    Observe that $u,\,v$ bracket $f$. Now define
    $$r_u(x)=\begin{cases}
        f(y_i), & x\in I_i,\\ f(x), & x=x_i,
    \end{cases} \quad r_v(x)=\begin{cases}
        f(z_i), & x\in I_i,\\ f(x), & x=x_i.
    \end{cases}$$ 
    Notice that $r_u, r_v\in R^*(f,\mathcal{P}_\epsilon)$ and 
    \begin{align*}
        \int_a^b u(x)\, d \alpha &= \sum_{i=0}^nu(x_i)\mu_\alpha(\{x_i\})+\sum_{i=1}^nc_i \mu_\alpha(I_i)\\
        &< \sum_{i=0}^nr_u(x_i)\mu_\alpha(\{x_i\})+\sum_{i=1}^nf(y_i)\mu_\alpha(I_i)+\sum_{i=1}^n \frac{\epsilon}{4\cdot \mu_{\alpha}([a,b])2^i}\mu_\alpha(I_i)\\
        &\le \int_a^b r_u(x)\, d \alpha + \sum_{i=1}^n \frac{\epsilon}{4\cdot 2^i}\\
        &< \int_a^b r_u(x)\, d \alpha +\frac{\epsilon}{4}.
    \end{align*} 
    Similarly, 
    $$\int_a^b v(x)\, d \alpha > \int_a^b r_v(x)\, d \alpha -\frac{\epsilon}{4}.$$ 
    Therefore, 
    \begin{multline*}
         \int_a^b u(x)-v(x)\, d \alpha < \int_a^b r_u(x)\, d \alpha - \int_a^b r_v(x)\, d \alpha +\frac{\epsilon}{2}\\
         =\left[\int_a^b r_u(x)\, d \alpha-A\right] +\left[A- \int_a^b r_v(x)\, d \alpha\right] +\frac{\epsilon}{2}<\epsilon.
     \end{multline*} 
     Hence, by Theorem \ref{thm:RDScrit}, $f\in RDS_\alpha [a,b]$. By the previous argument, \eqref{eqn:RPSisRDS-1} holds.
    \end{proof}

\medskip

Ross \cite[Definition~35.24,Theorem~35.25]{Ross} gave an alternative definition of a Riemann-type integral associated to the RDS-integral.  His idea was to replace the mesh size of a partition with a mesh size defined using the $\alpha$-lengths of the interval.

\begin{definition}
    Given $\alpha\in BV[a,b]$, the $\alpha$-mesh size of a partition $\mathcal{P}=\{x_i\}_{i=0}^n$ is 
    $$|\mathcal{P}|_\alpha =\max_i\mu_{\alpha}(I_i).$$
\end{definition}

Using this he defined what we refer to as the Ross-Riemann-Stieltjes integral, and showed that it was equivalent to the RDS-integral.  We refer the reader to \cite{Ross} for the proof.

\begin{definition}\label{def:RRSintNewMesh}
    Given $\alpha\in BV[a,b]$ and $f\in B[a,b]$, we say $f$ is Ross-Riemann-Stieltjes integrable on $[a,b]$ with respect to $\alpha$ if there exists $A\in \mathbb{R}$ such that for every $\epsilon>0$ there exists $\delta>0$ so that for any partition $\mathcal P$ with $|\mathcal{P}|_\alpha <\delta$ and any $r\in R^*(f, \mathcal P)$, 
    $$\left|\int_a^b r(x)\, d \alpha - A\right|<\epsilon.$$ 
    We define the value of the Ross-Riemann-Stieltjes integral of $f$ with respect to $\alpha$ by $$(RRS)\int_a^b f(x)\, d \alpha = A$$ 
    and denote the collection of all such $f$ by $RRS_\alpha[a,b]$.
\end{definition}

\begin{theorem}\label{thm:RPSNewisRDS}
    Given $\alpha\in I[a,b]$ and $f\in B[a,b]$, $f\in RDS_\alpha[a,b]$ if and only if $f\in RRS_\alpha[a,b]$. In this case, 
    $$\int_a^b f(x)\, d \alpha = (RRS)\int_a^b f(x)\, d \alpha.$$
\end{theorem}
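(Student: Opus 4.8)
The plan is to follow the template of the proof of Theorem~\ref{thm:RPSisRDS}, but to overcome a genuine new difficulty. In the RPS setting one only tests \emph{refinements} of a fixed partition $\mathcal{P}_\epsilon$, so the bracketing step functions $u,v$ remain step functions with respect to every admissible partition and monotonicity delivers the sums for free. Here the admissible partitions are only required to have small $\alpha$-mesh and need not refine the partition on which $u,v$ live. The heart of the argument is therefore a quantitative lemma: if $g\in S[a,b]$ is a step function with respect to $\mathcal{P}_0=\{t_k\}_{k=0}^m$, then for every $\epsilon'>0$ there is $\delta>0$ so that for every partition $\mathcal{P}=\{x_i\}_{i=0}^n$ with $|\mathcal{P}|_\alpha<\delta$ and every $r\in R^*(g,\mathcal{P})$ one has $\big|\int_a^b r\,d\alpha-\int_a^b g\,d\alpha\big|<\epsilon'$. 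I would prove this by introducing on each interval $I_i=(x_{i-1},x_i)$ the closed-interval sup/inf $\hat c_i=\sup_{\overline{I_i}}g$, $\hat d_i=\inf_{\overline{I_i}}g$ and the step functions $\hat u,\hat v$ equal to $\hat c_i,\hat d_i$ on $I_i$ and to $g(x_i)$ at the nodes. Since $\hat v\le g\le \hat u$ and since every sample point lies in $\overline{I_i}$, both $\int_a^b g\,d\alpha$ and $\int_a^b r\,d\alpha$ lie in $[\int_a^b \hat v\,d\alpha,\int_a^b \hat u\,d\alpha]$ by Theorem~\ref{thm:stepmon}; hence the error is at most $\int_a^b(\hat u-\hat v)\,d\alpha=\sum_i \operatorname{osc}_{\overline{I_i}}(g)\,\mu_\alpha(I_i)$. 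The key observation is that for a step function $\operatorname{osc}_{\overline{I_i}}(g)\ne 0$ only when $\overline{I_i}$ meets the finite set $\mathcal{P}_0$, which happens for at most $2(m+1)$ indices $i$; on each such interval $\mu_\alpha(I_i)\le|\mathcal{P}|_\alpha<\delta$, so the error is bounded by $4(m+1)\|g\|_\infty\,\delta$. This is exactly where the $\alpha$-mesh, rather than the ordinary mesh, is indispensable: a jump of $\alpha$ inside such an interval would force $\mu_\alpha(I_i)$ to be large, so the $\alpha$-mesh condition both bounds the contribution and forces large jumps of $\alpha$ to be partition points. This is precisely the mechanism whose absence makes the analogous mRS-integral fail, cf.\ Example~\ref{example:RRSisnotRDS}.

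Granting this lemma, the direction $RDS\Rightarrow RRS$ runs as follows. Given $f\in RDS_\alpha[a,b]$ with value $A$, fix $\epsilon>0$ and use Theorem~\ref{thm:RDScrit} to choose step functions $u,v$ bracketing $f$ with $\int_a^b(u-v)\,d\alpha<\epsilon/3$, so that $\int_a^b v\,d\alpha$ and $\int_a^b u\,d\alpha$ both lie within $\epsilon/3$ of $A$. Applying the lemma to $u$ and to $v$ yields a single $\delta>0$ such that, for any $\mathcal{P}$ with $|\mathcal{P}|_\alpha<\delta$ and any sample points, the associated Riemann step functions $r_u\in R^*(u,\mathcal{P})$ and $r_v\in R^*(v,\mathcal{P})$ satisfy $\int_a^b r_u\,d\alpha<A+\epsilon$ and $\int_a^b r_v\,d\alpha>A-\epsilon$. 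For any $r\in R^*(f,\mathcal{P})$ built from the same sample points, $v\le f\le u$ forces $r_v\le r\le r_u$ pointwise, whence $A-\epsilon<\int_a^b r\,d\alpha<A+\epsilon$ by Theorem~\ref{thm:stepmon}. Thus $f\in RRS_\alpha[a,b]$ with RRS-value $A$.

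For $RRS\Rightarrow RDS$ I would mirror the second half of the proof of Theorem~\ref{thm:RPSisRDS} almost verbatim, the only change being the selection of the test partition. Fixing $\epsilon>0$, RRS-integrability furnishes $\delta>0$ controlling all partitions of $\alpha$-mesh below $\delta$; I then fix one such partition $\mathcal{P}=\{x_i\}_{i=0}^n$, set $c_i=\sup_{\overline{I_i}}f$ and $d_i=\inf_{\overline{I_i}}f$, and choose sample points $y_i,z_i\in\overline{I_i}$ with $f(y_i)>c_i-\tfrac{\epsilon}{4\mu_\alpha([a,b])2^i}$ and $f(z_i)<d_i+\tfrac{\epsilon}{4\mu_\alpha([a,b])2^i}$. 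The bracketing step functions $u,v$ (equal to $c_i,d_i$ on $I_i$ and $f(x_i)$ at the nodes) and the Riemann step functions $r_u,r_v$ determined by $y_i,z_i$ then satisfy $\int_a^b u\,d\alpha<\int_a^b r_u\,d\alpha+\epsilon/4$ and $\int_a^b v\,d\alpha>\int_a^b r_v\,d\alpha-\epsilon/4$, exactly as in that proof; since $|\int_a^b r_u\,d\alpha-A|,|\int_a^b r_v\,d\alpha-A|<\epsilon/4$, we obtain $\int_a^b(u-v)\,d\alpha<\epsilon$, and Theorem~\ref{thm:RDScrit} gives $f\in RDS_\alpha[a,b]$. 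Equality of the two integrals then follows from the first direction (or directly from the sandwiching). A small preliminary I would dispatch first is that partitions of arbitrarily small $\alpha$-mesh exist at all, since otherwise the selection above is vacuous and the RRS-value is not even unique; this follows by making the finitely many jumps of $\alpha$ of size $\ge\delta/2$ into partition points and then subdividing each remaining piece, on which all jumps are $<\delta/2$, into subintervals of $\alpha$-content less than $\delta$. The main obstacle is the quantitative step-function lemma; everything else is a routine adaptation of Theorem~\ref{thm:RPSisRDS}.
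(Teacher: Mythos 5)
For this theorem the paper gives no proof of its own: it states the result and refers the reader to Ross~\cite{Ross}, so your argument is being compared against a citation rather than an internal argument. Your proposal is correct, and it has the merit of making the equivalence self-contained in the paper's step-function framework. The direction $RRS_\alpha\Rightarrow RDS_\alpha$ is, as you note, the second half of the proof of Theorem~\ref{thm:RPSisRDS} with the test partition supplied by your preliminary on the existence of partitions of arbitrarily small $\alpha$-mesh (a point you are right to insist on, since without it the RRS value is not even unique). The genuinely new ingredient is your quantitative lemma for the direction $RDS_\alpha\Rightarrow RRS_\alpha$, and its proof is sound: for $r\in R^*(g,\mathcal{P})$ both $r$ and $g$ are squeezed between the best-fit step functions $\hat v,\hat u$ of $g$ on $\mathcal{P}$, the node terms cancel, and $\operatorname{osc}_{\overline{I_i}}(g)\neq 0$ only on the at most $2(m+1)$ intervals whose closures meet the partition of $g$, each of which has $\mu_\alpha(I_i)\le|\mathcal{P}|_\alpha<\delta$; this cleanly isolates why the $\alpha$-mesh succeeds where the ordinary mesh fails in Example~\ref{example:RRSisnotRDS}. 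The one step I would ask you to expand is the subdivision claim in your preliminary: on a closed piece all of whose interior jumps are $<\delta/2$, you should say explicitly that $\alpha(t-)\to\alpha(x+)$ as $t\downarrow x$ and $\alpha(t+)\to\alpha(x-)$ as $t\uparrow x$ give each point a (one- or two-sided) neighborhood of $\alpha$-content $<\delta$, and then extract a finite subcover of the compact piece to produce the partition points; this is routine but not purely formal. Compared with Ross's treatment, which is phrased in terms of his upper and lower sums $U^r_\alpha$, $L^r_\alpha$ and his mesh-comparison estimates, your route reuses the paper's own tools (Theorems~\ref{thm:RDScrit}, \ref{thm:steplin}, \ref{thm:stepmon}) and the already-proved Theorem~\ref{thm:RPSisRDS}, which is exactly what a reader of this paper would want.
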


\begin{remark}
In~\cite{Ross}, Ross only defined the $\alpha$-mesh size of a partition, and the RRS-integral for increasing $\alpha$.  It is an open problem to extend his definition to $\alpha\in BV[a,b]$ and determine whether this more general definition is equivalent to the RDS-integral.  
\end{remark}

\bigskip

Finally, we will show that the RDS-integral agrees with the Lebesgue-Stieltjes integral whenever the former is defined. We use the standard definition for the Lebesgue-Stieltjes integral which can be found, for example, in Kamke~\cite{Kamke_1956},  Royden~\cite{MR1013117}, or Hewitt and Stromberg~\cite{MR0188387}. (See also Carter and van Brunt~\cite{LebesgueStieltjesCarter},which uses an equivalent but nonstandard definition.)   Recall that  $\mu_\alpha$, as given in Definition~\ref{def:aleng}, is a premeasure on $[a,b]$.  Therefore, by the Caratheodory Extension Theorem~\cite[Section~12.2]{MR1013117}, there exists a regular Borel measure $\mu^\alpha$ that agrees with $\mu_\alpha$ on intervals.   We will denote  the Lebesgue-Stieltjes integral associated to $\alpha$ by
$$\int_{[a,b]} f(x)\, d \mu^\alpha.$$ 

\begin{theorem}\label{thm:RDSLebEquiv}
    Given $\alpha\in BV[a,b]$ and $f\in B[a,b]$, if $f\in RDS_{\alpha}[a,b]$, then $f$ is Lebesgue-Stieltjes integrable on $[a,b]$ and 
\begin{equation} \label{eqn:RDSL-1}
\int_a^b f(x)\,d\alpha = \int_{[a,b]} f(x)\, d \mu^\alpha.
\end{equation}
\end{theorem}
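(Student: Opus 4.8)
The plan is to reduce to increasing $\alpha$ and then run the classical argument that RDS-integrability forces Lebesgue-Stieltjes integrability with equal value. For the reduction, note that from $\alpha = P\alpha - N\alpha + \alpha(a)$ together with Definition~\ref{def:aleng} one checks that $\mu_\alpha = \mu_{P\alpha} - \mu_{N\alpha}$ on every open interval, so the Carath\'eodory extensions satisfy $\mu^\alpha = \mu^{P\alpha} - \mu^{N\alpha}$. Since Theorem~\ref{thm:pnalpha} gives $f \in RDS_{P\alpha}[a,b]$ and $f \in RDS_{N\alpha}[a,b]$ with $\int_a^b f\,d\alpha = \int_a^b f\,dP\alpha - \int_a^b f\,dN\alpha$, it suffices to prove \eqref{eqn:RDSL-1} for $\alpha \in I[a,b]$, where $\mu^\alpha$ is a finite positive regular Borel measure, and then reassemble.

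First I would record the step-function case. If $s \in S[a,b]$ is defined with respect to $\{x_i\}_{i=0}^n$ with $s = c_i$ on $I_i$, then $s = \sum_{i=1}^n c_i \chi_{I_i} + \sum_{i=0}^n s(x_i)\chi_{\{x_i\}}$, so by linearity of the Lebesgue-Stieltjes integral and the fact that $\mu^\alpha$ agrees with $\mu_\alpha$ on open intervals we obtain $\int_{[a,b]} s\,d\mu^\alpha = (S)\int_a^b s\,d\alpha$, matching Definition~\ref{def:stepint}. Here the agreement on singletons follows from continuity from above of the finite measure $\mu^\alpha$: $\mu^\alpha(\{x\}) = \lim_n \mu_\alpha\big((x-\tfrac1n,\,x+\tfrac1n)\big) = \alpha(x+)-\alpha(x-) = \mu_\alpha(\{x\})$.

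Next I would build monotone bracketing sequences. By the RDS criterion, Theorem~\ref{thm:RDScrit}, together with Corollary~\ref{cor:RDScritbdd}, for each $n$ there are step functions with $-M \le \tilde v_n \le f \le \tilde u_n \le M$ and $\int_a^b(\tilde u_n - \tilde v_n)\,d\alpha < \tfrac1n$. Setting $u_n = \min(\tilde u_1, \dots, \tilde u_n)$ and $v_n = \max(\tilde v_1, \dots, \tilde v_n)$ --- again step functions that bracket $f$ --- I get $u_n$ decreasing, $v_n$ increasing, and $0 \le u_n - v_n \le \tilde u_n - \tilde v_n$, so $\int_a^b(u_n - v_n)\,d\alpha \to 0$ by Theorem~\ref{thm:stepmon}. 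Let $\bar f = \lim_n u_n$ and $\underline f = \lim_n v_n$; these are bounded Borel functions with $\underline f \le f \le \bar f$ on $[a,b]$. Since $\mu^\alpha([a,b]) < \infty$ and $|u_n - v_n| \le 2M$, dominated convergence and the step-function case give $\int_{[a,b]}(\bar f - \underline f)\,d\mu^\alpha = \lim_n \int_a^b(u_n - v_n)\,d\alpha = 0$. As $\bar f - \underline f \ge 0$, this forces $\bar f = \underline f = f$ $\mu^\alpha$-a.e., so $f$ is $\mu^\alpha$-measurable (using completeness of the Lebesgue-Stieltjes measure) and bounded, hence Lebesgue-Stieltjes integrable; moreover $\int_{[a,b]} f\,d\mu^\alpha = \lim_n \int_a^b u_n\,d\alpha$, while the bracket $\int_a^b v_n\,d\alpha \le \int_a^b f\,d\alpha \le \int_a^b u_n\,d\alpha$ with vanishing gap forces $\lim_n \int_a^b u_n\,d\alpha = \int_a^b f\,d\alpha$, yielding \eqref{eqn:RDSL-1}.

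The main obstacle is the measurability of $f$ itself: $f$ is an arbitrary bounded function, and it is precisely the squeeze $\bar f = \underline f$ a.e.\ --- obtained from the monotone step sequences and dominated convergence --- that upgrades $f$ to a $\mu^\alpha$-measurable (indeed a.e.\ Borel) function while simultaneously identifying the two integral values. The only other point needing care is the agreement of $\mu^\alpha$ with $\mu_\alpha$ on singletons, handled above by continuity from above.
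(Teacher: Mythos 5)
Your proposal is correct, and its skeleton matches the paper's proof: establish agreement for step functions, bracket $f$ between a decreasing and an increasing sequence of step functions whose RDS-integrals squeeze to $\int_a^b f\,d\alpha$, pass to pointwise limits, apply dominated convergence for $\mu^\alpha$, and conclude $f$ equals a Borel function $\mu^\alpha$-a.e.; then reassemble via $P\alpha$, $N\alpha$ using Theorem~\ref{thm:pnalpha}. There are two genuine differences worth noting. First, where the paper's Lemma~\ref{lem:bestfitlimit} produces the monotone sequences by taking best-fit step functions over a nested sequence of partitions, you get monotonicity for free by setting $u_n=\min(\tilde u_1,\dots,\tilde u_n)$ and $v_n=\max(\tilde v_1,\dots,\tilde v_n)$, where the $\tilde u_n,\tilde v_n$ come from Theorem~\ref{thm:RDScrit} and Corollary~\ref{cor:RDScritbdd}; this is a cleaner mechanism (no need to arrange nested partitions), at the small cost of checking that pointwise min/max of step functions are step functions via a common refinement. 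Second, the paper first restricts to nonnegative $f$ and then decomposes $f=f^+-f^-$ using Lemma~\ref{lem:fplus} in the $BV$ case, whereas your squeeze argument never uses a sign condition, so you run it directly for signed bounded $f$ and your reassembly over $P\alpha,N\alpha$ is correspondingly shorter; this is a legitimate simplification, since boundedness, measurability (via completeness of the extension), and finiteness of $\mu^\alpha$ already give integrability. Your explicit verification that $\mu^\alpha(\{x\})=\mu_\alpha(\{x\})$ by continuity from above is also a point the paper glosses over but which is genuinely needed, since the RDS step-function integral in Definition~\ref{def:stepint} carries the singleton terms $f(x_i)\mu_\alpha(\{x_i\})$.
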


To prove Theorem~\ref{thm:RDSLebEquiv} we need one lemma whose proof is given below.
\begin{lemma}\label{lem:bestfitlimit}
    Given $\alpha\in I[a,b]$, if $f\in RDS_\alpha[a,b]$, then there exist two sequences of step functions $\{u_n\}_{n=1}^\infty$ and $\{v_n\}_{n=1}^\infty$ such that $v_n(x)\le v_{n+1}(x)\le f(x)\le u_{n+1}(x)\le u_n(x)$ for all $n\in \mathbb{N}$ and $x\in [a,b]$ and 
    \begin{equation} \label{eqn:bestfitlimit1}
\lim_{n\to \infty}\int_a^b u_n\, d\alpha = \lim_{n\to \infty}\int_a^b v_n\, d\alpha = \int_a^b f(x)\, d\alpha.
\end{equation}
\end{lemma}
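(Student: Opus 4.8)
The plan is to build the required monotone sequences by taking running minima and maxima of step functions supplied by the RDS criterion. First I would apply Theorem~\ref{thm:RDScrit} with $\epsilon = \tfrac{1}{n}$: for each $n\in\mathbb{N}$ there exist step functions $\tilde u_n,\tilde v_n\in S[a,b]$ bracketing $f$, so that $\tilde v_n(x)\le f(x)\le \tilde u_n(x)$ for all $x\in[a,b]$ and
$$\int_a^b \tilde u_n(x)-\tilde v_n(x)\, d\alpha < \frac{1}{n}.$$
These brackets need not be monotone in $n$, so I would set $u_n=\min(\tilde u_1,\dots,\tilde u_n)$ and $v_n=\max(\tilde v_1,\dots,\tilde v_n)$, the pointwise minimum and maximum. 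Passing to a common refinement of the partitions associated with $\tilde u_1,\dots,\tilde u_n$, each $\tilde u_k$ is constant on every partition interval, so $u_n$ is again a step function; the same reasoning applies to $v_n$.

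Next I would verify the bracketing chain. Since $u_{n+1}=\min(u_n,\tilde u_{n+1})\le u_n$ and each $\tilde u_k(x)\ge f(x)$, we get $f(x)\le u_{n+1}(x)\le u_n(x)$, and the symmetric argument gives $v_n(x)\le v_{n+1}(x)\le f(x)$; together these are exactly the inequalities claimed in the statement. In particular $u_n\in S_U(f,[a,b])$ and $v_n\in S_L(f,[a,b])$, so by Definition~\ref{def:RDSint} and the integrability of $f$,
$$\int_a^b v_n(x)\, d\alpha \le \int_a^b f(x)\, d\alpha \le \int_a^b u_n(x)\, d\alpha.$$

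Finally I would estimate the gap. Because $u_n\le \tilde u_n$ and $v_n\ge \tilde v_n$, the nonnegative step function $u_n-v_n$ satisfies $u_n-v_n\le \tilde u_n-\tilde v_n$ pointwise; since $\alpha\in I[a,b]$, Theorem~\ref{thm:stepmon} yields
$$0\le \int_a^b u_n(x)-v_n(x)\, d\alpha \le \int_a^b \tilde u_n(x)-\tilde v_n(x)\, d\alpha < \frac{1}{n}.$$
Combining this with the preceding display and letting $n\to\infty$ squeezes both $\int_a^b u_n\, d\alpha$ and $\int_a^b v_n\, d\alpha$ to $\int_a^b f\, d\alpha$, which is~\eqref{eqn:bestfitlimit1}. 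The only point requiring a little care—and the closest thing to an obstacle—is confirming that the running minima and maxima are genuine step functions and that the bracketing persists at the partition points; both are immediate once everything is written over a common refinement, since bracketing was assumed to hold at every point of $[a,b]$, the partition points included.
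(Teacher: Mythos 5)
Your proof is correct. It reaches the conclusion by a genuinely different monotonization device than the paper's. The paper works directly from Definition~\ref{def:RDSint}: for each $n$ it picks a step function $u_n'$ with $\int_a^b u_n'(x)\,d\alpha - \int_a^b f(x)\,d\alpha < \tfrac{1}{n}$, arranges the associated partitions $\mathcal{P}_n$ to be nested, and replaces $u_n'$ by the best-fit step function of $f$ with respect to $\mathcal{P}_n$ (Definition~\ref{def:bestfitstepfunctions}); monotonicity of the resulting sequence comes from the fact that best-fit step functions decrease under refinement of the partition, and the error bound from the fact that the best-fit function lies below $u_n'$, as in~\eqref{eqn:bestfit-bracket}. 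You instead start from the RDS criterion (Theorem~\ref{thm:RDScrit}) and monotonize by running pointwise extrema, $u_n=\min(\tilde u_1,\dots,\tilde u_n)$ and $v_n=\max(\tilde v_1,\dots,\tilde v_n)$. Your route is more self-contained: it needs only that step functions are closed under pointwise min and max (clear after passing to a common refinement) together with Theorem~\ref{thm:stepmon}, and it avoids the best-fit machinery entirely. It also produces $u_n$ and $v_n$ simultaneously, with the single two-sided estimate $\int_a^b u_n(x)-v_n(x)\,d\alpha < \tfrac{1}{n}$ yielding both limits in~\eqref{eqn:bestfitlimit1} by one squeeze, whereas the paper constructs the upper and lower sequences by separate (nearly identical) arguments. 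The paper's version, for its part, reuses machinery already established for Corollary~\ref{cor:bestfitRDScrit} and ties the approximants directly to the suprema and infima of $f$ on the partition intervals. All the steps you flag as needing care (step-function closure under min/max, bracketing at partition points) are handled correctly in your write-up.
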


\begin{proof}[Proof of Theorem \ref{thm:RDSLebEquiv}]
    Since the premeasure of $\mu^\alpha$ is $\mu_\alpha$, they agree on intervals, and so we immediately have that step functions are Lebesgue-Stieltjes integrable and their RDS and LS-integrals agree.
    
    First, we consider the case $\alpha\in I[a,b]$ and $f$ is nonnegative. By Lemma \ref{lem:bestfitlimit}, there exists a decreasing sequence of step functions  $\{u_n\}_{n=1}^\infty$ and an increasing sequence $\{v_n\}_{n=1}^\infty$ such that $u_n$ and $v_n$ bracket $f$ for all $n$ and \eqref{eqn:bestfitlimit1} holds.    Since these sequences are monotone and bounded, we can define their pointwise limits
    $$u(x) = \inf_n u_n(x) \quad \text{and} \quad  v(x)=\sup_n v_n(x).$$
    Note that $v(x)\le f(x)\le u(x)$ and $u$ and $v$ are bounded on $[a,b]$. Thus, by the Dominated Convergence Theorem and the equality of the step function integrals,
\begin{multline*}
    \int_{[a,b]} v\, d\mu^\alpha = \lim_{n\to \infty} \int_{[a,b]} v_n\, d\mu^\alpha =  \lim_{n\to \infty} \int_a^b v_n\, d\alpha
    = \int_a^b f(x)\, d\alpha\\ = \lim_{n\to \infty}\int_a^b u_n\, d\alpha = \lim_{n\to \infty}\int_{[a,b]} u_n\, d\mu^\alpha =\int_{[a,b]} u\, d\mu^\alpha.
\end{multline*} 
Since $u(x)-v(x)\ge 0$ and their integrals agree,  we have $u=v$ $\mu^\alpha$-a.e. Hence,  $f=u$ $\mu^\alpha$-a.e., and so it is Lebesgue-Stieltjes integrable and~\eqref{eqn:RDSL-1} holds.

  We next consider the general case. By Theorem \ref{thm:pnalpha}, $f\in RDS_{P\alpha}[a,b]$ and $f\in RDS_{N\alpha}[a,b]$. By Lemma \ref{lem:fplus}, $f^\pm \in RDS_{P\alpha}[a,b]$ and $f^\pm\in RDS_{N\alpha}[a,b]$.  But by the previous case, $f^\pm$ are LS-integrable with respect to $\mu^{P\alpha}$ and $\mu^{N\alpha}$ and their LS and RDS-integrals agree. Further, since $\alpha = P\alpha - N\alpha + \alpha(a)$, by \cite[Theorem 9.1.0]{Kamke_1956}, $\mu^\alpha = \mu^{P\alpha} -\mu^{N\alpha}$. Therefore, we can estimate as follows:
  \begin{align*}
       & \int_a^b f(x)\, d \alpha \\ 
       &\qquad  = \int_a^b f^+(x)\, d P\alpha -\int_a^b f^+(x)\, d N\alpha
        - \int_a^b f^-(x)\, d P\alpha + \int_a^b f^-(x)\, d N\alpha\\
        &\qquad  = \int_{[a,b]}f^+(x)\, d\mu^{P\alpha}-\int_{[a,b]}f^+(x)\, d\mu^{N\alpha}
        - \int_{[a,b]}f^-(x)\, d\mu^{P\alpha} + \int_{[a,b]}f^-(x)\, d\mu^{N\alpha}\\
        &\qquad  =\int_{[a,b]}f(x)\, d\mu^{P\alpha}-\int_{[a,b]}f(x)\, d\mu^{N\alpha}\\
        &\qquad  = \int_{[a,b]}f(x)\, d\mu^\alpha.
    \end{align*}

\end{proof}

\begin{proof}[Proof of Lemma \ref{lem:bestfitlimit}]
    We will construct the decreasing sequence $\{u_n\}_{n=1}^\infty$; the construction of the increasing sequence $\{v_n\}_{n=1}^\infty$ is nearly identical.  By Definition \ref{def:RDSint}, for each $n\in \mathbb{N}$, there exists a step function $u^\prime_n$, defined with respect to a partition $\mathcal{P}_n$, such that $u^\prime_n(x)\ge f(x)$ for all $x\in [a,b]$ and 
    $$\int_a^b u^\prime_n(x)\, d\alpha-\int_a^b f(x)\, d\alpha<\frac{1}{n}.$$ 
    Further, we may assume without loss of generality that for each $n\in \mathbb{N}$, $\mathcal{P}_{n+1}$ is a refinement of $\mathcal{P}_n$. Let $u_n$ be the best fit step function of $f$ with respect to $\mathcal{P}_n$. Since $\mathcal{P}_{n+1}$ is a refinement of $\mathcal{P}_n$, $f(x)\le u_{n+1}(x) \leq u_n(x)$ for each $n\in \mathbb{N}$ and $x\in [a,b]$.    Finally, 
    $$0 \leq \int_a^b u_n(x)\ d\alpha-\int_a^b f(x)\ d\alpha
    \le \int_a^b u^\prime_n(x)\ d\alpha-\int_a^b f(x)\ d\alpha
    <\frac{1}{n}.$$ 
    Therefore, the limit for $\{u_n\}_{n=1}^\infty$ in \eqref{eqn:bestfitlimit1} holds.
\end{proof}

\bibliographystyle{plain}
\bibliography{ref}

\end{document}